\newtheorem{thm}{Theorem}[section]
\newtheorem{cor}[thm]{Corollary}
\newtheorem{lem}[thm]{Lemma}
\newtheorem{clm}[thm]{Claim}
\newtheorem{prop}[thm]{Proposition}
\newtheorem{notation}[thm]{Notation}
\theoremstyle{remark}
\newtheorem{rmk}[thm]{Remark}
\theoremstyle{definition}
\newtheorem{defi}[thm]{Definition}
\def \Gr {Gr}
\def \m {\mathfrak m}
\def \R {\mathbb{R}}
\def \Z {\mathbb{Z}}
\def \G {\mathcal{G}}
\def \C {\mathbb{C}}
\def \E {\mathcal{E}}
\def \F {\mathcal{F}}
\def \O {\mathcal{O}}
\def \N {\mathcal N}
\def \P {\mathbb{CP}}
\def \Tr {\text{Tr}}
\DeclareMathOperator \Sing {Sing}
\DeclareMathOperator\dVol {dVol}
\DeclareMathOperator\Vol {Vol}
\DeclareMathOperator\rank {rank}
\DeclareMathOperator\Ker {Ker}
\DeclareMathOperator \Id {Id}
\numberwithin{equation}{section}
\begin{document}

\newcommand{\bC}{{\textbf C}}
\newcommand{\Quot}{{\bf Quot}}

\title{Reflexive sheaves, Hermitian-Yang-Mills connections, and tangent cones}
\author{Xuemiao Chen\thanks{University of Maryland; xmchen@umd.edu.}, Song Sun\thanks{University of California, Berkeley; sosun@berkeley.edu. }}
\date{\today}
\maketitle{}

\begin{abstract}
 In this paper we give a complete algebro-geometric characterization of analytic tangent cones of admissible Hermitian-Yang-Mills connections over any reflexive sheaves. 
\end{abstract}

\tableofcontents

\section{Introduction} 

Let $\omega$ be a smooth K\"ahler metric on the ball $B=\{|z|<1\}$ in $\C^n$, and let $\E$ be a reflexive coherent sheaf defined on a neighborhood of $\overline B$. Let  $A$ be an admissible Hermitian-Yang-Mills (HYM) connection on $\E$ with respect to the K\"ahler metric $\omega$. This means that $A$ is the Chern connection of a smooth Hermitian-Yang-Mills (HYM) metric $H$ on the locally free locus of $\E$, and the curvature  $F_A$ has finite $L^2$ norm on any compact subset of $B$. Denote by $\Sing(\E)$ the singular locus of $\E$. It is known that $\Sing(\E)$ is a complex analytic subset of complex co-dimension at least 3. Our main goal in this paper is to understand the singular behavior of $A$ at $0$ in terms of the underlying sheaf $\E$.

We first briefly review the notions of analytic and algebraic tangent cones, and  more details will be provided in Section \ref{section2}. The notion of   \emph{analytic tangent cones} is first studied by Tian \cite{Tian}.  Consider the dilation map $$\lambda: B_{\lambda^{-1}}\rightarrow B, z\mapsto \lambda \cdot z$$ where $\lambda>0$, and the family of rescaled connections $$A_\lambda=\lambda^*A.$$ Letting $\lambda \rightarrow 0$, passing to a subsequence and  applying gauge transforms, $A_\lambda$ converges smoothly to a  connection $A_\infty$ on $\C^n\setminus (\Sigma\cup Z(\E))$, and $A_\infty$ is HYM with respect to the standard flat metric. Here $\Sigma$  is the \emph{bubbling set}, i.e.,  the subset of $\C^n\setminus Z(\E)$ where the convergence is not smooth, and $Z(\E)$ is the Zariski tangent cone of $\Sing(\E)$ at $0$. It is known by \cite{BS, Tian} that $A_\infty$ extends to an admissible HYM connection on a reflexive sheaf $\E_\infty$ on $\C^n$, and $\Sigma\cup Z(\E)$ is a closed affine algebraic subvariety in $\C^n$.  Moreover, passing to a further subsequence, there is a complex codimension 2 algebraic cycle on $\P^{n-1}$, called the \emph{analytic blow-up cycle}, which is of the form 
$$\Sigma_b^{an}=\sum_{k} m_k^{an}\cdot [\underline\Sigma_b^k],$$
such that the affine cone over the support $\cup_k \underline\Sigma_b^k$ of $\Sigma_b^{an}$ is precisely the pure complex codimension  two part of $\Sigma\cup Z(\E)$, and the \emph{analytic multiplicity} $m_k^{an}$ is a positive integer characterizing the blow-up of  Yang-Mills energy transverse to $\Sigma_b^k$ at a generic point. On the other hand, the rest of $\Sigma$ is contained in $\Sing(A_\infty)=\Sing(\E_\infty)$.   We call the pair $(A_\infty, \Sigma_b^{an})$ an analytic tangent cone. 

The terminology ``tangent cone" is justified by the fact that $A_\infty$ is a HYM cone connection in the sense of \cite{CS1} (see Section \ref{section2.1}). The underlying sheaf $\E_\infty$ is of the form $\psi_*\pi^*\underline \E_\infty$, where $$\pi: \C^n\setminus \{0\}\rightarrow \P^{n-1}$$ is the natural projection map and $$\psi: \C^n\setminus \{0\}\rightarrow \C^n$$ is the inclusion map, and $\underline\E_\infty$ is a reflexive sheaf on $\P^{n-1}$ which is a direct sum of polystable\footnote{Throughout this paper, when we talk about stability of sheaves on the projective space, we always mean \emph{slope} stability with respect to the standard polarization.} sheaves. Moreover,  up to gauge equivalence $A_\infty$ is uniquely determined by the sheaf $\underline\E_\infty$. So the information of an analytic tangent cone is completely encoded in the algebraic data $(\underline\E_\infty, \underline\Sigma_b^{an})$.   We point out that a priori from the definition analytic tangent cones at $0$ depend on not only the initial connection $A$, but also on the choice of subsequences as $\lambda\rightarrow 0$. Uniqueness of tangent cones independent of subsequences is in general a difficult question in many geometric analytic problems.

Recall from \cite{CS1, CS3}  we introduced the notion of an \emph{algebraic tangent cone} at a singularity of a reflexive coherent analytic sheaf $\E$. This is defined to be a torsion-free sheaf $\underline{\widehat\E}$ on the exceptional divisor  $D=\P^{n-1}$ that is given by the restriction  of a reflexive extension $\widehat\E$ of $p^*(\E|_{B\setminus\{0\}})$ across $D$, where $$p: \widehat B\rightarrow B$$ is the blowup at $0$. In general algebraic tangent cones  at $0$ are not necessarily unique, due to the fact that the exceptional divisor has complex co-dimension exactly 1. We say an algebraic tangent cone $\underline{\widehat \E}$ is \emph{optimal} if 
$$\Phi(\underline{\widehat\E}):=\mu(\underline{\E}_1)-\mu(\underline \E_m /\underline \E_{m-1})\in [0, 1),$$ where $$0=\underline{\E}_0\subset \underline{\E}_1\subset \cdots \subset\underline{\E}_m=\underline{\widehat\E}$$ is the  Harder-Narasimhan filtration of $\underline{\widehat\E}$, and $\mu(\cdot)$ denotes the slope of a torsion-free sheaf on $\P^{n-1}$ with respect to the standard polarization. The function $\Phi$ measures how far an algebraic tangent cone is from being semistable.

Given a  torsion-free sheaf $\underline\E$ on $\P^{n-1}$, we  denote by $\Gr^{HN}(\underline{\E})$ (resp. $\Gr^{HNS}(\underline\E))$ the graded sheaf associated to the Harder-Narasimhan (resp. Harder-Narasimhan-Seshadri) filtration of $\underline{\E}$. 
In \cite{CS3} it is proved that an optimal algebraic tangent cone always exists and it is unique up to certain natural transforms. In particular, the isomorphism class of the corresponding graded torsion-free sheaf $\Gr^{HN}(\underline{\widehat\E})$, up to tensoring each factor by some $\O(k)$, does not depend on the choice of optimal algebraic tangent cones. For our purpose, we need to consider instead $Gr^{HNS}(\underline{\widehat \E})$. The latter is not unique in general but certain algebraic invariants can be extracted. More specifically, we define a reflexive sheaf over $\C^n$ 
\begin{equation}\label{e:definition of Galg}
\mathcal{G}^{alg}:=\psi_*\pi^*(Gr^{HNS}(\underline{\widehat \E}))^{**},
\end{equation}
and a complex codimension 2 algebraic cycle  $$\Sigma_b^{alg}:=
\Sigma_b^{alg}(\underline{\widehat \E})$$  on $\P^{n-1}$ (c.f. Definition \ref{defi2.13}). We call $\Sigma_b^{alg}$ the \emph{algebraic blow-up cycle} of $\E$ at $0$. It is a fact that  both $\mathcal G^{alg}$ and $\Sigma_b^{alg}$ do not depend on the specific choice of optimal algebraic tangent cones, so they are local algebraic invariants of the stalk of $\E$ at $0$ (see Section \ref{section2.2} for more details).

In \cite{CS3}  we made a conjecture relating the analytic and algebraic tangent cones, motivated by the results in \cite{CS1, CS2}. In this paper we give a proof of this conjecture in complete generality, based on  the techniques introduced in \cite{CS1, CS2} and a new approach. Simply put, the algebraic data underlying the analytic tangent cones matches exactly with the above algebraic invariants of optimal algebraic tangent cones. More precisely, we have

\begin{thm}\label{main}
Given $\E$ and $A$ as  above, then
there is a unique analytic tangent cone $(A_\infty,\Sigma_b^{an})$ at $0$, which is completely determined by the stalk of $\E$ at $0$:
\begin{itemize}
\item[(I).]  $A_\infty$ is gauge equivalent to the HYM cone on $\G^{alg}$ (see Section \ref{section2.1} for the definition of a HYM cone). In particular, $\E_\infty$ is isomorphic to $\G^{alg}$, and
$$\Sing(A_\infty)=\Sing(\G^{alg}).$$ 
\item[(II)] The analytic blow-up cycle $\Sigma_b^{an}$ equals the algebraic blow-up cycle $\Sigma_b^{alg}$.  
\end{itemize}
\end{thm}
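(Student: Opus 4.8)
The plan is to establish both parts by matching the analytic and algebraic pictures through a common intermediate object, the Harder--Narasimhan--Seshadri filtration of an optimal algebraic tangent cone. The overall strategy is an induction on the rank of $\E$ (or more precisely on the complexity of the HN filtration), where the base case is when an optimal algebraic tangent cone is already semistable, in which case the results of \cite{CS1, CS2} identify $A_\infty$ with the HYM cone associated to $\Gr^{HNS}$ of the slope-stable pieces. First I would recall from \cite{CS3} the construction of the comparison map: given the family of rescalings $A_\lambda = \lambda^* A$, one produces on the blowup $\widehat B$ a sequence of Hermitian metrics whose curvature is controlled in $L^2$, and a weak limit determines a reflexive extension $\widehat\E$ across $D = \P^{n-1}$; the point is that the analytic tangent cone ``sees'' the restriction $\underline{\widehat\E}|_D$, and one needs to arrange that this restriction is an \emph{optimal} algebraic tangent cone, i.e.\ that $\Phi(\underline{\widehat\E}) \in [0,1)$. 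This is where the quantitative estimates on the decay rate of $F_{A_\lambda}$ (the ``no fast decay'' and ``no slow decay'' dichotomy from \cite{CS1}) enter: the constraint $\Phi < 1$ is precisely the analytic statement that the Yang--Mills density cannot concentrate faster than a certain rate.

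The second main step is to analyze the destabilizing behavior. When $\underline{\widehat\E}|_D$ is strictly semistable but not stable, one runs a deformation/continuity argument along the HNS filtration: each graded piece $\Gr^{HNS}_i$ is slope-stable of the same slope, so by Uhlenbeck--Yau it carries an HYM metric, and the direct sum $\bigoplus_i \Gr^{HNS}_i$ with the product metric is HYM; one must show that $A_\infty$ is gauge-equivalent to the pullback cone $\psi_* \pi^* (\bigoplus_i \Gr^{HNS}_i)^{**} = \G^{alg}$. The mechanism is the now-standard fact (used in \cite{CS1, CS2}) that the limit of a sequence of HYM connections, when the underlying bundles degenerate, is the HYM connection on the graded object of the appropriate filtration — here the subtlety is that the filtration lives on $D$ while the connection lives on $\C^n \setminus \{0\}$, so one needs the compatibility between the sheaf-theoretic operations (restriction to $D$, taking $\Gr^{HNS}$, reflexive pullback $\psi_*\pi^*(-)^{**}$) and the analytic operations (rescaling, weak $L^2$ limit, gauge fixing). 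For the blow-up cycle in part (II), the analytic multiplicity $m_k^{an}$ is defined via the energy concentration $\lim_{r\to 0} r^{4-2n}\int_{B_r} |F_{A_\lambda}|^2$ transverse to $\underline\Sigma_b^k$, and the algebraic multiplicity $m_k^{alg}$ is a length/intersection-theoretic quantity attached to $\Gr^{HNS}(\underline{\widehat\E})$ along the codimension-2 locus; I would match them pointwise at a generic point of each $\underline\Sigma_b^k$ by reducing (via slicing by a generic $\C^3$) to the three-dimensional local model, where both sides compute the same second Chern class contribution by the Chern--Weil / Bismut--Gillet--Soulé type identity relating $\int |F|^2$ to $c_2$.

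The main obstacle, I expect, is the \emph{uniqueness} assertion — that the analytic tangent cone does not depend on the subsequence $\lambda \to 0$. The difficulty is the usual one in tangent-cone problems: a priori different subsequences could produce different weak limits, and there is no monotone quantity (\`a la Łojasiewicz--Simon) immediately available. The resolution here should be that the algebraic invariants $\G^{alg}$ and $\Sigma_b^{alg}$ are manifestly independent of everything (they depend only on the stalk of $\E$ at $0$, by the uniqueness-up-to-transform of optimal algebraic tangent cones from \cite{CS3}), so \emph{once} one proves that \emph{every} subsequential analytic tangent cone equals the corresponding algebraic object, uniqueness is automatic. Thus the real work is to upgrade the subsequential convergence statements of \cite{BS, Tian} to the identification with the algebraic side for an \emph{arbitrary} convergent subsequence — i.e.\ the new approach alluded to in the introduction must bypass the need for a monotonicity-based uniqueness proof entirely, deriving uniqueness as a \emph{corollary} of the algebraic characterization rather than as an input. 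A secondary technical obstacle is handling the non-reduced and higher-multiplicity parts of the bubbling set $\Sigma$, and separating the part of $\Sigma$ that lies in $\Sing(\E_\infty)$ from the codimension-2 part carrying the cycle $\Sigma_b^{an}$; this requires the refined structure theory of admissible HYM connections and their singular sets developed in \cite{CS1}.
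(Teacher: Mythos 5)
Your overall logical architecture is right on one point: uniqueness is indeed derived as a corollary of identifying \emph{every} subsequential analytic tangent cone with the subsequence-independent algebraic data $(\G^{alg},\Sigma_b^{alg})$, not from any monotonicity argument. But the core of your plan --- how that identification is actually achieved --- is missing the paper's central new idea and substitutes for it the approach the paper explicitly abandons. You propose to build the reflexive extension $\widehat\E$ across $D$ by producing ``a sequence of Hermitian metrics whose curvature is controlled in $L^2$'' on the blowup and taking a weak limit, with optimality ($\Phi<1$) coming from quantitative curvature-decay estimates. This is precisely the metric-comparison strategy of \cite{CS1,CS2}, which works when $0$ is an isolated homogeneous singularity or when some algebraic tangent cone is locally free and stable, and which the introduction states meets severe difficulties in general; there is no construction here of the weak limit, no reason it defines a coherent reflexive sheaf, and no mechanism forcing optimality. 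The paper instead bypasses metrics on $\widehat B$ entirely: it introduces the degree function $d(s)$ on local holomorphic sections, proves $d(s)<\infty$ for $s\neq 0$ and additivity under tensor product (Theorem \ref{thm3.1}, via comparison with a flat metric coming from a presentation of $\E^*$), and uses the resulting filtration $M_k=\{s: d(s)\geq\mu_k\}$ to define finitely generated torsion-free graded modules, hence sheaves $\underline\E^{an,i}$ on $D$, which are then shown to be restrictions of reflexive extensions (Theorem \ref{thm3.18}). Optimality is proved not by curvature decay but by a duality argument: the same construction applied to $(\E^*,A^*)$ yields $\widehat\F$ with $\widehat\E^*\simeq\widehat\F$, forcing $\mu(\underline\E^{an})+\mu(\underline\F^{an})=0$ and hence equality in the slope inequalities of Corollary \ref{corollary3.24}.

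The second genuine gap is in your ``deformation/continuity argument along the HNS filtration.'' You invoke as a ``now-standard fact'' that the limit of degenerating HYM connections is the HYM connection on the graded object of the appropriate filtration --- but in this generality that assertion \emph{is} part (I) of the theorem; it cannot be assumed. The paper's actual mechanism is to embed the problem into the Quot-scheme compactification of \cite{GT,GSRW}: the $L^2$-orthonormalized sections $\sigma_i^j$ define a sequence of quotients $\underline q_j$ in a fixed $GL(N,\C)$-orbit of $\Quot(\mathcal H,\tau)$, whose algebraic limit $\underline q_\infty^{alg}$ is compared with the analytic limit $\underline q_\infty^{an}$ onto $\underline\N_\infty^{\mu_m}(k)$ (Proposition \ref{prop4.3}, again via a pairing with sections of the dual), and then the continuity of the map to $M^{\mu ss}$ yields the S-equivalence statement $\underline\E_\infty^{\mu_l}\simeq(\Gr^{HNS}(\underline\E^{\mu_l}))^{**}$ together with the cycle identity. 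Your proposed induction on rank plays no role in the paper, and your treatment of part (II) by slicing with a generic $\C^3$ is off: the multiplicities are matched on complex $2$-dimensional slices via the Bott--Chern/Chern--Simons transgression formula, using crucially that the resolution \eqref{equation5.1} of $\N_\infty$ descends by homogeneity to $\P^{n-1}$. Without the degree-function construction and the Quot-scheme comparison, the plan as written does not close.
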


In \cite{CS1, CS2} we proved this result under the extra assumption that $0$ is an isolated and homogeneous singularity of $\E$, or that there is an algebraic tangent cone $\underline{\widehat{\E}}$ which is locally free and stable (see also \cite{JWE} for a special case). The arguments there are more analytical and involve PDE estimates based on explicit construction of good background Hermitian metrics. In the general setting the previous approach meets severe difficulties. In this paper, we proceed using a different idea which is more intrinsic and is of more algebraic nature.
The key new input in this paper is that, as we will show in Section \ref{section3}, an admissible HYM connection $A$ naturally recovers all the equivalence classes of  optimal algebraic tangent cones (see Section \ref{section3}.)

In Section \ref{section2} we include some background material. In Section \ref{section4} we finish the proof of Theorem \ref{main}, using recent results   on moduli compactification of admissible HYM connections  and semi-stable sheaves over projective manifolds (see \cite{GSRW} and \cite{GT}).

\

\textbf{Remark on notations}:  In this paper we need to introduce many notations regarding sheaves on different spaces. The convention is that a coherent sheaf $\E$ is denoted by a letter with calligraphic font; a coherent sheaf $\underline\E$ on the projective space $\P^{n-1}$ comes with an extra  underline; a coherent sheaf $\widehat \E$ on the blowup $\widehat B$ of the ball $B$ comes with an extra widehat.

\

\noindent \textbf{Acknowledgements}:  Both authors were partially supported by the Simons Collaboration Grant on Special Holonomy in Geometry, Analysis and Physics (No. 488633, S.S.), and the second author was partially supported by NSF grant  DMS-1916520. The first author would like to thank Santai Qu for helpful discussions. He also thanks UC Berkeley for hospitality during his visit between January 2018 and May 2019.  The second author thanks Princeton University for hospitality during Fall 2019. We also thank the anonymous referees for valuable comments which helped improve the exposition of the paper.

\section{Preliminaries}
\label{section2}

\subsection{Analytic tangent cones}\label{section2.1}
In this subsection, we recall some backgrounds about analytic tangent cones of Hermitian-Yang-Mills connections. For details see \cite{CS1}. As in the introduction  we let $\E$ be a reflexive sheaf defined over a neighborhood of $\overline{B}$  in $\C^n$, where $\overline{B}=\{z\in \C^n: |z|\leq 1\}$. We assume that under the standard holomorphic coordinates $\{z_1, \cdots, z_n\}$ on $\C^n$ 
\begin{equation}\label{Kahler metric expansion}\omega=\omega_0 + O(|z|^2)  ,
\end{equation}
where 
$$\omega_0:=\frac{\sqrt{-1}}{2} \partial \bar{\partial} |z|^2$$
is the standard flat metric. Let $H$ be an admissible HYM metric on $\E$. This means that $H$ is a smooth Hermitian metric on $\E$ outside $\Sing(\E)$, and the associated Chern connection $A$ satisfies the following conditions
\begin{itemize}
\item the HYM equation
 \begin{equation}\label{HYMequation}\sqrt{-1} \Lambda_\omega F_A =c\cdot \Id	
 \end{equation}
 holds on $B\setminus \Sing(\E)$, for some constant $c\in \R$, called the \emph{Einstein constant};
\item $A$ has locally finite Yang-Mills energy:
$$\int_{K\setminus\Sing(\E)} |F_A|^2\dVol_\omega < \infty$$ for any compact subset $K \subset  B$.
\end{itemize}
Notice that a Hermitian-Yang-Mills connection is a projectively unitary $\Omega$-anti-self-dual instanton in the sense of Tian \cite{Tian}, so is \emph{stationary} by \cite{Tian}, Proposition 5.1.2. Hence by  Proposition 5.1.1 in \cite{Tian},   Price's monotonicity formula holds. In particular, we have

\begin{equation}\label{eqn2.0}
\sup_{r\in (0,1]}r^{4-2n}\int_{B_r\setminus \Sing(\E)} |F_A|^2\dVol_{\omega} <\infty.
\end{equation}
For any $\lambda>0,$ we denote the rescaling map $$\lambda: B_{\lambda^{-1}} \rightarrow B; z \mapsto \lambda \cdot z,$$ where in this paper $B_r$ always denotes the ball $\{|z|<r\}$ in $\C^n$. For any sequence of positive numbers $\lambda_j \rightarrow 0$, we get a sequence of admissible HYM connections $$A_j:=\lambda_j^* A$$ with respect to the K\"ahler metric $\omega_j:=\lambda_j^{-2}\cdot \lambda_j^*\omega$. By \eqref{eqn2.0}, this sequence has uniformly bounded Yang-Mills energy over any compact subset $K \subset \C^n$. Notice by \eqref{Kahler metric expansion} obviously $\omega_j$ converges smoothly to $\omega_0$ as $j\rightarrow\infty$. 
 
We denote by $Z(\E)$ the $\C^*$ invariant reduced subvariety in $\C^n$ underlying the Zariski tangent cone of $\Sing(\E)$, i.e., $Z(\E)$ is the Hausdorff limit of $\Sing(\E)$ in $\C^n$ under the above rescaling as $\lambda\rightarrow 0$. It is well-known that $Z(\E)$ has the same complex dimension as $\Sing(\E)$ (see for example \cite{HW}), so is of complex codimension at least 3 in $\C^n$.

The associated \emph{bubbling set} of this sequence is defined as
\begin{equation}\label{bs}
\Sigma=\{z\in \C^n\setminus Z(\E)| \lim_{r\rightarrow 0} \liminf_{j\rightarrow\infty} r^{4-2n}\int_{B_z(r)} |F_{A_j}|^2\dVol_{\omega_j}\geq \epsilon_0\},
\end{equation}
where $\epsilon_0$ denote the $\epsilon$-regularity constant for Yang-Mills connections over the flat $\C^n$ (see Theorem $2.2.1$ in \cite{Tian} for example).
Applying Uhlenbeck's $\epsilon$-regularity theorem and standard analytic results on the convergence of Yang-Mills connections, by passing to a subsequence we may assume there is a smooth  connection $A_\infty$ on some Hermitian vector bundle $(\E_\infty, H_\infty)$ defined over $\C^n\setminus (\Sigma\cup Z(\E))$, which is HYM with respect to the flat metric $\omega_0$. Moreover, for $j\gg1$, there exist Hermitian isomorphisms $$P_j: (\lambda_j^*\E, \lambda_j^*H)\rightarrow (\E_\infty, H_\infty)$$ such that $({P_j^{-1}})^*(A_j)$ converges smoothly to $A_\infty$\footnote{In this paper, when we talk about convergence of a sequence of objects, we often need to pass to a subsequence. We will abuse notation and not re-lable the new subsequence, if no confusion arises.}.   The connection $A_\infty$ can be viewed as an admissible HYM connection on $\C^n$, hence by Bando-Siu we know \cite{BS} $\E_\infty$ extends to a reflexive sheaf on $\C^n$ and $A_\infty$ extends smoothly outside $\Sing(\E_\infty)\subset \Sigma\cup Z(\E)$, so that the set of essential singularities of $A_\infty$ is given by $\Sing(A_\infty)=\Sing(\E_\infty)$.

\

It is  proved by Tian (see Theorem $4.3.3$ in \cite{Tian}) that $\Sigma$ is a complex-analytic set in $\C^n\setminus Z(\E)$, and the complement $\Sigma\setminus \Sing(A_\infty)$ has pure complex codimension 2. Since $Z(\E)$ is of complex codimension at least 3, by the Remmert-Stein-Shiffman extension theorem (see \cite{Shiffman} for example) we know the closure $\Sigma_b$ of the pure complex codimension 2 part of $\Sigma$ in $\C^n$ is also a pure codimension 2 complex analytic set in $\C^n$.  Let $\{\Sigma_b^k\}$ be the irreducible components of $\Sigma_b$. Then by Theorem 4.3.3 in \cite{Tian},  passing to a subsequence we may assume the convergence of Radon measures on $\C^n\setminus Z(\E)$  
 \begin{equation}\label{energy identity}\frac{1}{8\pi^2}|F_{A_j}|^2 \dVol_{\omega_j}\rightharpoonup \frac{1}{8\pi^2} |F_{A_\infty}|^2\dVol_{\omega_0}+\nu, 
 \end{equation}
with $$\nu=\sum_{k} m_{k}^{an}\cdot \mathcal H^{2n-4}\llcorner (\Sigma_b^k\setminus Z(\E)), $$
where $\mathcal H^{2n-4}$ denotes the $2n-4$ dimensional Hausdorff measure, and  $m_k^{an}$ are positive integers called the analytic multiplicities. 
Now by definition we have
$$(\cup_k\Sigma_b^k )\cup\Sing(A_\infty)\subset\Sigma\cup Z(\E).$$
Although not needed in this paper, one can show that the two sets above are indeed identical, using Lemma 3.18 in \cite{GSRW}.
 
Fixing an irreducible component $\Sigma_b^k$, and taking a generic complex 2-dimensional  slice $\Delta$ which intersects $\Sigma_b^k$ transversely, we have the following formula computing analytic multiplicities
\begin{lem}[\cite{SW}, Lemma $4.1$]\label{analyticmultiplictyformula}
$$m_k^{an}=\lim_{j\rightarrow\infty} \frac{1}{8\pi^2}\int_{\Delta}\text{Tr}(F_{A_{j}} \wedge F_{A_{j}})-\Tr(F_{A_\infty}\wedge F_{A_\infty}).$$
\end{lem}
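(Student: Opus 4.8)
The plan is to relate the curvature integral $\int_\Delta\Tr(F\wedge F)$ to the Yang--Mills energy density via the pointwise Chern--Weil identity on a K\"ahler surface, and then to combine the smooth convergence of $A_j$ away from the bubbling locus with the energy identity \eqref{energy identity} and the fact that the rescaled bubbles are anti-self-dual instantons on $\C^2$ with quantized energy.

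First I would fix the slice. Since $Z(\E)$ and $\Sing(A_\infty)=\Sing(\E_\infty)$ are complex analytic of codimension $\geq 3$ and the components $\Sigma_b^{k'}$ with $k'\neq k$ are distinct from $\Sigma_b^k$, a genericity (Sard-type) argument produces a small closed $2$-ball $\Delta$ inside a generic affine complex $2$-plane, meeting $\Sigma_b^k$ transversally in a single interior point $p$ which is a smooth point of $\Sigma_b^k$ lying off $\Sing(A_\infty)$, disjoint from $Z(\E)$, from $\Sing(A_\infty)$, from every $\Sigma_b^{k'}$ with $k'\neq k$ and from $\Sing(\E_j)$ for all $j$, and with $\partial\Delta$ disjoint from $\Sigma\cup Z(\E)$. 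Then $A_j|_\Delta$ and $A_\infty|_\Delta$ are smooth unitary connections with vanishing $(0,2)$ curvature, $A_\infty|_\Delta$ has finite energy on $\Delta$, the convergence $A_j|_\Delta\to A_\infty|_\Delta$ modulo gauge is smooth on $\Delta\setminus\{p\}$, and both curvature integrals in the statement are finite. Writing $\omega_j^\Delta$ (resp.\ $\omega_0^\Delta$) for the restriction of $\omega_j$ (resp.\ $\omega_0$) to $\Delta$, and using that primitive $(1,1)$-forms on a K\"ahler surface are anti-self-dual, one has the pointwise identity
$$\Tr\big(F_{A_j}|_\Delta\wedge F_{A_j}|_\Delta\big)=\Big(\big|F_{A_j}|_\Delta\big|^2-\big|\Lambda_{\omega_j^\Delta}(F_{A_j}|_\Delta)\big|^2\Big)\dVol_{\omega_j^\Delta},$$
and likewise for $A_\infty$ with $\omega_0$; note that $\Lambda_{\omega_j^\Delta}(F_{A_j}|_\Delta)$ is only a partial trace of $F_{A_j}$ and is not controlled by the Einstein constant.

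Next I would pass to the limit by writing $\Delta=(\Delta\setminus B_p(r))\cup B_p(r)$ with $B_p(r)\subset\Delta$ a small ball about $p$. On $\Delta\setminus B_p(r)$ the smooth convergence gives $\int_{\Delta\setminus B_p(r)}\Tr(F_{A_j}\wedge F_{A_j})\to\int_{\Delta\setminus B_p(r)}\Tr(F_{A_\infty}\wedge F_{A_\infty})$ for each fixed $r$, so only $B_p(r)$ matters, where I apply the identity above. The partial-trace term carries no concentrated mass at $p$: under the parabolic rescaling at $p$ the rescaled limits are finite-energy Yang--Mills connections on $\C^2\cong T_p\Delta$, and since the Einstein constants satisfy $c_j=\lambda_j^2 c\to 0$ these limits are genuine anti-self-dual instantons, for which $\Lambda_{\omega_0}F=0$; hence $\int_{B_p(r)}|\Lambda_{\omega_j^\Delta}(F_{A_j}|_\Delta)|^2\to\int_{B_p(r)}|\Lambda_{\omega_0^\Delta}(F_{A_\infty}|_\Delta)|^2$ as $j\to\infty$ and then $r\to 0$. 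For the energy density $|F_{A_j}|_\Delta|^2$ I would invoke \eqref{energy identity}: since $\Delta$ is transverse to $\Sigma_b^k$ at the single point $p$ and avoids the rest of $\Sigma\cup Z(\E)$, the slice of the defect measure $\nu$ along $\Delta$ is exactly $m_k^{an}\,\delta_p$ (the transverse slice of $\mathcal H^{2n-4}\llcorner\Sigma_b^k$ is a unit Dirac mass at each intersection point, $\Sigma_b^k$ being complex analytic), and the bubbling at $p$ is carried by the directions of $\Delta$ normal to $\Sigma_b^k$, so that
$$\lim_{r\to 0}\lim_{j\to\infty}\int_{B_p(r)}\big|F_{A_j}|_\Delta\big|^2\dVol_{\omega_j^\Delta}=\lim_{r\to 0}\int_{B_p(r)}\big|F_{A_\infty}|_\Delta\big|^2\dVol_{\omega_0^\Delta}+8\pi^2 m_k^{an}.$$
Combining the three displays and letting $r\to 0$ yields $\lim_j\int_\Delta\Tr(F_{A_j}\wedge F_{A_j})=\int_\Delta\Tr(F_{A_\infty}\wedge F_{A_\infty})+8\pi^2 m_k^{an}$, which is the claim after dividing by $8\pi^2$.

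The step I expect to be the main obstacle is this last one: that the energy concentrating at $p$ as measured on the transverse slice equals precisely $8\pi^2 m_k^{an}$, rather than some proper fraction of it. This rests on the validity of restricting the energy identity \eqref{energy identity} to a generic transverse $2$-plane --- weak convergence of measures does not restrict to lower-dimensional subsets in general --- which in turn relies on Tian's structure theory for the blow-up set of $\Omega$-anti-self-dual instantons, the rectifiability and complex analyticity of $\Sigma_b^k$, and the fact that the rescaled bubbles are anti-self-dual instantons on $\C^2$ with energy in $8\pi^2\Z_{>0}$ supported in the slice directions. The remaining ingredients --- the choice of the slice, the pointwise identity, and the smooth convergence away from $p$ --- are routine.
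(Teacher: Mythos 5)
First, a point of comparison: the paper does not prove this lemma at all --- it is quoted verbatim from Sibley--Wentworth (\cite{SW}, Lemma 4.1) --- so there is no internal proof to measure your argument against. Judged on its own terms, your proposal sets up the standard strategy correctly (generic transverse slice $\Delta$, splitting into an annulus where the convergence is smooth and a small ball $B_p(r)$ where energy concentrates, and the K\"ahler-surface identity $\Tr(F\wedge F)=(|F|^2-|\Lambda F|^2)\dVol$), but the decisive step is asserted rather than proved, and the justification you offer for it does not work as stated.

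The gap is the passage from the weak convergence \eqref{energy identity}, which concerns the full $2n$-dimensional densities $|F_{A_j}|^2\dVol_{\omega_j}$ on $\C^n$, to the claim that the concentration of $\int_{B_p(r)}|F_{A_j}|_\Delta|^2\dVol_{\omega_j^\Delta}$ at $p$ is exactly $8\pi^2 m_k^{an}$. Two separate things go wrong. (a) The integrand on the slice involves the \emph{restricted} two-form $F_{A_j}|_\Delta$, which omits all components of $F_{A_j}$ in directions off $\Delta$; there is no Fubini-type relation between $\nu$ and the measures $|F_{A_j}|_\Delta|^2\dVol_{\omega_j^\Delta}$, and weak convergence on $\C^n$ gives no information about restriction to the null set $\Delta$ in any case. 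Identifying the slice concentration with the $(2n-4)$-density $m_k^{an}$ of $\nu$ requires the structure theory at $\mathcal H^{2n-4}$-a.e.\ point of $\Sigma_b^k$ --- decay of the tangential curvature components, so that the concentrating part of $F_{A_j}$ is asymptotically pulled back from the normal $\C^2$ with quantized transverse energy --- together with a covering/Fubini argument passing from a.e.\ points to the single chosen slice. That is precisely the content of \cite{SW}, Lemma 4.1 (building on Tian's Theorem 4.3.3); you list these as ingredients but do not supply the argument, so the key step reduces the lemma to itself. (b) A smaller but genuine point: the separate convergence you claim for the term $|\Lambda_{\omega_j^\Delta}(F_{A_j}|_\Delta)|^2$ is also unjustified. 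A generic $\Delta$ is transverse to $\Sigma_b^k$ but is not the normal plane $N_p$, and primitivity of a $(1,1)$-form on $N_p$ is not preserved under the linear identification $\Delta\to N_p$; so even granting that the bubbles are anti-self-dual instantons on $N_p$, their pullbacks to $\Delta$ need not be primitive for $\omega_0^\Delta$, and the two terms $|F|^2$ and $|\Lambda F|^2$ may each concentrate at $p$. Only their difference, i.e.\ $\Tr(F\wedge F)$ itself, is controlled (it is a relative Chern number, invariant under the linear reparametrization), which is why the proofs in \cite{Tian, SW} work directly with $\Tr(F\wedge F)$ and Chern--Simons boundary terms rather than splitting the energy density.
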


In our special setting, one can say more about the structure of the limiting data (See \cite{Tian, CS1, CS2})
\begin{enumerate}[(1).]
\item $\Sigma$ is invariant under the natural action of $\C^*$ on $\C^n$. 
\item $ F_{A_\infty}(\partial_r, \cdot)=0$, where $\partial_r$ is the radial vector field on $\C^n$;
\item $\sqrt{-1}\Lambda_{\omega_0}F_{A_\infty}=0$, away from $\Sing(\E_\infty)$.
\end{enumerate}
In particular, each $\Sigma_b^k$ is an affine cone over a pure codimension 2 algebraic subvariety $\underline \Sigma_b^k$ in $\P^{n-1}$. 

\begin{defi}
The \emph{analytic blow-up cycle} is the codimension 2 algebraic cycle on $\P^{n-1}$ given by 
$$\Sigma_b^{an}:=\sum_{k}m_k^{an}\cdot[\underline\Sigma_b^k]$$ 
\end{defi}

\begin{defi}
We call the pair $(A_\infty, \Sigma_{b}^{an})$ an \emph{analytic tangent cone} of $A$ (or $(\E, H))$ at $0$. 
\end{defi}
\begin{rmk}
The definition here is different from \cite{CS2}, where an analytic tangent cone is defined to be the triple $(A_\infty, \Sigma, \nu)$. But it is easy to see that these two definitions contain exactly the same information. The above definition is more convenient to use in this paper.  
\end{rmk}

\begin{rmk} \label{rmk:sequence}
We emphasize here that the analytic tangent cones are \emph{not} a priori unique, and may depend on the choice of subsequences as $\lambda\rightarrow 0$. But it is not difficult to see that any analytic tangent cone can be obtained by taking the rescaled limit corresponding to a subsequence of the fixed sequence $\{\lambda_j:=2^{-j}\}$. We shall use this fact in Section \ref{section3}.
\end{rmk}

The above extra properties in our setting imply that  $A_\infty$ is a HYM cone in the following sense. 
 Let $\underline \F$ be a polystable  reflexive sheaf over $\P^{n-1}$ with slope $\mu$. By Theorem $3$ in \cite{BS}, there exists an admissible HYM metric $\underline H$ on $\underline \F$ with respect to the Fubini-Study metric. Now on the reflexive sheaf $\F=\psi_*\pi^*\underline \F$, the metric $|z|^{2\mu}\pi^*\underline H$ is again an admissible HYM metric, with respect to the flat metric $\omega_0$, and with vanishing Einstein constant. We let $A_{\F}$ be the associated Chern connection which is an admissible HYM connection. Such $(\F, A_{\F})$ is called a \emph{simple HYM cone}. It should be noted here that tensoring $\underline \F$ with some $\O(k)$ does not change the resulting simple HYM cone. 

\begin{defi} \label{definition2.5}
A direct sum of simple HYM cones is called a HYM cone. In particular, it is determined by a direct sum of polystable reflexive sheaves on $\P^{n-1}$. 
\end{defi}

The above properties (2) and (3) implies that  
\begin{lem}[\cite{CS1}, Theorem $2.23$]\label{lem2.6}
The tangent cone connection $A_\infty$ is a HYM cone on $\E_\infty$. More precisely, we can write $\E_\infty=\psi_*\pi^*\underline\E_\infty$, where 
\begin{equation}\label{decomposition}\underline\E_\infty=\bigoplus _i\underline \E_{\infty}^{\mu_i}
\end{equation}
 so that each $\underline \E_{\infty}^{\mu_i}$ is a polystable reflexive sheaf with slope given by $\mu_i\in [0,1)$, with $\mu_i \neq \mu_j$ if $i\neq j$, and $A_\infty$ is gauge equivalent to $\bigoplus_i A_i$ where $A_i$ is the simple HYM cone determined by $\underline \E_{\infty}^{\mu_i}$.
\end{lem}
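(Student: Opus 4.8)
The plan is to exploit the three structural properties recalled just above—$\C^*$-invariance of $\Sigma$, the vanishing $F_{A_\infty}(\partial_r,\cdot)=0$, and $\sqrt{-1}\Lambda_{\omega_0}F_{A_\infty}=0$ away from $\Sing(\E_\infty)$—to descend $A_\infty$ to $\P^{n-1}$. First I would work on $\C^n\setminus(\Sigma\cup Z(\E))$, which is $\C^*$-invariant of complex codimension at least $2$. The condition $F_{A_\infty}(\partial_r,\cdot)=0$, together with the fact that $A_\infty$ is HYM (hence holomorphic in an appropriate gauge), says precisely that the holomorphic structure $\bp_{A_\infty}$ and the curvature are pulled back from the projection $\pi$; concretely, one shows that the restriction of the holomorphic bundle $\E_\infty$ to $\C^n\setminus\{0\}$ (minus the singular/bubbling loci) is of the form $\pi^*\underline\E'$ for a holomorphic bundle $\underline\E'$ on the complement of $\pi(\Sigma\cup Z(\E))$ in $\P^{n-1}$. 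Then I would take the reflexive extension (Bando–Siu) to get the reflexive sheaf $\underline\E_\infty$ on all of $\P^{n-1}$ with $\E_\infty = \psi_*\pi^*\underline\E_\infty$; this is where the codimension bookkeeping matters, since reflexivity lets us extend across the codimension-$\geq 2$ bad set.

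Next I would analyze the Hermitian metric. Because $F_{A_\infty}$ is radially flat and Kähler–Einstein with zero Einstein constant along $\omega_0$, a standard separation-of-variables argument (as in \cite{CS1}) forces $H_\infty$ to take the shape $|z|^{2\mu}\,\pi^*\underline H$ on each piece where the slope is constant, with $\underline H$ an admissible HYM metric on the corresponding summand over $\P^{n-1}$ with respect to the Fubini–Study metric. To justify the direct sum decomposition \eqref{decomposition}, I would invoke the fact that an admissible HYM connection on a reflexive sheaf decomposes the underlying sheaf into a direct sum of stable pieces of equal slope that are mutually non-isomorphic up to the slopes, i.e. polystability; the radial weight $|z|^{2\mu}$ then pins down each slope $\mu_i$, and distinct eigenvalues of $\sqrt{-1}\Lambda_{\omega_0}F_{A_\infty}$ in the cone geometry (equivalently, the radial homogeneities of parallel sections) correspond to distinct $\mu_i$. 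The normalization $\mu_i\in[0,1)$ comes from the fact that the rescaled limit is obtained from $A_\lambda=\lambda^*A$, so one can absorb integer shifts by tensoring with $\O(k)$—which, as noted above, does not change the simple HYM cone—and the admissibility/finite-energy condition rules out $\mu_i<0$ blowing up too fast at the origin.

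Finally, I would assemble these pieces: on each slope-$\mu_i$ summand the connection is, by construction and by the uniqueness of admissible HYM metrics on polystable sheaves (Bando–Siu), gauge equivalent to the simple HYM cone $A_i$ determined by $\underline\E_\infty^{\mu_i}$, and the orthogonal splitting of $H_\infty$ gives $A_\infty \cong \bigoplus_i A_i$ as a HYM cone in the sense of Definition \ref{definition2.5}. The main obstacle I anticipate is the descent step across the bubbling set $\Sigma$: one must show that $\underline\E_\infty$ extends as a reflexive sheaf over $\pi(\Sigma)$ and that the metric/connection data behave well there, which requires the codimension-$2$ structure of $\Sigma\setminus\Sing(A_\infty)$ together with a removable-singularity argument for the holomorphic structure; controlling the radial behavior uniformly near $\Sigma$ (as opposed to just at a generic point) is the delicate point, and it is precisely here that Tian's structure theorem and the finite-energy bound \eqref{eqn2.0} must be used carefully.
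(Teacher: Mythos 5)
This lemma is quoted verbatim from \cite{CS1} (Theorem 2.23) and the present paper offers no proof, but your sketch follows essentially the same route as the cited argument: use $F_{A_\infty}(\partial_r,\cdot)=0$ to descend the holomorphic structure to $\P^{n-1}$, extend reflexively across the codimension-$\geq 2$ bad set via Bando--Siu, read off the metric as $|z|^{2\mu}\pi^*\underline H$ on each homogeneity piece, and invoke polystability of sheaves admitting admissible HYM metrics. One small imprecision: by property (3) the eigenvalues of $\sqrt{-1}\Lambda_{\omega_0}F_{A_\infty}$ on the cone are all zero, so the slopes $\mu_i$ are distinguished not by these but by the radial homogeneities (equivalently, by the Einstein constants of the descended connections on $\P^{n-1}$ with respect to the Fubini--Study metric), as your parenthetical correctly indicates.
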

\begin{rmk}
It follows from the discussion in \cite{CS1} that given $A_\infty$, such $\underline\E_\infty$  and the above decomposition are unique up to isomorphism, under the normalization condition that $\mu_i\in [0, 1)$. 
\end{rmk}

To sum up,  an analytic tangent cone $(A_\infty, \Sigma_b^{an})$ is  uniquely determined by the corresponding algebraic data $\E_\infty$ and $\Sigma_b^{an}$ up to isomorphisms. 
Our main result Theorem \ref{main} thus gives an algebro-geometric characterization of $\E_\infty$ and $\Sigma_b^{an}$ in terms of $\E$ itself.

\

 For our purpose later, we also need to discuss the notion of convergence of holomorphic sections. Given a sequence of holomorphic sections $\sigma_j$ of $$\E_j:=\lambda_j^*\E$$ with uniformly bounded $L^2$ norm over $B$ and a holomorphic section $\sigma_\infty$ on some analytic tangent cone $\E_\infty$ over $B$, then 
 
\begin{defi}
 We say  $\sigma_j$ converges to $\sigma_\infty$ if  $P_j(\sigma_j)$ converges to $\sigma_\infty$ smoothly outside $\Sigma\cup Z(\E)$. 
\end{defi}    

The following is essentially a consequence of \cite{BS}. For the convenience of readers we include a sketch of proof here. 

\begin{lem}
For any compact subset $K\subset B$, there is a constant $C=C(K)$ independent of $j$ such that 
\begin{equation}\label{BS interior estimate}
|\sigma_j|^2_{L^\infty(K)}\leq C \int_{B} |\sigma_j|^2\dVol_{\omega_j}
 \end{equation}
 \end{lem}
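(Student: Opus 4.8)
The plan is to establish the interior $L^\infty$--$L^2$ estimate \eqref{BS interior estimate} by combining the uniform energy bound \eqref{eqn2.0} with a Moser iteration argument for the subharmonicity of $|\sigma_j|^2$, and then to upgrade it across the singular set $\Sing(\E_j)$ using the fact that holomorphic sections of a reflexive sheaf equipped with an admissible HYM metric automatically have bounded pointwise norm near the singularities (this is the content of \cite{BS}). First I would recall that, away from $\Sing(\E_j)$, the Weitzenb\"ock--Bochner formula for a holomorphic section $\sigma_j$ of the Chern connection $A_j$ gives
\begin{equation}\label{e:bochner}
\Delta_{\omega_j}|\sigma_j|^2 = |\nabla_{A_j}\sigma_j|^2 - \langle \sqrt{-1}\Lambda_{\omega_j} F_{A_j}\,\sigma_j,\sigma_j\rangle,
\end{equation}
and since $A_j$ is HYM with Einstein constant $c_j$ which is uniformly bounded (indeed it scales like $\lambda_j^2 c \to 0$), we get $\Delta_{\omega_j}|\sigma_j|^2 \geq -C|\sigma_j|^2$ pointwise on the smooth locus, with $C$ independent of $j$ since $\omega_j \to \omega_0$ smoothly on compact sets.

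The key step is the Moser iteration. On a fixed compact $K\subset B$, cover $K$ by finitely many balls $B_z(2r)\Subset B$; on each such ball one runs the standard iteration for the differential inequality \eqref{e:bochner}, where the Sobolev constants are uniformly controlled because $\omega_j\to\omega_0$. The only subtlety is that the inequality a priori holds only on $B_z(2r)\setminus \Sing(\E_j)$, and $\Sing(\E_j)$ has complex codimension at least $3$, hence real codimension at least $6$, so it has zero $2$-capacity and can be removed as a polar-type set in the Sobolev theory: test functions supported away from $\Sing(\E_j)$ are dense in $W^{1,2}$, and the curvature bound \eqref{eqn2.0} together with Bando--Siu guarantees $|\sigma_j|^2\in L^\infty_{loc}$ so the weak inequality $\Delta_{\omega_j}|\sigma_j|^2\ge -C|\sigma_j|^2$ extends across $\Sing(\E_j)$ in the sense of distributions on the full ball $B_z(2r)$. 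Then Moser iteration yields
$$
|\sigma_j|^2_{L^\infty(B_z(r))} \le C(r) \int_{B_z(2r)} |\sigma_j|^2 \dVol_{\omega_j} \le C(r)\int_B |\sigma_j|^2\dVol_{\omega_j},
$$
and summing over the finite cover of $K$ gives \eqref{BS interior estimate} with $C=C(K)$ independent of $j$.

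The main obstacle is making rigorous the statement that the distributional differential inequality persists across $\Sing(\E_j)$ together with a uniform (in $j$) bound on $\|\sigma_j\|_{L^\infty_{loc}}$ good enough to start the iteration: Bando--Siu provides boundedness for each fixed $j$, but a priori not uniformly, so one cannot directly invoke it as the base case. The way around this is to run the iteration purely on the smooth locus with cutoff functions that also vanish in a neighborhood of $\Sing(\E_j)$, exploiting that the codimension-$\ge 3$ condition lets these cutoffs be chosen with $\int |\nabla \chi_\delta|^2 \to 0$ as the neighborhood shrinks (a standard capacity argument, since $2 < 2\cdot 3$); the energy bound \eqref{eqn2.0} controls the error terms involving $F_{A_j}$ uniformly in $j$. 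This produces the $L^\infty$ bound on the smooth locus with a $j$-independent constant, and then $|\sigma_j|^2$ extends as a bounded function across $\Sing(\E_j)$ by Riemann-type removable singularity, completing the argument. An alternative, cleaner route is to quote directly the interior estimate of \cite{BS} in the form already used there for sequences, observing that all constants in their proof depend only on the Sobolev constant and the local energy bound, both of which are uniform here by \eqref{eqn2.0} and $\omega_j\to\omega_0$.
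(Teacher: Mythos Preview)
Your proposal is correct and follows essentially the same approach as the paper: Bochner formula using the HYM condition to get $\Delta|\sigma_j|^2\ge -C|\sigma_j|^2$ on the smooth locus (with $C$ uniform since the Einstein constant rescales as $c\lambda_j^2$), then extend this weak inequality across $\Sing(\E_j)$ using its codimension $\ge 3$ together with the qualitative local boundedness of $|\sigma_j|^2$ from \cite{BS}, and finally run Moser iteration.

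One clarification: your worry in the last paragraph about needing a \emph{uniform-in-$j$} a priori $L^\infty$ bound ``to start the iteration'' is misplaced, and the workaround you propose is unnecessary. Moser iteration takes as input only the weak differential inequality and the $L^2$ norm; it does not require any $L^\infty$ bound as a base case. The Bando--Siu boundedness is used purely qualitatively, for each fixed $j$, to justify that $|\sigma_j|^2\in W^{1,2}_{loc}$ and that the inequality holds distributionally across $\Sing(\E_j)$ (this is exactly how the paper argues: cutoff plus integration by parts, using that $|\sigma_j|^2$ is bounded and $\Sing(\E_j)$ has high codimension). Once the weak inequality is in place, the Moser constant depends only on the Sobolev constant of $\omega_j$ and on the coefficient $c\lambda_j^2$, both uniform in $j$, so the output estimate is automatically uniform.
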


\begin{rmk}
 Here and later in this paper, the norm of a holomorphic section is always meant to be the one defined by the natural Hermitian metric, and the integral is always taken on the complement of the singular set of the sheaf. 
\end{rmk}
 
\begin{proof} From Theorem 2 (b) in \cite{BS} we know $|\sigma_j|^2$ is locally bounded in $B$ and is smooth away from $\Sing(\E_j)$. By the HYM equation one computes $$\Delta |\sigma_j|^2=2|\nabla\sigma_j|^2 -2c\cdot \lambda_j^2|\sigma_j|^2\geq -2c\cdot \lambda_j^2|\sigma_j|^2$$ on $B\setminus \Sing(\E_j)$, where $c$ is the Einstein constant of the original admissible HYM metric on $\E$ (c.f. \eqref{HYMequation}). By the fact that $\Sing(\E_j)$ has complex codimension at least 3 and $|\sigma_j|^2$ is locally bounded, using a cut-off function and integration by parts  one then sees that $|\sigma_j|^2\in W^{1,2}_{loc}(B)$, and the inequality $\Delta|\sigma_j|\geq -c\cdot \lambda_j^2|\sigma_j|$ holds in the weak sense on $B$. Then the conclusion follows from local Moser iteration.
 \end{proof}

 It follows easily from \eqref{BS interior estimate} and the Hartogs's extension theorem (see \cite{Shiffman}) for holomorphic sections of reflexive sheaves, that given a sequence $\{\sigma_j\}$ with $\int_B |\sigma_j|^2$ uniformly bounded, one can always extract a  convergent subsequence. Moreover  since the set $\Sigma\cup Z(\E)$ has vanishing Lebesgue measure,  we have
\begin{equation}\label{lower semi continuity}\int_B |\sigma_\infty|^2\dVol_{\omega_0}\leq \liminf_{j\rightarrow\infty} \int_B |\sigma_j|^2\dVol_{\omega_j}.
\end{equation}

We may also refer to the above convergence as \emph{weak} convergence.
\begin{defi}
We say $\sigma_j$ \emph{strongly} converges to $\sigma_\infty$ if $\{\sigma_j\}$ converges to $\sigma_\infty$, and furthermore $$\int_B |\sigma_\infty|^2\dVol_{\omega_0}= \lim_{j\rightarrow\infty} \int_B |\sigma_j|^2\dVol_{\omega_j}.$$ 
\end{defi}
Again since $\Sigma\cup Z(\E)$ has vanishing Lebesgue measure, it is clear that strong convergence follows from convergence if one can establish an a priori bound $$\int_{B} |\sigma_j|^{{2+\epsilon}}\dVol_{\omega_j}\leq C$$ for some $\epsilon>0$. In reality we will indeed derive a uniform $L^\infty$ bound to guarantee strong convergence, see \cite{CS1} and Section \ref{Finiteness of degree}. In view of \eqref{BS interior estimate}, the key point is to rule out the blowing up of $L^\infty$ norm near $\partial B$. The following fact will be used in Section \ref{A torsion-free sheaf on $D$}.  

\begin{lem}
\label{facts about strong convergence}
Suppose $\sigma_j$ and $\sigma_j'$ converge strongly to $\sigma_\infty$ and $\sigma_\infty'$ respectively, and $f$ is a fixed holomorphic function on $B$, then 
\begin{itemize}
\item $\sigma_j+\sigma_j'$ converges strongly to $\sigma_\infty+\sigma_\infty'$;
\item $f\cdot\sigma_j$ converges strongly to $f\cdot\sigma_\infty$.
\end{itemize}
\end{lem}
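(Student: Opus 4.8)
\textbf{Proof proposal for Lemma \ref{facts about strong convergence}.}

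The plan is to treat the two bullet points separately, in each case first upgrading ordinary (weak) convergence to the claimed limit using only smooth convergence away from $\Sigma \cup Z(\E)$, and then upgrading to strong convergence by controlling the $L^2$ norms. For the first bullet, convergence of $\sigma_j + \sigma_j'$ to $\sigma_\infty + \sigma_\infty'$ is immediate: $P_j(\sigma_j + \sigma_j') = P_j(\sigma_j) + P_j(\sigma_j')$, which converges smoothly to $\sigma_\infty + \sigma_\infty'$ on $\C^n \setminus (\Sigma \cup Z(\E))$ by hypothesis. For the $L^2$ statement, the natural approach is to expand $\int_B |\sigma_j + \sigma_j'|^2 \dVol_{\omega_j} = \int_B |\sigma_j|^2 + \int_B |\sigma_j'|^2 + 2\,\mathrm{Re}\int_B \langle \sigma_j, \sigma_j'\rangle_{\lambda_j^* H}\dVol_{\omega_j}$. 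The first two terms converge to $\int_B |\sigma_\infty|^2$ and $\int_B |\sigma_\infty'|^2$ by strong convergence; the key point is that the cross term converges to $2\,\mathrm{Re}\int_B \langle \sigma_\infty, \sigma_\infty'\rangle \dVol_{\omega_0}$. This I would get from strong convergence by a standard argument: from the a priori $L^\infty$ bounds (or more precisely the uniform $L^2$ bounds plus \eqref{BS interior estimate} on compact subsets, together with the fact that strong convergence rules out energy escaping to $\partial B$) one obtains equi-integrability of $|\sigma_j|^2$ and $|\sigma_j'|^2$ on $B$; combined with smooth convergence off the measure-zero set $\Sigma \cup Z(\E)$ and the Cauchy–Schwarz bound $|\langle\sigma_j,\sigma_j'\rangle| \le |\sigma_j||\sigma_j'| \le \tfrac12(|\sigma_j|^2+|\sigma_j'|^2)$, the inner products $\langle\sigma_j,\sigma_j'\rangle$ are themselves equi-integrable and converge a.e., so Vitali's convergence theorem gives convergence of the integrals. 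Adding the three limits and comparing with $\int_B |\sigma_\infty + \sigma_\infty'|^2 \dVol_{\omega_0}$ (expanded the same way) yields the strong convergence of $\sigma_j + \sigma_j'$.

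For the second bullet, $f$ is a fixed holomorphic function on $B$, and multiplication by $f$ commutes with the isomorphisms $P_j$ in the appropriate sense (since $P_j$ is $\O_B$-linear on sections, being induced by the rescaling and the gauge/Hermitian identification), so $P_j(f\cdot\sigma_j) = f\cdot P_j(\sigma_j) \to f\cdot\sigma_\infty$ smoothly outside $\Sigma\cup Z(\E)$; thus $f\sigma_j$ converges to $f\sigma_\infty$ in the weak sense. For strong convergence one needs $\int_B |f|^2|\sigma_j|^2 \dVol_{\omega_j} \to \int_B |f|^2|\sigma_\infty|^2\dVol_{\omega_0}$. Since $f$ is fixed, $|f|^2$ is bounded on $\overline B$, so $|f|^2|\sigma_j|^2$ inherits equi-integrability from that of $|\sigma_j|^2$; it converges a.e.\ to $|f|^2|\sigma_\infty|^2$; hence Vitali again gives the conclusion. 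Here one should be slightly careful that strong convergence of $\sigma_j$ is used precisely to get the equi-integrability on all of $B$ (not merely on compact subsets), which is what prevents mass of $|f|^2|\sigma_j|^2$ from concentrating near $\partial B$.

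The only genuine subtlety — and the step I would expect to require the most care — is the equi-integrability claim, i.e.\ extracting from "$\int_B |\sigma_j|^2 \to \int_B |\sigma_\infty|^2$ together with a.e.\ convergence" the uniform integrability needed to pass the cross terms and the factor $|f|^2$ through the limit. This is exactly the classical fact that $L^2$-norm convergence plus a.e.\ convergence implies $L^2$ convergence (hence equi-integrability of the squares), applied on the finite-measure domain $B$ with the measures $\dVol_{\omega_j} \to \dVol_{\omega_0}$ smoothly; one has to note that the varying volume forms cause no trouble since $\omega_j \to \omega_0$ uniformly on $\overline B$ by \eqref{Kahler metric expansion}, so all the measures are uniformly comparable. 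Everything else is bookkeeping with the definition of (strong) convergence and the $\O_B$-linearity of the maps $P_j$.
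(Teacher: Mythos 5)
Your proof is correct, but it is packaged differently from the paper's, which is essentially a two-line argument. The paper first records the equivalence: a (weakly) convergent sequence $\sigma_j\to\sigma_\infty$ converges strongly if and only if
$$\lim_{r\to 1^-}\limsup_{j\to\infty}\int_{B\setminus B_r}|\sigma_j|^2\,\dVol_{\omega_j}=0,$$
which follows from the interior estimate \eqref{BS interior estimate} (on any compact $K\subset B$ the sections are uniformly bounded in $L^\infty$, so dominated convergence applies off the measure-zero set $\Sigma\cup Z(\E)$ and no mass can concentrate in the interior) together with \eqref{lower semi continuity}. Once strong convergence is localized to this boundary non-concentration statement, both bullets are immediate from pointwise domination: $|\sigma_j+\sigma_j'|^2\le 2\bigl(|\sigma_j|^2+|\sigma_j'|^2\bigr)$ and $|f\sigma_j|^2\le \bigl(\sup_{B\setminus B_r}|f|^2\bigr)|\sigma_j|^2$. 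You instead expand the cross term and pass it to the limit via the classical fact that a.e.\ convergence plus convergence of norms gives $L^2$ convergence, hence equi-integrability, and then Vitali; this is valid (and your remarks about the uniformly comparable volume forms $\dVol_{\omega_j}$ and the fiberwise linearity of the Hermitian identifications $P_j$, which gives $P_j(f\sigma_j)=fP_j(\sigma_j)$, are exactly the points that need checking), but it is more machinery than necessary: the boundary criterion avoids the cross-term expansion and never isolates equi-integrability as a separate step. One shared caveat: both arguments use that $|f|$ is bounded near $\partial B$, which does not literally follow from ``$f$ holomorphic on $B$''; since in all applications $f$ is a polynomial and the whole setup lives on a neighborhood of $\overline B$, this is harmless.
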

\begin{proof}
It suffices to notice that if $\sigma_j$ converges to $\sigma_\infty$, then the convergence is strong if and only if 
$$\lim_{r\rightarrow 1^-}\limsup_{j\rightarrow\infty}\int_{B\setminus B_{r}}|\sigma_j|^2\dVol_{\omega_j}=0.$$
 This is itself a consequence of \eqref{BS interior estimate} and \eqref{lower semi continuity}.
\end{proof}

\subsection{Algebraic tangent cones}\label{section2.2}
In this subsection, we collect the results on algebraic tangent cones of reflexive sheaves. For details see \cite{CS3}. 
We fix a reflexive  sheaf $\E$ over $B \subset \C^n$. Let $$p: \widehat{B} \rightarrow B$$ denote the blowup of $B$ at  $0\in B$ and denote by $D=\P^{n-1}$ the exceptional divisor. Then we define $\mathcal{A}$ to be the set of isomorphism classes of reflexive sheaves $\widehat\E$ over $\widehat{B}$ so that  $\widehat{\E}|_{\widehat{B}\setminus D}$ is isomorphic to $p^*\E|_{\widehat{B} \setminus 0}$. An element $\widehat{\E} \in \mathcal{A}$ is called an \emph{extension} of $\E$ at $0\in B$ and the torsion-free sheaf $$\underline{\widehat \E}: =\iota_D^*\widehat\E$$ is called an algebraic tangent cone of $\E$ at $0$, where 
$$\iota_D: D\rightarrow \widehat B$$
denotes the obvious inclusion map. We define a function 
$$\Phi: \mathcal{A} \rightarrow \mathbb{Q}_{\geq 0}; \ \ \ \ \widehat\E\mapsto \mu(\underline \E_1)-\mu(\underline \E_m/\underline \E_{m-1}),$$ where $$0= \underline\E_0\subset \underline \E_1 \subset \cdots \underline \E_m=\underline{\widehat \E}$$ is the Harder-Narasihman filtration of $\underline{\E}$. 
\begin{defi}
$\widehat{\E}\in \mathcal{A}$ is called an optimal extension of $\E$ at $0$ if $\Phi(\widehat{\E})\in [0,1).$
In this case we also call $\underline{\widehat \E}$ an optimal algebraic tangent cone of $\E$ at $0$. 
\end{defi}
Notice $\widehat\E$ is optimal if $\underline{\widehat\E}$ is semi-stable, i.e., $\Phi(\underline{\widehat{\E}})=0$. But simple examples (see \cite{CS3}) show that we can not always achieve semistability and this is the reason for introducing the weaker notion of being optimal. 

Now given  an optimal extension $\widehat{\E}$, and a subsheaf $\underline \E_i$ occurring in the Harder-Narasimhan filtration of $\E$,  we define the Hecke transform (or elementary transform)  of $\widehat{\E}$ along $\underline \E_i$ to be the reflexive sheaf $\widehat{\E}^i$ which is given by the following natural exact sequence 
$$
0\rightarrow \widehat{\E}^i \rightarrow \widehat{\E} \rightarrow (\iota_D)_*(\underline{\widehat{\E}}/\underline \E_i) \rightarrow 0
$$
Then by Corollary $3.3$ in \cite{CS3}, we know that $\widehat{\E}^i$ is again an optimal extension of $\E$ at $0$. We also say $\widehat{\E}^i$ and $\widehat{\E}$ differ by a \emph{Hecke transform of special type}. It is also shown that the graded sheaves $\Gr^{HN}(\underline{\widehat\E})$ and $\Gr^{HN}(\underline{\widehat\E^i})$ are isomorphic up to tensoring each factor by some $\O(k)$ on $D$.

Next it is easy to see that if $\widehat\E\in \mathcal A$, then for any $k\in \Z$, the sheaf $\widehat \E':=\widehat\E\otimes [D]^k$ is again an extension,  where $[D]$ denotes the line bundle on $\widehat B$ defined by the the divisor $D$. In this case we say $\widehat \E'$ and $\widehat \E$ are \emph{equivalent} extensions. Restricting to $D$, we have $\underline{\widehat \E'}=\underline{\widehat \E}\otimes \O(-k)$, so in particular $\widehat \E$ is optimal if and only if $\widehat\E'$ is optimal. 

\begin{thm}[\cite{CS3}]\label{optimalalgebraictangentcone}
Given a reflexive sheaf $\E$ over $B$, we have 
\begin{itemize}
\item An optimal extension of $\E$ at $0\in B$ always exists and up to equivalence, two  optimal extensions differ by a Hecke transform of special type. In particular, there are exactly $m$ different optimal extensions up to equivalence, where $m$ is the length of the Harder-Narasimhan filtration of $\underline{\widehat\E}$. 
\item The graded sheaf $\Gr^{HN}(\underline {\widehat{\E}})$  is uniquely determined by $\E$ up to tensoring each factor with some $\mathcal O(k)$. In particular the sheaf $\psi_*\pi^*(\Gr^{HN}(\underline{\widehat \E}))$ on $\C^n$ is uniquely determined by $\E$. 
\end{itemize}
\end{thm}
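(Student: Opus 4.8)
The plan is to reduce the statement to a combinatorial analysis of how the Harder--Narasimhan polygon of $\iota_D^*\widehat\E$ behaves under the two elementary operations available on the set $\mathcal A$ of extensions: tensoring by powers of $[D]$, and taking Hecke transforms of special type. The only geometric input is a single short exact sequence computed by a Tor argument; everything else is bookkeeping plus one monovariant. First I would record that $\mathcal A\neq\emptyset$, since the reflexive hull $\widehat\E_0:=(p^*\E)^{**}$ is reflexive on $\widehat B$ and agrees with $p^*(\E|_{B\setminus\{0\}})$ on $\widehat B\setminus D\cong B\setminus\{0\}$.

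Next I would work out the effect of a Hecke transform on HN data. Fix $\widehat\E\in\mathcal A$; being reflexive it is locally free near the generic point of $D$, so the relevant slopes are defined. Write $0=\underline\E_0\subset\cdots\subset\underline\E_m=\underline{\widehat\E}$ for the HN filtration of $\underline{\widehat\E}$, with semistable quotients $G_j=\underline\E_j/\underline\E_{j-1}$ of slope $\mu_j$, so $\mu_1>\cdots>\mu_m$. Applying $\iota_D^*=(-)\otimes_{\O_{\widehat B}}\O_D$ to the defining sequence of $\widehat\E^{\,i}$, and using $\operatorname{Tor}_1^{\O_{\widehat B}}(\iota_{D*}Q,\O_D)\cong Q\otimes N_D^*$ together with $N_D^*\cong\O_D(1)$ for the conormal bundle of $D$ in the blow--up of a point, one gets
\begin{equation*}
0\longrightarrow (\underline{\widehat\E}/\underline\E_i)\otimes\O(1)\longrightarrow\iota_D^*\widehat\E^{\,i}\longrightarrow\underline\E_i\longrightarrow 0,
\end{equation*}
so in particular $\widehat\E^{\,i}\in\mathcal A$. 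Taking $i=1$ and assuming $\widehat\E$ optimal (that is, $\mu_1-\mu_m<1$), the HN slopes $\mu_2+1>\cdots>\mu_m+1$ of the subsheaf all exceed the slope $\mu_1$ of the quotient, so the subsheaf in the displayed sequence is the penultimate term of the HN filtration of $\iota_D^*\widehat\E^{\,1}$, and one reads off $\Gr^{HN}(\iota_D^*\widehat\E^{\,1})=(G_2\otimes\O(1),\dots,G_m\otimes\O(1),G_1)$, a cyclic shift of $\Gr^{HN}(\underline{\widehat\E})$ in which the wrapped factor acquires an $\O(1)$. Iterating $m$ times returns $\Gr^{HN}(\underline{\widehat\E})$ twisted overall by $\O(m-1)$, i.e.\ the extension $\widehat\E\otimes[D]^{-(m-1)}$, which is equivalent to $\widehat\E$. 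A check with $c_1$ and $\rank$, using that any two extensions agree off $D$, then shows that any two optimal extensions differ, up to equivalence, by one of these $m$ iterated Hecke transforms; this is the first bullet.

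For existence of an optimal extension I would argue by a monovariant. After normalizing by a $[D]$--twist (say $\mu_1\in[0,1)$), the HN polygons of all extensions in $\mathcal A$ form a bounded family, since any two of them differ by a modification supported on the \emph{fixed} divisor $D$, which bounds the jump of $c_1$ and hence of the polygon; as these polygons also take rational values of bounded denominator, only finitely many occur. One then shows that whenever $\Phi\ge 1$ a Hecke transform of special type along a carefully chosen term $\underline\E_i$ strictly decreases the spread $\mu_{\max}-\mu_{\min}$, so the process terminates at $\Phi<1$. (This is also consistent with the fact that semistability cannot be forced: the cyclic orbit of $m$ optimal extensions above need not contain a semistable member.) By the first bullet the optimal extensions form a single cyclic orbit under Hecke transforms of special type, each of which only permutes the HN factors and twists some of them by $\O(\pm1)$; hence $\Gr^{HN}(\underline{\widehat\E})$, for $\widehat\E$ optimal, is well defined up to tensoring each factor by some $\O(k)$. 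Finally, since $\pi^*\O_{\P^{n-1}}(k)$ is holomorphically trivial on $\C^n\setminus\{0\}$ (the tautological section $z$ trivializes $\pi^*\O(-1)$ there), we have $\psi_*\pi^*(G\otimes\O(k))\cong\psi_*\pi^*G$ for every coherent $G$ on $\P^{n-1}$, so $\psi_*\pi^*(\Gr^{HN}(\underline{\widehat\E}))=\bigoplus_j\psi_*\pi^*G_j$ is independent of all the choices; this is the second bullet.

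The hard part will be the monovariant step: establishing boundedness of the HN polygons over the infinite set $\mathcal A$, and pinning down a genuine monovariant — the spread need not drop after an \emph{arbitrary} Hecke transform, so the choice of $\underline\E_i$ when $\Phi\ge1$ must be made with care — as well as the ``converse'' direction that the $m$ iterated transforms already exhaust the optimal extensions up to equivalence. By contrast, the homological computation behind the displayed short exact sequence is routine.
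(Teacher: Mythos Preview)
This theorem is not proved in the present paper; it is stated as background and attributed to \cite{CS3} (the surrounding text in Section~\ref{section2.2} also cites Corollary~3.3 of \cite{CS3} for the fact that Hecke transforms of special type preserve optimality). There is therefore no proof here to compare your proposal against.

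That said, your outline is broadly the shape of the argument in \cite{CS3}, and the Tor computation you sketch is the correct way to obtain the short exact sequence governing $\iota_D^*\widehat\E^{\,i}$. Two comments. First, a small slip: in the displayed sequence the $\O(1)$ twist lands on the \emph{non}-wrapped factors, so the HN quotients of $\iota_D^*\widehat\E^{\,1}$ are $(G_2(1),\dots,G_m(1),G_1)$; your conclusion after $m$ iterations is still correct, only the phrase ``the wrapped factor acquires an $\O(1)$'' is inverted. Second, matching $\Gr^{HN}$ after $m$ iterations does not by itself show that the extension on $\widehat B$ returns to $\widehat\E\otimes[D]^{-(m-1)}$; you need an identification at the level of sheaves on $\widehat B$, not just of restricted HN data. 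You have correctly flagged the two substantive gaps yourself: the termination/monovariant argument for existence, and the converse direction showing that every optimal extension already lies in the cyclic Hecke orbit. Your boundedness sketch (``any two extensions differ by a modification supported on $D$'') is too coarse as written to control the HN polygon; the actual argument proceeds more carefully through the exact sequence you derived and the slope numerics.
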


\begin{rmk}
If we want  strict \emph{uniqueness} of optimal extensions, we can impose the normalizing condition that $\mu(\underline\E_1)$ and $\mu(\underline\E_m/\underline\E_{m-1})$ are both in the interval $[0, 1)$. This will remove the freedom of performing Hecke transform of special type or tensoring with $\O(k)$. For our purpose in this paper, the statement in Theorem \ref{optimalalgebraictangentcone} is more suitable since it implies that each factor of  $\Gr^{HN}(\underline{\widehat \E})$ can be viewed as the maximal destabilizing subsheaf of \emph{some} optimal algebraic tangent cone. 
\end{rmk}

For our purpose, we need to consider the sheaf $\Gr^{HNS}(\underline{\widehat{\E}})$ which in general depends on the choice of the Harder-Narasihman-Seshadri filtration of $\underline{\widehat \E}$ (see Example $3.1$ in \cite{BTT}). Nonetheless, we can still extract algebraic invariants from $\Gr^{HNS}(\underline{\widehat{\E}})$ that suffice for the need in this paper.

\

 We first introduce a general definition following \cite{BTT}.

\begin{defi} \label{defi210}
Given a torsion sheaf $\mathcal T$ on $\P^{n-1}$ with support in codimension at least 2, we define
 the codimension 2 \emph{support} cycle of $\mathcal T$ to be the algebraic cycle
$$\mathcal C(\mathcal T):=\sum_k m_k^{alg}\cdot [\underline\Sigma_k]$$ 
where $\underline\Sigma_k$ are  irreducible codimension $2$ components of the support of $\mathcal T$, and  the \emph{algebraic multiplicity} $$m_k^{alg}=h^0(\underline \Delta, \mathcal T|_{\underline \Delta})$$ for a  complex $2$ dimensional slice $\underline\Delta$ which intersects $\underline\Sigma_k$ transversely at a generic point. 
\end{defi}

\begin{defi}\label{defi2.13}
Let $\underline \F$ be a  torsion-free coherent sheaf on $\P^{n-1}$.
We define   the pure codimension $2$ algebraic cycle $\Sigma_b^{alg}(\underline\F)$ of $\underline \F$ to be 
$$\Sigma_b^{alg}(\underline \F):=\mathcal C(\mathcal T), $$
where 
$\mathcal T=(Gr^{HNS}(\underline \F))^{**}/Gr^{HNS}(\underline \F)$.
\end{defi}
\begin{rmk}
It follows from the definition that $\Sigma^{alg}_b(\underline \F)$ is supported on the pure codimension $2$ part of the support of $\mathcal{T}$.  
When $\underline \F$ is locally free,  we know by Proposition $2.3$ in \cite{SW} that away from $\Sing((Gr^{HNS}(\underline \F))^{**})$, the support of $\mathcal{T}$  has pure codimension $2$. 
\end{rmk}

We have
 
 \begin{prop}[\cite{BTT} Proposition 2.1]
 Given a semistable torsion free sheaf $\underline\F$ on $\P^{n-1}$, the reflexive sheaf $(\Gr^{HNS}(\underline{\F}))^{**}$ and the codimension 2 algebraic cycle $\Sigma_b^{alg}(\underline\F)$ do not depend on the choice of the Harder-Narasihman-Seshadri filtration of $\underline\F$.
 \end{prop}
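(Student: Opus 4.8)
The statement to prove is the Proposition attributed to \cite{BTT} (Proposition 2.1): for a semistable torsion-free sheaf $\underline\F$ on $\P^{n-1}$, the reflexive sheaf $(\Gr^{HNS}(\underline\F))^{**}$ and the codimension-2 cycle $\Sigma_b^{alg}(\underline\F)$ are independent of the choice of Harder--Narasimhan--Seshadri (Jordan--H\"older) filtration.

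\medskip

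The plan is to reduce everything to the classical uniqueness of the associated graded of a Jordan--H\"older filtration in the abelian category of semistable sheaves of a fixed slope $\mu$, and then track how the operations $(-)^{**}$ and $\mathcal C(-)$ behave. First I would recall that, since $\underline\F$ is semistable of slope $\mu$, its HNS (Jordan--H\"older) filtration $0=\underline\F_0\subset\underline\F_1\subset\cdots\subset\underline\F_\ell=\underline\F$ has all quotients $\underline Q_i=\underline\F_i/\underline\F_{i-1}$ stable of the same slope $\mu$. The category of semistable torsion-free sheaves of slope $\mu$ on $\P^{n-1}$ is abelian, artinian and noetherian, with simple objects exactly the stable sheaves of slope $\mu$; by the Jordan--H\"older theorem in this category, the multiset $\{\underline Q_1,\dots,\underline Q_\ell\}$ of isomorphism classes of stable factors is independent of the filtration. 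Hence $\Gr^{HNS}(\underline\F)=\bigoplus_i \underline Q_i$ is well-defined up to isomorphism \emph{as an object}, and therefore so is $(\Gr^{HNS}(\underline\F))^{**}=\bigoplus_i \underline Q_i^{**}$; this already gives the first assertion. It is worth noting that this step is essentially the content of \cite[§1.5]{HuybrechtsLehn}, so I would cite that rather than reprove it.

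\medskip

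For the cycle statement, the subtlety is that, while $\Gr^{HNS}(\underline\F)$ is determined up to abstract isomorphism, the torsion sheaf $\mathcal T=(\Gr^{HNS}(\underline\F))^{**}/\Gr^{HNS}(\underline\F)$ depends only on this isomorphism class once one fixes the canonical double-dual inclusion $\Gr^{HNS}(\underline\F)\hookrightarrow(\Gr^{HNS}(\underline\F))^{**}$, which is functorial. So in fact $\mathcal T$ is itself determined up to isomorphism, and consequently its codimension-2 support cycle $\mathcal C(\mathcal T)=\sum_k m_k^{alg}[\underline\Sigma_k]$ — whose data are the irreducible codimension-2 components $\underline\Sigma_k$ of $\mathrm{Supp}(\mathcal T)$ together with the numbers $h^0(\underline\Delta,\mathcal T|_{\underline\Delta})$ for a generic transverse $2$-slice $\underline\Delta$ — is an isomorphism invariant of $\mathcal T$, hence of $\underline\F$. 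Concretely I would spell out: if $\underline\F_\bullet$ and $\underline\F'_\bullet$ are two HNS filtrations, the Jordan--H\"older theorem produces an isomorphism $\phi:\Gr^{HNS}(\underline\F)\xrightarrow{\sim}\Gr^{HNS}(\underline\F)'$, which extends uniquely to $\phi^{**}$ on the reflexivizations compatibly with the two inclusions, and therefore induces an isomorphism of the quotients $\mathcal T\cong\mathcal T'$; an isomorphism of torsion sheaves preserves supports and, by restricting to $\underline\Delta$ and using that $\phi|_{\underline\Delta}$ is an isomorphism for generic $\underline\Delta$, preserves the multiplicities $h^0$.

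\medskip

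The main obstacle I anticipate is a slightly delicate point rather than a deep one: verifying that the category of semistable torsion-free sheaves of fixed slope $\mu$ on $\P^{n-1}$ is genuinely abelian (in particular that kernels and cokernels of morphisms between such sheaves are again semistable of slope $\mu$, and that it has finite length), so that the Jordan--H\"older machinery applies verbatim. On a smooth projective variety with the standard polarization this is standard — a nonzero morphism between semistable sheaves of the same slope has image and kernel of slope $\mu$ again semistable, and a descending chain of subsheaves of fixed slope stabilizes by boundedness — so I would handle it by citing \cite{HuybrechtsLehn} and at most adding a one-line reminder; torsion-freeness of subobjects is automatic and one works modulo the fact that on $\P^{n-1}$ reflexive hulls of stable sheaves remain stable. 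A secondary point is the commutation $(\bigoplus_i \underline Q_i)^{**}\cong\bigoplus_i \underline Q_i^{**}$, which is immediate since double-dual is additive on finite direct sums. I do not expect any of these to require new ideas beyond invoking the classical theory.
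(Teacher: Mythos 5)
Your argument has a genuine gap at its core. The claim that ``by the Jordan--H\"older theorem in this category, the multiset $\{\underline Q_1,\dots,\underline Q_\ell\}$ of isomorphism classes of stable factors is independent of the filtration, hence $\Gr^{HNS}(\underline\F)$ is well-defined up to isomorphism'' is false for \emph{slope} stability, which is the notion used throughout this paper. Indeed, the paper states explicitly, immediately before this proposition, that $\Gr^{HNS}(\underline{\widehat\E})$ ``in general depends on the choice of the Harder--Narasimhan--Seshadri filtration'' and points to Example~3.1 of \cite{BTT} for a counterexample. The issue you flagged as a ``slightly delicate point'' is exactly where the argument breaks: the category of $\mu$-semistable torsion-free sheaves of fixed slope is \emph{not} abelian (cokernels acquire torsion, and slope is blind to modifications in codimension $\geq 2$; e.g.\ a stable reflexive $\underline Q$ and a subsheaf $\underline Q'\subset\underline Q$ with $\underline Q/\underline Q'$ supported in codimension $2$ are non-isomorphic stable sheaves of the same slope that can occur as corresponding factors of two different HNS filtrations). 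The Jordan--H\"older theorem only applies after passing to the quotient category of coherent sheaves modulo sheaves supported in codimension $\geq 2$; this determines each factor only up to such modifications, i.e.\ it determines $\underline Q_i^{**}$ but not $\underline Q_i$. That weaker uniqueness does suffice for the first assertion, $(\Gr^{HNS}(\underline\F))^{**}\cong\bigoplus_i\underline Q_i^{**}$.

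For the cycle, however, your derivation collapses along with the first step: since the $\underline Q_i$ themselves are not determined, the torsion sheaf $\mathcal T=\bigoplus_i\underline Q_i^{**}/\underline Q_i$ is genuinely \emph{not} determined up to isomorphism by $\underline\F$ --- it is precisely the codimension-$2$ information that the quotient category forgets --- so you cannot conclude that $\mathcal C(\mathcal T)$ is invariant by transporting $\mathcal T$ along an isomorphism of graded objects. The real content of \cite{BTT}, Proposition~2.1 is that the \emph{sum} $\sum_i\mathcal C(\underline Q_i^{**}/\underline Q_i)$ is nevertheless filtration-independent even though the individual summands are not; proving this requires a separate argument (additivity of the support cycle $\mathcal C(\cdot)$ in short exact sequences of torsion sheaves supported in codimension $\geq 2$, combined with a comparison/refinement argument for two filtrations, in the spirit of the computation carried out in the proof of Corollary~\ref{S-equivalence} in this paper). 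As written, your proposal supplies no argument for this step.
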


An immediate corollary is 

\begin{cor}\label{cor2-18}
For a reflexive sheaf $\E$ over $B$, let $\widehat{\E}$ be any optimal extension of $\E$ at $0\in B$, then the sheaf 
\begin{equation}
\mathcal G^{alg}:=\psi_*\pi^*(\Gr^{HNS}(\underline{\widehat\E}))^{**}
\end{equation}
and the algebraic blow-up cycle
\begin{equation}\Sigma_b^{alg}:=\Sigma_b^{alg}(\underline{\widehat\E})
\end{equation}
 do not depend on the choice of the optimal extension $\widehat\E$ at $0$, hence are algebraic invariants of the stalk of $\E$ at $0$. 
\end{cor}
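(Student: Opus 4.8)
The statement to prove is Corollary \ref{cor2-18}, which asserts that $\mathcal G^{alg}=\psi_*\pi^*(\Gr^{HNS}(\underline{\widehat\E}))^{**}$ and the cycle $\Sigma_b^{alg}=\Sigma_b^{alg}(\underline{\widehat\E})$ are independent of the chosen optimal extension $\widehat\E$. The idea is that this is a formal consequence of two inputs already available: (i) Theorem \ref{optimalalgebraictangentcone}, which says any two optimal extensions differ, up to the equivalence $\widehat\E\rightsquigarrow\widehat\E\otimes[D]^k$, by a Hecke transform of special type along some HN-subsheaf; and (ii) Proposition 2.1 of \cite{BTT}, quoted just above, which says $(\Gr^{HNS}(\underline\F))^{**}$ and $\Sigma_b^{alg}(\underline\F)$ are insensitive to the choice of HNS filtration for a fixed semistable $\underline\F$. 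So the proof is really a reduction: show that the two operations relating optimal extensions — tensoring by $[D]^k$, and Hecke transform of special type — change $(\Gr^{HNS}(\underline{\widehat\E}))^{**}$ only by an overall twist by some $\O(k)$ (which disappears after applying $\psi_*\pi^*$), and do not change $\Sigma_b^{alg}$ at all.

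First I would handle the twisting operation. If $\widehat\E'=\widehat\E\otimes[D]^k$, then restricting to $D$ gives $\underline{\widehat\E'}=\underline{\widehat\E}\otimes\O(-k)$, as noted in the excerpt. The HN and HNS filtrations of a sheaf and of its twist by a line bundle correspond exactly (slopes all shift by the same constant, stability is preserved), so $\Gr^{HNS}(\underline{\widehat\E'})=\Gr^{HNS}(\underline{\widehat\E})\otimes\O(-k)$, hence $(\Gr^{HNS}(\underline{\widehat\E'}))^{**}=(\Gr^{HNS}(\underline{\widehat\E}))^{**}\otimes\O(-k)$. Now $\psi_*\pi^*(\underline\F\otimes\O(-k))\cong\psi_*\pi^*\underline\F$ as sheaves on $\C^n$ — this is exactly the observation made for simple HYM cones in Section \ref{section2.1} (tensoring the underlying sheaf on $\P^{n-1}$ by $\O(k)$ does not change $\psi_*\pi^*$ of it, since $\pi^*\O(1)$ is trivial on $\C^n\setminus\{0\}$ and $\psi_*$ extends across a codimension $\geq 2$ set). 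For the cycle: $\mathcal C(\mathcal T)$ depends only on the torsion sheaf $\mathcal T=(\Gr^{HNS})^{**}/\Gr^{HNS}$, and tensoring the whole short exact sequence $0\to\Gr^{HNS}\to(\Gr^{HNS})^{**}\to\mathcal T\to 0$ by the line bundle $\O(-k)$ replaces $\mathcal T$ by $\mathcal T\otimes\O(-k)$; this changes neither the support nor the generic fibre length $h^0(\underline\Delta,\mathcal T|_{\underline\Delta})$ (twisting by a line bundle is locally trivial near a generic point of each component), so $\Sigma_b^{alg}$ is unchanged.

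The substantive step is the Hecke transform. Suppose $\widehat\E^i$ fits in $0\to\widehat\E^i\to\widehat\E\to(\iota_D)_*(\underline{\widehat\E}/\underline\E_i)\to 0$. I want to relate $\underline{\widehat{\E^i}}=\iota_D^*\widehat\E^i$ to $\underline{\widehat\E}$. The standard computation — restrict the defining sequence to $D$ and use $\iota_D^*(\iota_D)_*\underline{\mathcal Q}=\underline{\mathcal Q}$ together with the normal bundle $\O_D(-D)=\O(1)$ — should yield an elementary modification of $\underline{\widehat\E}$ along the subsheaf $\underline\E_i$: concretely there is an exact sequence expressing $\underline{\widehat{\E^i}}$ (up to torsion, and modulo the sheaf-theoretic care needed because $D$ is a divisor) in terms of $\underline\E_i\oplus(\underline{\widehat\E}/\underline\E_i)\otimes\O(1)$ at the graded level. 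Since $\underline\E_i$ is a term of the HN filtration and $\widehat\E^i$ is again optimal, the HN filtration of $\underline{\widehat{\E^i}}$ is the "rotated" filtration whose graded pieces are $\{(\underline{\widehat\E}/\underline\E_i)\otimes\O(1)\}$ followed by $\{\underline\E_i\}$ — this is exactly the content behind the already-cited statement that $\Gr^{HN}(\underline{\widehat{\E^i}})\cong\Gr^{HN}(\underline{\widehat\E})$ up to twisting each factor by some $\O(k)$. Passing from HN to HNS graded pieces: each HN factor is semistable and its HNS graded sheaf is what enters, so $\Gr^{HNS}(\underline{\widehat{\E^i}})$ is, factor by factor, $\Gr^{HNS}$ of a factor of $\underline{\widehat\E}$ twisted by some $\O(k)$. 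By the same line-bundle-twist and $\psi_*\pi^*$ arguments as above, applied factor by factor, $\mathcal G^{alg}$ and $\Sigma_b^{alg}$ are unchanged; here one also invokes Proposition 2.1 of \cite{BTT} to know that within each semistable HN factor the HNS choice is irrelevant. Iterating over the finitely many optimal extensions (Theorem \ref{optimalalgebraictangentcone} says there are exactly $m$ of them up to equivalence, all linked by these moves) finishes the proof.

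**Main obstacle.** The delicate point is the bookkeeping in the Hecke-transform step: carefully identifying $\iota_D^*\widehat\E^i$ with the elementary modification of $\underline{\widehat\E}$ along $\underline\E_i$, correctly tracking the $\O(1)$ twists coming from the normal bundle of $D$, and checking that the torsion subsheaf introduced by restricting to the divisor $D$ is killed upon taking double duals and does not perturb the support cycle. Everything else — the behaviour under twisting by $\O(k)$, the invariance of $\psi_*\pi^*(-)$ under such twists, and the reduction to the $\Gr^{HNS}$-independence of \cite{BTT} — is formal once that identification is in hand. I would therefore structure the write-up so that the Hecke computation is isolated as the one real lemma, with the rest assembled from it and Theorem \ref{optimalalgebraictangentcone} in a few lines.
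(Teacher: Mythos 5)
Your proposal is correct and follows exactly the route the paper intends: the paper states this as an immediate consequence of Theorem \ref{optimalalgebraictangentcone} (which already contains the Hecke-transform bookkeeping you isolate as the ``one real lemma,'' so you may simply cite it rather than reprove it) combined with the quoted Proposition 2.1 of \cite{BTT} applied factor by factor to the semistable HN pieces, plus the observation that twisting by $\O(k)$ is killed by $\psi_*\pi^*$ and does not affect the support cycle.
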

 
 \begin{rmk}Since $(\Gr^{HNS}(\underline{\widehat\E}))^{**}$ is a direct sum of polystable sheaves, we can apply the construction in Section \ref{section2.1} to obtain a  HYM cone on the sheaf $\G^{alg}$. By Corollary \ref{cor2-18}, we know that up to gauge equivalence such a HYM cone is \emph{canonically} associated to the stalk of $\E$ at $0$. 
 \end{rmk}
\subsection{Moduli of semi-stable sheaves}\label{section2.3}
In this subsection, we will review some algebro-geometric results from \cite{GSRW, GT}. We mention that these results will not be used until  Section \ref{uniquenessoflimitingsheaf}. The results are proved on general polarized projective manifolds but for our purpose we will only consider the case when the base manifold is $\P^{n-1}$ with the standard polarization $\O(1)$.

Let $\underline\E$ be a semi-stable torsion-free sheaf on $\P^{n-1}$.  Denote by $r$ the rank of $\underline\E$, and  denote by $\tau$ the Hilbert polynomial of $\underline \E$. 
Throughout this paper we shall denote $$\underline \E(k):=\underline \E\otimes \O(k).$$
Since the set of semi-stable torsion-free  sheaves having the same Hilbert polynomials as $\underline \E$ forms a bounded family (see \cite{Maruyama}), we  may fix $k$ large  so that for any such $\underline\E'$, $\underline \E'(k)$ is globally generated, and  $$H^i(\P^{n-1}, \underline \E'(k))=0$$ for all $i>0$. 
Denote the sheaf $$\mathcal{H}={\C^{\oplus \tau(k)}} \otimes \O(-k).$$ Choosing a basis of $H^0(\P^{n-1}, \underline \E(k))$ gives an exact sequence  
$$
\mathcal{H} \xrightarrow{\underline q} \underline \E \rightarrow 0, 
$$
hence yields a point in the Quot scheme $\Quot(\mathcal{H}, \tau)$. Here $\Quot(\mathcal{H}, \tau)$ is by definition the set of equivalence classes of quotients $$q: \mathcal{H} \rightarrow \underline \E' \rightarrow 0,$$ where $\underline \E'$ is a coherent sheaf over $\P^{n-1}$ with Hilbert polynomial equal to $\tau$. Two quotients $\underline q_1: \mathcal{H} \rightarrow \underline \E_1$ and $\underline q_2: \mathcal{H} \rightarrow \underline \E_2$ are equivalent if $\Ker(\underline q_1)=\Ker(\underline q_2)$; this is the same as saying that there exists an isomorphism $\underline \rho: \underline \E_1 \rightarrow \underline \E_2$ so that $$\underline{\rho} \circ \underline q_1=\underline q_2.$$ Notice there is a natural action of $GL(\tau(k), \C)$ on $\Quot(\mathcal{H}, \tau)$ given by $$M.\underline p'=  \underline p' \circ M$$ for any $M\in GL(\tau(k), \C)$ and any quotient $\underline p'$ in $\Quot(\mathcal{H},\tau)$.

By \cite{Grothendieck}, we know $\Quot(\mathcal{H}, \tau)$ is a projective scheme which admits a decomposition $$\Quot(\mathcal{H}, \tau)= \coprod \Quot(\mathcal{H}, (c_1,\cdots, c_{\min(r,n-1)})),$$ where $\Quot(\mathcal{H}, (c_1,\cdots, c_{\min(r,n-1)}))$ consists of those quotients with fixed Chern classes $(c_1,\cdots, c_{\min(r,n-1)})$.  Let $$\Quot(\mathcal{H}, c(\underline\E))=\Quot(\mathcal{H}, c_1(\underline \E), \cdots c_{\min(r, n-1)}(\underline \E)).$$ 
Now we denote by $R^{\mu ss}\subset \Quot(\mathcal{H}, c(E))$  the subscheme of quotients $q:\mathcal{H} \rightarrow \underline \E' \rightarrow 0$ so that 
\begin{itemize}
\item $\underline\E'$ is torsion-free;
\item $\det(\underline \E')\cong \det(\underline \E)$; 
\item $\underline \E'$ is semi-stable;
\item $q$ induces an isomorphism ${\C}^{\oplus \tau(k)} \cong H^0(\P^{n-1}, \underline \E'(k))$.
\end{itemize}
Let $\mathcal{Z}$ denote the reduced weak normalization of $R^{\mu ss}$ as a complex analytic space. This means that we first take the underlying reduced complex analytic space of $R^{\mu ss}$, and then take its weak normalization, so that every locally defined \emph{continuous} function on $\mathcal{Z}$ which is holomorphic on the smooth part $\mathcal{Z}_{reg}$ of $\mathcal{Z}$ is in fact holomorphic. Notice the weak normal property is ``weaker" than being normal, in the sense that we impose the extension property only for continuous functions. As a simple example, in complex dimension 1, a nodal singularity is weakly normal but not normal,  and a cusp singularity is not weakly normal.

\begin{prop}[\cite{GT},  Definition $4.4.$ and Theorem $5.5$]\label{prop2.4}
There exists a compact complex  analytic space $M^{\mu ss}$, with a natural continuous map $$\phi: \mathcal{Z} \rightarrow M^{\mu ss}$$  so that
\begin{itemize}
\item The image of a fixed $GL(\tau(k), \C)$ orbit in $\mathcal{Z}$ is a point;
\item If two quotients $\underline q_i: \mathcal{H} \rightarrow \underline \E_i, i=1,2$ in $\mathcal{Z}$ have the same image in $M^{\mu ss}$, then 
$$
(\Gr^{HNS}(\underline \E_1))^{**}\cong (\Gr^{HNS}(\underline \E_2))^{**},$$
and
$$ \Sigma_b^{alg}(\underline \E_1)=\Sigma_b^{alg}(\underline \E_2).
$$
\end{itemize}
\end{prop}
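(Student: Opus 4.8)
The plan is to prove this existence theorem for a compact moduli space of slope-semistable sheaves by Geometric Invariant Theory, realizing $M^{\mu ss}$ as a good quotient of a suitable projective compactification of $R^{\mu ss}$ by $SL(\tau(k),\C)$ (the center of $GL(\tau(k),\C)$ acts trivially on all the data in question). First recall that, with $k$ fixed large as in the set-up, the $GL(\tau(k),\C)$-orbits inside $R^{\mu ss}$ are in bijection with isomorphism classes of torsion-free semistable sheaves $\underline\E'$ on $\P^{n-1}$ with Hilbert polynomial $\tau$, determinant $\det(\underline\E)$ and Chern classes $c(\underline\E)$: a point of $R^{\mu ss}$ is such a sheaf together with a choice of basis of $H^0(\P^{n-1},\underline\E'(k))$ compatible with $q$, and changing the basis is exactly acting by $GL(\tau(k),\C)$. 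Thus the naive orbit space would parametrize isomorphism classes, but the moduli problem for slope-semistable sheaves is \emph{not separated}, so one cannot hope for a geometric quotient; the GIT quotient will instead automatically identify the slope-S-equivalent sheaves, which is the content of the second bullet.

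The key step is the GIT construction together with the identification of the orbit-closure relation. Via the Grothendieck embedding, for $l\gg k$ the Quot scheme embeds $SL(\tau(k),\C)$-equivariantly into a Grassmannian of quotients of $H^0(\mathcal H(l))$ with its Pl\"ucker polarization, which endows $\overline{R^{\mu ss}}$ with a family of linearized ample line bundles. Slope-, rather than Gieseker-, semistability is where the difficulty lies: at finite $l$ and with a single polarization, a sheaf that is $\mu$-semistable but not Gieseker semistable is GIT-unstable. The remedy, following Greb--Toma, is to work with \emph{multi-Gieseker stability}, i.e. a positive combination of twists $\O(l_1),\dots,\O(l_N)$, and to observe that as the parameters are driven to a suitable boundary of the chamber structure the GIT notion of semistability converges to slope semistability; for a linearization $\mathcal L$ in the corresponding limiting chamber one shows that (i) every point of $R^{\mu ss}$ is GIT-semistable, and (ii) two GIT-semistable points have intersecting $SL(\tau(k),\C)$-orbit closures if and only if the underlying sheaves have isomorphic $(\Gr^{HNS})^{**}$ and the same codimension-two cycle $\Sigma_b^{alg}$. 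Granting this, set $M^{\mu ss}:=\overline{R^{\mu ss}}/\!\!/_{\mathcal L}SL(\tau(k),\C)$; this is a projective scheme, hence (after analytification) a compact complex analytic space. Passing to the reduced weak normalization $\mathcal Z$ of $R^{\mu ss}$ guarantees that the a priori only constructible quotient map becomes continuous in the analytic topology, so we may take $\phi$ to be its restriction to $\mathcal Z$ and, if desired, $M^{\mu ss}$ to be weakly normal as well.

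It then remains to verify the two bulleted properties. The first is immediate: a $GL(\tau(k),\C)$-orbit differs from an $SL(\tau(k),\C)$-orbit only by the trivially acting center, and $SL$-orbits map to points under a GIT quotient. For the second, if $\underline q_1:\mathcal H\to\underline\E_1$ and $\underline q_2:\mathcal H\to\underline\E_2$ have the same image in $M^{\mu ss}$, then their $SL(\tau(k),\C)$-orbit closures in the semistable locus meet; choosing a point in the intersection produces a semistable quotient $\mathcal H\to\underline\E_0$ that is a common flat degeneration of $\underline\E_1$ and $\underline\E_2$. It therefore suffices to establish specialization invariance: if $\underline\E_0$ lies in the orbit closure of a semistable $\underline\E'$ inside $R^{\mu ss}$, then $(\Gr^{HNS}(\underline\E_0))^{**}\cong(\Gr^{HNS}(\underline\E'))^{**}$ and $\Sigma_b^{alg}(\underline\E_0)=\Sigma_b^{alg}(\underline\E')$. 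Concretely, one identifies the closed orbit in the fiber of $\phi$ over $[\underline\E']$ with that of $\Gr^{HNS}(\underline\E')$, so that both $(\Gr^{HNS}(\underline\E'))^{**}$ and, via Definition \ref{defi210}, the transverse multiplicities $m_k^{alg}=h^0(\underline\Delta,\mathcal T|_{\underline\Delta})$ of $\mathcal T=(\Gr^{HNS}(\underline\E'))^{**}/\Gr^{HNS}(\underline\E')$ are read off from that orbit; invariance along the degeneration then follows by combining semicontinuity of the Harder--Narasimhan numerical invariants in flat families with Proposition 2.1 of \cite{BTT}, quoted above, which asserts that $(\Gr^{HNS})^{**}$ and $\Sigma_b^{alg}$ depend only on the slope-S-equivalence class.

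The main obstacle is the GIT step in the second paragraph: making precise the sense in which slope semistability arises as a limit of (multi-)Gieseker stabilities, and exhibiting a single linearization whose semistable locus is all of $R^{\mu ss}$ and whose orbit-closure relation is exactly the $\bigl((\Gr^{HNS})^{**},\Sigma_b^{alg}\bigr)$-equivalence. This is the technical core of \cite{GT} and \cite{GSRW}, and it requires controlling the behavior of the codimension-two torsion cycle under degenerations --- showing the multiplicities $m_k^{alg}$ neither jump up nor escape into higher codimension even though $c_2$ is pinned down --- which is considerably subtler than the boundedness and semicontinuity inputs used elsewhere in the paper.
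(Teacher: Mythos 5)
First, a point of calibration: the paper does not prove this statement at all --- Proposition \ref{prop2.4} is quoted verbatim from \cite{GT} (Definition 4.4 and Theorem 5.5) and used as a black box, so there is no internal proof to compare against. Judged on its own terms, your sketch gets several peripheral things right (the role of the weak normalization in making the map continuous in the analytic topology, the triviality of the center of $GL(\tau(k),\C)$, the fact that the fibers must implement a coarser equivalence than isomorphism because the moduli problem is non-separated), but the central construction you propose is not the one in \cite{GT} and, as stated, would fail. Slope-semistability is not the semistable locus of any GIT linearization on (a compactification of) the Quot scheme: for a fixed twist the Gieseker-unstable but $\mu$-semistable points are GIT-unstable, and the multi-Gieseker chamber structure you invoke does not have a ``limiting chamber'' whose semistable locus is exactly $R^{\mu ss}$ and whose orbit-closure relation is the $\bigl((\Gr^{HNS})^{**},\Sigma_b^{alg}\bigr)$-equivalence. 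This is precisely the obstruction that forces Greb--Toma to abandon GIT quotients: they instead construct a determinant line bundle on the weak normalization $\mathcal Z$, prove it is \emph{semiample} there (following Le Potier and J.~Li, via restriction to high-degree complete-intersection surfaces and comparison with the Donaldson--Uhlenbeck picture), and define $M^{\mu ss}$ as the image of the associated morphism to projective space. The identification of the fibers --- that non-separated points have isomorphic $(\Gr^{HNS})^{**}$ and equal $\Sigma_b^{alg}$ --- is then the content of their Theorem 5.5 and rests on that semiampleness and restriction machinery, not on an orbit-closure analysis.

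There is also a structural circularity: at the decisive moment you write that the GIT step ``is the technical core of \cite{GT} and \cite{GSRW},'' i.e.\ you defer the key claim to the very references whose theorem you are purporting to prove, while attributing to them an argument they do not make. If the intent is merely to cite the result, the citation alone (as the paper does) is the honest route; if the intent is to prove it, the proof must go through the semiample determinant line bundle, and the claimed existence of a suitable GIT linearization needs to be removed rather than asserted.
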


For our purpose in this paper, we also need the following fact regarding the convergence of a sequence in the space $\Quot(\mathcal{H}, \tau)$ in the analytic topology. We fix a smooth Hermitian metric on $\mathcal H$. Given any sequence of $M_i \in GL(\tau(k), \C)$, we define $$q_i=q\circ M_i: \mathcal{H} \rightarrow \underline \E \rightarrow 0.$$ Furthermore, we assume $q_i$ converge to $$q_\infty: \mathcal{H} \rightarrow \underline \E_\infty$$  in the analytic topology of $\Quot(\mathcal{H},\tau)$.  

The maps $q_i$ (resp. $q_\infty$) induce smooth bundle endomorphisms $\pi_i: \mathcal H \rightarrow \mathcal H$ (resp. $\pi_\infty: \mathcal H \rightarrow \mathcal H$), which are given by  projection onto the orthogonal complement of $\Ker(q_i)$ (resp. $\Ker(q_\infty)$ in $\mathcal H$, and are defined away from $\Sing(\underline\E)$ (resp. $\Sing(\underline\E_\infty)$). We have

\begin{lem}[\cite{GSRW}]\label{quoteschemeanalyticconvergence}
Away from $\Sing(\underline \E) \cup \Sing(\underline \E_\infty)$, $\pi_i$ converges to $\pi_\infty$ smoothly.
\end{lem}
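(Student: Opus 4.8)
Let me think about what's being claimed. We have $\mathcal{H}$ a fixed sheaf (direct sum of line bundles $\O(-k)$), and quotients $q_i : \mathcal{H} \to \underline{\E} \to 0$ converging in the Quot scheme analytic topology to $q_\infty : \mathcal{H} \to \underline{\E}_\infty \to 0$. We want: the orthogonal projections $\pi_i$ onto $(\Ker q_i)^\perp$ converge smoothly to $\pi_\infty$ away from $\Sing(\underline{\E}) \cup \Sing(\underline{\E}_\infty)$.

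The plan is to reduce the statement to an elementary fact about convergence of subsheaves, using the local freeness of everything in sight away from the singular loci. On the open set $U = \P^{n-1} \setminus (\Sing(\underline{\E}) \cup \Sing(\underline{\E}_\infty))$, both $\underline{\E}$ and $\underline{\E}_\infty$ are locally free, hence $\mathcal{S}_i := \Ker(q_i)$ and $\mathcal{S}_\infty := \Ker(q_\infty)$ are subbundles of the fixed bundle $\mathcal{H}|_U$ (they are saturated, since the quotients are torsion-free on $U$, and a saturated subsheaf of a locally free sheaf on a smooth variety with locally free quotient is a subbundle). The projection $\pi_i$ is then the smooth bundle endomorphism of $\mathcal{H}|_U$ given fiberwise by orthogonal projection onto the fiber of $\mathcal{S}_i$. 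So the content is purely: if the subsheaves $\mathcal{S}_i$ converge to $\mathcal{S}_\infty$ in the Quot scheme, then the corresponding subbundles converge in $C^\infty_{loc}(U)$, equivalently the fiberwise orthogonal projections converge smoothly.

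First I would recall how the analytic topology on $\Quot(\mathcal{H},\tau)$ is set up and what a convergent sequence means concretely. Convergence of $q_i \to q_\infty$ means convergence of the corresponding points; locally near a point of the Quot scheme one has a universal quotient, and convergence in the analytic topology gives, on small coordinate balls, convergence of the kernels $\mathcal{S}_i$ as holomorphic subsheaves — concretely, one can represent $\mathcal{S}_i$ over a small Stein open set by holomorphic frames (generators) that depend holomorphically on the point of the Quot scheme, since over the (weakly normalized, reduced) base the universal subsheaf restricted to $U$ is a subbundle of $\mathcal{H}$ and hence locally trivial in the holomorphic category with holomorphic dependence on parameters. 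Therefore over any $x \in U$ there is a neighborhood $V$ and holomorphic maps $s_i^{(1)},\dots,s_i^{(\ell)} : V \to \mathcal{H}$ which frame $\mathcal{S}_i$, with $s_i^{(\alpha)} \to s_\infty^{(\alpha)}$ in $C^\infty(V)$ (indeed holomorphically, hence locally uniformly with all derivatives). Given such converging frames, the orthogonal projection onto their span is obtained by a Gram–Schmidt procedure: it is a fixed smooth (indeed real-analytic) function of the frame vectors and the fixed Hermitian metric on $\mathcal{H}$, as long as the frame stays linearly independent — which holds near $x$ for $i$ large since it holds at $x$ for $i=\infty$. Hence $\pi_i \to \pi_\infty$ smoothly on a neighborhood of $x$, and since $x \in U$ was arbitrary this gives $C^\infty_{loc}(U)$ convergence. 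This is precisely the argument of \cite{GSRW}, and the only nontrivial ingredient is the statement that the universal subsheaf is locally holomorphically trivial over $U$ with holomorphic parameter dependence; this rests on the weak normality of the base $\mathcal{Z}$ (so that continuous + holomorphic-on-the-regular-part frames are genuinely holomorphic) together with the subbundle property on $U$.

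The main obstacle is this last point: controlling the behavior of the universal subsheaf over the singular (non-reduced, or merely weakly normal) locus of the Quot scheme, and more importantly ensuring that "convergence in the Quot scheme" is strong enough to yield $C^\infty$ (not just $C^0$ or $L^2_{loc}$) convergence of the frames. I would handle it by working on $U$, where torsion-freeness of all the quotients forces the kernels to be honest subbundles, so the universal kernel is a genuine holomorphic subbundle of $\mathcal{H} \boxtimes \O_{\mathcal{Z}}$ over $U \times \mathcal{Z}$; local holomorphic trivializations of this subbundle exist by Cartan's Theorem A/B on Stein opens (or just by the implicit function theorem, picking $\ell$ of the coordinate sections of $\mathcal{H}$ that remain a basis), and restricting to a small arc $\{q_i\} \cup \{q_\infty\}$ in $\mathcal{Z}$ gives holomorphically-in-$i$ frames whose restrictions to $U$ converge with all derivatives. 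Then Gram–Schmidt finishes it. The singularities of the Quot scheme never enter because we only ever restrict the universal object to the open set $U$ of the base $\P^{n-1}$, where the subbundle structure makes everything manifestly smooth.
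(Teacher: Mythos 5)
Your argument is correct in substance, but it takes a genuinely different route from the paper's. The paper (following Lemma 2.17 of \cite{GSRW}) never touches the universal family: it uses the embedding of $\Quot(\mathcal H,\tau)$ into a fixed Grassmannian $Gr(W,r)$ with $W=H^0(\P^{n-1},\mathcal H(m))$, so that convergence $q_i\to q_\infty$ becomes convergence of the finite-dimensional orthogonal projections $P_{q_i}\to P_{q_\infty}$ onto $H^0(\Ker(q_i)(m))\subset W$; it then transfers this to the base via the evaluation map $W\otimes\O_{\P^{n-1}}\to\mathcal H(m)$, whose composition with $P_{q_i}$ has image exactly $\Ker(q_i)(m)$ away from the singular loci, giving smooth convergence of the sub-bundles $\Ker(q_i)$ and hence of $\pi_i$. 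Your route instead invokes the universal quotient on $\P^{n-1}\times\Quot$: flatness plus local freeness of the fiber $\underline\E_\infty$ at $x\in U$ makes the universal quotient, hence the universal kernel, locally free near $(x,q_\infty)$, so restricting local holomorphic frames of the universal kernel to the converging sequence of points gives converging frames, and Gram--Schmidt finishes. Both arguments are valid; the paper's buys a more elementary, self-contained reduction to linear algebra on a fixed vector space plus global generation, while yours leans on standard but heavier facts about universal families. Two small inaccuracies in your write-up worth fixing: the lemma lives on $\Quot(\mathcal H,\tau)$ itself, not on the weak normalization $\mathcal Z$, and no (weak) normality of the base is needed anywhere --- the local triviality you want is a consequence of flatness and openness of the locally free locus, not of normality; and the universal kernel is not a subbundle over all of $U\times\Quot$ (other members of the family may be singular on $U$), only in a neighborhood of the points $(x,q_i)$ and $(x,q_\infty)$ for $x\in U$, which is all you actually use.
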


This follows exactly the same as the proof of Lemma 2.17 in \cite{GSRW}. For the convenience of readers we reproduce the arguments here.  The key point is a geometric interpretation of the abstract convergence of $q_i$ to $q_\infty$ in $\Quot$. To see this we use the fact that for some fixed $m$ large,  the Quot scheme $\Quot$ admits an embedding into a fixed Grassmannian $Gr(W, r)$ where $W=H^0(\P^{n-1}, \mathcal H (m))$ and $r=\dim W-\tau(m)$. Now by fixing a Hermitian metric on $W$, $q_i$ (resp. $q_\infty$) can be viewed as a vector space endomorphism $P_{q_i}$ (resp. $P_{q_\infty}$) of $W$ given by orthogonal projection onto the subspace $H^0(\P^{n-1}, \Ker(q_i)(m))$ (resp. $H^0(\P^{n-1}, \Ker(q_\infty)(m))$. The convergence of $q_i$ to $q_\infty$ implies that  $P_{q_i}$ converges to $P_{q_\infty}$. On the other hand,  we can also view $P_{q_i}$ (resp. $P_{q_\infty}$) as a bundle endomorphism of the holomorphic vector bundle corresponding to the sheaf $W\otimes \O_{\P^{n-1}}$, and  the image of the natural composition 
$$
W\otimes \O_{\P^{n-1}} \xrightarrow{P_{q_i} (\text{resp}. \ P_{q_\infty}) } W\otimes \O_{\P^{n-1}} 
\rightarrow \mathcal{H}(m)
$$
is exactly given by $\Ker(q_i)(m)$ (resp. $\Ker(q_\infty)(m)$) away from $\text{Sing}(\underline \E)$ (resp. $\text{Sing}(\underline \E_\infty)$). This yields that away from $\Sing(\underline \E)\cup \Sing(\underline \E_\infty)$, the sub-bundles $\Ker(q_i)$ converge smoothly to $\Ker(q_\infty)$. This clearly implies  the convergence of $\pi_i$.

Combining with the continuity of the map $\phi$ in Proposition \ref{prop2.4} we obtain
\begin{cor}\label{S-equivalence}
If $\underline \E_\infty$ is a torsion-free and  semi-stable, then $$(\Gr^{HNS}(\underline \E_\infty))^{**}=(\Gr^{HNS}(\underline \E))^{**}$$ and $$\Sigma^{alg}_b(\underline \E_\infty^{alg})=\Sigma^{alg}_b(\underline \E).$$ 
Furthermore, if $\underline \E_\infty^{**}$ is polystable, then 
$$\Sigma^{alg}_b(\underline\E)=\mathcal C(\underline\E_\infty^{**}/\underline\E_\infty).$$
\end{cor}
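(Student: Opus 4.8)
The plan is to derive the corollary from Lemma~\ref{quoteschemeanalyticconvergence} together with the continuity of $\phi$ in Proposition~\ref{prop2.4}, so the argument for the first two identities is short. Since $q_i=q\circ M_i$, every $q_i$ represents the \emph{same} quotient sheaf $\underline\E$ and the $q_i$ all lie on the single $GL(\tau(k),\C)$-orbit through $q$; as $\underline\E$ is semi-stable and torsion-free and lies in the fixed bounded family, the defining conditions of $R^{\mu ss}$ hold for $q$, hence for each $q_i$, so this orbit sits in $R^{\mu ss}\subset\mathcal Z$ and by the first bullet of Proposition~\ref{prop2.4} is contracted by $\phi$ to the single point $\phi(q)$. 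The limit $q_\infty$ lies in the same component $\Quot(\mathcal H,c(\underline\E))$, so its quotient $\underline\E_\infty$ has the same Chern classes and $\det(\underline\E_\infty)\cong\det(\underline\E)$; the choice of twist by $\O(k)$ makes $\C^{\oplus\tau(k)}\cong H^0(\P^{n-1},\underline\E_\infty(k))$ with all higher cohomology vanishing; together with the hypothesis that $\underline\E_\infty$ is torsion-free and semi-stable this gives $q_\infty\in R^{\mu ss}$, and, since the weak normalization is a homeomorphism on underlying spaces, $q_i\to q_\infty$ in $\mathcal Z$ — the convergence being the concrete smooth convergence of projections furnished by Lemma~\ref{quoteschemeanalyticconvergence}. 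Continuity of $\phi$ then gives $\phi(q_\infty)=\lim_i\phi(q_i)=\phi(q)$, and the second bullet of Proposition~\ref{prop2.4} yields $(\Gr^{HNS}(\underline\E_\infty))^{**}\cong(\Gr^{HNS}(\underline\E))^{**}$ and $\Sigma_b^{alg}(\underline\E_\infty)=\Sigma_b^{alg}(\underline\E)$.

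For the last assertion, by the above $\Sigma_b^{alg}(\underline\E)=\Sigma_b^{alg}(\underline\E_\infty)=\mathcal C(\mathcal T)$ with $\mathcal T=(\Gr^{HNS}(\underline\E_\infty))^{**}/\Gr^{HNS}(\underline\E_\infty)$, so it suffices to prove $\mathcal C(\mathcal T)=\mathcal C(\underline\E_\infty^{**}/\underline\E_\infty)$ when $\underline\E_\infty^{**}$ is polystable. The key step is the identification $(\Gr^{HNS}(\underline\E_\infty))^{**}\cong\underline\E_\infty^{**}$. To obtain it, fix a Jordan--H\"older filtration $0=F_0\subset\cdots\subset F_\ell=\underline\E_\infty$ with stable quotients $G_j=F_j/F_{j-1}$, all of slope $\mu:=\mu(\underline\E_\infty)=\mu(\underline\E_\infty^{**})$, and let $\tilde F_j\subset\underline\E_\infty^{**}$ be the saturation of $F_j$. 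Semi-stability of $\underline\E_\infty^{**}$ forces $\mu(\tilde F_j)=\mu$ and $\tilde F_j/F_j$ to be supported in codimension $\geq2$; a short argument using stability of the $G_j$ then shows each $\tilde F_j/\tilde F_{j-1}$ is stable of slope $\mu$ and contains $G_j$ with codimension-$\geq2$ cokernel, hence $(\tilde F_j/\tilde F_{j-1})^{**}\cong G_j^{**}$. Thus $\{\tilde F_j\}$ is a Jordan--H\"older filtration of $\underline\E_\infty^{**}$ whose double-dualized graded is $\bigoplus_j G_j^{**}=(\Gr^{HNS}(\underline\E_\infty))^{**}$; comparing with the Jordan--H\"older filtration of the polystable $\underline\E_\infty^{**}$ whose graded is $\underline\E_\infty^{**}$ itself, the independence statement of \cite[Proposition~2.1]{BTT} gives $(\Gr^{HNS}(\underline\E_\infty))^{**}\cong\underline\E_\infty^{**}$.

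Granting this, $\Gr^{HNS}(\underline\E_\infty)=\bigoplus_j G_j$ and $\underline\E_\infty$ both embed in $\underline\E_\infty^{**}$ with cokernels $\mathcal T$ and $\underline\E_\infty^{**}/\underline\E_\infty$, both of codimension $\geq2$. I would then compare the two cycles component by component: cut by a generic $2$-dimensional linear slice $\underline\Delta$ through a generic point $x$ of each codimension-$2$ component of the support of either, note that restriction to $\underline\Delta$ commutes with double duals and with the filtration of $\underline\E_\infty$, and apply the Whitney formula on the surface $\underline\Delta$ both to this filtration and to the splitting $\underline\E_\infty^{**}|_{\underline\Delta}\cong\bigoplus_j G_j^{**}|_{\underline\Delta}$; since $c_1(G_j^{**})=c_1(G_j)$ the cross terms cancel, and one reads off that the length at $x$ of $(\underline\E_\infty^{**}/\underline\E_\infty)|_{\underline\Delta}$ equals $\sum_j$ of the lengths at $x$ of $(G_j^{**}/G_j)|_{\underline\Delta}$, i.e.\ the two cycles have the same multiplicity along each such component, so $\mathcal C(\mathcal T)=\mathcal C(\underline\E_\infty^{**}/\underline\E_\infty)$. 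I expect the identification $(\Gr^{HNS}(\underline\E_\infty))^{**}\cong\underline\E_\infty^{**}$, and inside it the verification that the saturated graded pieces $\tilde F_j/\tilde F_{j-1}$ remain stable, to be the main obstacle; it is exactly there that polystability of $\underline\E_\infty^{**}$ is indispensable — when it fails (say $\underline\E_\infty$ is a non-split, strictly semi-stable but locally free extension, so that $\underline\E_\infty^{**}=\underline\E_\infty$ is not polystable), the colength of a sheaf in its reflexive hull is not additive along a Jordan--H\"older filtration and $\mathcal C(\mathcal T)\neq\mathcal C(\underline\E_\infty^{**}/\underline\E_\infty)$ in general.
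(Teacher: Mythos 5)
Your first paragraph is essentially the paper's argument: the $q_i$ lie on one $GL(\tau(k),\C)$-orbit, so by the first bullet of Proposition \ref{prop2.4} they all map to $\phi(q)$, and continuity of $\phi$ together with the second bullet gives both identities once one checks $q_\infty\in R^{\mu ss}$. For the last assertion, however, you take a genuinely different route from the paper, and the final step has a gap. The paper does not pass through the isomorphism $(\Gr^{HNS}(\underline\E_\infty))^{**}\cong\underline\E_\infty^{**}$ (though your derivation of it via saturations is correct and the polystability input is the same). Instead it takes a Seshadri filtration $0\subset\underline\E_1\subset\cdots\subset\underline\E_m=\underline\E_\infty$ with saturated terms, uses polystability of $\underline\E_\infty^{**}$ to get the canonical isomorphism $\underline\E_\infty^{**}/\underline\E_1^{**}\cong(\underline\E_\infty/\underline\E_1)^{**}$, deduces the short exact sequence
$$0\rightarrow \underline\E_1^{**}/\underline\E_1\rightarrow \underline\E_\infty^{**}/\underline\E_\infty\rightarrow (\underline\E_\infty/\underline\E_1)^{**}/(\underline\E_\infty/\underline\E_1)\rightarrow 0$$
of torsion sheaves supported in codimension at least $2$, and invokes additivity of $\mathcal C$ on such sequences (Lemma 2.15 in \cite{GSRW}) plus induction. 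That additivity is a \emph{componentwise} statement, because $h^0$ of the restriction to a generic transverse slice is additive in exact sequences of finite-length modules.

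The gap in your version is the localization at the end. Applying the Whitney formula to a $2$-dimensional \emph{linear} slice $\underline\Delta\subset\P^{n-1}$ (together with $c_2(F|_{\underline\Delta})-c_2(F^{**}|_{\underline\Delta})=\ell((F^{**}/F)|_{\underline\Delta})$ and the splitting $\underline\E_\infty^{**}|_{\underline\Delta}\cong\bigoplus_jG_j^{**}|_{\underline\Delta}$) only yields equality of the \emph{total} lengths over all of $\underline\Delta$, i.e.\ $\deg(\mathcal C(\mathcal T)\cdot\underline\Delta)=\deg(\mathcal C(\underline\E_\infty^{**}/\underline\E_\infty)\cdot\underline\Delta)$. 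Since $\underline\Delta$ generically meets every codimension-$2$ component in several points, equality of these intersection degrees for generic planes gives only $\deg\mathcal C(\mathcal T)=\deg\mathcal C(\underline\E_\infty^{**}/\underline\E_\infty)$, not equality of cycles (compare $2[L_1]$ with $[L_1]+[L_2]$). The asserted ``length at $x$'' identity does not follow from the global Whitney computation; to get it you need a local statement at a generic point of each component, and the natural one is exactly the exact sequence above restricted to a small transverse disc, where length is additive. So you should replace the slicing argument by the exact-sequence-plus-additivity argument (or by a local Chern class version of it); with that substitution your proof closes. Your closing remark correctly identifies where polystability is indispensable: without it $\underline\E_\infty^{**}/\underline\E_1^{**}$ need not be reflexive and the sequence above fails.
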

\begin{proof}
The first part follows from Proposition \ref{prop2.4} directly since $q_i$ stays in a fixed $GL(\tau(k), \C)$ orbit. It remains to prove the second part. Indeed, we take a Seshadri filtration of $\underline \E_\infty$ as 
$$
0\subset \underline\E_1 \subset \cdots \underline\E_m=\underline \E_\infty 
$$
where $\underline\E_i$ are saturated in $\underline\E_\infty$. Since $\underline \E_\infty^{**}$ is polystable, we have a canonical isomorphism $\underline{\E}_\infty^{**}/\underline{\E_1}^{**} \cong (\underline{\E}_\infty/\underline \E_1)^{**}$. Consequently, using the fact that $\underline\E_\infty/\underline\E_1$ is torsion-free, we obtain the following exact sequence 
$$
0\rightarrow (\underline \E_1)^{**}/\underline \E_1 \rightarrow \underline{\E}_\infty^{**}/\underline{\E}_\infty \rightarrow (\underline{\E}_\infty/ \underline \E_1)^{**}/ (\underline{\E}_\infty/ \underline \E_1) \rightarrow 0.
$$
Since each term has support in codimension at least 2, it follows from Lemma 2.15 in \cite{GSRW} that 
$$\mathcal C(\underline{\E}_\infty^{**}/\underline{\E}_\infty)=\mathcal C((\underline{\E}_1)^{**}/\underline{\E}_1)+\mathcal C((\underline{\E}_\infty/ \underline \E_1)^{**}/ (\underline{\E}_\infty/ \underline \E_1)).$$
Since by assumption $\underline\E_\infty/\underline \E_1$ is again a torsion-free sheaf whose double dual is polystable, the conclusion follows from induction by repeating the process for $\underline \E_\infty / \underline \E_1$.
\end{proof}

\section{Optimal algebraic tangent cones from admissible HYM}
\label{section3}
The goal of this subsection is to show that an admissible HYM connection naturally  gives rise to optimal algebraic tangent cones. More precisely,  in Section \ref{Finiteness of degree} we study general properties of the degree function introduced in \cite{CS1}; in Section \ref{A torsion-free sheaf on $D$} we define certain canonical torsion-free sheaves on the exceptional divisor $D$ of the blowup $p: \widehat B\rightarrow B$; in Section \ref{section3.3} we show these torsion-free sheaves do arise as algebraic tangent cones; in Section \ref{Optimality} we show these algebraic tangent cones are optimal.

\subsection{Properties of the degree function}
\label{Finiteness of degree}
We first recall the definition of the degree function in \cite{CS1}. Given $(\E, H, A)$ as in the introduction, we denote by $\E_0$ the stalk of $\E$ at $0$. Then the degree function
\begin{equation}
d: \E_0\rightarrow \mathcal \R\cup \{\infty\}
\end{equation}
is defined 
by setting  that $d(s)=\infty$ if $s=0$ is the zero section, and that for a non-zero holomorphic section $s$ defined in a neighborhood of $0$,
\begin{equation}
d(s):= \lim_{r\rightarrow 0^+}\frac{\log \int_{B_r} |s|^2\dVol_{\omega}}{2\log r}-n.
\end{equation}
\begin{lem}
$d(s)$ is  well-defined and lies in  $(\rank (\E)!)^{-1}\mathbb Z \cup \{\infty\}$
\end{lem}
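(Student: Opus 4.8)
The plan is to prove existence of the limit via an (almost-)monotonicity formula for a frequency function, and then to identify the value of $d(s)$ with a homogeneity degree on an analytic tangent cone, where rationality becomes transparent.

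For the first step I would study $J(r):=\int_{B_r}|s|^2\dVol_\omega$ and the associated frequency $N(r):=rJ'(r)/J(r)$. Since $s$ is holomorphic and $A$ is HYM, on $B\setminus\Sing(\E)$ one has $\Delta|s|^2=2|\nabla_A s|^2-2c|s|^2$; because $\Sing(\E)$ has complex codimension at least $3$ and $|s|^2$ is locally bounded (Bando--Siu), this identity and the Green-type identities underlying the frequency computation extend weakly across $\Sing(\E)$, exactly as in the proof of \eqref{BS interior estimate}. Using $\omega=\omega_0+O(|z|^2)$ and treating the zeroth-order term $-2c|s|^2$ as subleading as $r\to0$, one obtains that $N(r)$ is monotone up to an error that is $o(1)$ as $r\to 0$. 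This gives that $\frac{\log J(r)}{2\log r}$ has a limit in $\R\cup\{\infty\}$, and, for $s\neq0$, uniform doubling estimates together with an annular lower bound $\int_{B_r\setminus B_{r/2}}|s|^2\ge\delta\int_{B_r}|s|^2$ for all small $r$.

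For the second step I would rescale: for $\lambda_j\to0$, set $s_j:=\lambda_j^*s/\|\lambda_j^*s\|_{L^2(B_1\setminus B_{1/2})}$, so $s_j$ has unit $L^2$ norm on $B_1\setminus B_{1/2}$. By the compactness statements of Section~\ref{section2.1} together with the annular bound, a subsequence of $s_j$ converges to a \emph{nonzero} holomorphic section $s_\infty$ on some analytic tangent cone $\E_\infty$; since $\omega_0$ is exactly dilation invariant and $A_\infty$ is a cone connection, $s_\infty$ is homogeneous. As the frequency is scale invariant, the frequency of $s_j$ on any fixed annulus equals $N(\lambda_j r)$, which tends to the limiting value $\lim_{\rho\to0}N(\rho)$ computed from $d(s)$; since it also converges to the frequency of $s_\infty$, the homogeneity degree of $s_\infty$ is forced to be exactly $d(s)$, i.e.\ $d(s)=d(s_\infty)$.

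For the third step I would classify the degrees realized by homogeneous holomorphic sections of a HYM cone. By Lemma~\ref{lem2.6}, $\E_\infty=\psi_*\pi^*\underline\E_\infty$ with $\underline\E_\infty=\bigoplus_i\underline\E_\infty^{\mu_i}$, each $\underline\E_\infty^{\mu_i}$ polystable of slope $\mu_i$ and carrying the cone metric $|z|^{2\mu_i}\pi^*\underline H_i$. Decomposing $s_\infty$ into $\C^*$-weight components, a weight-$d$ component in the $i$-th factor is the pullback of a section of $\underline\E_\infty^{\mu_i}(d)$ on $\P^{n-1}$, and restricting to the unit sphere (using dilation invariance of $|\cdot|^2_{\pi^*\underline H_i}$) shows it contributes $|s_\infty|^2=r^{2(d+\mu_i)}\cdot(\text{function on }\P^{n-1})$, hence has degree $d+\mu_i$ with $d\in\Z$. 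Therefore $d(s)=d_0+\mu_{i_0}$ for some $d_0\in\Z$ and some summand $i_0$. Finally $\mu_{i_0}=\deg(\underline\E_\infty^{\mu_{i_0}})/\rank(\underline\E_\infty^{\mu_{i_0}})$ has integer numerator and denominator at most $\rank(\E_\infty)=\rank(\E)$, so $\mu_{i_0}\in(\rank(\E)!)^{-1}\Z$ and hence $d(s)\in(\rank(\E)!)^{-1}\Z$; together with the convention $d(0)=\infty$ this proves the lemma. The main obstacle I anticipate is the first step: establishing the (almost-)monotonicity of the frequency in the merely admissible setting, where $H$ degenerates along $\Sing(\E)$ and $\omega$ is only a perturbation of $\omega_0$, and extracting from it the uniform doubling and annular bounds. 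The remaining steps are comparatively soft once the tangent-cone compactness of Section~\ref{section2.1} and the structure result Lemma~\ref{lem2.6} are in hand.
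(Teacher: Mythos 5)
Your overall architecture --- identify $d(s)$ with the homogeneity degree of a rescaled limit on an analytic tangent cone, then observe that homogeneous sections of a HYM cone have degree of the form $d_0+\mu_{i_0}$ with $d_0\in\Z$ and $\mu_{i_0}$ the slope of a sheaf of rank at most $\rank(\E)$ --- is exactly the mechanism behind this lemma (the paper simply cites Corollary 3.7 of \cite{CS1}, which runs this way), and your steps 2 and 3 are essentially correct. The genuine gap is step 1. An almost-monotonicity formula for the frequency $N(r)=rJ'(r)/J(r)$ is not available here, and the obstruction is not the zeroth-order term $-2c|s|^2$ that you dismiss as subleading: the Rellich--Pohozaev identity underlying frequency monotonicity requires commuting $\nabla_{\partial_r}$ past the other covariant derivatives, which produces error terms of the schematic form $\int_{B_r}\langle F_A(\partial_r,\cdot)\cdot s,\nabla_A s\rangle$. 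Since the curvature is controlled only in the scale-invariant $L^2$ sense (via \eqref{eqn2.0}), these errors scale exactly like the main terms and are in no way $o(1)$ as $r\to 0$; at best one could hope for boundedness of $N(r)$, not convergence, and certainly not the uniform doubling and annular lower bounds you need in step 2 to guarantee that the rescaled limits are nonzero.

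The way this is actually closed in \cite{CS1} (Proposition 3.15 there, which this paper quotes verbatim in the proof of Claim \ref{clm3.7}) is a \emph{discrete} three-circle inequality proven by compactness and contradiction rather than by a differential inequality: if the dyadic ratios $\|s\|_{i+1}/\|s\|_i$ failed the relevant convexity at a sequence of scales, one would normalize, pass to a tangent cone, and extract a nonzero limit section whose growth rate lies in a forbidden gap of the discrete spectrum of homogeneity degrees --- the same discreteness you establish in your step 3. In other words, the quantization of degrees is not merely the final step of the proof; it is an input to the monotonicity itself, and the order of quantifiers in your outline has to be rearranged accordingly. As written, your step 1 is an unproved assertion (and, by the direct PDE route you sketch, likely an unprovable one), and without it steps 2 and 3 have nothing to act on.
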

\begin{proof}
This is proved in Corollary 3.7 in \cite{CS1} in the setting when $0$ is an isolated singularity, but this assumption is not essentially used there.  Two key points are the interior estimate \eqref{BS interior estimate}, and the Hartogs's extension theorem.
\end{proof}

The same definition applies to an analytic tangent cone $\E_\infty$. Because of  the cone structure we have a notion of \emph{homogeneous holomorphic sections} on $\E_\infty$. We say a holomorphic section $s$ of $\E_\infty$ is homogeneous of degree $\beta$ if away from $\Sing(\E_\infty)$, we have 
$$\nabla_{\partial_r}s=\beta r^{-1}s. 
$$
It is easy to see for such $s$, $d(s)=\beta$ using the above definition of the degree function. The following Lemma will be used in the next section. 

\begin{lem}\label{lem-finite cone}
Given any analytic tangent cone $\E_\infty$, then for any fixed $\beta\in \mathbb R$,  the space $V_\beta$ of homogeneous holomorphic sections on $\E_\infty$ of degree $\beta$ is finite dimensional. 
\end{lem}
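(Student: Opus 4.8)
\medskip
\noindent\emph{Proof sketch (proposal).}
The plan is to translate homogeneity into a grading on the algebraic side and thereby reduce to the finiteness of $H^0$ of a coherent sheaf on $\mathbb{CP}^{n-1}$. By Lemma~\ref{lem2.6} we may write $\E_\infty=\psi_*\pi^*\underline\E_\infty$ with $\underline\E_\infty=\bigoplus_i\underline\E_\infty^{\mu_i}$, each summand polystable of slope $\mu_i\in[0,1)$, and with $A_\infty$ gauge equivalent to the corresponding direct sum of simple HYM cones; in particular the metric and Chern connection are block diagonal for this decomposition. Since the decomposition $\E_\infty=\bigoplus_i\F_i$ (with $\F_i:=\psi_*\pi^*\underline\E_\infty^{\mu_i}$) holds at the level of sheaves and the projections are parallel, a homogeneous holomorphic section $s$ of degree $\beta$ splits as $s=\sum_i s_i$ with each $s_i$ a holomorphic section of $\F_i$ that is again homogeneous of degree $\beta$. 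So it suffices to bound the dimension of the space of degree-$\beta$ homogeneous holomorphic sections of one simple HYM cone $\F=\psi_*\pi^*\underline\F$, where $\underline\F$ is polystable of slope $\mu$ and $\F$ carries the metric $|z|^{2\mu}\pi^*\underline H$.

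The key step is to identify that space with $H^0(\mathbb{CP}^{n-1},\underline\F(\beta-\mu))$. A homogeneous holomorphic section near $0$ extends over $\mathbb{C}^n\setminus\{0\}$ (solve the radial equation $\nabla_{\partial_r}s=\beta r^{-1}s$ outward, or use reflexivity and Hartogs) and, being holomorphic and homogeneous, is a $\mathbb{C}^*$-weight vector for the scaling action; via the standard decomposition $H^0(\mathbb{C}^n\setminus\{0\},\pi^*\underline\F)=\bigoplus_{k\in\mathbb{Z}}H^0(\mathbb{CP}^{n-1},\underline\F(k))$ it corresponds to some $\underline s\in H^0(\mathbb{CP}^{n-1},\underline\F(k))$, whose associated section $\hat s$ satisfies $\hat s(\lambda z)=\lambda^k\hat s(z)$. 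It remains to match the weight $k$ with the homogeneity degree: writing $E=\sum_j z_j\partial_{z_j}$ for the holomorphic Euler field (so $\partial_r$ has $(1,0)$-part $r^{-1}E$) and $\theta=h^{-1}\partial h$ for the Chern connection form of $h=|z|^{2\mu}\pi^*\underline H$, one has for holomorphic $\hat s$ that $\nabla_{\partial_r}\hat s=r^{-1}\bigl(E(\hat s)+(\iota_E\theta)\hat s\bigr)$; here $\iota_E\theta=\mu\,\mathrm{Id}$ since $\iota_E(\partial\log|z|^2)=1$ and $E$ annihilates the scaling-invariant $\pi^*\underline H$, while $E(\hat s)=k\hat s$ by Euler's relation. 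Thus $\nabla_{\partial_r}\hat s=(\mu+k)r^{-1}\hat s$, so weight $k$ corresponds to degree $\mu+k$ (consistently, $\int_{B_r}|\hat s|^2\dVol_{\omega_0}\sim r^{2(\mu+k+n)}$, i.e.\ $d(\hat s)=\mu+k$). Hence the space in question equals $H^0(\mathbb{CP}^{n-1},\underline\F(\beta-\mu))$ if $\beta-\mu\in\mathbb{Z}$ and is $0$ otherwise, and in either case it is finite dimensional, being $H^0$ of a coherent sheaf on a projective variety.

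Reassembling, $V_\beta\cong\bigoplus_{i:\,\beta-\mu_i\in\mathbb{Z}}H^0(\mathbb{CP}^{n-1},\underline\E_\infty^{\mu_i}(\beta-\mu_i))$, which is finite dimensional; in fact at most one summand is nonzero, because the $\mu_i$ are distinct and lie in $[0,1)$, hence cannot differ by a nonzero integer. I do not expect a real obstacle here: the statement is essentially a dictionary entry. The only points requiring care are bookkeeping ones: confirming that an arbitrary homogeneous holomorphic section really does split into individually holomorphic $\mathbb{C}^*$-weight eigenvectors (by averaging over $U(1)\subset\mathbb{C}^*$, or from the weight grading of $\O_{\mathbb{C}^n\setminus\{0\}}$), so that fixing $\beta$ selects a single weight in each polystable factor, and carrying out the Chern-connection computation above that reconciles the $\nabla_{\partial_r}$-eigenvalue normalization of ``degree'' used in this section with the $\mathbb{C}^*$-weight.
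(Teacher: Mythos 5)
Your proposal is correct, but it proves the lemma by a genuinely different route than the paper. The paper's argument is purely functional-analytic: it puts the $L^2(B)$ norm on $V_\beta$ and shows the unit sphere is compact (hence $\dim V_\beta<\infty$ by Riesz). The two inputs are homogeneity, which gives $\int_{B_2}|s|^2\dVol_{\omega_0}=2^{2n+2\beta}\int_B|s|^2\dVol_{\omega_0}$ for $s\in V_\beta$, and the interior estimate \eqref{BS interior estimate} on the larger ball, which converts this into a uniform $L^\infty(B)$ bound on the unit sphere; this rules out mass escaping to $\partial B$, so a weakly convergent normalized sequence converges strongly and the unit sphere is compact. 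Your argument instead builds the dictionary $V_\beta\cong H^0(\P^{n-1},\underline\E_\infty^{\mu_i}(\beta-\mu_i))$ (for the unique $i$ with $\beta-\mu_i\in\mathbb Z$, and $V_\beta=0$ otherwise) and invokes coherence on a projective variety. Both work. The paper's proof is shorter and needs none of the weight/extension bookkeeping you rightly flag as the delicate points (splitting into $U(1)$-weight eigenvectors, holomorphic extension across radii, and the $\iota_E\theta=\mu\,\Id$ computation reconciling the $\nabla_{\partial_r}$-eigenvalue with the $\mathbb{C}^*$-weight); yours is more informative, since it computes $V_\beta$ exactly rather than merely bounding its dimension, and this identification of homogeneous sections with elements of $H^0(\P^{n-1},\underline\E_\infty^{\mu}(e))$ is precisely the one the paper uses later (e.g., in defining the spaces $S_k$ in Section \ref{A torsion-free sheaf on $D$}). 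So your route front-loads work the paper defers, at the cost of a longer proof of this particular lemma.
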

\begin{proof}
We define a norm on $V_{\beta}$  by setting
$$\|s\|_{L^2(B)}^2:=\int_{B} |s|^2\dVol_{\omega_0}. $$
It suffices to show that the unit sphere in $V_\beta$ is compact. Given a sequence $s_j\in V_\beta$ with $\|s_j\|_{L^2(B)}=1$, then after passing to a subsequence we obtain a weak limit $s_\infty$ with $\|s_\infty\|_{L^2(B)}\leq 1$. We need to show the equality holds.  Notice  that away from $\Sing(\E_\infty)$, $s_j$ converges smoothly to $s_\infty$. So it suffices to prevent mass concentration near $\partial B$.  The key point is that  by homogeneity we have for all $j$
\begin{equation}
\int_{B_2} |s_j|^{2}\dVol_{\omega_0}=2^{2n+2\beta} 
\end{equation}
So by \eqref{BS interior estimate} we obtain $\|s_j\|_{L^\infty(B)}\leq C$ for a uniform $C>0$. Then it is easy to conclude.
\end{proof}

The understanding of the above degree function is crucial in studying analytic tangent cones. This is first used in \cite{DS} when studying singularities of K\"ahler-Einstein metrics and then was introduced in \cite{CS1} to study singularities of HYM connections. It is proved in \cite{CS1} that if $d(s)<\infty$  then  $s$ gives rise to non-trivial limit homogeneous sections of degree $d(s)$ on all the analytic tangent cones, hence it provides a basic link between $\E$ and $\E_\infty$.  Notice as pointed out in Remark \ref{rmk:sequence} when studying analytic tangent cones we may restrict to a fixed sequence $\lambda_j\rightarrow 0$ given by  
 $$\lambda_j:=2^{-j}.$$
If we denote
$$[s]_j:=\frac{\lambda_j^*s}{\|s\|_j},$$
and
\begin{equation}\label{j-norm}
\|s\|_j:=\|\lambda_j^*s\|_{L^2(B)}=\sqrt{\frac{1}{\Vol{B_{2^{-j}}}} \int_{B_{2^{-j}}}|s|^2\dVol_\omega},
\end{equation} then passing to subsequences we get strong convergence of $[s]_j$ to homogeneous holomorphic sections of degree $d(s)$ on analytic tangent cones. Any such limit $s_\infty$ satisfies $\|s_\infty\|_{L^2(B)}=1$. Again  the arguments in \cite{CS1} were written in the setting of an isolated singularity of $\E$ but tracing the proof one sees that this assumption is not used.   Notice if $d(s)=\infty$, which a priori could be the case,  then we will not be able to obtain anything interesting. Therefore it is important to  find sections $s$ with $d(s)$ finite.

In \cite{CS1, CS2} our idea to study the degree function was to compare the unknown Hermitian metric $H$ with certain explicitly constructed  background Hermitian metric. This allows us to compute the degree explicitly when $\E$ is homogeneous, i.e., $\E=\psi_*\pi^*\underline \E$ for some locally free sheaf $\underline\E$ on $\P^{n-1}$. In general when $\E$ is non-homogeneous or when $\E$ has non-isolated singularities this approach seems to involve very complicated difficulties. In this paper a crucial new observation is that one can directly show finiteness of $d(s)$ for a non-zero $s$, and use this to perform abstract studies without explicit computation of $d(s)$. The main result of this subsection is

\begin{thm}
\label{thm3.1} The following hold
\begin{enumerate}[(1).]
\item For all $s\in \E_0$, we have $d(s)=\infty$ if and only if $s=0$;
\item For all $s\in \E_0$, we have $d(s)\geq 0$; 
\item Given $s, s'\in \E_0$, we have 
\begin{equation}
d(s+s')\geq \min\{d(s), d(s')\};
\end{equation}
\item Suppose there is another admissible HYM connection $(A', H')$ on a reflexive sheaf $\E'$ over $B$, then for $s\in \E_0$ and $s'\in \E'_0$, we have 
\begin{equation}
d(s\otimes s')=d(s)+d(s'), 
\end{equation}
where in each term the degree function has the obvious meaning.
\end{enumerate}
\end{thm}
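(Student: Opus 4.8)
Assertions (2) and (3) follow directly from the definition of $d$ together with the interior estimate $\eqref{BS interior estimate}$. For (2): $\eqref{BS interior estimate}$ applied over a ball slightly larger than $B_{1/2}$ gives $|s|^2_H\le C_0$ on $\overline{B_{1/2}}$, hence $\int_{B_r}|s|^2\dVol_\omega\le C_1 r^{2n}$ for $r\le\tfrac12$; since $\log r<0$, dividing $\log$ by $2\log r$ and letting $r\to 0$ gives $d(s)\ge 0$. For (3): from $|s+s'|^2\le 2|s|^2+2|s'|^2$ and the definition of $d$ one has, for each $\varepsilon>0$ and all small $r$, $\int_{B_r}|s+s'|^2\le 4\,r^{\,2n+2\min\{d(s),d(s')\}-2\varepsilon}$, hence $d(s+s')\ge\min\{d(s),d(s')\}-\varepsilon$; let $\varepsilon\to 0$ (the case $s+s'=0$ being trivial).

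\textbf{The crux: finiteness in (1).} In (1), the implication $s=0\Rightarrow d(s)=\infty$ is the convention; the new point is that $s\ne 0$ forces $d(s)<\infty$. Since $\eqref{BS interior estimate}$ already gives the upper bound $\int_{B_r}|s|^2\le C_1 r^{2n}$, it is enough to produce a lower bound $\int_{B_r}|s|^2\ge c\,r^{2n+2N_0}$ with $N_0$ finite. I would exploit that $\log|s|^2_H$ is subharmonic up to a constant: since $s$ is holomorphic and $A$ is HYM, the Cauchy--Schwarz inequality for the Chern connection together with the HYM equation gives $\Delta_\omega\log|s|^2_H\ge -\langle\sqrt{-1}\Lambda_\omega F_A\,\tfrac{s}{|s|},\tfrac{s}{|s|}\rangle=-c$ on the locally free locus, so $v:=\log|s|^2_H+C'|z|^2$ is $\Delta_\omega$-subharmonic there for a suitable $C'$ (and $\Delta_\omega$ is uniformly elliptic near $0$). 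By $\eqref{BS interior estimate}$ it is bounded above, and being defined off the pluripolar set $\{s=0\}\cup\Sing(\E)$ it extends to a genuine subharmonic function on a neighbourhood of $\overline B$, not identically $-\infty$. Jensen's inequality then gives $\int_{B_r}|s|^2=\int_{B_r}e^{\,v-C'|z|^2}\ge \Vol(B_r)\exp\!\bigl(\tfrac{1}{\Vol(B_r)}\int_{B_r}v\bigr)\cdot e^{-C'r^2}$, so the claim reduces to the potential-theoretic estimate $\tfrac{1}{\Vol(B_r)}\int_{B_r}v\ge -K|\log r|-C$ for small $r$ with $K$ finite --- equivalently, a bound $\mu(B_t)\lesssim t^{2n-2}$ on the Riesz mass of $v$, equivalently that $\log|s|^2_H$ decays at $0$ no faster than $O(\log|z|)$. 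This is where $s\ne 0$ enters: $s$ vanishes to finite order along any divisor through $0$ (by Krull's intersection theorem, $s\notin\bigcap_k\mathfrak{m}^k\E_0$), and an admissible HYM metric has at worst polynomial singular behaviour near $\Sing(\E)$; together these pin down the growth of $\log|s|^2_H$. (An alternative is to use the Almgren-type frequency $N(r)=r\int_{B_r}|\nabla_A s|^2\big/\int_{\partial B_r}|s|^2\ge 0$ underlying the well-definedness of $d$ in \cite{CS1}: its monotonicity in $r$ gives $d(s)=\lim_{r\to 0}\bigl(N(r)+n\bigr)\le N(r_0)+n<\infty$, finiteness of $N(r_0)$ coming from $\int_{\partial B_{r_0}}|s|^2>0$ --- true for a.e.\ $r_0$, as a nonzero holomorphic section of a reflexive sheaf is positive on a dense open set --- and from $\int_{B_{r_0}}|\nabla_A s|^2<\infty$.)

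\textbf{The multiplicativity (4).} We may assume $s,s'\ne 0$; then by (1) all degrees involved are finite, and $H\otimes H'$ is an admissible HYM metric on $(\E\otimes\E')^{**}$ with Einstein constant $c+c'$ and $|s\otimes s'|^2=|s|^2\,|s'|^2$, so $d(s\otimes s')$ is well defined. For $d(s\otimes s')\ge d(s)+d(s')$: a rescaled form of $\eqref{BS interior estimate}$ gives $\sup_{\overline{B_\rho}}|s'|^2\le C\rho^{2d(s')-2\varepsilon}$; summing $\int_{B_{2^{-k}r}\setminus B_{2^{-k-1}r}}|s|^2|s'|^2\le C(2^{-k}r)^{2n+2d(s)+2d(s')-4\varepsilon}$ over $k\ge 0$ (the geometric series converging since $d(s),d(s')\ge 0$ by (2)) yields $\int_{B_r}|s|^2|s'|^2\le C'r^{2n+2d(s)+2d(s')-4\varepsilon}$, hence $d(s\otimes s')\ge d(s)+d(s')-2\varepsilon$. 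For the reverse inequality, working along $\lambda_j=2^{-j}$ (cf.\ Remark \ref{rmk:sequence}) I would pass to a common subsequence on which $[s]_j\to s_\infty$ and $[s']_j\to s'_\infty$ strongly, where $s_\infty,s'_\infty$ are nonzero homogeneous sections of degrees $d(s),d(s')$ on analytic tangent cones of $\E$ and $\E'$ respectively --- here (1) is used, to have finite degrees and hence strong convergence. Then $[s]_j\otimes[s']_j\to s_\infty\otimes s'_\infty$ off a Lebesgue-null set, and $s_\infty\otimes s'_\infty$ is a nonzero homogeneous section of degree $d(s)+d(s')$ on the tangent cone of $(\E\otimes\E')^{**}$, so its $L^2(B)$-norm is positive; since $\|[s]_j\otimes[s']_j\|^2_{L^2(B,\omega_j)}=\|s\otimes s'\|_j^2\big/\bigl(\|s\|_j^2\|s'\|_j^2\bigr)$, lower semicontinuity of the $L^2$-norm gives $\|s\otimes s'\|_j^2\ge c\,\|s\|_j^2\,\|s'\|_j^2$ along the subsequence, and passing to $-\tfrac{1}{2j\log 2}\log(\cdot)$ and using $\eqref{j-norm}$ (with $\tfrac{\log M_\sigma(2^{-j})}{2\log 2^{-j}}\to n+d(\sigma)$ for $\sigma=s,s',s\otimes s'$, where $M_\sigma(r)=\int_{B_r}|\sigma|^2$) gives $d(s\otimes s')\le d(s)+d(s')$.

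\textbf{The main obstacle.} I expect the real difficulty to be Part (1) --- specifically, making the lower bound $\int_{B_r}|s|^2\gtrsim r^{2n+O(1)}$ rigorous in the general (non-isolated, possibly non-homogeneous) setting: controlling the Riesz mass of $\log|s|^2_H$ near $0$, equivalently ruling out super-polynomial decay of the admissible HYM metric near $\Sing(\E)$ (or, on the alternative route, upgrading the frequency monotonicity of \cite{CS1} beyond the isolated/homogeneous case). Everything else is comparatively soft: (2), (3) and the bound $d(s\otimes s')\ge d(s)+d(s')$ are elementary consequences of $\eqref{BS interior estimate}$, and the reverse bound in (4) is a routine application of the strong-convergence and lower-semicontinuity results of Section \ref{section2.1}, the only bookkeeping being to take a single subsequence of $\{2^{-j}\}$ that works for $s$, $s'$, and $s\otimes s'$ simultaneously.
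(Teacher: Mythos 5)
Your treatments of (2), (3) and (4) are essentially correct and close to the paper's: (2) and (3) are immediate from local boundedness of $|s|_H$ (Bando--Siu) and the definition, and for (4) the paper likewise passes to a common subsequence of $\{2^{-j}\}$ along which $[s]_j$, $[s']_j$ and $[s\otimes s']_j$ converge strongly to nonzero homogeneous limits; writing $[s\otimes s']_j=C_j\,[s]_j\otimes[s']_j$ and using that all three limits are nonzero to bound $|C_j|$ and $|C_j|^{-1}$ gives the equality $d(s\otimes s')=d(s)+d(s')$ in one stroke (your separate dyadic estimate for the inequality $\geq$ is fine but not needed). Note that in (4) you do rely on Item (1) to know the limits exist and are nonzero, so everything hinges on (1).

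Part (1) is where your argument has a genuine gap, and you have in fact located it yourself: the assertions that ``an admissible HYM metric has at worst polynomial singular behaviour near $\Sing(\E)$'' and that the Almgren frequency is monotone in the general (non-isolated, non-homogeneous) setting are exactly what is \emph{not} available a priori; neither the Riesz-mass bound on $\log|s|^2_H$ nor the finiteness of $N(r_0)$ is established by what you wrote, so the lower bound $\int_{B_r}|s|^2\gtrsim r^{2n+O(1)}$ remains unproven. The paper's resolution is a different and much softer idea which avoids any analysis of $H$ near $\Sing(\E)$: by Cartan's Theorem A the dual sheaf $\E^*$ is globally generated near $\overline B$, giving an embedding $0\to\E\to\O^{\oplus n_1}$, hence an auxiliary metric $H_0$ on $\E$ induced from the flat metric on $\O^{\oplus n_1}$. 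The Bando--Siu theorem says every holomorphic section $\zeta$ of $\E^*$ has $|\zeta|_{H^*}$ bounded on $B_{1/2}$, which yields $H^*\leq C H_0^*$ and therefore $H\geq C^{-1}H_0$ on $B_{1/2}\setminus\Sing(\E)$. Consequently $d(s)\leq d^0(s)$, where $d^0$ is the degree computed with $H_0$; and viewing $s$ as a tuple of holomorphic functions $(F_1,\dots,F_{n_1})$ one has $d^0(s)=\min_j\deg(F_j)<\infty$ for $s\neq 0$. In short: the one-sided comparison $H\geq C^{-1}H_0$, obtained by dualizing and quoting Bando--Siu boundedness of sections of $\E^*$, is the missing ingredient that converts your desired lower bound into an elementary statement about vanishing orders of holomorphic functions.
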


\begin{proof} The key is Item (1).  First we recall that by Cartan's Theorem A any coherent sheaf over a Stein manifold is generated by global sections. Now by assumption the dual sheaf $\E^*$ is  defined on a neighborhood $\Omega$ of $\overline B$, so it is generated by finitely many global sections over a slightly smaller neighborhood $\Omega'$ of $\overline B$. In 
other words, over $\Omega'$ we have a surjective sheaf homomorphism $\O^{\oplus n_1}\rightarrow \E^*$ for some $n_1$. Applying this again to the kernel sheaf, we then obtain an exact sequence on $B$ of the form \begin{equation}
\O^{\oplus n_2}\rightarrow \O^{\oplus n_1}\rightarrow \E^*\rightarrow 0.
\end{equation}
Taking dual we obtain 
\begin{equation}\label{eqn2.1}
0\rightarrow \E \rightarrow \O^{\oplus n_1} \xrightarrow{\rho} \O^{\oplus n_2}
\end{equation}
We can now endow the natural flat Hermitian metric on $\O^{\oplus n_1}$, then we also get induced Hermitian metrics $H_0$ on $\E$ and $H_0^*$ on $\E^*$ away from $\Sing(\E)=\Sing(\E^*)$. 

\begin{lem}\label{lem3-5}
There exists a constant $C>0$ such that on $B_{1/2}\setminus \Sing(\E)$ we have
\begin{equation}
H\geq C\cdot H_0\end{equation}
\end{lem}
\begin{proof}
By basic linear algebra it suffices to show $H^*\leq CH_0^*$. This then follows from the fact that any holomorphic section $\zeta$ of $\E^*$ over $B$ has $|\zeta|_{H^*}$ uniformly bounded on $B_{1/2}$ (see Theorem $2$ in \cite{BS}). 
\end{proof}

For a holomorphic function $f$ defined on a neighborhood of $0$, we denote by $\deg(f)$ the vanishing order of $f$ at $0$.  Notice if $f$ is not identically zero, then 
$$\deg(f)=\frac{1}{2}\lim_{r\rightarrow 0}\frac{\log \int_{B_r} |f|^2\dVol_{\omega_0}}{\log r}-n.$$
This is an easy consequence using Taylor expansion of $f$ at $0$. We also make the convention that $\deg(f)=\infty$ if $f$ is identically zero.

Given any non-zero $s\in \E_0$, we define 
$$d^0(s)=\frac{1}{2}\lim_{r\rightarrow 0} \frac{\log\int_{B_r} |s|^2_{H_0}\dVol_{\omega_0}}{\log r}-n.$$
Here we emphasize that $d^0$ is defined with respect to the metric $H_0$.
If we view $s$ as a tuple of holomorphic functions $(F_1, \cdots, F_{n_1})$ using \eqref{eqn2.1}, then it is easy to see that if $s\neq 0$, then 
$$
d^0(s)=\min_j \deg(F_j). $$
Lemma \ref{lem3-5} then shows that for nonzero $s$, 
$$d(s)\leq d_0(s)<\infty.$$
 This proves Item (1) of Theorem \ref{thm3.1}.

Item (2) follows from the fact that for any  holomorphic section $s$ of $\E$, $|s|_H$ is locally bounded (c.f. \cite{BS}, Theorem $2$).  Item (3) follows easily from the definition. 

 Now we prove Item (4). Given nonzero $s$ and $s'$, by passing to a subsequence we may assume the rescaled sequences $[s]_j, [s']_j, [s\otimes s']_j$ converges strongly to nonzero homogeneous limit sections $s_\infty, s_\infty', s_\infty''$ respectively. It follows from definition that $[s]_j\otimes [s']_j$ converges strongly to $s_\infty\otimes s_\infty'$. On the other hand, we have 
 $$[s\otimes s']_j=C_j\cdot [s]_j\otimes [s']_j$$
 for some $C_j\in \C^*$. Since $s_\infty, s_\infty', s_\infty''$ are all nonzero, it follows that $|C_j|$ and $|C_j|^{-1}$ are uniformly bounded as $j$ tends to infinity. Passing to a further subsequence we may assume 
 $$s_\infty''=C_\infty s_\infty\otimes s_\infty'$$
 for some $C_\infty\neq 0$. It then follows that 
 $$d(s\otimes s')=d(s_\infty'')=d(s_\infty)+d(s_\infty')=d(s)+d(s').$$
This finishes the proof of Theorem \ref{thm3.1}.
\end{proof}

\begin{cor}
\label{cor3-4}
Denote by $\O_0$ the stalk at $0$ of the sheaf of holomorphic functions on $B$. Then for all  $s\in \E_0$ and  $f\in \O_{0}$, we have 
\begin{equation}
d(fs)=\deg(f)+d(s).
\end{equation}
\end{cor}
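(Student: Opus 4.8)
The plan is to deduce this directly from Theorem \ref{thm3.1}(4) applied to a well-chosen auxiliary bundle. Take $\E'$ to be the trivial line bundle $\O$ over $B$ equipped with the standard flat Hermitian metric and the trivial flat connection. This is an admissible HYM connection in the sense of the introduction: it is locally free everywhere, its curvature vanishes identically so the HYM equation \eqref{HYMequation} holds with Einstein constant $0$, and it obviously has locally finite Yang-Mills energy. Thus Theorem \ref{thm3.1} applies with $\E'=\O$.

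First I would check that for this choice the degree function $d$ on $\O_0$ coincides with the vanishing-order function $\deg$. With the flat metric $H_0\equiv 1$ on $\O$ one has $|f|^2_{H_0}=|f|^2$, and since $\omega=\omega_0+O(|z|^2)$ gives $\dVol_\omega=(1+O(|z|^2))\dVol_{\omega_0}$, the integral $\int_{B_r}|f|^2\dVol_\omega$ has the same leading logarithmic behaviour as $\int_{B_r}|f|^2\dVol_{\omega_0}$ as $r\to 0^+$; by the Taylor expansion of $f$ at $0$ this leading behaviour is governed exactly by $\deg(f)$, a computation already recorded in the proof of Theorem \ref{thm3.1}. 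Hence $d(f)=\deg(f)$ for every nonzero $f\in\O_0$, and $d(0)=\infty=\deg(0)$ by convention.

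Next, under the canonical isomorphism $\O\otimes\E\cong\E$, the section $f\otimes s$ is identified with $fs$, and the induced Hermitian metric $H_0\otimes H$ on $\O\otimes\E$ is identified with $H$; so the degree function on $(\O\otimes\E)_0$ transports to the degree function on $\E_0$. Therefore, when $f$ and $s$ are both nonzero, Theorem \ref{thm3.1}(4) yields
\begin{equation}
d(fs)=d(f\otimes s)=d(f)+d(s)=\deg(f)+d(s).
\end{equation}
It remains to dispose of the degenerate cases: if $s=0$ or $f\equiv 0$ then $fs=0$, so $d(fs)=\infty$, while $\deg(f)+d(s)=\infty$ as well under the conventions $\deg(0)=\infty$ and $d(0)=\infty$; so the identity holds in all cases.

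The only point requiring any care — and it is a mild one — is the verification in the second paragraph that the degree function, which is defined using the (possibly non-flat) Kähler metric $\omega$, still computes the vanishing order on the trivial bundle; i.e., that the $O(|z|^2)$ perturbation of the Kähler form does not alter the leading logarithmic asymptotics of $\int_{B_r}|f|^2\dVol_\omega$. This is routine. All of the substantive analytic content — finiteness of the degree, strong convergence of rescaled sections, and multiplicativity under tensor products — is already contained in Theorem \ref{thm3.1}, so no genuine obstacle arises.
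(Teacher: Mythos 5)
Your proof is correct and follows exactly the paper's route: the paper's entire proof is ``apply Theorem \ref{thm3.1}(4) with $\E'$ the trivial Hermitian line bundle,'' and your additional verifications (that the flat connection on $\O$ is admissible HYM with respect to $\omega$, that $d(f)=\deg(f)$ despite the $O(|z|^2)$ perturbation of the metric, and the degenerate cases) are exactly the details the paper leaves implicit.
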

\begin{proof}
This follows from  Item (4) above, applied to the case when $\E'$ is the trivial Hermitian line bundle on $B$.
\end{proof}

For our purpose later, we also need the following semi-continuity property of  degrees under taking analytic tangent cones. This property will be crucial in a few places later in this section. 

\begin{prop}\label{prop2.7}
Let $s_j$ be a sequence of holomorphic sections of $\E$ over a fixed neighborhood $B'$ of $0$. Suppose $d(s_j)\geq \mu$ for all $j$, and the rescaled sequence $[s_j]_j$ converges to a nonzero limit section $s_\infty$ on some analytic tangent cone as $j\rightarrow\infty$, then $s_\infty$ (which is not necessarily homogeneous) has degree at least $\mu$. 
\end{prop}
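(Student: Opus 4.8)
The plan is to reduce the statement to a uniform $L^2$-decay estimate on the original sections and then pass to the limit. Concretely, I aim to prove that there is a constant $C$, independent of $\rho$, with
$$\int_{B_\rho}|s_\infty|^2\,\dVol_{\omega_0}\ \le\ C\,\rho^{2\mu+2n}\qquad\text{for all small }\rho>0 .$$
Granting this, dividing $\log(\cdot)$ by $2\log\rho$ and letting $\rho\to 0$ gives $d(s_\infty)\ge\mu$ directly from the definition of the degree function (note $d(s_\infty)$ is a genuine limit, and finite, because $A_\infty$ is an admissible HYM connection on $\E_\infty$ and $s_\infty\ne 0$, by Theorem \ref{thm3.1}(1) applied to $(\E_\infty,A_\infty)$).

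First I would record a rescaling identity. Writing $[s_j]_j=\|s_j\|_j^{-1}\lambda_j^*s_j$ and changing variables by $z\mapsto\lambda_j z$, one gets
$$\int_{B_\rho}|[s_j]_j|^2\,\dVol_{\omega_j}\ =\ \lambda_j^{-2n}\Vol(B_{\lambda_j},\omega)\cdot\frac{\int_{B_{\lambda_j\rho}}|s_j|^2\,\dVol_\omega}{\int_{B_{\lambda_j}}|s_j|^2\,\dVol_\omega},$$
and since $\omega=\omega_0+O(|z|^2)$ the prefactor $\lambda_j^{-2n}\Vol(B_{\lambda_j},\omega)$ is bounded by a fixed constant once $j$ is large. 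Hence everything comes down to estimating the ratio $\int_{B_{\lambda_j\rho}}|s_j|^2/\int_{B_{\lambda_j}}|s_j|^2$ uniformly in $j$. This is exactly where the hypothesis $d(s_j)\ge\mu$ is used: I would invoke the monotonicity of the frequency function associated to the admissible HYM connection $A$ on $(\E,H)$ from \cite{CS1} (which, as noted repeatedly, does not require the singular set to be isolated), whose limiting value at $0$ along a nonzero holomorphic section $\sigma$ equals $d(\sigma)+n$. Combined with the HYM equation and the bound $\omega=\omega_0+O(|z|^2)$, this yields a doubling estimate
$$\int_{B_{r_1}}|\sigma|^2\,\dVol_\omega\ \le\ C_0\Bigl(\tfrac{r_1}{r_2}\Bigr)^{2\mu+2n}\int_{B_{r_2}}|\sigma|^2\,\dVol_\omega\qquad\bigl(0<r_1\le r_2\le \tfrac12\bigr)$$
for every holomorphic section $\sigma$ near $0$ with $d(\sigma)\ge\mu$, where the constant $C_0$ depends only on $\omega$ and the Einstein constant $c$ — crucially \emph{not} on $\sigma$, hence not on $j$. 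Taking $\sigma=s_j$, $r_1=\lambda_j\rho$, $r_2=\lambda_j$ and combining with the rescaling identity gives $\int_{B_\rho}|[s_j]_j|^2\,\dVol_{\omega_j}\le C\rho^{2\mu+2n}$ uniformly in $j$.

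Finally I would pass to the limit using lower semicontinuity. Since $P_j$ is a Hermitian isomorphism, $|[s_j]_j|_{\lambda_j^*H}=|P_j([s_j]_j)|_{H_\infty}$, and by hypothesis (after passing to the subsequence realizing the convergence to the analytic tangent cone) $P_j([s_j]_j)\to s_\infty$ smoothly on $\C^n\setminus(\Sigma\cup Z(\E))$, a set of vanishing Lebesgue measure; together with $\omega_j\to\omega_0$ smoothly, Fatou's lemma gives
$$\int_{B_\rho}|s_\infty|^2\,\dVol_{\omega_0}\ \le\ \liminf_{j\to\infty}\int_{B_\rho}|[s_j]_j|^2\,\dVol_{\omega_j}\ \le\ C\rho^{2\mu+2n},$$
which is the estimate isolated at the start. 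The main obstacle is the uniform doubling estimate of the previous paragraph: one has to check that the frequency-monotonicity argument of \cite{CS1} produces a constant $C_0$ depending only on the fixed background data, while only a lower bound $\mu$ on $d(\sigma)$ (not the actual value, which may vary with $j$) is available, and that it applies verbatim when $\E$ has non-isolated singularities and $\sigma$ may vanish along $\Sing(\E)$; once the relevant error terms are seen to be governed purely by the curvature of $\omega$ and by $c$, the rest of the argument is soft.
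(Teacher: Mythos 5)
Your overall strategy --- establish a decay estimate for $\int_{B_r}|s_j|^2$ that is uniform over all sections with $d\geq\mu$, transfer it to $[s_j]_j$ by the rescaling identity, and pass to the limit by Fatou --- is exactly the strategy of the paper's proof, and the rescaling identity and the semicontinuity step at the end are correct and routine (the latter is \eqref{lower semi continuity}). The problem is that the uniform doubling estimate you isolate as ``the main obstacle'' \emph{is} the entire content of the proposition, and you do not prove it; you only assert that it should follow from frequency monotonicity in \cite{CS1}. Moreover, in the form you state it --- sharp exponent $2\mu+2n$, valid on all scales $0<r_1\leq r_2\leq \tfrac12$, with $C_0$ depending only on $\omega$ and $c$ --- it is stronger than what the machinery of \cite{CS1} delivers and is most likely not provable (possibly not even true). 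The frequency along a holomorphic section is only \emph{almost} monotone: the error terms come from the deviation of $(A,H,\omega)$ from an exact cone, and no decay rate for that deviation is known a priori --- indeed, uniqueness of the tangent cone is the theorem being proved. In the flat model the sharp exponent holds, but here one can only obtain it up to an $\epsilon$ loss and only at scales $2^{-i}$ with $i\geq i_0(\epsilon)$.

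The correct, provable version of your estimate is the paper's Claim \ref{clm3.7}: for every small $\epsilon>0$ there is $i_0(\epsilon)$ (independent of the section) such that every $s\in H^0(B',\E)$ with $d(s)\geq\mu$ satisfies $\|s\|_i\leq 2^{-(\mu-\epsilon)}\|s\|_{i-1}$ for all $i\geq i_0(\epsilon)$. This is deduced by contradiction from the three-circles/propagation lemma (Proposition 3.15 of \cite{CS1}): if the dyadic ratio of a section ever exceeds the threshold $2^{-(\mu-\epsilon)}$ at some scale $i\geq i_0'$, it exceeds it at all subsequent scales, forcing $d(s)\leq\mu-\epsilon<\mu$. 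Note that this resolves precisely the uniformity issue you flag --- the lemma is a threshold statement at level $\mu-\epsilon$ quantified over \emph{all} sections, so only the lower bound $d(s)\geq\mu$ is ever used, and $i_0$ depends only on $\epsilon$ and the background data. The $\epsilon$ loss and the restriction to small scales are harmless for your argument: iterating gives $\int_{B_\rho}|[s_j]_j|^2\dVol_{\omega_j}\leq C_\epsilon\,\rho^{2(\mu-\epsilon)+2n}$ for $\rho$ in the relevant dyadic range, whence $d(s_\infty)\geq\mu-\epsilon$ for every $\epsilon>0$, and one concludes by letting $\epsilon\to 0$ (which is how the paper finishes). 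So your write-up needs to (i) weaken the doubling estimate to the $\epsilon$-loss, small-scale version, and (ii) actually derive it from the propagation lemma of \cite{CS1} rather than leaving it as a check.
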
 
\begin{proof}
We first make the following 
\begin{clm}\label{clm3.7}
For any $\epsilon>0$ small enough so that $\mu-\epsilon \notin ((\rank\E)!)^{-1}\mathbb{Z}$, there exists an $i_0=i_0(\epsilon)$, so that for any $i\geq i_0$ and $s\in H^0(B',\E)$ with $d(s)\geq\mu$, we  have $\|s\|_i \leq 2^{-(\mu - \epsilon)} \|s\|_{i-1}$. 
\end{clm}
Given this Claim, it follows that $$\|[s_j]_j\|_i\leq 2^{-(\mu-\epsilon)} \|[s_j]_j\|_{i-1}$$ for all $i\geq i_0$.  Taking limit as $j\rightarrow\infty$ we obtain  $$\|s_{\infty}\|_{i} \leq 2^{-(\mu-\epsilon)}\|s_\infty\|_{i-1}$$ for all $i\geq i_0+1$. It follows that $d(s_\infty)\geq \mu-\epsilon$. Letting $\epsilon\rightarrow 0$ we obtain the conclusion. 
\end{proof} 

\begin{proof}[Proof of Claim \ref{clm3.7}]
Otherwise, there exists a subsequence $j_i$ and $s_{j_i} \in H^0(B', \E)$ with $d(s_{j_i})\geq \mu$,  so that for all $i$ large we have $$\|s_{j_i}\|_i> 2^{-(\mu-\epsilon)} \|s_{j_i}\|_{i-1}.$$ On the other hand, by Proposition $3.15$ in \cite{CS1} (again the proof extends trivially to our general setting), we know that there exists some $i_0'$ (depending on $\epsilon$) so that for any $i\geq i_0'$ and $s\in H^0(B_{2^{-i}}, \E)$ if $$\|s\|_i \geq 2^{-(\mu-\epsilon)}\|s\|_{i-1}$$ then $$\|s\|_{i+1} \geq 2^{-(\mu-\epsilon)} \|s\|_i.$$ Then for any $i$ so that $j_i \geq i_0'$, we know $s_{j_i}$ must have degree smaller than $\mu-\epsilon$ which is a contradiction. 
\end{proof}

\subsection{Construction of torsion-free sheaves  on $D$}\label{A torsion-free sheaf on $D$}
In this subsection we define certain canonical torsion-free coherent sheaves on  the exceptional divisor $D=\P^{n-1}$ of the blowup $p:\widehat B\rightarrow B$, which are intrinsically associated to the HYM connection $A$ on $\E$. 

The construction of this subsection can be done using only the stalk $\E_0$, but for the discussion in the next subsection it is more convenient that we work with global sections over $B$ instead of  the stalk $\E_0$. Clearly the degree function defined previously induces a degree function 
\begin{equation}
d: H^0(B, \E) \rightarrow (\rank(\E)!)^{-1}\Z_{\geq0}\cup \{\infty\},
\end{equation}
which satisfies the same properties as those listed in Theorem \ref{thm3.1}.

Let $\mathcal{S}=Im(d)$. We list the nonnegative numbers in $\mathcal S+\mathbb{Z}$ as 
\begin{equation}\label{eqn3.10}
0\leq \mu_1<\cdots <\mu_k<\mu_{k+1}<\cdots.
\end{equation}
We denote by $m$ the biggest integer such that  $\mu_m-\mu_1<1$.  It follows from Corollary \ref{cor3-4} that for $i=1, \cdots, m$ and $l\in \Z_{\geq 0}$, we have $\mu_{i+ml}=\mu_i+l$.

\begin{defi}
For $k\geq 1$, we define 
\begin{equation}
M_k:=\{s\in H^0(B, \E)| d(s)\geq \mu_k\}.
\end{equation}
\end{defi}
It follows from Theorem \ref{thm3.1} (3) that $M_k$ is a $\C$-vector space. 
Since any holomorphic section $s\in H^0(B, \E)$ with $d(s)<\infty$ gives rise to nonzero homogeneous sections on the analytic tangent cones with degree $d(s)$, it follows that on any analytic tangent cone $\E_\infty$, for each $i=1, \cdots, m$, there is a nontrivial direct summand $\underline\E_\infty^{\mu_i}$ of $\underline\E_\infty$ (see Lemma \ref{lem2.6}). However at this moment we do not know if there are possibly other direct summands of $\underline\E_\infty$ since we have not shown how to construct local holomorphic sections of $\E$ from a homogeneous section of $\E_\infty$. Later we will indeed show there are no extra direct summands, see Remark \ref{r:rank equal}.

\begin{prop}\label{l:orthogonal projections}
For each $k$, $M_k/M_{k+1}$ is  finite dimensional as a $\C$-vector space.  
\end{prop}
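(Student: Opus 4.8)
The plan is to show that the quotient $M_k/M_{k+1}$ embeds into the finite-dimensional space of homogeneous holomorphic sections of degree $\mu_k$ on a fixed analytic tangent cone, whose finiteness is Lemma \ref{lem-finite cone}. The key construction is a ``symbol'' or ``leading term'' map: given a nonzero $s\in M_k$, by definition $d(s)\geq \mu_k$. If $d(s)=\mu_k$, then passing to a subsequence of $\lambda_j=2^{-j}$, the rescaled sections $[s]_j$ converge strongly to a nonzero homogeneous section $\sigma(s)$ of degree $\mu_k$ on some analytic tangent cone $\E_\infty$, with $\|\sigma(s)\|_{L^2(B)}=1$; if instead $d(s)>\mu_k$ (equivalently $s\in M_{k+1}$, since $\mathcal{S}+\Z$ is discrete and $\mu_{k+1}$ is the next value), we set $\sigma(s)=0$. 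The first issue is that $\sigma(s)$ depends on the choice of subsequence and a priori on the normalization $\|s\|_j$, so this is not literally a well-defined linear map on $M_k$. To get around this, I would argue indirectly: suppose for contradiction that $M_k/M_{k+1}$ is infinite dimensional, pick representatives $s_1,s_2,\dots\in M_k$ whose images in $M_k/M_{k+1}$ are linearly independent, and derive a contradiction with Lemma \ref{lem-finite cone}.

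Here is how I would run the contradiction argument. First, note that we may fix a single analytic tangent cone: by Remark \ref{rmk:sequence} we only need the sequence $\lambda_j=2^{-j}$, and by a diagonal argument we can pass to one subsequence that simultaneously realizes a tangent cone $(\E_\infty,\Sigma_b^{an})$ along which the rescaled normalized sections $[s_i]_j$ of any finite collection converge. For a fixed $N$, consider the $N$-dimensional space $W_N=\mathrm{span}_{\C}\{s_1,\dots,s_N\}\subset M_k$. For any $t=\sum a_i s_i\in W_N$ not lying in $M_{k+1}$, we have $d(t)=\mu_k$ exactly (it is $\geq\mu_k$ since $t\in M_k$, and if it were $>\mu_k$ then $t\in M_{k+1}$, contradicting that the $s_i$ are independent mod $M_{k+1}$ unless all $a_i=0$ — more precisely, the set of $t\in W_N$ with $d(t)>\mu_k$ is $W_N\cap M_{k+1}$, a proper subspace). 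Thus on the complement of a proper subspace, $d$ equals $\mu_k$. Now I would use the degree-$\mu_k$ leading-term construction together with the semicontinuity Proposition \ref{prop2.7} and the facts about strong convergence in Lemma \ref{facts about strong convergence}: choosing the normalization carefully (rescale each $s_i$ so that its ``leading coefficient'' along the fixed subsequence is controlled), one shows that the map sending $t\in W_N\setminus M_{k+1}$ to its degree-$\mu_k$ homogeneous limit extends to an injective linear map $W_N/(W_N\cap M_{k+1})\hookrightarrow V_{\mu_k}$, where $V_{\mu_k}$ is the space of homogeneous degree-$\mu_k$ sections on $\E_\infty$. Injectivity is the crucial point: if $t\in W_N$ has $d(t)>\mu_k$ then its image is $0$, and conversely if the degree-$\mu_k$ limit of (a normalized) $t$ vanishes, then by Proposition \ref{prop2.7} — applied to a suitable renormalization $[t]_j$ — one forces $d(t)\geq\mu_k+\delta$ for some $\delta>0$, hence $d(t)\geq\mu_{k+1}$ by discreteness, i.e. $t\in M_{k+1}$. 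Since $\dim_{\C} V_{\mu_k}<\infty$ by Lemma \ref{lem-finite cone}, this bounds $\dim W_N/(W_N\cap M_{k+1})\leq \dim V_{\mu_k}$ for every $N$, contradicting infinite-dimensionality of $M_k/M_{k+1}$.

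The main obstacle I anticipate is making the ``leading-term'' assignment genuinely linear on the quotient. The issue is that strong convergence $[s]_j\to\sigma(s)$ holds only after passing to a subsequence, and the limit is defined up to the scaling freedom in how one normalizes; for a sum $s+s'$ the relevant normalizing constants $\|s+s'\|_j$ need not match $\|s\|_j$ or $\|s'\|_j$. The fix is to work with \emph{un-normalized} rescaled sections $\lambda_j^* s$ but first multiply each fixed generator $s_i$ by a constant so that $\|s_i\|_{j}\asymp 2^{-j\mu_k}$ (possible since $d(s_i)=\mu_k$), and then invoke Lemma \ref{facts about strong convergence} to see that $\sum a_i\lambda_j^* s_i$ converges (after rescaling by $2^{j\mu_k}$) to $\sum a_i\sigma(s_i)$; the possible degeneration (the limit being zero) is exactly the kernel, which Proposition \ref{prop2.7} identifies with $W_N\cap M_{k+1}$. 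Handling the finitely many ``bad directions'' where the naive limit vanishes, and ensuring the semicontinuity estimate in Proposition \ref{prop2.7} can be applied uniformly over the subspace $W_N$, is the technical heart; everything else is bookkeeping with the discreteness of $\mathcal{S}+\Z$ and the interior estimate \eqref{BS interior estimate}.
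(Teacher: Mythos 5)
Your overall strategy is the paper's: inject $M_k/M_{k+1}$ (or each finite-dimensional piece of it) into the space $V_{\mu_k}$ of homogeneous degree-$\mu_k$ sections on a fixed tangent cone and invoke Lemma \ref{lem-finite cone}. But the step you correctly identify as ``the technical heart'' --- making the leading-term assignment linear and injective on $W_N/(W_N\cap M_{k+1})$ --- is exactly where the proposal has a gap, and the proposed fix does not close it. The condition $d(s)=\mu_k$ only gives $\log\|s\|_j=-j\mu_k\log 2+o(j)$; it does \emph{not} imply $\|s\|_j\asymp 2^{-j\mu_k}$, and no rescaling of $s$ by a constant can arrange that, since the quantity $2^{j\mu_k}\|s\|_j$ may tend to $0$ or $\infty$ (subexponentially) or oscillate. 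Consequently the ``un-normalized'' sections $2^{j\mu_k}\lambda_j^*s_i$ need not converge at all, and Lemma \ref{facts about strong convergence} cannot be applied to them. Worse, even granting convergence of each generator, for a combination $t=\sum_i a_i s_i$ one has $\lambda_j^*t=\sum_i a_i\|s_i\|_j\,[s_i]_j$, and the ratios $\|s_i\|_j/\|s_{i'}\|_j$ between two generators of the same degree need not converge; so fixing the coefficients $a_i$ once and for all and passing to the limit does not produce a linear map, and the claim that the kernel is exactly $W_N\cap M_{k+1}$ (i.e.\ that the limit of $t\notin M_{k+1}$ is nonzero) is precisely the statement that still has to be proved.

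The paper resolves this by renormalizing \emph{at every scale} rather than once: it runs the Gram--Schmidt process on $\{\lambda_j^*s_{k,1},\dots,\lambda_j^*s_{k,l}\}$ for each $j$, producing $L^2(B)$-orthonormal sections $\sigma^j_{k,1},\dots,\sigma^j_{k,l}$ whose limits $\sigma^\infty_{k,1},\dots,\sigma^\infty_{k,l}$ are automatically $L^2$-orthogonal with norm $\le 1$; linear independence of the limits is then free, with no need for a well-defined linear symbol map. The only thing left to check is that no $\sigma^\infty_{k,i}$ degenerates (each has norm exactly $1$ and is homogeneous of degree exactly $\mu_k$), and this is where the linear independence of the $\tilde s_{k,i}$ in $M_k/M_{k+1}$ enters, via an induction on $i$ (the argument of Proposition 3.12 in \cite{CS1}) using the semicontinuity of Proposition \ref{prop2.7}: a degenerating Gram--Schmidt coefficient would force some combination $s_{k,l+1}-\sum_i c_i s_{k,i}$ to have degree $\ge\mu_{k+1}$, contradicting independence modulo $M_{k+1}$. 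If you replace your fixed-coefficient limit with this scale-by-scale orthonormalization, your argument becomes the paper's proof.
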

\begin{proof}
Suppose $M_k/M_{k+1}$ is nontrivial. We choose a sequence of elements $s_{k,1}, s_{k, 2}, \cdots$ in $M_k$ in the following way. We first choose $s_{k,1}$ such that the induced element $\tilde s_{k,1}$  in $M_k/M_{k+1}$ is not zero. Suppose $s_{k, 1}, \cdots, s_{k, l}$ are chosen. If $\tilde s_{k,1}, \cdots, \tilde s_{k, l}$ span $M_k/M_{k+1}$ then we stop. Otherwise  we choose $s_{k,l+1}$ such that $\tilde s_{k, 1}, \cdots, \tilde s_{k, l+1}$ are linearly independent in $M_k/M_{k+1}$. This process a priori may be infinite. In any case if $l\leq \dim_{\C} M_k/M_{k+1}$ then we denote by $\mathcal H_{k,l}$ the $\C$-vector space spanned by $s_{k,1}, \cdots, s_{k, l}$. Then by definition $\mathcal H_{k,l}\cap M_{k+1}=0$, hence $d(s)=\mu_k$ for all $s\in \mathcal H_{k,l}\setminus\{0\}$. 

From these we also construct a sequence of $L^2$ orthonormal sections $\{\sigma^{j}_{k, i}\}$ over $B$ for $j$ sufficiently large (depending on $k$ and $i$), as follows. First we define for $j\gg 1$
$$\sigma^j_{k,1}:=\frac{\lambda_j^* s_{k,1}}{\|\lambda_j^*s_{k, 1}\|_{L^2(B)}}.$$
Suppose $\sigma^j_{k, 1}, \cdots, \sigma^j_{k, l}$ are defined for $j\geq j_0$. Then for $j\gg j_0$ we define $\sigma^j_{k, l+1}$ to be the $L^2$ orthonormal projection of $\lambda_j^*s^j_{k, l+1} $ to the complement of the space spanned by $\sigma^{j}_{k, 1}, \cdots, \sigma^j_{k, l}$, and then normalized to have $L^2$ norm 1. This is the standard Gram-Schmidt process.  

Now fix an analytic tangent cone $\E_\infty$. After passing to a subsequence we may assume for each $i=1, \cdots, l$, $\sigma^j_{k, i}$ converges to a holomorphic section $\sigma^\infty_{k, i}$ on $\E_\infty$ with $\|\sigma^\infty_{k, i}\|_{L^2(B)}\leq 1$, and they are $L^2$ orthogonal over $B$. Now for each $i$, $\sigma^\infty_{k, i}$ is homogeneous of degree $\mu_k$ and $\|\sigma^\infty_{k, i}\|_{L^2(B)}=1$.
Indeed,  for $i=1$ this is simply the fact that $d(s_{k,1})=\mu_k$; for $i\geq 2$ this follows from the same induction argument as Proposition 3.12 in \cite{CS1}.
Given this, it follows that $l$ can not be bigger than the dimension of homogeneous holomorphic sections of degree $\mu_k$ on $\E_\infty$, which is finite by Lemma \ref{lem-finite cone}.

\end{proof}

We denote  
$$n_k=\dim_{\C} M_k/M_{k+1},$$ then the above process stops with $l=n_k$.
For any $\mu\in\{\mu_1,\cdots, \mu_m\}$, we define 
\begin{equation}\label{e:definition of N}N^{\mu}:=\bigoplus_{\mu_k\equiv \mu(\text{mod}\ \mathbb Z)} M_k/M_{k+1}.
\end{equation}
 We define a $\mathbb Z$-grading on $N^\mu$ by setting the degree of $[s]$ to be $\mu_k-\mu$ for $0\neq [s]\in M_k/M_{k+1}$.
 As a direct corollary of Theorem \ref{thm3.1}, we know $N^{\mu}$ is  a graded module over $\C[z_1, \cdots z_n]$.  
  
 The following is the main result of this subsection. 
 
\begin{thm}\label{thm3.8}
For each $\mu\in\{\mu_1, \cdots, \mu_m\}$, $N^\mu$ is a finitely generated torsion-free module over $\C[z_1, \cdots, z_n]$. 
\end{thm}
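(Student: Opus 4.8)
The plan is to establish the three properties separately: torsion-freeness, finite generation, and (implicitly along the way) the module structure, which has already been noted to follow from Theorem \ref{thm3.1}. For torsion-freeness, suppose $f\in\C[z_1,\dots,z_n]$ and $[s]\in M_k/M_{k+1}$ satisfy $f\cdot[s]=0$ in $N^\mu$; unwinding the grading, this means $fs\in M_{k'+1}$ where $\mu_{k'}=\mu_k+\deg f$. But by Corollary \ref{cor3-4} we have $d(fs)=\deg(f)+d(s)=\deg(f)+\mu_k=\mu_{k'}$, provided $s\notin M_{k+1}$, since $d(s)=\mu_k$ exactly when $[s]\neq 0$ in $M_k/M_{k+1}$. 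So $fs\notin M_{k'+1}$ unless $f=0$ or $[s]=0$. Hence $N^\mu$ has no torsion. (One must be slightly careful that $[s]$ is a class, not an element; but the degree function is constant $=\mu_k$ on $M_k\setminus M_{k+1}$, so the argument is well-posed on representatives.)

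For finite generation, the key structural fact is the periodicity $\mu_{i+ml}=\mu_i+l$ established just before Definition via Corollary \ref{cor3-4}, together with Proposition \ref{l:orthogonal projections} which gives $\dim_{\C}(M_k/M_{k+1})=n_k<\infty$ for each $k$. I would argue as follows. Over $\C[z_1,\dots,z_n]$, choose homogeneous lifts: for each $k$ with $\mu_k\equiv\mu$, pick representatives of a basis of $M_k/M_{k+1}$, giving finitely many graded elements in each degree. I claim that for $l$ large enough, the degree-$(\mu_k+l)$ piece of $N^\mu$ (i.e.\ $M_{k+ml}/M_{k+ml+1}$) is generated over $\C[z_1,\dots,z_n]$ by the pieces in degrees $\leq$ some fixed bound. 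Concretely: the natural multiplication map from $(\m_0/\m_0^2)\otimes M_{k-1}/M_k$ type data into $M_k/M_{k+1}$ — more precisely the map sending $\sum z_i\otimes[s_i]\mapsto[\sum z_i s_i]$ — should eventually be surjective. To see this one uses that a homogeneous section $s$ with $d(s)=\mu_k\geq 1$ produces, on an analytic tangent cone, a homogeneous section $s_\infty$ of degree $\mu_k$ on the cone $\E_\infty$; since $\mu_k\geq 1$, on the simple HYM cone summand $\psi_*\pi^*\underline\E_\infty^{\mu}$ such a section of the cone, by the cone structure ($\underline\E_\infty^\mu$ being a polystable sheaf on $\P^{n-1}$ and sections being built from $H^0(\P^{n-1},\underline\E_\infty^\mu(j))$ with $j\geq 1$), lies in the image of multiplication by linear forms from sections of degree $\mu_k-1$. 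Pulling this back via strong convergence (Lemma \ref{facts about strong convergence}, which respects addition and multiplication by holomorphic functions) and Proposition \ref{prop2.7}, one deduces that $s$ itself, modulo $M_{k+1}$, is a $\C$-linear combination of terms $z_i\cdot s_i'$ with $d(s_i')\geq \mu_k-1$, i.e.\ lies in $\m_0\cdot N^\mu$ in the appropriate degree.

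The main obstacle, and the step I expect to require the most care, is precisely this last surjectivity-of-multiplication argument: passing between the section $s$ over $B$ and the homogeneous cone section $s_\infty$, and transferring the algebraic fact about cone sections (divisibility by a linear form once the twist is positive) back to a statement modulo $M_{k+1}$. The subtlety is that the analytic tangent cone is not known a priori to be unique at this stage of the paper (that is the conclusion of the whole work), and a given $s$ may converge only along a subsequence; so the argument must be phrased so that the conclusion "$[s]\in\m_0\cdot N^\mu$" — which is purely about $\E$ and independent of any cone — follows from what happens on *some* analytic tangent cone, together with the uniform estimates (the interior bound \eqref{BS interior estimate}, strong convergence, and the semicontinuity Proposition \ref{prop2.7}) that hold regardless of the subsequence. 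Once this "eventual divisibility in degree $\geq 1$" is in hand, finite generation is immediate: $N^\mu$ is generated over $\C[z_1,\dots,z_n]$ by the finitely many classes in degrees $<1$, i.e.\ by the images of $M_k/M_{k+1}$ for the finitely many $k$ with $0\leq\mu_k-\mu<1$ (there are at most $m$ such, and each is finite-dimensional by Proposition \ref{l:orthogonal projections}). Combined with torsion-freeness this proves Theorem \ref{thm3.8}.
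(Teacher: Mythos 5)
Your torsion-freeness argument is correct and coincides with the paper's: reduce to lowest-degree homogeneous components and apply the additivity $d(fs)=\deg(f)+d(s)$ of Corollary \ref{cor3-4}. The module structure and the reduction of finite generation to eventual surjectivity of multiplication by linear forms are also the right skeleton. The gap is in the step you yourself flag as delicate, and it is a different (and more serious) issue than the subsequence-dependence you worry about: it is a \emph{lifting} problem. Your plan is to take $s$ with $d(s)=\mu_{k'}$, pass to a homogeneous limit $s_\infty$ on a tangent cone, write $s_\infty=\sum_l z_l t_l^\infty$ using surjectivity of multiplication on the cone, and pull back. But the sections $t_l^\infty$ produced this way are a priori arbitrary elements of $H^0(\P^{n-1},\underline\E_\infty^{\mu}(\cdot))$; to conclude $[s]\in\m\cdot N^\mu$ you must realize each $t_l^\infty$ as the limit of an actual section $s_l'\in H^0(B,\E)$ with $d(s_l')\geq\mu_k$, and then show $s-\sum_l z_l s_l'\in M_{k'+1}$. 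None of the tools you invoke (\eqref{BS interior estimate}, Lemma \ref{facts about strong convergence}, Proposition \ref{prop2.7}) produce such lifts; the paper explicitly remarks that at this stage one does not know how to construct local holomorphic sections of $\E$ from homogeneous sections of $\E_\infty$. A secondary inaccuracy: surjectivity of $\bigoplus_l z_l\colon H^0(\underline\F(j-1))^{\oplus n}\to H^0(\underline\F(j))$ requires $j\gg1$ (vanishing of $H^1$ of a twisted kernel sheaf), not merely $j\geq 1$ as you assert; so the generators live in degrees below some finite bound rather than in degrees $<1$. That still yields finite generation, but the statement as written is false.

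The paper avoids the lifting problem entirely by a dimension count that only ever pushes sections \emph{forward} to the cone. It defines $S_k$ as the span of the limits of Gram--Schmidt orthonormalized rescalings of a basis of $M_k/M_{k+1}$, establishes $\dim_\C S_k=\dim_\C M_k/M_{k+1}$, shows the $S_k$ form a graded submodule generating a coherent sheaf $\underline\N_\infty^\mu$ (Proposition \ref{lem2.10}), deduces $S_{k'}=\sum_l z_l S_k$ for $k\gg1$ from $H^1$-vanishing (Corollary \ref{cor-312}), and then proves the one inequality $\dim_\C\sum_l z_l(M_k/M_{k+1})\geq\dim_\C\sum_l z_l S_k$ (Claim \ref{clm3.13}, via a linear-independence-by-contradiction argument using strong convergence and Proposition \ref{prop2.7}). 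Sandwiching against $\dim_\C S_{k'}=\dim_\C M_{k'}/M_{k'+1}$ forces $\sum_l z_l(M_k/M_{k+1})=M_{k'}/M_{k'+1}$. To complete your proof you would need either to supply the missing lifting mechanism or to replace your direct surjectivity claim with such a dimension comparison.
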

Before proving this we need some preparation. We fix a given analytic tangent cone $\E_\infty$.
 Let $\{\sigma^j_{k, i}\}$ be the elements constructed as in the proof of Proposition \ref{l:orthogonal projections}. By passing to a subsequence, we may assume for each $k$ and $i$,  $\{\sigma^j_{k,i}\}$ strongly converges to a set of $L^2$ orthonormal homogeneous sections  $\{\sigma^\infty_{k,i}\}$ of $\E_{\infty}$ with degree  $\mu_k$. Suppose $\mu_k=\mu+e$ for $e\in \Z_{\geq 0}$. Then $\{\sigma^\infty_{k, j}\}$ can be viewed as sections in $H^0(\P^{n-1}, \underline \E_\infty^{\mu}(e))$, where $\underline\E_\infty^{\mu}$ is a direct summand of $\underline\E_\infty$ of slope $\mu$. 

Denote by $S_k$ the $\C$-vector space spanned by $\{\sigma^\infty_{k,1}, \cdots, \sigma^\infty_{k,n_k}\}$. It can be viewed as  a subspace of $H^0(\P^{n-1}, \underline\E_\infty^{\mu}(e))$.  By definition  we have 
\begin{equation}\label{dimension equal}\dim_{\C} S_k=n_k=\dim_{\C} M_k/M_{k+1}.
\end{equation}
Define 
\begin{equation}N_{\infty}^{\mu}=\bigoplus_{\mu_k\equiv\mu (\text{mod} \ \Z)} S_k.
\end{equation}
This is also a graded vector space over $\C$ with natural grading given by $\mu_k-\mu$.

\begin{prop}\label{lem2.10}
$N_{\infty}^{\mu}$ is a graded submodule of $\bigoplus_e H^0(\P^{n-1}, \underline\E_\infty^{\mu}(e))$.\end{prop}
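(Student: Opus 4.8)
The plan is to show that $N_\infty^\mu$ is closed under multiplication by the homogeneous coordinate ring $\C[z_1,\dots,z_n]$, i.e. that for any linear form $z_l$ and any $\sigma_{k,i}^\infty \in S_k$, the product $z_l \cdot \sigma_{k,i}^\infty$ lies in $S_{k'}$ where $\mu_{k'} = \mu_k + 1$ (the index $k'$ with $S_{k'} \subset H^0(\P^{n-1}, \underline\E_\infty^\mu(e+1))$). More generally it suffices to check this for a fixed set of module generators of $\bigoplus_e H^0(\P^{n-1}, \underline\E_\infty^\mu(e))$ over $\C[z_1,\dots,z_n]$, but working with linear forms is cleanest. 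The key tool is the strong convergence machinery from Section \ref{section2.1}, together with the semi-continuity of degrees in Proposition \ref{prop2.7} and the facts about strong convergence in Lemma \ref{facts about strong convergence}.

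First I would fix the analytic tangent cone $\E_\infty$ and the strongly convergent sequences $\{\sigma^j_{k,i}\}$ with limits $\{\sigma^\infty_{k,i}\}$ as in the setup preceding the statement. Fix $\sigma^\infty_{k,i}$ and a coordinate function $z_l$ on $B$. Since $z_l$ is a fixed holomorphic function on $B$ and $\lambda_j^* z_l = \lambda_j z_l$, the second bullet of Lemma \ref{facts about strong convergence} (applied after rescaling, tracking the scaling factor $\lambda_j = 2^{-j}$) gives that, up to the explicit normalizing constant, the rescaled sequence associated to $z_l \cdot s_{k,i}$ (where $s_{k,i}$ is a local representative with $[s_{k,i}]_j$ converging to $\sigma^\infty_{k,i}$) converges strongly to $z_l \cdot \sigma^\infty_{k,i}$, which is the product taken inside $\E_\infty$. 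By Corollary \ref{cor3-4} we have $d(z_l s_{k,i}) = 1 + d(s_{k,i}) = 1 + \mu_k$, and by Theorem \ref{thm3.1} this degree is achieved; hence $z_l \cdot \sigma^\infty_{k,i}$ is a nonzero homogeneous holomorphic section of $\E_\infty$ of degree $\mu_k + 1$ — unless $z_l \sigma^\infty_{k,i} = 0$, in which case there is nothing to prove. Such a homogeneous section of degree $\mu_k+1 = \mu + (e+1)$ is exactly an element of $H^0(\P^{n-1}, \underline\E_\infty^\mu(e+1))$ (using Lemma \ref{lem2.6} and the fact that $\mu_k \equiv \mu \pmod{\Z}$ forces the homogeneous section to live in the $\underline\E_\infty^\mu$ summand, since distinct slopes $\mu_i$ differ by non-integers). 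So $z_l \sigma^\infty_{k,i}$ is automatically in the right graded piece of $\bigoplus_e H^0(\P^{n-1},\underline\E_\infty^\mu(e))$.

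The remaining and genuinely substantive point is that $z_l \cdot \sigma^\infty_{k,i}$ actually lies in the subspace $S_{k'} \subseteq H^0(\P^{n-1},\underline\E_\infty^\mu(e+1))$, not merely in the ambient space of all sections — this is what makes $N_\infty^\mu$ a \emph{submodule}. The argument here is: $z_l \cdot \sigma^\infty_{k,i}$ is the strong limit of a multiple of $\lambda_j^*(z_l s_{k,i})$, and $z_l s_{k,i} \in M_{k'}$ (since $d(z_l s_{k,i}) = \mu_k + 1 = \mu_{k'}$, using the relation $\mu_{i+ml} = \mu_i + l$ from the indexing before Definition 3.14 to identify $k'$). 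Now $M_{k'}$ is spanned modulo $M_{k'+1}$ by the chosen $s_{k',1},\dots,s_{k',n_{k'}}$, so we can write $z_l s_{k,i} = \sum_a c_a s_{k',a} + t$ with $d(t) \geq \mu_{k'+1}$. Applying the strong-convergence additivity (first bullet of Lemma \ref{facts about strong convergence}) and Proposition \ref{prop2.7} to the term $t$: its rescaled limit, if nonzero, has degree $\geq \mu_{k'+1} > \mu_{k'}$, while the whole point is that $z_l\sigma^\infty_{k,i}$ has degree exactly $\mu_{k'}$; a careful bookkeeping of the normalizing constants $\|\cdot\|_j$ (they all have the same order $2^{-j\mu_{k'}}$) shows the contribution of $t$ vanishes in the limit, so $z_l \sigma^\infty_{k,i} = \sum_a c_a \sigma^\infty_{k',a} \in S_{k'}$. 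Finally, finite generation: $\bigoplus_e H^0(\P^{n-1},\underline\E_\infty^\mu(e))$ is a finitely generated module over $\C[z_1,\dots,z_n]$ (it is the module of sections of a coherent sheaf on $\P^{n-1}$ twisted up), and $N_\infty^\mu$ is a submodule, so $N_\infty^\mu$ is finitely generated since $\C[z_1,\dots,z_n]$ is Noetherian; I would state this as the closing line. The main obstacle I anticipate is the constant-tracking in the strong-convergence step — verifying that the Gram–Schmidt normalizations and the rescaling factors $\lambda_j^{\mu_k}$ conspire so that the "lower-order tail" $t$ genuinely dies in the limit rather than contributing a spurious term; this is where one must invoke Proposition \ref{prop2.7} and the sharp degree identities rather than soft limits.
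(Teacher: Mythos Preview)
Your overall strategy matches the paper's: reduce to showing $f\cdot\sigma^\infty_{k,i}\in S_{k'}$ for homogeneous $f$ (you take $f=z_l$), decompose modulo $M_{k'+1}$, and kill the tail using Proposition~\ref{prop2.7}. The key ingredients you cite (Lemma~\ref{facts about strong convergence}, Corollary~\ref{cor3-4}, Proposition~\ref{prop2.7}) are exactly the right ones.

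There is, however, a genuine gap in your execution. You write ``$s_{k,i}$ is a local representative with $[s_{k,i}]_j$ converging to $\sigma^\infty_{k,i}$'' and then decompose this \emph{single} section as $z_l s_{k,i}=\sum_a c_a s_{k',a}+t$ with \emph{fixed} constants $c_a$. But recall how $\sigma^\infty_{k,i}$ is defined: it is the limit of the Gram--Schmidt orthonormalized $\sigma^j_{k,i}$, which for $i>1$ is a $j$-\emph{dependent} linear combination of $\lambda_j^* s_{k,1},\dots,\lambda_j^* s_{k,i}$. There is no single section $s$ with $[s]_j\to\sigma^\infty_{k,i}$ in general, so neither the fixed-coefficient decomposition nor the claimed identity $z_l\sigma^\infty_{k,i}=\sum_a c_a\sigma^\infty_{k',a}$ makes sense as written. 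A related difficulty with your ``direct constant-tracking'' plan: knowing $d(s)=d(s')$ only gives $\log\|s\|_j/\log\|s'\|_j\to 1$, not that $\|s\|_j/\|s'\|_j$ stays bounded, so the ratios you need to control are not obviously tame.

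The paper resolves both issues at once. It writes the decomposition at level $j$ with $j$-dependent coefficients,
\[
f\sigma^j_{k,i}-\sum_{i'} a_{i'}^j\,\sigma^j_{k',i'}=s^j_{k'+1},
\]
where $s^j_{k'+1}$ is the rescaling of a section on $B$ of degree $\geq\mu_{k'+1}$, and then argues by contradiction: if $\|s^j_{k'+1}\|_{L^2(B)}$ does not tend to $0$, rescale both sides to unit $L^2$ norm, pass to strong limits (the $a_{i'}^j$ are then bounded because the $\sigma^j_{k',i'}$ are orthonormal), and obtain a nonzero limit which is simultaneously homogeneous of degree $\mu_{k'}$ (from the left side) and of degree $\geq\mu_{k'+1}$ (from Proposition~\ref{prop2.7} applied to the right side). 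This contradiction forces $s^j_{k'+1}\to 0$, after which $f\sigma^\infty_{k,i}=\lim\sum a_{i'}^j\sigma^j_{k',i'}\in S_{k'}$ follows immediately. Your anticipated ``main obstacle'' is real, and the fix is to abandon the single-representative picture for $j$-dependent coefficients plus this contradiction argument. (The closing remark on finite generation is fine but not part of the proposition.)
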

\begin{proof}
It suffices to show that for any $k$ fixed, given any homogeneous polynomial $f\in \C[z_1, \cdots, z_n]$ and $\sigma^{\infty}_{k,j}\in S_k$ for any $j$, we have $f\sigma^{\infty}_{k,j} \in S_{k'}$ where $\mu_{k'}=\mu_k+\deg(f)$. By assumption, $f\sigma^{\infty}_{k,i}$ is the limit of $f\sigma^{j}_{k,i}$ as $j\rightarrow\infty$. However,  for each $j$ we have
\begin{equation}\label{eqn-fsigma}f\sigma^{j}_{k,i}-\sum_{i'=1}^{n_{k'}}a_{i'}^j\sigma^j_{k',i'}=s^j_{k'+1}
\end{equation}
 where $\mu_{k'}=\mu_k+\deg(f)$, $a_{i'}^j\in \C$, and $s_{k'+1}^j\in M_{k'+1}$.
 
  We claim $s^j_{k'+1}$ converges to zero in $L^2$ and then the result follows.  Otherwise, suppose the $L^2$ norm of  $\{s^j_{k'+1}\}_j$ has a positive lower bound after passing to a subsequence. Then rescale by factors $A_j\leq C$ we can assume both sides of \eqref{eqn-fsigma} have $L^2(B)$ norm exactly 1 for all $j$. Then it follows that $a_{i'}^j$ is also uniformly bounded for all $i'$. Passing to a subsequence we may assume $a_{i'}^j$ converges to $a_{i'}^\infty$ for all $i'$. Since $\sigma_{k, i}^j$ and $\sigma_{k', i'}^{j}$ converge strongly, and $f$ is fixed, by Lemma \ref{facts about strong convergence} we then obtain that both sides of \eqref{eqn-fsigma} strongly converge to a holomorphic section $s_\infty$ on $\E_\infty$ with $\|s_\infty\|_{L^2(B)}=1$.
 
  Notice by definition for each $j$, $s^j_{k'+1}$ comes from the rescaling of a holomorphic section defined over the fixed ball $B$. Then by Proposition \ref{prop2.7} $s_\infty$ has degree at least $\mu_{k'+1}$. On the other hand, we know $f\sigma^\infty_{k, j}$ is homogeneous of degree $\mu_{k'}$ and by definition the limit $\sigma^\infty_{k', i'}$ is also homogeneous of degree $\mu_{k'}$. This is a contradiction. 
\end{proof}

 \begin{defi} We define   $\underline{\mathcal N}_\infty^\mu$  to be the subsheaf of $\underline\E_\infty^\mu$ generated by $N_\infty^\mu$. 
 \end{defi}
 In particular, $\underline{\N}_{\infty}^{\mu}$ is torsion-free, and for $k\gg1$, $H^0(\P^{n-1}, \underline{\mathcal N}_\infty^\mu(k))$ can be identified with $S_k$. But notice it depends on various choices made above.

\begin{cor}\label{cor-312}
For $k\gg1$, we have $${S_{k'}}=\sum_{l=1}^n z_l {S_k},$$
where $k'$ is such that  $\mu_{k'}=\mu_k+1$. 
\end{cor}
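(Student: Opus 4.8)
The plan is to deduce this from two standard facts: that the module of sections $\bigoplus_{t\ge 0}H^0(\P^{n-1},\underline{\mathcal N}_\infty^\mu(t))$ of a coherent sheaf is finitely generated over the homogeneous coordinate ring $R:=\C[z_1,\dots,z_n]=\bigoplus_{t\ge 0}H^0(\P^{n-1},\O(t))$ (Serre), and that a finitely generated graded module over a standard graded polynomial ring is, in each sufficiently high degree, the product of its preceding degree with the degree-one part of $R$.

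First I would fix the analytic tangent cone $\E_\infty$ and the subsequence used to build the spaces $S_k$ and the sheaf $\underline{\mathcal N}_\infty^\mu$, and write $e_k:=\mu_k-\mu\in\Z_{\ge 0}$ for the indices $k$ with $\mu_k\equiv\mu\pmod{\Z}$. By the consequence of Corollary \ref{cor3-4} recorded after \eqref{eqn3.10}, these indices form an arithmetic progression (with common difference $m$) along which $e$ increases by $1$ at each step; thus the index $k'$ in the statement is the successor of $k$ in this progression and $e_{k'}=e_k+1$. Recall from the comment following the definition of $\underline{\mathcal N}_\infty^\mu$ that for $k\gg 1$ one has a canonical identification $S_k=H^0(\P^{n-1},\underline{\mathcal N}_\infty^\mu(e_k))$. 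Since $z_1,\dots,z_n$ is a basis of $H^0(\P^{n-1},\O(1))$, the subspace $\sum_{l=1}^n z_l S_k$ is exactly the image of the multiplication map $H^0(\P^{n-1},\O(1))\otimes H^0(\P^{n-1},\underline{\mathcal N}_\infty^\mu(e_k))\to H^0(\P^{n-1},\underline{\mathcal N}_\infty^\mu(e_k+1))$, and this image lies in $S_{k'}=H^0(\P^{n-1},\underline{\mathcal N}_\infty^\mu(e_k+1))$ because $N_\infty^\mu$ is a graded $R$-module by Proposition \ref{lem2.10}. So the only thing left to prove is that this multiplication map is surjective for $k$ large.

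For surjectivity, set $\Gamma:=\bigoplus_{t\ge 0}H^0(\P^{n-1},\underline{\mathcal N}_\infty^\mu(t))$, which is a finitely generated graded $R$-module since $\underline{\mathcal N}_\infty^\mu$ is coherent on $\P^{n-1}$. Choose $t_0$ with $\Gamma$ generated over $R$ by elements of degree $\le t_0$. For $t>t_0$, every $\sigma\in\Gamma_t$ is a finite sum of terms $fg$ with $f\in R$ homogeneous of positive degree and $g\in\Gamma$ of degree $<t$; writing each such $f$ as $\sum_l z_l h_l$ with $h_l\in R$ (possible because $R$ is generated in degree one) gives $\sigma\in\sum_{l=1}^n z_l\Gamma_{t-1}$, hence $\Gamma_t=\sum_{l=1}^n z_l\Gamma_{t-1}$. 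Applying this with $t=e_k+1$ for $k$ large enough that $e_k>t_0$ and that $S_k=\Gamma_{e_k}$, $S_{k'}=\Gamma_{e_k+1}$ hold, yields $S_{k'}=\sum_{l=1}^n z_l S_k$. The only point requiring care is the bookkeeping of twists — attaching to $S_k$ the degree $e_k=\mu_k-\mu$ rather than $k$ itself, and checking that $k'$ is the progression-successor of $k$ so that $e_{k'}=e_k+1$, compatibly with "$k\gg 1$" meaning "$e_k\gg 0$"; beyond that the argument is entirely formal.
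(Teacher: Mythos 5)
Your proof is correct, and it reaches the key surjectivity statement by a slightly different mechanism than the paper. The paper's proof is a two-line cohomological argument: it uses the exact sequence
$$0\rightarrow \underline{\mathcal F}\rightarrow (\underline\N_\infty^{\mu})^{\oplus n}\xrightarrow{(z_1, \cdots, z_{n})} \underline \N_\infty^{\mu}(1)\rightarrow 0$$
(the sheaf map is surjective because the $z_l$ have no common zero on $\P^{n-1}$), twists by $\O(e)$, and invokes Serre vanishing $H^1(\P^{n-1},\underline{\mathcal F}(e))=0$ for $e\gg1$ to conclude that $H^0(\underline\N_\infty^\mu(e))^{\oplus n}\to H^0(\underline\N_\infty^\mu(e+1))$ is surjective. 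You instead pass to the graded section module $\Gamma=\bigoplus_t H^0(\underline\N_\infty^\mu(t))$, cite its finite generation over $\C[z_1,\dots,z_n]$ (Serre's finiteness theorem for coherent sheaves), and use the elementary fact that a finitely generated graded module over a ring generated in degree one satisfies $\Gamma_t=\sum_l z_l\Gamma_{t-1}$ above the top generator degree. The two routes are equivalent in substance --- both ultimately rest on coherence of $\underline\N_\infty^\mu$ and both prove exactly the surjectivity of the multiplication map in high degree --- but yours replaces the $H^1$ vanishing step by a purely module-theoretic one. A minor point in your favor: you are careful with the twist bookkeeping, attaching to $S_k$ the degree $e_k=\mu_k-\mu$ and identifying $k'$ as the successor $k+m$ in the arithmetic progression of indices congruent to $\mu$ modulo $\Z$; the paper is slightly sloppy here, writing $\underline{\mathcal N}_\infty^\mu(k)$ where it means the twist by $e_k$.
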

\begin{proof}
This follows from the exact sequence 
\begin{equation}
0\rightarrow \underline{\mathcal F}\rightarrow (\underline\N_\infty^{\mu})^{\oplus n}\xrightarrow{(z_1, \cdots, z_{n})} \underline \N_\infty^{\mu}(1)\rightarrow 0
\end{equation}
for some sheaf $\underline{\F}$ on $\P^{n-1}$. 
Tensoring with $\O(e)$, and noticing that for $e\gg1$, $H^1(\P^{n-1}, \F(e))=0$, we obtain the conclusion. 
\end{proof}

Now we prove Theorem \ref{thm3.8}.
\begin{proof}[Proof of Theorem \ref{thm3.8}]

We first prove the torsion-free property, this follows directly from Corollary \ref{cor3-4}. Indeed,  suppose $[s]\in N^\mu$ is non-zero, then we can write $[s]=\sum_{i\geq i_1} [s_i]$, where $[s_i]\in M_i/M_{i+1}$ and $s_{i_1}\neq 0$. For any nonzero $f\in \C[z_1, \cdots, z_n]$, we write $$f=\sum_{j\geq j_1} f_j,$$ where each $f_j$ is homogeneous and $f_{j_1}\neq 0$.  Let $i'$ be the unique integer such that $\mu_{i'}-\mu_{i_1}=j_1$. Then by Corollary \ref{cor3-4} we know $d(fs)=d(f_{j_1}s_{i_1})=\mu_{i'}$, and the component $[f_{j_1}s_{i_1}]$ of $[fs]$ in $M_{i'}/M_{i'+1}$ is nonzero. 

\

Now we prove
$N^{\mu}$ is finitely generated. It suffices to show that for $k$ large, $$M_{k'}/M_{k'+1}=\sum_l z_l (M_{k}/M_{k+1}), $$
where $k'$ is such that $\mu_{k'}=\mu_k+1$.

\begin{clm}\label{clm3.13}
$\dim_{\C} \sum_l z_l (M_{k}/M_{k+1}) 
\geq \dim_{\C}\sum_l z_l {S_k}.$
\end{clm}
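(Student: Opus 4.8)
The plan is to realize the inequality as coming from a comparison of the multiplication-by-$(z_1,\dots,z_n)$ maps on the two sides, transported through a canonical isomorphism built out of the rescaled limits.

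First I would set up the isomorphism. Fix an analytic tangent cone $\E_\infty$ together with the sections $\sigma^j_{k,i},\sigma^\infty_{k,i}$ and the spaces $S_k$ as before Theorem \ref{thm3.8}, and write $\mu_k=\mu+e$. Since $\mathcal H_{k,n_k}$ is finite dimensional and every nonzero element of it has degree exactly $\mu_k$, I pass to a subsequence of $\{\lambda_j=2^{-j}\}$ along which $[s]_j:=\lambda_j^*s/\|s\|_j$ converges strongly for every $s$ in a fixed basis of $\mathcal H_{k,n_k}$, hence for every $s\in\mathcal H_{k,n_k}$, each limit being homogeneous of degree $\mu_k$ with unit $L^2(B)$-norm. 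The assignment $s\mapsto\lim_j[s]_j$ is, up to a scalar depending on $s$, the restriction to the unit sphere of a linear map $\mathcal H_{k,n_k}\to S_k$, and --- this is the delicate point, see below --- this linear map is an \emph{isomorphism} onto $S_k$; composing with $\mathcal H_{k,n_k}\cong M_k/M_{k+1}$ gives an isomorphism $\Psi_k\colon M_k/M_{k+1}\xrightarrow{\ \sim\ }S_k$. Now let $\mu_V\colon(M_k/M_{k+1})^{\oplus n}\to M_{k'}/M_{k'+1}$ and $\mu_W\colon S_k^{\oplus n}\to S_{k'}$ be the maps $(v_l)_l\mapsto\sum_l z_lv_l$ (the second well defined by Proposition \ref{lem2.10}), whose images are exactly $\sum_l z_l(M_k/M_{k+1})$ and $\sum_l z_lS_k$. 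Since $\Psi_k$ is an isomorphism, $\rank(\mu_W\circ\Psi_k^{\oplus n})=\rank\mu_W$, so the Claim follows once I show $\ker\mu_V\subseteq\ker(\mu_W\circ\Psi_k^{\oplus n})$, i.e.\ the implication: \emph{if $g_1,\dots,g_n\in\mathcal H_{k,n_k}$ and $g:=\sum_l z_lg_l$ satisfies $d(g)\geq\mu_{k'+1}$, then $\sum_l z_l\Psi_k([g_l])=0$ in $H^0(\P^{n-1},\underline\E_\infty^\mu(e+1))$.}

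To prove the implication I rescale and pass to the limit. If all $g_l=0$ there is nothing to do, so fix $g_{l_0}\neq0$; then $\rho_j:=\max_l\|g_l\|_j\geq\|g_{l_0}\|_j=2^{-j\mu_k+o(j)}$, hence $\rho_j\geq2^{-j(\mu_k+\epsilon)}$ for any fixed $\epsilon>0$ and $j\gg1$. From $\lambda_j^*z_l=2^{-j}z_l$ we get $\lambda_j^*g=2^{-j}\sum_l z_l\,\lambda_j^*g_l$, so
\[ (2^{-j}\rho_j)^{-1}\lambda_j^*g=\sum_l z_l\big(\rho_j^{-1}\lambda_j^*g_l\big). \]
Passing to a further subsequence, $\rho_j^{-1}\lambda_j^*g_l=(\|g_l\|_j/\rho_j)[g_l]_j$ converges on $\C^n\setminus(\Sigma\cup Z(\E))$ to $d\cdot\Psi_k([g_l])$ for a \emph{positive} constant $d=\big(\max_l\|\Psi_k([g_l])\|\big)^{-1}$ that is \emph{the same for all $l$}: indeed $\|g_l\|_j$ equals to leading order a common Gram--Schmidt scale times $\|\Psi_k([g_l])\|$, and that scale cancels against $\rho_j=\max_l\|g_l\|_j$. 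Hence $(2^{-j}\rho_j)^{-1}\lambda_j^*g\to d\sum_l z_l\Psi_k([g_l])$. On the other hand $g$ is a single fixed section with $d(g)\geq\mu_{k'+1}$, so by the uniform almost-monotonicity of the norms $\|\cdot\|_j$ (Proposition $3.15$ of \cite{CS1}, whose proof is uniform in the section and does not use that the singularity is isolated) one has $\|g\|_j\leq C\,2^{-j(\mu_{k'+1}-\epsilon)}$ for a constant $C=C(\epsilon,g)$ and $\epsilon$ small. Since $\mu_{k'+1}-1=\mu_{k+1}$ and $\mu_{k+1}-\mu_k>0$, choosing $\epsilon<(\mu_{k+1}-\mu_k)/2$ gives
\[ \big\|(2^{-j}\rho_j)^{-1}\lambda_j^*g\big\|_{L^2(B,\omega_j)}=\frac{\|g\|_j}{2^{-j}\rho_j}\leq C\,2^{-j(\mu_{k+1}-\mu_k-2\epsilon)}\longrightarrow0. \]
By the lower semicontinuity \eqref{lower semi continuity}, $\big\|d\sum_l z_l\Psi_k([g_l])\big\|_{L^2(B,\omega_0)}=0$, and since $d>0$ this forces $\sum_l z_l\Psi_k([g_l])=0$, proving the implication and hence the Claim.

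Granting the Claim, Theorem \ref{thm3.8} is finished: for $k\gg1$ one has $\sum_l z_lS_k=S_{k'}$ (Corollary \ref{cor-312}) and $\dim_\C S_{k'}=n_{k'}=\dim_\C M_{k'}/M_{k'+1}$ by \eqref{dimension equal}, so with the trivial inclusion $\sum_l z_l(M_k/M_{k+1})\subseteq M_{k'}/M_{k'+1}$ the Claim forces $\sum_l z_l(M_k/M_{k+1})=M_{k'}/M_{k'+1}$, i.e.\ $N^\mu$ is finitely generated. The hard part will be the \emph{delicate point} above: that $s\mapsto\lim_j[s]_j$ defines an isomorphism $\mathcal H_{k,n_k}\to S_k$ --- equivalently, that the Gram--Schmidt scales of the vectors $\lambda_j^*s_{k,i}$ stay mutually comparable, so that no dimension is lost in the limit and the constant $d$ above is well defined. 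This reduces to the non-degeneracy, after renormalization, of the Hermitian forms $s\mapsto\|\lambda_j^*s\|_j^2$ on $\mathcal H_{k,n_k}$ as $j\to\infty$, which is the kind of statement controlled by the uniform three-circle / almost-monotonicity estimates underlying Proposition $3.15$ of \cite{CS1} --- the same input guaranteeing (in the proof of Proposition \ref{l:orthogonal projections}) that the $\sigma^\infty_{k,i}$ have unit $L^2$-norm --- and the work is to check that these estimates carry over verbatim to the present non-isolated, non-homogeneous setting.
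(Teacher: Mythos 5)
Your reduction of the Claim to the two facts ``$\Psi_k$ is an isomorphism'' and ``$\ker\mu_V\subseteq\ker(\mu_W\circ\Psi_k^{\oplus n})$'' is logically sound, and your second step (rescaling $g=\sum_lz_lg_l$ and comparing decay rates of $\|g\|_j$ against $2^{-j}\rho_j$) is essentially the same degree-semicontinuity mechanism the paper uses. But the whole architecture rests on the ``delicate point'' you defer, and that point is a genuine gap: the tools you cite do not produce the isomorphism $\Psi_k$. What Proposition $3.15$ of \cite{CS1} and Claim \ref{clm3.7} give --- even after a compactness argument over the unit sphere of the finite-dimensional space $\mathcal H_{k,n_k}$ --- is that for every $\epsilon>0$ one has $2^{-j(\mu_k+\epsilon)}\lesssim\|s\|_j\lesssim 2^{-j(\mu_k-\epsilon)}$ uniformly for unit vectors $s$. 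That pins down $d(s)=\mu_k$ but still allows the ratio $\|s\|_j/\|s'\|_j$ of two unit vectors to drift by factors of order $2^{\pm\epsilon j}$; equivalently, it gives no uniform bound on the condition number of the Gram matrices $\bigl(\langle\lambda_j^*s_{k,i},\lambda_j^*s_{k,i'}\rangle\bigr)_{i,i'}$. If these matrices degenerate along the subsequence, then $s\mapsto\lim_j[s]_j$ is not the projectivization of any linear map (independent classes get collapsed onto proportional limit sections), $\Psi_k$ is not well defined, and your common constant $d$ in the second step does not exist: the limits of $\rho_j^{-1}\lambda_j^*g_l$ simply vanish for those $l$ whose norms are asymptotically negligible against $\rho_j$, so the vanishing of $\lim_j(2^{-j}\rho_j)^{-1}\lambda_j^*g$ tells you nothing about $\sum_lz_l\Psi_k([g_l])$. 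Note also that the orthonormality of the limits $\sigma^\infty_{k,i}$ (which the paper does prove, and which yields $\dim_{\C}S_k=n_k$) is a statement about the $j$-dependent Gram--Schmidt frames and does not imply the non-degeneracy you need.

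The paper's proof is engineered precisely to avoid ever constructing $\Psi_k$. It starts from an $L^2$ orthonormal basis $\{\sigma^\infty_{k',i}\}$ of $S_{k'}$, uses $\sum_lz_lS_k=S_{k'}$ to write each $\sigma^\infty_{k',i}$ as a \emph{fixed} linear combination of the $z_l\sigma^\infty_{k,m}$, and then forms the corresponding ($j$-dependent) combinations $\tau^j_{k',i}$ of the $z_l\sigma^j_{k,m}$. These define classes in $\sum_lz_l(M_k/M_{k+1})$ and converge strongly to an orthonormal set; if they were linearly dependent in $M_{k'}/M_{k'+1}$ for a subsequence of $j$, a normalized dependency would converge strongly to a nonzero degree-$\mu_{k'}$ section while Proposition \ref{prop2.7} forces its degree to be at least $\mu_{k'+1}$ --- a contradiction. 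This yields $\dim_{\C}\sum_lz_l(M_k/M_{k+1})\geq n_{k'}=\dim_{\C}\sum_lz_lS_k$ without any claim about a limit map on $M_k/M_{k+1}$ itself. So the fix for your argument is to work with the Gram--Schmidt frames directly rather than with $\Psi_k$; as written, the proposal is incomplete, and the route you suggest for closing it would require a uniqueness-of-leading-term statement that is stronger than anything established at this stage of the paper.
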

Given this Claim, using Corollary \ref{cor-312} we have 
$$
\begin{aligned}
\dim_{\C} \sum_l z_l (M_{k}/M_{k+1}) 
&\geq \dim_{\C}\sum_l z_l {S_k} \\
&=\dim_{\C}{S_{k'}} \\
&=\dim_{\C} M_{k'}/ M_{k'+1} \\
&\geq \dim_{\C} \sum_k z_l (M_{k}/M_{k+1}), 
\end{aligned}
$$
where the third equation follows by \eqref{dimension equal}. This forces $$\dim_{\C} \sum_k z_l (M_{k}/M_{k+1}) = \dim_\C  M_{k'}/M_{k'+1}.$$ 
Combining this with the fact that  $$\sum_k z_l (M_{k}/M_{k+1})\subset M_{k'}/M_{k'+1},$$ we obtain $$\sum_k z_l (M_{k}/M_{k+1})=M_{k'}/M_{k'+1}.$$ This finishes the proof of Theorem \ref{thm3.8}.  
\end{proof}

\begin{proof}[Proof of Claim \ref{clm3.13}]
Let $\{\sigma_{\infty}^{k',i}\}$ be an $L^2$ orthonormal basis for ${S_{k'}}$ constructed as above. Since we know $$\sum_l z_l {S_{k}}=S_{k'},$$  there exists a sequence of sections $\tau^{j}_{k',i}$ in the $\C$-linear span of $z_l \sigma^{j}_{k, i}$ which converge strongly to $\sigma^{\infty}_{k', i}$. In particular, we have 
\begin{equation}
\lim_{j\rightarrow\infty} \int_{B} \langle \tau^j_{k',i}, \tau^j_{k',l}\rangle =\delta_{il}
\end{equation}
for all $1\leq i, l\leq n_{k'}$.

It suffices to show that $\{\tau_{k',i}^j: 1\leq i\leq n_{k'} \}$ are linearly independent in $M_{k'}/M_{k'+1}$ for $j$ large. We argue by contradiction. Otherwise by passing to a subsequence  we can assume for $j$ large there are constants $a_i^j\in \C$, with $$\sum_{i} a^j_i \tau^{j}_{k',i}=s^j\in M_{k'+1}.$$ 
We normalize $\|s^j\|_{L^2(B)}=1$ which implies $a^{j}_i$ are all uniformly bounded in $j$ since $\{\tau_{k+1,i}^j\}$ are approximately $L^2$-orthonormal for $j$ large enough. In particular, passing to a subsequence we can assume $\{a^{j}_i: 1\leq i\leq n_{k'}\}$ converge to $\{a^{\infty}_i:1\leq i\leq n_{k'}\}$  and there exists some $i$ such that $a_i^{\infty}$ is nonzero. By  Lemma \ref{facts about strong convergence}, we can assume $s^{j}$ converges strongly to some non-zero holomorphic section $s^\infty=\sum_i a^{\infty}_i \sigma^{\infty}_{k',i}$. In particular $d(s^\infty)=\mu_{k'}$. On the other hand, Proposition \ref{prop2.7} implies that  $d(s^\infty)\geq\mu_{k'+1}$. This is a contradiction.
\end{proof}

Theorem \ref{thm3.8} allows us to make the following 

\begin{defi}
For $\mu\in \{\mu_1, \cdots, \mu_m\}$, we define $\underline\E^\mu$ to be the torsion-free sheaf on $\P^{n-1}$ associated to the module $N^\mu$.
\end{defi}

By definition, for $k$ large $H^0(\P^{n-1}, \underline\E^\mu(k))$ can be identified with $M_k/M_{k+1}$. On the other hand,we also know  that by definition for $k$ large  $H^0(\P^{n-1},\underline \N_\infty^\mu(k))$ can be identified with $S_k$. So by \eqref{dimension equal} we have 
\begin{equation}\label{same Hilbert polynomial}
\dim_\C H^0(\P^{n-1}, \underline\E^\mu(k))=\dim_\C H^0(\P^{n-1}, \underline \N_\infty^\mu(k)).
\end{equation}
It follows that $\underline\E^\mu$ and $\underline{\N}_\infty^\mu$ have the same Hilbert polynomial.

Now we recall the asymptotic Riemann-Roch theorem (see Page $189$ in \cite{Kobayashi}) \begin{lem}[Asymptotic Riemann-Roch Theorem] \label{thm1.1}
Let $\underline \F$ be a torsion-free coherent  sheaf over $\P^{n-1}$. Then 
$$
\chi(\underline \F(k))= r\cdot \frac{k^{n-1}}{(n-1)!}+ r(\mu(\underline\F)+\frac{n}{2}) \frac{k^{n-2}}{(n-2)!}+O(k^{n-3}).
$$
where  $\chi$ denotes the holomorphic Euler characteristic, $r$ denotes the rank of $\underline \F$, and $\mu(\underline\F)$ denotes the slope of $\underline\F$.
\end{lem}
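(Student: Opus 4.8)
The plan is to read this off from the Hirzebruch--Riemann--Roch theorem, which applies to \emph{any} coherent sheaf on the smooth projective variety $\P^{n-1}$: replacing $\underline\F$ by a finite locally free resolution (which exists since $\P^{n-1}$ is smooth) and using additivity of the Chern character in short exact sequences, one gets
$$\chi(\underline\F(k))=\int_{\P^{n-1}}\operatorname{ch}(\underline\F(k))\cdot\operatorname{td}(T\P^{n-1}).$$
So the task reduces to extracting the two top-order terms in $k$ from the right-hand side.

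First I would expand all factors in terms of the hyperplane class $H$, normalized by $\int_{\P^{n-1}}H^{\,n-1}=1$. Here $\operatorname{ch}(\underline\F(k))=\operatorname{ch}(\underline\F)\,e^{kH}$, with $\operatorname{ch}(\underline\F)=r+\deg(\underline\F)\,H+(\text{terms of degree}\ge 2)$, where $\deg(\underline\F)$ is the integer determined by $c_1(\underline\F)=\deg(\underline\F)\,H$; note $\deg(\underline\F)=r\,\mu(\underline\F)$ by the definition of the slope with respect to the polarization $\O(1)$. For the Todd class I would use the Euler sequence $0\to\O\to\O(1)^{\oplus n}\to T\P^{n-1}\to 0$, which gives $\operatorname{td}(T\P^{n-1})=\bigl(H/(1-e^{-H})\bigr)^{n}=1+\tfrac{n}{2}H+(\text{terms of degree}\ge 2)$, using $x/(1-e^{-x})=1+\tfrac x2+\tfrac{x^2}{12}+\cdots$.

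Then I would simply collect the coefficient of $H^{\,n-1}$ in $\bigl(r+\deg(\underline\F)H+\cdots\bigr)\bigl(\sum_{j\ge0}\tfrac{k^j}{j!}H^j\bigr)\bigl(1+\tfrac n2 H+\cdots\bigr)$. Taking $H^{n-1}$ from $e^{kH}$ together with the constant terms of the other two factors gives $\tfrac{r}{(n-1)!}k^{n-1}$; taking $H^{n-2}$ from $e^{kH}$ and one extra power of $H$ either from $\operatorname{ch}(\underline\F)$ or from $\operatorname{td}$ gives $\bigl(\deg(\underline\F)+\tfrac{rn}{2}\bigr)\tfrac{1}{(n-2)!}k^{n-2}$; and every remaining term picks out $H^{j}$ with $j\le n-3$ from $e^{kH}$ and is therefore $O(k^{n-3})$. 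Using $\deg(\underline\F)=r\mu(\underline\F)$, this is exactly the asserted expansion.

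I do not expect a genuine obstacle here; the only points needing a line of justification are the validity of Hirzebruch--Riemann--Roch for the possibly non-locally-free $\underline\F$ (dealt with by a locally free resolution) and the bookkeeping matching $\deg(\underline\F)=r\mu(\underline\F)$. Should a self-contained argument be preferred, an alternative is induction on $n$ via a generic hyperplane: choosing $H\cong\P^{n-2}$ avoiding the finitely many associated points of $\underline\F$ gives a short exact sequence $0\to\underline\F(-1)\to\underline\F\to\underline\F|_H\to 0$ in which $\underline\F|_H$ is torsion-free of the same rank with $\deg(\underline\F|_H)=\deg(\underline\F)$, hence $\mu(\underline\F|_H)=\mu(\underline\F)$; then $\chi(\underline\F(k))-\chi(\underline\F(k-1))=\chi(\underline\F|_H(k))$, and summing the inductive formula on $\P^{n-2}$ (a finite-difference computation in which the $+\tfrac12$ produced by the summation combines with the $\tfrac{n-1}{2}$ of the inductive hypothesis to produce $\tfrac n2$) recovers the statement, the base case $\P^1$ being immediate from $\chi(\underline\F(k))=\deg(\underline\F)+r k+r$.
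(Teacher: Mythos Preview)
Your proof is correct. The paper itself does not prove this lemma at all: it is stated with a citation to page~189 of Kobayashi's book \cite{Kobayashi} and used as a black box. So there is no ``paper's own proof'' to compare against; your Hirzebruch--Riemann--Roch computation (and the alternative hyperplane-induction argument) simply supplies what the paper takes for granted.
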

An immediate consequence is 
\begin{cor}\label{lem2.12}
We have 
\begin{equation} \label{e:rankequal}\rank(\underline \E^{\mu})=\rank(\underline{\N}_{\infty}^{\mu})
\end{equation}
and 
\begin{equation}\label{e:slopequal}\mu(\underline \E^{\mu})=\mu(\underline{\N}_{\infty}^{\mu}).
\end{equation}
\end{cor}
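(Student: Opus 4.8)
The plan is to deduce both equalities directly from the already-established coincidence of Hilbert polynomials, together with the asymptotic Riemann--Roch expansion (Lemma \ref{thm1.1}).

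First I would upgrade \eqref{same Hilbert polynomial} from an equality of dimensions of spaces of sections to an equality of holomorphic Euler characteristics. By Serre vanishing, for $k$ sufficiently large one has $H^i(\P^{n-1}, \underline\E^\mu(k)) = 0$ and $H^i(\P^{n-1}, \underline\N_\infty^\mu(k)) = 0$ for all $i>0$, so $\chi(\underline\E^\mu(k)) = \dim_\C H^0(\P^{n-1}, \underline\E^\mu(k))$ and likewise for $\underline\N_\infty^\mu$. Combined with \eqref{same Hilbert polynomial}, the Hilbert polynomials $k\mapsto \chi(\underline\E^\mu(k))$ and $k\mapsto\chi(\underline\N_\infty^\mu(k))$ agree for all large $k$; being polynomials in $k$, they are identical. (This is exactly the step where one must be careful, since \eqref{same Hilbert polynomial} is only asserted for $k\gg 1$.)

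Next I would compare coefficients of these two equal polynomials using Lemma \ref{thm1.1}. The coefficient of $k^{n-1}/(n-1)!$ in the Hilbert polynomial of a torsion-free sheaf on $\P^{n-1}$ is its rank, so matching leading terms yields \eqref{e:rankequal}. Once the ranks are known to agree, matching the coefficients of $k^{n-2}/(n-2)!$, which by Lemma \ref{thm1.1} equal $r(\mu(\underline\E^\mu)+\tfrac n2)$ and $r(\mu(\underline\N_\infty^\mu)+\tfrac n2)$ respectively with the same $r$, forces $\mu(\underline\E^\mu) = \mu(\underline\N_\infty^\mu)$, which is \eqref{e:slopequal}.

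There is essentially no serious obstacle here: the content was placed in the preceding results (finite generation and torsion-freeness of $N^\mu$ in Theorem \ref{thm3.8}, the dimension matching \eqref{dimension equal} and its consequence \eqref{same Hilbert polynomial}), and what remains is the routine passage from sections to Euler characteristics via Serre vanishing followed by bookkeeping with the Riemann--Roch expansion.
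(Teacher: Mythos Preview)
Your proposal is correct and follows exactly the paper's approach: the paper already records (just before Lemma \ref{thm1.1}) that $\underline\E^\mu$ and $\underline{\N}_\infty^\mu$ have the same Hilbert polynomial, and then states the corollary as an immediate consequence of the asymptotic Riemann--Roch expansion. Your write-up simply makes explicit the Serre vanishing step that the paper leaves implicit when passing from \eqref{same Hilbert polynomial} to equality of Hilbert polynomials.
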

\begin{rmk}
In the above discussion we work on a fixed  analytic  tangent cone $\E_\infty$, but it is clear that given an analytic tangent cone, by passing to a further subsequence one can extract the a subsheaf $\underline\N_\infty^\mu$ of $\underline\E_\infty^\mu$ as above.  Here ``passing to a further subsequence" is necessary in general since we need the convergence of the chosen holomorphic sections. 
Notice $\underline\E^\mu$ does not depend on the choice of the analytic tangent cone, and the equalities above hold for all such $\underline\N_\infty^\mu$.
\end{rmk}
Now we realize the above defined sheave $\underline \E^{\mu}$ naturally as a factor of the graded sheaf associated to a  filtration of  sheaf on $\P^{n-1}$. For $q\in \Z_{\geq 0}$, $i\geq 1$, we denote
$$M_{q}^{an, i}:=\{s\in H^0(B, \E)| d(s) \geq \mu_i+q\}.$$
For $i=1, \cdots, m$, we then obtain a filtration of $H^0(B, \E)$ given by $\{M_q^{an, i}\}_{q\geq 0}$. We denote the associated graded module by 
$$N^{an, i}:=\bigoplus_{q\geq 0} M^{an,i}_q/M^{an,i}_{q+1}.$$
For $i=1, \cdots, m$ and $l=0, \cdots, m$, we also denote the graded module
$$N^{an, i}_l:=\bigoplus_{q\geq 0}M^{an, m-l+i}_{q}/M_{q+1}^{an, i}.$$
Then we obtain a filtration of graded modules
$$0=N^{an, i}_0\subset N^{an, i}_1\subset \cdots N^{an, i}_m=N^{an, i}.$$
By definition we also know that for $l\geq 1$ the quotient  module $N_l^{an, i}/N_{l-1}^{an, i}$ is isomorphic to the graded module $N^{\mu_{[m-l+i]}}$  (given by \eqref{e:definition of N}) except possibly the lowest degree component. Here we use the notation that
$$
[m-l+i]:=
\begin{cases}
m-l+i \ \ \  \text{if} \ \ \ \  i\leq l;\\
-l+i \ \ \ \ \ \ \ \text{if} \ \ \ \ i >l.
\end{cases}
$$
Then by Theorem \ref{thm3.8} and a simple induction on $l$ it follows that all the graded modules $N_l^{an, i}$ are finitely generated and torsion-free. This enables us to make the following

\

\begin{defi}
For $i=1, \cdots, m$, we  define $\underline{\E}^{an, i}$  to be the torsion-free coherent sheaf on $\P^{n-1}$ associated  to the graded module $N^{an, i}$.
\end{defi}
\begin{defi}
For $i, l=1, \cdots, m$ we  define  $\underline{\E}^{i}_l$ to be the torsion-free coherent sheaf on $\P^{n-1}$ associated to the graded module  $N^{an, i}_l$. 
\end{defi}

\

By definition we have a natural filtration of sheaves given by 
\begin{equation}\label{eqn315}
0=\underline\E_0^i\subset \underline{\E}_1^i \subset 
\cdots \underline{\E}_m^i=\underline{\E}^{an, i}. \end{equation}
Furthermore,  for $i,l=1, \cdots, m$ we have the isomorphism
\begin{equation}\label{eqn320}\underline \E_{l}^i/\underline\E_{l-1}^i\simeq\underline \E^{\mu_{[m-l+i]}}.
\end{equation}
Here we use the fact that two finite generated graded module over $\C[z_1, \cdots, z_n]$ define isomorphic coherent sheaves on $\P^{n-1}$ if and only if they are isomorphic in sufficiently large degrees (see Exercise $5.9$ in \cite{Hartshorne}).

From the above definition we see that the sheaves $\underline \E^{an, i}$ are put into equal footing. In our later discussion when we prove properties of these sheaves we will often restrict to the case $i=1$, and the other cases are just the same up to change of notation. To make notational convenience we also set the following

\begin{notation} 
We make the convention that when we omit the upper script $i$, we always mean $i=1$. So in particular  
$\underline\E^{an}:=\underline\E^{an,1}$, $M^{an}_k:=M^{an, 1}_k$, etc.  
\end{notation}

Notice by definition the sheaves $\underline \E^{an, i}$ on $D=\P^{n-1}$ depend only on the degree function $d$,  and do not depend on choice of analytic tangent cones.  However at this point we can not say much about the geometric properties of either $\underline \E^{an,i}$, or the filtration \eqref{eqn315}. We have only compared the dimension of the space of sections of the quotients associated to the filtration with that of a subsheaf of any analytic tangent cone $\underline\E_\infty$. Also the construction of $\underline\N_\infty^\mu$ depends not only on the analytic tangent cone $\E_\infty$, but also on the choice of holomorphic sections $\{s_{k, i}\}$ at the beginning of this subsection, and it is not a priori clear why this is an intrinsic object.

In the remainder of this section, we will show that $\underline \E^{an,i}$ is an optimal algebraic tangent cone, i.e., it is isomorphic to the restriction to $D$ of some optimal extension of $p^*(\E)|_{\widehat{B} \setminus D}$ across $D$ and the filtration in  \eqref{eqn315} is precisely the Harder-Narasimhan filtration of $\underline{\E}^{an,i}$.

\subsection{The main construction}\label{section3.3}
The main goal of this subsection is to prove 
\begin{thm}\label{thm3.18}
For $i=1, \cdots, m$, there exists a reflexive sheaf $\widehat{\E}^{i}$ on $\widehat{B}$, such that  $\widehat{\E}^i|_{\widehat B\setminus D}$ is isomorphic to $p^*\E|_{\widehat{B} \setminus D}$,  $\widehat{\E}^i|_D$ is isomorphic to $\underline{\E}^{an, i}$, and $H^0(\widehat{B}, \widehat{\E}^i(-kD))$ is naturally identified with $M^{an, i}_k$ for $k\gg 1$.
\end{thm}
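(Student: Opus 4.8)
The plan is to reconstruct the sheaf $\widehat{\E}^i$ directly from the graded module $N^{an,i}$ as a sheaf on the Proj of an appropriate Rees-type algebra, and then to identify this with an extension of $p^*\E|_{\widehat B\setminus D}$ by a local computation near $D$. Recall that $\widehat B$ is the blowup of $B$ at $0$, so away from $D$ it is biholomorphic to $B\setminus\{0\}$, and $\widehat B$ carries the tautological line bundle $\O(-D)=\O(1)$ relative to the exceptional $\P^{n-1}$; sections of $\widehat\E^i(-kD)$ on a neighborhood of $D$ that are pulled back from $B\setminus\{0\}$ correspond precisely to holomorphic sections of $\E$ on a punctured neighborhood of $0$ with controlled vanishing, measured by the degree function $d$. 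So the first step is to make this correspondence precise: I would show that $M^{an,i}_k = \{s\in H^0(B,\E) : d(s)\geq \mu_i+k\}$ is exactly the space that ought to be $H^0$ of the $(-kD)$-twist of the extension, by comparing with the valuation-theoretic meaning of $d$ (the degree function behaves like an order of vanishing along $D$ after pullback, up to the shift by $\mu_i$).

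**Next**, I would construct $\widehat\E^i$ concretely. Consider the sheaf of graded $\O_B$-algebras $\mathcal R = \bigoplus_{k\geq 0} \mathcal I^{an,i}_k$ where $\mathcal I^{an,i}_k$ is the subsheaf of (the pushforward to $B$ of) $\E|_{B\setminus\{0\}}$ of sections with $d\geq \mu_i+k$ — more precisely one works with the filtration of the stalk $\E_0$ induced by $d$ shifted by $\mu_i$, which by Corollary \ref{cor3-4} is multiplicative over the filtration of $\O_0$ by vanishing order. Taking $\mathrm{Proj}$ of the associated Rees algebra over $\widehat B$ (or equivalently, gluing: over $\widehat B\setminus D$ use $p^*\E$, and over a neighborhood of $D$ use the sheaf whose local sections are generated by the filtered pieces with the $D$-twist bookkeeping) produces a coherent sheaf $\widehat\E^i$ on $\widehat B$ with $\widehat\E^i|_{\widehat B\setminus D}\cong p^*\E|_{\widehat B\setminus D}$ and, by construction, $H^0(\widehat B, \widehat\E^i(-kD)) = M^{an,i}_k$ for $k\gg 0$ — the large-$k$ restriction is exactly the statement that a finitely generated graded module determines the sheaf in high degrees (Exercise 5.9 in \cite{Hartshorne}, already invoked in the excerpt). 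The restriction $\widehat\E^i|_D$ is then the sheaf associated to the graded module $\bigoplus_k M^{an,i}_k/M^{an,i}_{k+1} = N^{an,i}$, which is by definition $\underline\E^{an,i}$; finite generation and torsion-freeness of $N^{an,i}$ were already established via Theorem \ref{thm3.8}.

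**The reflexivity** of $\widehat\E^i$ is the point that needs a genuine argument rather than bookkeeping. Since $\E$ is reflexive, $p^*\E$ is reflexive on $\widehat B\setminus D$, and $D$ has codimension one, so reflexivity of the extension across $D$ is not automatic and must be arranged. The strategy is: first produce \emph{some} coherent extension (the Rees construction does this), then replace it by its double dual $(\widehat\E^i)^{**}$, which is reflexive and still agrees with $p^*\E$ off $D$; the remaining work is to check that the double-dualization does not change the cohomology groups $H^0(\widehat B, \widehat\E^i(-kD))$ for $k\gg 0$, equivalently that the Rees construction already gives a sheaf which is "saturated" in the right sense along $D$ — this is where the finiteness results of Section \ref{Finiteness of degree}, particularly the semicontinuity Proposition \ref{prop2.7} and the structure of $N^{an,i}$ as a torsion-free module, are used to rule out that the natural map $\widehat\E^i\to(\widehat\E^i)^{**}$ changes sections. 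I expect this reflexivity/saturation step to be the main obstacle: one must show the extension built from the degree filtration has no "missing" sections along $D$, which amounts to showing that every section of $(\widehat\E^i)^{**}(-kD)$ restricts, after passing to $B\setminus\{0\}$, to a section of $\E$ already counted in $M^{an,i}_k$ — and this uses crucially that $d(s)<\infty$ for all nonzero $s$ (Theorem \ref{thm3.1}(1)) together with Hartogs-type extension for reflexive sheaves. The optimality of the resulting algebraic tangent cone and the identification of \eqref{eqn315} with the Harder–Narasimhan filtration are deferred to the subsequent subsections, so here I only need the three bulleted properties.
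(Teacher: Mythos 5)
Your overall architecture (build a coherent extension realizing the degree filtration, identify its twisted global sections with $M_k^{an,i}$, restrict to $D$, then handle reflexivity by comparison with the double dual) matches the paper's, and your reflexivity outline is in the right spirit. But there are two genuine gaps. The first is the construction itself: you propose to take $\mathrm{Proj}$ of a Rees algebra, or to "glue" $p^*\E$ to a sheaf "generated by the filtered pieces" near $D$. The paper explicitly flags this as the main difficulty: $\widehat B$ is not algebraic and $R=\O(B)$ is not Noetherian, so a Proj-type construction does not automatically yield a \emph{coherent analytic} sheaf, and your gluing description never establishes coherence. The paper's resolution is to pull back a global resolution $0\to\E\to\O^{\oplus n_1}\xrightarrow{\rho}\O^{\oplus n_2}$ to $\widehat B$, obtaining an ambient coherent sheaf $\Ker(\widehat\rho)$, and to define $\widehat\E$ as the \emph{image} of a map $\bigoplus_i\O(k_iD)\to\Ker(\widehat\rho)$ given by finitely many generators of $\widehat M^{an}=\bigoplus_k M_k^{an}$ over the Rees ring $\widehat R=\bigoplus_k\m^k$. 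This requires proving that $\{M_k^{an}\}$ is a \emph{stable} $\m$-filtration and that each $M_k^{an}$ is finitely generated over $R$ (using Steinness of $B$ and global generation); note that finite generation of $N^{an,i}$ over $\C[z_1,\dots,z_n]$ (Theorem \ref{thm3.8}), which is all you invoke, is a different and weaker statement. The second gap is the identification $H^0(\widehat B,\widehat\E^i(-kD))=M_k^{an,i}$: Exercise 5.9 of Hartshorne only governs sheaves on $\P^{n-1}$ associated to graded modules, whereas here one needs surjectivity of restriction maps on the non-algebraic space $\widehat B$, which the paper obtains from Fujiki's vanishing theorem $H^1(\widehat B,\widehat\F(-kD))=0$ for $k\gg1$ applied to the kernel of the presentation \eqref{equation3.27}. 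Without some substitute for this vanishing, your claimed equality of section spaces does not follow.

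On reflexivity, your plan is close to the paper's but stops short of the actual mechanism. Since the constructed $\widehat\E$ sits inside $\Ker(\widehat\rho)$, the paper shows $H^0(\widehat B,(\widehat\E^{**}/\widehat\E)(-kD))=0$ for $k\gg1$ by annihilating the torsion quotient: for any $s\in H^0(\widehat B,\widehat\E^{**}(-kD))$ one finds a homogeneous polynomial $P$ with $p^*P\cdot s\in H^0(\widehat B,\mathcal I_D^{\deg P}\cdot\widehat\E(-kD))=M^{an}_{k+\deg P}$, and then Corollary \ref{cor3-4} ($d(fs)=\deg(f)+d(s)$) lets one divide off $\deg P$ to conclude $d(s)\geq\mu_1+k$, i.e.\ $s\in M_k^{an}$. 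Your appeal to Theorem \ref{thm3.1}(1), Proposition \ref{prop2.7} and Hartogs extension does not by itself produce this cancellation step, which is where the multiplicativity of the degree function is used in an essential way.
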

\begin{rmk}\label{rmk316}
A priori from the proof below the construction of $\widehat\E^i$ depends on various choices, but later in the next subsection we shall prove each such $\widehat\E^i$ is an optimal extension, so is  indeed unique up to isomorphism by Theorem \ref{optimalalgebraictangentcone}.
\end{rmk}

In the following we shall only prove the case $i=1$, and the arguments for $i\neq 1$ are similar. So we shall omit the superscript $i$ throughout this subsection.

The main difficulty in proving such a statement is that we are working on a mixed situation between algebraic geometry and complex analytic geometry. The exceptional divisor $D$ is algebraic so we can describe sheaves over $D$ in terms of graded modules as in the last subsection. On the other hand, $\widehat{B}$ is not algebraic so it seems not easy to describe $\widehat{\E}$ in terms of purely algebraic objects. To overcome this issue we define an auxiliary sheaf first and then define $\widehat{\E}$ as a subsheaf. 

For our purpose, we need the following vanishing theorem of A. Fujiki (see Theorem $N'$ in \cite{Fujiki})
\begin{lem}\label{lem2.13}
Given any coherent analytic sheaf $\widehat \F$ over $\widehat{B}$, $H^1(\widehat B, \widehat \F (-kD))=0$ for $k\gg1$.
\end{lem}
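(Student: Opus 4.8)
The plan is to push the vanishing down to the base via the proper modification $p$. First I would use that $B$ is Stein: since $p$ is proper, the Leray spectral sequence and Cartan's Theorem~B give, for every coherent $\widehat\F$,
$$H^1(\widehat B,\widehat\F(-kD))\ \cong\ \Gamma\big(B,\,R^1p_*(\widehat\F(-kD))\big),$$
and $R^1p_*(\widehat\F(-kD))$ is coherent by Grauert's direct image theorem, and is supported at $0$ because $p$ is biholomorphic away from $0$. So it suffices to prove $R^1p_*(\widehat\F(-kD))=0$ for $k\gg1$. Since $\widehat B=\mathrm{Proj}_B\big(\bigoplus_{j\ge0}\mathfrak m_0^{\,j}\big)$ with $\O_{\widehat B}(-D)$ the associated $\O(1)$, the line bundle $\O_{\widehat B}(-D)$ is $p$-ample, so the statement is exactly relative Serre vanishing, carried out in the analytic category.

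To establish this relative vanishing I would invoke the formal function theorem for the proper map $p$. Using the defining property $\mathfrak m_0\cdot\O_{\widehat B}=\O_{\widehat B}(-D)$ of the blow-up, the $\ell$-th infinitesimal neighbourhood of $D$ is the closed subspace $D_\ell\subset\widehat B$ with $\O_{D_\ell}=\O_{\widehat B}/\O_{\widehat B}(-\ell D)$, and the theorem identifies the $\mathfrak m_0$-adic completion of $\big(R^1p_*(\widehat\F(-kD))\big)_0$ with $\varprojlim_\ell H^1\big(D_\ell,\,\widehat\F(-kD)\otimes\O_{D_\ell}\big)$. For each fixed $\ell$ I would filter $\widehat\F(-kD)\otimes\O_{D_\ell}$ by the images of $\O_{\widehat B}(-(k+j)D)\widehat\F$; the successive quotients are, up to torsion sheaves supported on $D$ (which are handled identically), the coherent $\O_D$-modules
$$\mathcal{G}_{k+j}\ :=\ \O_{\widehat B}(-(k+j)D)\,\widehat\F\ \big/\ \O_{\widehat B}(-(k+j+1)D)\,\widehat\F\qquad(j=0,\dots,\ell-1),$$
viewed as sheaves on $D\cong\P^{n-1}$. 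Running the long exact cohomology sequences down this finite filtration, $H^1(D_\ell,\widehat\F(-kD)\otimes\O_{D_\ell})=0$ as soon as $H^1(\P^{n-1},\mathcal{G}_{k+j})=0$ for all $0\le j\le\ell-1$.

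The whole argument then reduces to a \emph{uniform} Serre vanishing: there should be a single $k_0$ with $H^1(\P^{n-1},\mathcal{G}_m)=0$ for every $m\ge k_0$. This is the step with real content, though it is standard. The key point is that $\bigoplus_{m\ge0}\mathcal{G}_m=\mathrm{gr}^{\bullet}_{D}\widehat\F$ is a finitely generated graded module over the graded $\O_D$-algebra $\bigoplus_{m\ge0}\O_{\widehat B}(-mD)|_D\cong\bigoplus_{m\ge0}\O_{\P^{n-1}}(m)=\mathrm{Sym}^{\bullet}\O_{\P^{n-1}}(1)$ (using that the conormal bundle of $D$ is $\O_{\P^{n-1}}(1)$); finite generation here is merely stability of the $\O_{\widehat B}(-D)$-adic filtration of the coherent sheaf $\widehat\F$, which is automatic since $\O_{\widehat B}(-D)$ is invertible. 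For a finitely generated graded module over $\mathrm{Sym}^{\bullet}$ of an ample line bundle on a projective variety, the higher cohomology of all graded pieces of sufficiently large degree vanishes simultaneously — one proves this by descending induction on cohomological degree, using a presentation by a finite sum of shifted copies of the algebra together with ordinary Serre vanishing on $\P^{n-1}$. Granting this, fix $k\ge k_0$: every $\mathcal{G}_{k+j}$ has vanishing $H^1$, so every term of the inverse system vanishes, hence so does the completion of $R^1p_*(\widehat\F(-kD))$, hence the sheaf itself, hence $H^1(\widehat B,\widehat\F(-kD))=0$. (The same reasoning applied to the higher cohomology of the $\mathcal{G}_m$ kills all $R^qp_*$ with $q\ge1$, so in fact $H^q(\widehat B,\widehat\F(-kD))=0$ for all $q\ge1$ when $k\gg1$.)

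The main obstacle I anticipate is conceptual rather than computational: because $\widehat\F$ is an \emph{arbitrary} coherent sheaf and need not be flat over $B$ (indeed $p$ itself is not flat, the fibre dimension jumping along $D$), one cannot shortcut the argument by comparing $R^1p_*$ directly with the cohomology of the single fibre $\P^{n-1}$ via Grauert's semicontinuity and base-change theorems; this is precisely what forces the detour through the formal function theorem and the uniform-in-degree form of Serre vanishing. As an alternative that bypasses the blow-down, $\widehat B$ carries a finite cover by Stein open sets — the sublevel sets $\{\,|z\circ p|<1\,\}$ in the standard affine charts of $\mathrm{Bl}_0\C^n$, each pseudoconvex in $\C^n$ and hence Stein, with Stein intersections — so $H^1(\widehat B,\widehat\F(-kD))$ is computed by the Čech complex of this cover and one shows the degree-one classes become coboundaries for $k\gg1$ by absorbing poles along $D$; but this only repackages the same estimates, so I would favour the first route.
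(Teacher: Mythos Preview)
The paper does not prove this lemma; it simply quotes it as Theorem~$N'$ from Fujiki's paper on blowing down analytic spaces and uses it as a black box. Your argument is correct and is essentially the standard proof of relative Serre vanishing for the proper morphism $p$, carried out analytically via the formal function theorem. One small point of care: your identification of the successive quotients with $\mathcal G_{k+j}=\mathcal I_D^{k+j}\widehat\F/\mathcal I_D^{k+j+1}\widehat\F$ implicitly treats $\widehat\F(-kD)$ as the subsheaf $\mathcal I_D^{k}\widehat\F\subset\widehat\F$, which is only literally true when $\widehat\F$ has no $\mathcal I_D$-torsion; you flag this, and the fix is routine (split off the torsion subsheaf, which is annihilated by some $\mathcal I_D^{N}$ and hence handled by ordinary Serre vanishing on finitely many $\O_D$-modules). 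The uniform vanishing $H^1(\P^{n-1},\mathcal G_m)=0$ for $m\gg0$ is most transparently seen via stabilization rather than your sketched descending induction: the surjections $\mathcal G_j(1)\twoheadrightarrow\mathcal G_{j+1}$ force the chain of quotients $\mathcal G_j(-j)$ of the coherent sheaf $\mathcal G_0$ to stabilize, whence $\mathcal G_m\cong\mathcal G_{m_0}(m-m_0)$ for $m\ge m_0$ and Serre vanishing on $\P^{n-1}$ applies directly. Compared with the paper, your approach makes the section self-contained at the cost of some standard commutative algebra; Fujiki's theorem covers arbitrary proper modifications, but only the blow-up case is needed here.
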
 

As a direct corollary of this, we have 
\begin{lem}\label{lem2.14}
Given any coherent sheaf $\widehat \F$ over $\widehat{B}$, $\widehat{\F}|_D$ is isomorphic to the sheaf associated to the graded module $$\bigoplus_{k\geq 0} H^0(\widehat{B}, \widehat\F(-kD))/H^0(\widehat{B}, \widehat\F(-(k+1)D)).$$
\end{lem}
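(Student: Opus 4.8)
The plan is to deduce the statement from Serre's correspondence between coherent sheaves on $\P^{n-1}$ and their graded modules of global sections, combined with the vanishing theorem of Lemma \ref{lem2.13}. Write $\mathcal I_D=\O_{\widehat B}(-D)$ for the ideal sheaf of $D$. Since $D\cong\P^{n-1}$ is the exceptional divisor of the blowup of a point, its conormal bundle is $\mathcal I_D|_D\cong\O_{\P^{n-1}}(1)$, so for every coherent sheaf $\widehat\F$ on $\widehat B$ one has a canonical identification $\bigl(\widehat\F\otimes\O_{\widehat B}(-kD)\bigr)|_D\cong(\widehat\F|_D)(k)$ on $D$, under which multiplication by the pullbacks of the linear coordinates $z_1,\dots,z_n$ (which lie in $H^0(\widehat B,\mathcal I_D)$) induces the $\C[z_1,\dots,z_n]$-module structure. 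Since two graded $\C[z_1,\dots,z_n]$-modules agreeing in all large degrees have the same associated sheaf, and $\bigoplus_k H^0(D,(\widehat\F|_D)(k))$ has associated sheaf $\widehat\F|_D$, it suffices to prove that for $k\gg1$ the natural map
\begin{equation}\label{eqn-pf-214}
H^0(\widehat B,\widehat\F(-kD))\big/H^0(\widehat B,\widehat\F(-(k+1)D))\ \longrightarrow\ H^0(D,(\widehat\F|_D)(k))
\end{equation}
is an isomorphism of $\C[z_1,\dots,z_n]$-modules.

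To analyze \eqref{eqn-pf-214} I would tensor the structure sequence $0\to\mathcal I_D\to\O_{\widehat B}\to\O_D\to0$ with the line bundle twist $\widehat\F(-kD)$; as $\O_D$ has a two-term locally free resolution this yields the four-term exact sequence
\begin{equation}\label{eqn-pf-214b}
0\to\mathcal T(-kD)\to\widehat\F(-(k+1)D)\to\widehat\F(-kD)\to(\widehat\F|_D)(k)\to0,
\end{equation}
where $\mathcal T:=\mathcal{T}or_1^{\O_{\widehat B}}(\O_D,\widehat\F)$ is a fixed coherent sheaf supported on $D$ (and $\mathcal T=0$ when $\widehat\F$ is torsion-free, which is the only case used later in the paper). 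Splitting \eqref{eqn-pf-214b} through the subsheaf $\mathcal G_k:=\mathrm{Im}\bigl(\widehat\F(-(k+1)D)\to\widehat\F(-kD)\bigr)\cong(\widehat\F/\mathcal T')(-(k+1)D)$ — where $\mathcal T'\subset\widehat\F$ is the (fixed, $D$-supported) subsheaf killed by the canonical section of $\O_{\widehat B}(D)$ — and passing to the associated long exact cohomology sequences, one sees that \eqref{eqn-pf-214} is an isomorphism as soon as $H^1(\widehat B,\mathcal G_k)=0$ and $H^1(\widehat B,\mathcal T(-kD))=0$ for $k\gg1$. The second vanishing is Lemma \ref{lem2.13} applied to $\mathcal T$; the first is Lemma \ref{lem2.13} applied to the fixed sheaf $\widehat\F/\mathcal T'$. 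A diagram chase through the two long exact sequences then shows that for such $k$ the restriction map $H^0(\widehat\F(-kD))\to H^0((\widehat\F|_D)(k))$ is surjective with kernel exactly the image of $H^0(\widehat\F(-(k+1)D))$, which is precisely \eqref{eqn-pf-214}. Compatibility with the module structure is automatic, since multiplication by $p^*z_i$ commutes with all the sheaf maps in \eqref{eqn-pf-214b} and with restriction to $D$.

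There is no genuine obstacle here; this is an "immediate corollary" of Lemma \ref{lem2.13}, and the only bookkeeping point is the $\mathcal{T}or$ correction term $\mathcal T$ (equivalently the torsion subsheaf $\mathcal T'$) appearing when $\widehat\F$ is not torsion-free. Both are fixed coherent sheaves supported on the compact divisor $D$, so after a sufficiently large twist by $-kD$ their $H^1$ vanishes by Fujiki's theorem, and in the torsion-free case that is actually needed in the sequel the argument collapses to the single short exact sequence $0\to\widehat\F(-(k+1)D)\to\widehat\F(-kD)\to(\widehat\F|_D)(k)\to0$ together with the vanishing of $H^1(\widehat B,\widehat\F(-(k+1)D))$ for $k\gg1$.
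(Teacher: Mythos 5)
Your proof is correct and follows essentially the same route as the paper's: reduce via Hartshorne Proposition 5.15 and Exercise 5.9 to identifying $H^0(D,(\widehat\F|_D)(k))$ with the quotient for $k\gg1$, then use the twisted structure sequence of $D$ together with Fujiki's vanishing (Lemma \ref{lem2.13}). The paper simply writes down the short exact sequence $0\to\widehat\F(-(k+1)D)\to\widehat\F(-kD)\to\widehat\F|_D(k)\to0$ without comment; your extra bookkeeping with the $\mathcal{T}or$ term is a harmless (and arguably welcome) refinement covering coherent sheaves with torsion along $D$, which collapses to the paper's argument in the torsion-free case actually used later.
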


\begin{proof}
It is a general result in algebraic geometry (see Proposition $5.15$ in \cite{Hartshorne}) that  $\widehat\F|_D$ is the sheaf associated to the graded module 
$$
\bigoplus_{k\geq0} H^0(D,\widehat\F|_D (k)).
$$ By Excersise $5.9$ in \cite{Hartshorne}, it suffices to show that for $k\gg1$  
$$H^0(D, \widehat\F|_D(k))=H^0(\widehat{B}, \widehat\F(-kD))/H^0(\widehat{B}, \F(-(k+1)D)).$$ 
To see this, we use the natural short exact sequence 
$$
0\rightarrow \widehat\F(-(k+1)D) \rightarrow \widehat\F(-kD)\rightarrow \widehat\F|_D(k)\rightarrow 0, 
$$
and the fact that  $H^1(\widehat{B}, \widehat\F(-(k+1)D)=0$ for $k\gg 1$ which follows from Lemma \ref{lem2.13}.
\end{proof}

Now we fix a short exact sequence  given  in   \eqref{eqn2.1}:
\begin{equation}\label{eqn3-8}
0\rightarrow \E \rightarrow \O^{\oplus n_1} \xrightarrow{\rho} \O^{\oplus n_2}.
\end{equation} We denote by $$R:=\O(B)$$ the ring of holomorphic functions over $B$, and we denote by $\m$ the maximal ideal of $R$ consisting of those functions vanishing at $0$. Notice $R$ is not Noetherian. Pulling back \eqref{eqn3-8} to $\widehat B$, we have 
\begin{equation}
0\rightarrow \widehat{\E}_0\rightarrow \O^{\oplus n_1}\xrightarrow{\widehat\rho} \O^{\oplus n_2},
\end{equation}
where we define $\widehat{\E}_0$ to be the kernel of $\widehat\rho$. 

Following the discussion in Section \ref{Finiteness of degree},  we define for $k\in \Z_{\geq 0}$,
$$M^0_{k}:=\{s\in H^0(B, \E): d^0(s) \geq k\}, $$
where $d^0$ is the degree function defined with respect to the fixed induced metric $H_0$.  Then $\{M_k^0\}_k$ forms an $\m$-filtration of $H^0(B, \E)$, that is to say, $$\m\cdot M_k^0\subset M_{k+1}^0$$ for all $k$. Similarly by previous discussion we know $\{M_k^{an}\}_k$ also forms an $\m$-filtration of $H^0(B, \E)$. Since $d(s)\leq d_0(s)$ for all $s$, we have $M_k^{an}\subset M_l^0$ for $l\leq \mu_1+k$. 

 We define the blowup ring $$\widehat{R}:=\bigoplus_{k\geq 0} \mathfrak m^k$$
and the graded modules over $\widehat R$ given by 
$$\widehat{M}^0:=\bigoplus_{k\geq 0} M^0_k,$$
and
\begin{equation}\widehat{M}^{an}:=\bigoplus_{k\geq 0} M^{an}_k.
\end{equation}

Now we make the identification 
$$M_k^0= H^0(\widehat{B}, \Ker(\widehat\rho)(-kD))$$ 
as follows. Given $s\in M_k^0$, then we can view $s$ via \eqref{eqn3-8} as a vector-valued holomorphic function over $B$ which has vanishing order at least $k$ at $0$. Then $p^*s$ is a section of $\Ker(\widehat\rho)$ with vanishing order at least $k$ along $D$. It is easy to see that the converse also holds by Hartog's extension theorem.

\begin{lem}\label{lem3.17'}
$\{M_k^0\}_k$ is  a stable $\m$-filtration of $H^0(B, \E)$, i.e., $$M_{k+1}^0=\m\cdot M_k^0$$ for $k\gg 1$. 
\end{lem}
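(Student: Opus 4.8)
The plan is to prove the stated identity by reducing it, via exactness of $H^0$ over the Stein ball, to the corresponding statement for coherent sheaves on $B$, and there to use that $\m$ generates the line bundle $\O_{\widehat B}(-D)$ on the blowup.

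Recall that we have identified $M^0_k=H^0(\widehat B,\widehat\E_0(-kD))$ with $\widehat\E_0=\Ker(\widehat\rho)$ coherent on $\widehat B$. Set $\mathcal F_k:=p_*(\widehat\E_0(-kD))$, a coherent sheaf on $B$ by Grauert's direct image theorem, so that $M^0_k=H^0(B,\mathcal F_k)$. Multiplication by the coordinate functions gives $\O_B$-linear maps $z_i\colon\mathcal F_k\to\mathcal F_{k+1}$, and since $\m=\sum_i Rz_i$ (Hadamard's lemma on the ball) and each $M^0_k$ is an $R$-module, it is enough to prove that $\sum_{i=1}^n z_i\mathcal F_k=\mathcal F_{k+1}$ as subsheaves of $\mathcal F_{k+1}$ for $k\gg1$. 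Indeed, granting this, exactness of $H^0(B,-)$ on coherent sheaves over the Stein manifold $B$ identifies $H^0(B,\sum_i z_i\mathcal F_k)$ with the image of $\bigoplus_iH^0(B,\mathcal F_k)\to H^0(B,\mathcal F_{k+1})$, that is with $\sum_i z_iM^0_k=\m M^0_k$; hence $M^0_{k+1}=\m M^0_k$.

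For the sheaf-level identity I would use the defining property $\m\cdot\O_{\widehat B}=\O_{\widehat B}(-D)$ of the blowup, which gives $\sum_i z_i\cdot\widehat\E_0(-kD)=\widehat\E_0(-(k+1)D)$ on $\widehat B$, i.e. a surjection $\widehat\E_0(-kD)^{\oplus n}\xrightarrow{(z_1,\dots,z_n)}\widehat\E_0(-(k+1)D)$. Writing $\mathcal Q:=\Ker\big(\O_{\widehat B}^{\oplus n}\xrightarrow{(z_1,\dots,z_n)}\O_{\widehat B}(-D)\big)$, a \emph{fixed} coherent sheaf on $\widehat B$, one tensors the sequence $0\to\mathcal Q\to\O_{\widehat B}^{\oplus n}\to\O_{\widehat B}(-D)\to 0$ by $\widehat\E_0$ (this stays exact since $\O_{\widehat B}(-D)$ is locally free) and twists by $\O_{\widehat B}(-kD)$; this identifies the kernel of the above surjection with $(\mathcal Q\otimes\widehat\E_0)(-kD)$. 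Applying $p_*$ to the resulting short exact sequence, the cokernel of $\mathcal F_k^{\oplus n}\to\mathcal F_{k+1}$ embeds into $R^1p_*\big((\mathcal Q\otimes\widehat\E_0)(-kD)\big)$, so it remains to see that this sheaf vanishes for $k\gg1$.

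This is where Fujiki's vanishing enters: applying Lemma \ref{lem2.13} to the fixed coherent sheaf $\mathcal Q\otimes\widehat\E_0$ gives $H^1(\widehat B,(\mathcal Q\otimes\widehat\E_0)(-kD))=0$ for $k\gg1$, and since $B$ is Stein the Leray spectral sequence identifies this group with $H^0(B,R^1p_*((\mathcal Q\otimes\widehat\E_0)(-kD)))$. Because $p$ is an isomorphism away from $0$, the sheaf $R^1p_*((\mathcal Q\otimes\widehat\E_0)(-kD))$ is supported at the origin, so vanishing of its global sections forces the sheaf itself to vanish; this yields the sheaf-level identity and hence the lemma. I expect the only genuine subtlety to be the bookkeeping forced by $\m$ not being locally free and $\widehat\E_0$ not being flat — this is precisely why the argument goes through the single line-bundle identity $\m\cdot\O_{\widehat B}=\O_{\widehat B}(-D)$ and the maps ``multiplication by $z_i$'' rather than a naive projection formula, and why exactness of $H^0$, available only because $B$ is Stein, has to be invoked at the last step.
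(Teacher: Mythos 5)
Your proposal is correct and is essentially the paper's argument: both rest on the surjection $\widehat\E_0(-kD)^{\oplus n}\xrightarrow{(p^*z_1,\dots,p^*z_n)}\widehat\E_0(-(k+1)D)$ whose kernel is a fixed coherent sheaf twisted by $\O(-kD)$, together with Fujiki's vanishing (Lemma \ref{lem2.13}) to kill the $H^1$ obstruction for $k\gg1$. The only difference is cosmetic: the paper takes $H^0(\widehat B,\cdot)$ of this sequence directly, whereas you detour through $p_*$, $R^1p_*$, Leray, and Steinness of $B$ — a longer but equivalent route to the same surjectivity on global sections.
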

\begin{proof}
 It suffices to show $M_{k+1}^0\subset \m\cdot M_k^0$. 
Let $\{z_1, \cdots, z_n\}$ denote the coordinate functions on $B$. Then $\{p^*z_i\}_i$ forms a set of global generators of $\O(-D)$. In particular, we have an exact sequence 
$$
0\rightarrow \F \rightarrow  \Ker(\widehat \rho)^{\oplus n} \xrightarrow{(p^*z_1, \cdots, p^*z_n)} \Ker(\widehat \rho)(-D) \rightarrow 0. 
$$
Tensoring with $\O(-kD)$, we have the following exact sequence 
$$
0\rightarrow \F(-kD) \rightarrow  (\Ker(\widehat\rho)(-kD))^{\oplus n} \xrightarrow{(p^*z_1, \cdots, p^*z_n)} \Ker(\widehat \rho)(-(k+1)D) \rightarrow 0. 
$$
By Lemma \ref{lem2.13} above, we have a surjective map 
$$(H^0(\widehat{B}, (\Ker(\widehat\rho)(-kD))))^{\oplus n}\xrightarrow{(p^*z_1, \cdots, p^*z_n)} H^0(\widehat{B}, \Ker(\widehat \rho)(-(k+1)D))$$
for $k$ large. In particular, by  the identification above  we know $M^0_{k+1}\subset \m \cdot M^0_{k}$ and thus $M^0_{k+1}=\m \cdot M^0_k$. 
\end{proof}

\begin{prop}
$\{M_k^{an}\}_k$ is a stable $\m$-filtration of $H^0(B, \E)$. 
\end{prop}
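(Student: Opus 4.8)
The plan is to deduce this from the finite generation of the modules $N^{\mu}$ (Theorem~\ref{thm3.8}) together with the stability of the algebraic filtration $\{M_k^0\}$ already established in Lemma~\ref{lem3.17'}. I would first revert to the indexing $M_j=\{s\in H^0(B,\E):d(s)\geq\mu_j\}$ from Section~\ref{A torsion-free sheaf on $D$}. Since $\mu_{1+mk}=\mu_1+k$ by Corollary~\ref{cor3-4}, one has $M_k^{an}=M_{1+mk}$, so with $j:=1+mk$ the assertion $M_{k+1}^{an}=\m M_k^{an}$ is exactly $M_{j+m}=\m M_j$. Here $\m=(z_1,\dots,z_n)\cdot\O(B)$ since $B$ is convex, each $M_j$ is an $\O(B)$-submodule of $H^0(B,\E)$, and the inclusion $\m M_j\subseteq M_{j+m}$ is immediate from Corollary~\ref{cor3-4} and Theorem~\ref{thm3.1}(3); so only the reverse inclusion needs proof, for $j$ large.

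The first step is to unwind Theorem~\ref{thm3.8}: a finitely generated graded module over $\C[z_1,\dots,z_n]$, a ring generated in degree one, satisfies $N_{e+1}=\C[z_1,\dots,z_n]_1\cdot N_e$ for $e$ large. Applying this to each $N^{\mu}$, $\mu\in\{\mu_1,\dots,\mu_m\}$, and translating back through the grading $N^{\mu}_e=M_j/M_{j+1}$ (where $\mu_j=\mu+e$), one obtains a single $k_0$ with
\[
M_{j+m}=\m M_j+M_{j+m+1}\qquad\text{for all }j\geq k_0 .
\]
Iterating this over $j,j+1,\dots$ and using $M_{j+i}\subseteq M_j$ (hence $\m M_{j+i}\subseteq\m M_j$) then gives
\[
M_{j+m}=\m M_j+M_{j+m+t}\qquad\text{for all }j\geq k_0,\ t\geq 1 .
\]
So the whole statement reduces to showing that for each fixed $j$ one has $M_{j+m+t}\subseteq\m M_j$ once $t$ is large enough.

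To prove that last inclusion I would chain three facts. Since $d\leq d^{0}$ and $d^{0}$ is integer-valued, $M_{j'}\subseteq M^0_{\lceil\mu_{j'}\rceil}$ for every $j'$. By Lemma~\ref{lem3.17'}, iterating $M^0_{l+1}=\m M^0_l$ gives $M^0_{l}=\m^{\,l-l_0}M^0_{l_0}\subseteq\m^{\,l-l_0}H^0(B,\E)$ for $l\geq l_0$. And by Corollary~\ref{cor3-4} and Theorem~\ref{thm3.1}(2),(3), every element of $\m^{p}H^0(B,\E)$ has degree $\geq p$, so $\m^{p}H^0(B,\E)\subseteq M_j$ as soon as $p\geq\mu_j$. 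Hence, choosing $t$ so large that $\lceil\mu_{j+m+t}\rceil\geq\mu_j+l_0+1$ (possible because $\mu_{j+m+t}\to\infty$), one gets
\[
M_{j+m+t}\subseteq M^0_{\lceil\mu_{j+m+t}\rceil}\subseteq\m^{\,\lceil\mu_{j+m+t}\rceil-l_0}H^0(B,\E)=\m\cdot\m^{\,\lceil\mu_{j+m+t}\rceil-l_0-1}H^0(B,\E)\subseteq\m M_j ,
\]
which together with the previous display yields $M_{j+m}=\m M_j$ for all $j\geq k_0$, i.e. $M_{k+1}^{an}=\m M_k^{an}$ for $k\gg 1$.

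The hard part is exactly this last inclusion. There is no two-sided comparison between the analytic filtration $\{M_k^{an}\}$ and the algebraic one $\{M_k^0\}$; only the one-sided $M_j\subseteq M^0_{\lceil\mu_j\rceil}$, coming from $d\leq d^0$, is available. The key point is that this one-sided control, passed through the already-known stability of $\{M_k^0\}$, manufactures arbitrarily high powers of $\m$, which can then be pushed back into the analytic filtration using the elementary lower bound $d(\m^p H^0(B,\E))\geq p$. The remaining ingredients — unwinding Theorem~\ref{thm3.8} and the Artin--Rees-type iteration — are routine.
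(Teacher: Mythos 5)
Your proof is correct and follows essentially the same route as the paper's: you use Theorem \ref{thm3.8} to get $M^{an}_{k+1}\subset \m\cdot M^{an}_k+M^{an}_{k+2}$ and iterate, then absorb the tail $M^{an}_l$ for $l\gg k$ into $\m\cdot M^{an}_k$ by passing through the algebraic filtration $\{M^0_k\}$ via $d\leq d^0$, invoking its stability (Lemma \ref{lem3.17'}) to produce high powers of $\m$, and returning via $\m^p\cdot H^0(B,\E)\subset M^{an}_k$. The only cosmetic difference is that you factor the tail as $\m\cdot\m^{p-1}H^0(B,\E)$ where the paper factors it as $\m\cdot M^0_{l-1}$; the ingredients and their order are identical.
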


\begin{proof}
 By Theorem \ref{thm3.8}, we know that for $k$ large, $$\m\cdot M_k^{an}\subset M^{an}_{k+1}\subset \m\cdot  M^{an}_{k}+M^{an}_{k+2}.$$  So it suffices to show for any fixed $k$ large, there exists $l_0=l_0(k)$ such that  $$M^{an}_{l} \subset \m \cdot M^{an}_{k}$$ for $l\geq l_0$.  
 Since $$M^{an}_{l} \subset M^0_{l}=\m \cdot M^{0}_{l-1},$$ it suffices to show $M^0_{l-1}\subset M^{an}_{k}$ for $l\gg 1$. Notice by definition we can find $l'$ large such that $$\m^{l'}\cdot H^{0}(B, \E)\subset M_k^{an}.$$ By Lemma \ref{lem3.17'} we may also assume $M_{l+1}^0=\m\cdot M_{l}^0$ for all $l\geq l'$.   Now for $l>2l'+1$, we have $$M^0_{l-1}=\m^{l'}\cdot  M^0_{l-1-l'} \subset \m^{l'}\cdot  M_0^0 \subset M^{an}_k.$$
\end{proof}

\begin{prop}
For all $k$, $M_k^0$ and $M_k^{an}$ are finitely generated $R$ modules. 
\end{prop}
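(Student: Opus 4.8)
The plan is to prove both statements simultaneously by exhibiting each of $M_k^0$ and $M_k^{an}$ as the space of global sections of a coherent sheaf on the \emph{compact} space $\widehat B$, and then invoking the finiteness of cohomology for coherent sheaves on compact complex spaces. The key observation is that $\widehat B$ is a compact complex space (the blow-up of the open ball $B$ at $0$ is not compact, but one works instead with a slightly shrunk closed ball, or notes that the relevant cohomology groups are finite-dimensional by Fujiki's theorem and the properness of $p$ near $D$, together with coherence on $B$ which is Stein). Concretely, for $M_k^0$ I would use the identification already established in the excerpt, namely $M_k^0 = H^0(\widehat B, \Ker(\widehat\rho)(-kD))$. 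Since $\Ker(\widehat\rho)$ is a coherent analytic sheaf on $\widehat B$ and $p$ is proper with one-dimensional fibers, and since $B$ itself is Stein so $H^0(B,\E) = H^0(\widehat B, \Ker(\widehat\rho))$ is a finitely generated module over $\O(B)$ only after restricting to a relatively compact subset — the cleanest route is: the $\O(B)$-module $H^0(B,\E)$ need not be finitely generated, but $M_k^0$ \emph{is}, because of Lemma \ref{lem3.17'} together with a direct argument.

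Here is the argument I would carry out. First, for $M_k^0$: by Lemma \ref{lem3.17'}, $\{M_k^0\}$ is a stable $\m$-filtration, so there is $k_0$ with $M_{k+1}^0 = \m\cdot M_k^0$ for all $k\geq k_0$. Hence it suffices to prove $M_{k_0}^0$ is a finitely generated $R$-module, since then all $M_k^0$ for $k\geq k_0$ are finitely generated (being $\m^{k-k_0}\cdot M_{k_0}^0$), and for $k<k_0$ one notes $M_k^0 \supset M_{k_0}^0$ with $M_k^0/M_{k_0}^0$ a finite-dimensional $\C$-vector space (this quotient injects into $\bigoplus_{j=k}^{k_0-1} M_j^0/M_{j+1}^0$, and each graded piece $M_j^0/M_{j+1}^0$ is finite-dimensional because it is identified, via \eqref{eqn3-8} and Taylor expansion, with a space of vector-valued homogeneous polynomials of a fixed degree — this is precisely the observation in the proof of Theorem \ref{thm3.1} that $d^0(s) = \min_j \deg(F_j)$). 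So the whole thing reduces to: $M_{k_0}^0 = H^0(\widehat B, \Ker(\widehat\rho)(-k_0 D))$ is finitely generated over $R = \O(B)$. For this I would pass to a relatively compact Stein subdomain: the module $H^0(B,\E)$ restricted to a smaller ball is a submodule of a free module $\O(B')^{\oplus n_1}$, and $M_{k_0}^0$ restricted there is $\m'^{k_0}$-torsion-controlled; more directly, $\Ker(\widehat\rho)(-k_0 D)$ pushes forward under the proper map $p$ to a coherent sheaf on the \emph{Stein} space $B$ (coherence of higher direct images, Grauert), and a coherent sheaf on a Stein space has finitely generated module of sections over any relatively compact subdomain — and since all our sections extend across $0$ by Hartogs, we may work on a slightly larger ball and restrict. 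Thus $M_{k_0}^0$ is finitely generated.

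For $M_k^{an}$ the argument is parallel but uses Theorem \ref{thm3.8} in place of Lemma \ref{lem3.17'}: the previous proposition just proved that $\{M_k^{an}\}$ is a stable $\m$-filtration, so $M_{k+1}^{an} = \m\cdot M_k^{an}$ for $k\geq k_1$. Hence it suffices to show $M_{k_1}^{an}$ is finitely generated over $R$. But $M_{k_1}^{an} \subset M_l^0$ for suitable $l$ (using $d(s)\leq d^0(s)$, so $M_k^{an}\subset M_l^0$ whenever $l\leq \mu_1 + k$, as noted in the excerpt), and we have just shown $M_l^0$ is a finitely generated $R$-module; since $R$ fails to be Noetherian I cannot immediately conclude a submodule is finitely generated, so instead I would argue: $M_{k_1}^{an}$ contains $\m^{l'}\cdot H^0(B,\E) \supset \m^{l'}\cdot M_0^0$ for some $l'$ (this was used in the previous proposition), and the quotient $M_{k_1}^{an}/(\m^{l'}\cdot M_0^0)$ is finite-dimensional over $\C$ because it is a subquotient of $M_0^0/\m^{l'} M_0^0$, which is finite-dimensional as $M_0^0$ is a finitely generated $R$-module and $R/\m^{l'}$ is a finite-dimensional local Artinian ring. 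Therefore $M_{k_1}^{an}$ is an extension of a finite-dimensional $\C$-space by the finitely generated module $\m^{l'}\cdot M_0^0$, hence finitely generated. Then, as before, $M_k^{an}$ for $k\geq k_1$ is $\m^{k-k_1}\cdot M_{k_1}^{an}$ hence finitely generated, and for $k<k_1$ one uses that $M_k^{an}/M_{k_1}^{an}$ is finite-dimensional (injecting into $\bigoplus_j M_j^{an}/M_{j+1}^{an}$, each summand finite-dimensional by Proposition \ref{l:orthogonal projections}).

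The main obstacle I anticipate is the non-Noetherianity of $R = \O(B)$, which blocks the naive "submodule of a finitely generated module is finitely generated" step; the workaround above is to exploit the stable $\m$-filtration structure to reduce everything to a single module $M_{k_0}^0$ (resp. $M_{k_1}^{an}$) and then control that one module either directly via coherence of a proper pushforward to the Stein space $B$, or via sandwiching between $\m^{l'}H^0(B,\E)$ and something with finite-dimensional quotient. A secondary technical point is making precise that "$H^0(\widehat B,\widehat\F(-kD))$ is finitely generated over $R$" — one must be careful since $\widehat B$ over the \emph{open} ball is not compact; the honest statement uses either Fujiki's finiteness (Lemma \ref{lem2.13}) on a slightly shrunk closed ball together with Hartogs extension to transfer back, or Grauert's coherence theorem for $p_*$ applied on $B$ Stein. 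Either way the finite generation over $R$ of $M_k^0$ follows, and $M_k^{an}$ comes along for the ride.
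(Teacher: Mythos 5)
Your argument is correct, but it proceeds differently from the paper's. The paper handles each $k$ directly and sheaf-theoretically: it lets $\E^k\subset\E$ be the coherent subsheaf generated by $M_k^0$, uses $\m^k\cdot H^0(B,\E)\subset M_k^0$ to see that $\E^k\supset \mathcal I_0^k\cdot\E$ is globally generated by finitely many sections each of degree at least $k$, checks that consequently $H^0(B,\E^k)=M_k^0$, and then runs the Stein argument (Cartan A on the compact $\overline B$ plus $H^1$-vanishing of the kernel) on the presentation of $\E^k$; the same works verbatim for $M_k^{an}$ since $\m^l\cdot H^0(B,\E)\subset M_k^{an}$ for $l=k+\mu_1+1$. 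You instead reduce everything to the single fact that $H^0(B,\E)=M_0^0$ is finitely generated (obtained by the same Stein-theoretic core, packaged for you through Grauert's coherence of $p_*$), and then sandwich: $\m^l\cdot H^0(B,\E)\subset M\subset H^0(B,\E)$ with $M/\m^l H^0(B,\E)$ finite-dimensional as a subspace of $H^0(B,\E)/\m^l H^0(B,\E)$. This is a clean, purely module-theoretic workaround of the non-Noetherianity of $R$, and it buys you the luxury of never having to identify $M_k^0$ or $M_k^{an}$ with the sections of an auxiliary subsheaf. Two housekeeping points: your sandwich works uniformly in $k$, so the preliminary reduction via stability of the $\m$-filtrations (Lemma \ref{lem3.17'} and the preceding proposition) is superfluous, though harmless; and it silently uses that $\m^l$ is the finitely generated ideal $(z^\alpha)_{|\alpha|=l}$ with $R/\m^l$ finite-dimensional, which holds because $B$ is star-shaped (iterated integral remainder formula). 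Finally, the aside in your first paragraph that ``$H^0(B,\E)$ need not be finitely generated'' contradicts your own conclusion at $k=0$ and should be deleted; what saves the day is precisely that $\E$ extends to a neighborhood of $\overline B$.
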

\begin{proof}
 We first show $H^0(B,\E)$ is finitely generated over $R$. By assumption, we know $\E$ is globally generated, so there exists a finite global resolution of $\E$ by sections $s_1, \cdots, s_N \in H^0(B, \E)$ given as 
\begin{equation}\label{eqn3.2}
0\rightarrow \F \rightarrow \O^{\oplus N} \rightarrow \E \rightarrow 0
\end{equation} 
for some coherent sheaf $\F$. Since $B$ is Stein, we know $H^1(B, \F)=0$ and thus we have a surjective map $$H^0(B, \O)^{\oplus N } \xrightarrow{(s_1, \cdots, s_N)} H^0 (B, \E)\rightarrow 0.$$ In particular, we know $H^0(B, \E)$ is finitely generated over $R$. 

Now we consider $M_k^0$. Let $\E^k\subset \E$ be the coherent subsheaf generated by $M_k^0$ in $\E$. By definition, for all $k$, we have  $$\m^k \cdot H^0(B, \E) \subset M_k^0.$$
This implies that $\mathcal I_0^k \cdot \E \subset \E^k$. In particular, $\{z_i^k s_j: 1\leq i\leq n, 1\leq j\leq N\}$ globally generates $\E^k$ away from $0$. Now take finitely many sections of $M_k^0$ which generate the stalk of $\E^k$ at $0$, then combined with $\{z_i^k s_j: 1\leq i\leq n, 1\leq j\leq N\}$, they globally generate $\E^k$. So it follows that for any section $s$ of $\E^k$ we have $d^0(s)\geq k$. Therefore  $H^0(M, \E^k)=M_k^0$. It follows  from the above argument using vanishing of $H^1$ that $M_k^0$ is finitely generated over $R$.

Again the same argument also works with $M_k^{an}$, noticing that for all $k$, $M_k^{an}\supset \m^{l}\cdot H^0(B, \E)$ for $l=k+\mu_1+1$.  
\end{proof}

An immediate corollary is 

\begin{cor}\label{stable}
Both $\widehat{M}^0$ and $\widehat {M}^{an}$ are finitely generated over $\widehat R$. 
\end{cor}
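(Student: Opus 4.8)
The statement to prove is Corollary~\ref{stable}: both $\widehat{M}^0$ and $\widehat{M}^{an}$ are finitely generated over $\widehat{R}$.

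\textbf{The plan.} This is a standard Artin--Rees type fact, and the plan is to deduce it directly from the two propositions immediately preceding it, namely that $\{M_k^0\}_k$ and $\{M_k^{an}\}_k$ are \emph{stable} $\m$-filtrations of $H^0(B,\E)$ (so $M_{k+1}^\bullet = \m\cdot M_k^\bullet$ for all $k\geq k_0$), together with the fact that each individual $M_k^0$ and $M_k^{an}$ is a finitely generated $R$-module. I would first treat $\widehat{M}^0 = \bigoplus_{k\geq 0} M_k^0$. Fix $k_0$ large enough that $M_{k+1}^0 = \m\cdot M_k^0$ for all $k\geq k_0$. Then for each $k$ with $0\leq k\leq k_0$, pick a finite generating set $G_k\subset M_k^0$ of $M_k^0$ as an $R$-module (possible since $M_k^0$ is finitely generated over $R$), and view $G_k$ as sitting in degree $k$ of $\widehat{M}^0$. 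I claim the finite set $G := \bigcup_{k=0}^{k_0} G_k$ generates $\widehat{M}^0$ over $\widehat{R}$. Indeed, the degree-$k$ piece of the $\widehat{R}$-submodule generated by $G$ is $\sum_{j\leq \min(k,k_0)} \m^{k-j} G_j R$; for $k\leq k_0$ this contains $G_k R = M_k^0$, and for $k > k_0$ the stability relation gives $M_k^0 = \m^{k-k_0} M_{k_0}^0 = \m^{k-k_0}(G_{k_0} R) \subseteq \m^{k-k_0}G_{k_0}\widehat{R}$, which is contained in the submodule generated by $G$. Hence $G$ generates $\widehat{M}^0$, and $\widehat{M}^0$ is finitely generated over $\widehat{R}$. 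The argument for $\widehat{M}^{an} = \bigoplus_{k\geq 0} M_k^{an}$ is word-for-word identical, using the stability of $\{M_k^{an}\}_k$ and the finite generation of each $M_k^{an}$ over $R$.

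\textbf{The main obstacle.} Honestly there is no serious obstacle here: this is the classical observation that a stable filtration by finitely generated submodules produces a finitely generated Rees-type module, and both required inputs have just been established in the preceding two propositions. The one point that deserves a sentence of care is that $R=\O(B)$ is \emph{not} Noetherian (as the excerpt emphasizes), so one cannot simply invoke the usual Noetherian Rees algebra / Artin--Rees machinery as a black box; but the direct bookkeeping above never uses Noetherianity of $R$ --- it only uses that finitely many elements generate each $M_k^\bullet$ for $k\leq k_0$, and that the stable relation $M_{k+1}^\bullet = \m M_k^\bullet$ propagates the generation to all higher degrees. So the proof is a short, self-contained computation with graded pieces, and I would present it in essentially the two sentences indicated, perhaps spelled out once and then remarked to apply verbatim to the analytic filtration.
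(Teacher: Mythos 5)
Your proof is correct and is essentially the paper's argument: the paper simply cites Proposition 5.3 of Eisenbud (noting, as you do, that Noetherianity of $\widehat R$ is not needed), and that proposition is exactly the statement that a stable $\m$-filtration by finitely generated modules has finitely generated Rees module, proved by precisely the degree-by-degree bookkeeping you carried out.
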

\begin{proof}
This follows from Proposition $5.3$ in Eisenbud \cite{Eisenbud}. Notice this result does not require the ring $\widehat R$ to be Noetherian. 
\end{proof}

Now we finish the proof of Theorem \ref{thm3.18}.
We divide it into a few pieces.

\

\noindent \textbf{Existence}:

\

 By Corollary  \ref{stable}, $\widehat{M}^{an}$ is finitely  generated over $\widehat R$. We can choose a set of finitely many homogeneous generators $\{[s_{k_i}]\in M^{an}_{k_i}/M^{an}_{k_i+1}\}_{i=1}^{l}$, and the corresponding representatives $s_{k_i}\in  M^{an}_{k_i} \setminus M^{an}_{k_i+1}$. In particular, we have 
 $$d(s_{k_i})=k_i+\mu_{q}$$
for some $q\in \{1, \cdots, m\}$. Given $s\in M^{an}_{k_i}$, we know $$d^0(s) \geq d(s)\geq k_i+\mu_1 \geq k_i,$$ thus $s\in M^{0}_{k_i}$. In particular, we have  $$M^{an}_{k_i}\subset M^{0}_{k_i}$$ and $$s_{k_i} \in H^0(\widehat B, \Ker(\widehat{\rho})(-k_iD)),$$ hence $s_{k_i}$ defines a map $\O(k_i D) \rightarrow \Ker(\widehat{\rho})$. We define $\widehat{\E}$ to be the image sheaf of the natural map $$\bigoplus_{i=1}^l \mathcal O(k_iD) \xrightarrow{(s_{k_1}, \cdots, s_{k_l})} \Ker(\widehat \rho)$$ In particular, $\widehat{\E}$ is coherent and it lies in the following exact sequence 
\begin{equation}\label{equation3.27}
0\rightarrow \widehat{\F} \rightarrow\bigoplus_{i=1}^l \mathcal{O}(k_i D) \rightarrow \widehat{\E} \rightarrow 0,
\end{equation}
for some sheaf $\widehat \F$.

\

\noindent \textbf{Global sections:}

\

 We show that $H^0(\widehat{B}, \widehat{\E}(-kD))=M^{an}_k$ for $k$ sufficiently large. Notice both are naturally subspaces of $H^0(B, \E)$. We first show that
 $$
 H^0(\widehat{B}, \widehat{\E}(-kD)) \subset M^{an}_k.
 $$
 By using the short exact sequence (\ref{equation3.27}), since $H^1(\widehat{B}, \widehat{\F}(-kD))=0$ for $k\gg1$,  we have a surjective map 
$$\bigoplus_{i=1}^l H^0(\widehat{B},\mathcal{O}((k_i-k) D)) \xrightarrow{(s_{k_1}, \cdots, s_{k_l})} H^0(\widehat{B}, \widehat{\E}(-kD))\rightarrow 0$$
for $k$ large. In particular, for any section $s\in H^0(\widehat{B}, \widehat{\E}(-kD))$, 
$$s=\sum_i f_{k-k_i} \cdot s_{k_i}$$
 for some $f_{k-k_i}\in \m^{k-k_i}$.  This implies $$d(s)\geq \mu_1+k,$$ so $s\in M^{an}_k$. It remains to show that $M^{an}_k \subset H^0(\widehat B, \widehat{\E}(-k))$. By our choices of $\{s_{k_i}\}$ it follows that given any $s'\in M^{an}_k$, we can write 
 $$s'=\sum_i g_{k-k_i} \cdot s_{k_i}$$ where $g_{k-k_i} \in \m^{k-k_i}$. In particular, we know $s' \in H^0(\widehat{B}, \widehat{\E}(-kD))$. 
 
 \

\noindent\textbf{Restriction}:

\

$\widehat\E|_D=\underline{\E}^{an}$ this follows from Lemma \ref{lem2.14} and the previous item.

\

\noindent\textbf{Reflexivity:} 

\

By definition $\widehat \E$ is a subsheaf of $\Ker (\widehat\rho)$ and we have a natural inclusion 
$$0\rightarrow \widehat{\E}^{**} / \widehat{\E} \rightarrow \Ker(\widehat\rho)/\widehat{\E}.$$ 
 The above  inclusion induces an inclusion of global sections 
$$0\rightarrow  H^0(\widehat{B}, (\widehat{\E}^{**}/\widehat{\E}) (-kD)) \rightarrow H^0(\widehat{B}, (\Ker(\widehat\rho)/\widehat{\E})(-kD))$$ 
for any $k$. By the discussion above, we have $$H^0(\widehat{B}, \Ker(\widehat \rho)(-kD))=M_k^0$$ and $$H^0(\widehat{B}, \widehat{\E}(-kD))=M^{an}_k$$ for $k$ large. In particular, by Lemma \ref{lem2.13} we have
$$H^0(\widehat{B}, (\Ker(\widehat\rho)/\widehat{\E}) (-kD))=M^0_k/M^{an}_{k}$$ 
for $k$ large. Similarly, 
\begin{equation}\label{eqn322}
H^0(\widehat{B}, (\widehat{\E}^{**}/\widehat{\E}) (-kD))=H^0(\widehat{B}, \widehat{\E}^{**}(-kD))/M^{an}_k.
\end{equation}

\begin{clm}\label{clm3.23}
For any fixed $k\gg 1$, given any $s\in H^0(\widehat{B}, \widehat{\E}^{**}(-kD))$, there exists a homogeneous polynomial function $P$ over $\C^n$ so that $$p^*P\cdot s\in H^0( \widehat{B}, \mathcal I_D^{d}\cdot \widehat{\E}(-kD)),$$ where $d=\deg(P)$, and $\mathcal I_D$ denotes the ideal sheaf of $D$ on $\widehat B$.
\end{clm}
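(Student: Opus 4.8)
\textbf{Plan for the proof of Claim \ref{clm3.23}.}

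The plan is to exploit the fact that $\widehat\E^{**}/\widehat\E$ is a torsion sheaf supported in codimension at least $2$ on $\widehat B$, together with the explicit presentation of $\widehat\E$ as the image sheaf in \eqref{equation3.27}. First I would recall that $\widehat\E$ and $\widehat\E^{**}$ agree away from an analytic set $\widehat Z\subset \widehat B$ of codimension $\geq 2$; since $\widehat\E^{**}/\widehat\E$ is coherent and supported on $\widehat Z$, a sufficiently high power of the ideal sheaf of $\widehat Z$ annihilates it, so it is in particular killed by $\mathcal I_D^N$ for no $N$ in general — that is the wrong ideal — but it is annihilated by the pullback of a suitable sheaf of functions on $B$. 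The right move is to descend to the base: under $p_*$, the sheaf $\widehat\E^{**}$ pushes forward to a coherent subsheaf of $\E^{**}=\E$ (using that $\E$ is reflexive and $p$ is the blowup at a point, so $p_*p^*\E = \E$), and $p_*\widehat\E$ pushes forward to the subsheaf of $\E$ generated over $R$ by the $M_k^{an}$'s. Both have the same stalk at every point of $B\setminus\{0\}$, so $p_*(\widehat\E^{**}/\widehat\E)$ is a coherent $R$-module supported at the origin, hence annihilated by some power $\m^e$ of the maximal ideal.

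Concretely, fix $k\gg 1$ and $s\in H^0(\widehat B,\widehat\E^{**}(-kD))$. By \eqref{eqn322} this $s$ is an element of $H^0(B,\E)$ with $d^0(s)\geq k$ (it lies in $M_k^0$, since $\widehat\E^{**}\subset \Ker(\widehat\rho)^{**}=\Ker(\widehat\rho)$ as the latter is already reflexive being a kernel of a map of free sheaves). The image of $s$ in $M_k^0/M_k^{an}$ is killed by $\m^e$ for some $e$ independent of $s$ (taking $e$ so that $\m^e\cdot H^0(B,\E)\subset M_{k}^{an}$ works, using that $M_k^{an}\supset \m^l H^0(B,\E)$ for $l=k+\mu_1+1$, so $e$ depends on $k$ but is finite). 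Thus for any homogeneous polynomial $P$ of degree $d=\deg(P)\geq e$, the section $Ps$ lies in $M_{k+d}^{an}$, and moreover $Ps$ has vanishing order at least $d$ along $D$ relative to the $(-kD)$-twist, i.e. $p^*P\cdot s\in H^0(\widehat B, \mathcal I_D^d\cdot\widehat\E(-kD))$, where the containment in $\mathcal I_D^d\cdot\widehat\E$ (rather than merely $\widehat\E$) follows because $p^*P$ vanishes to order exactly $d$ along $D$ and $s$ is a section of $\widehat\E^{**}(-kD)$, so $p^*P\cdot s$ is a section of $\mathcal I_D^d\cdot\widehat\E^{**}(-kD)$ which, combined with the membership $Ps\in M_{k+d}^{an}=H^0(\widehat B,\widehat\E(-(k+d)D))$, gives that $p^*P\cdot s$ lands in $\mathcal I_D^d\cdot\widehat\E(-kD)$.

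The main obstacle I anticipate is the last bookkeeping step: reconciling "$p^*P\cdot s$ has the right vanishing order along $D$" with "$p^*P\cdot s$ is a section of $\widehat\E$, not merely $\widehat\E^{**}$". The cleanest way is probably to argue sheaf-theoretically rather than via sections: since $\widehat\E^{**}/\widehat\E$ has support of codimension $\geq 2$ missing the generic point of $D$, and since multiplication by $p^*P$ sends $\widehat\E^{**}(-kD)$ into $\mathcal I_D^d\widehat\E^{**}(-kD)$, it suffices to know that $\mathcal I_D^d\widehat\E^{**}\subset\widehat\E$ holds after pushing forward and intersecting with the appropriate graded pieces — which is exactly the statement that $p_*(\widehat\E^{**}/\widehat\E)$ is $\m$-torsion, established above. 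So the obstacle is really one of choosing the order of quantifiers ($d$ vs.\ $k$ vs.\ $e$) carefully; once one fixes $k$ first and then chooses $d$ large depending on $k$, everything goes through, and the homogeneity of $P$ is used only to guarantee that $p^*P$ cuts out precisely $d$ copies of $D$ plus a divisor disjoint from the generic behavior we care about.
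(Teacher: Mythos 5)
Your setup is correct (the quotient $\widehat\E^{**}/\widehat\E$ is a torsion sheaf supported inside $D$ and of codimension at least $2$ in $\widehat B$), and you correctly flag the main obstacle, but your resolution of it is circular at the decisive step. What you need is $P\cdot s\in M^{an}_{k+d}$, i.e.\ $d(Ps)\geq \mu_1+k+d$. The inclusion $\m^e\cdot H^0(B,\E)\subset M_k^{an}$ only gives, for $\deg P=d\geq e$, that $Ps\in M_k^{an}$ (or at best $M^{an}_{k+d-e}$ using that $\{M^{an}_j\}$ is an $\m$-filtration): you lose $e\approx k+\mu_1+1$ in the degree count, and via $d(Ps)=d+d(s)$ this yields only $d(s)\geq \mu_1+k-e$, which is vacuous. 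Indeed, since $d(Ps)=d+d(s)$, the statement $Ps\in M^{an}_{k+d}$ is \emph{equivalent} to $s\in M^{an}_k$, which is exactly the reflexivity assertion being proved. Your sheaf-theoretic patch fails for the same reason: it is true that $\mathcal I_D^N\widehat\E^{**}\subset\widehat\E$ for $N$ large (since $V:=\text{supp}(\widehat\E^{**}/\widehat\E)\subset D$ forces $\mathcal I_D\subset\mathcal I_V$, and $\mathcal I_V^N$ annihilates the quotient), but combining $p^*P\cdot s\in\mathcal I_D^d\widehat\E^{**}(-kD)$ with $p^*P\cdot s\in\widehat\E(-kD)$ does \emph{not} give $p^*P\cdot s\in\mathcal I_D^d\widehat\E(-kD)$: along $V$ the intersection $\mathcal I_D^d\widehat\E^{**}\cap\widehat\E$ is strictly larger than $\mathcal I_D^d\widehat\E$, because if $t$ is a local equation of $D$ then $t^ds\in t^d\widehat\E$ holds if and only if $s\in\widehat\E$ (cancel $t^d$ using torsion-freeness of $\widehat\E^{**}$) --- so nothing has been gained.

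The missing idea is that $P$ cannot be an arbitrary homogeneous polynomial of large degree; it must be chosen to \emph{vanish on the support $V$} of $\widehat\E^{**}/\widehat\E$, viewed as a codimension $\geq 1$ subvariety of $D=\P^{n-1}$. Near a point $z\in V$ write $p^*P=t^d\cdot u$; then $u$ is the pullback under $\phi:\widehat B\to D$ of the local expression of $P$ as a section of $\O_D(d)$, so if $P$ vanishes on $V$ then $u\in\mathcal I_V$ near $z$, a power of $u$ annihilates $(\widehat\E^{**}/\widehat\E)(-kD)$, hence (after replacing $P$ by that power) $u\cdot s$ is a genuine local section of $\widehat\E(-kD)$ and $p^*P\cdot s=t^d\,(u\,s)\in\mathcal I_D^d\widehat\E(-kD)$: the full degree $d$ is converted into vanishing along $D$ with no loss. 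This is precisely the paper's argument, phrased via the meromorphic function $f=p^*P/p^*Q$ with $P$ vanishing on $V$ near $z$, $Q(z)\neq 0$ and $\deg P=\deg Q$, followed by a compactness argument over $V$ that produces a single global $P$ as a product of finitely many local choices. The existence of such $P$ uses that $V$ has codimension at least $1$ in $D$ and that $D$ is projective; this is where the geometry of the support actually enters, and your proposal never uses it.
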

Given this, fix any $s\in H^0(\widehat{B}, \widehat{\E}^{**}(-kD))$, we have $$p^*P\cdot s\in H^0(\widehat{B}, \mathcal I_D^{d}\cdot \widehat{\E}(-kD))=M^{an}_{k+d}.$$
 Notice $s$ naturally induces a section of $\E$ on $B$, and we have then  $$d(P\cdot s)\geq \mu_1+k+d.$$ 
 Thus by Corollary \ref{cor3-4} we get $$d(s)\geq \mu_1 +k,$$
hence $s\in M^{an}_k$. By \eqref{eqn322}, for any $k$ large, we have $$H^0(\widehat{B}, (\widehat{\E}^{**}/\widehat{\E}) (-kD))=0.$$ This implies $\widehat{\E}=\widehat{\E}^{**}$, in other words, $\widehat{\E}$ is reflexive. This finishes the proof of Theorem \ref{thm3.18}.

\begin{proof}[Proof of Claim \ref{clm3.23}]
Let $\tau_k=(\widehat{\E}^{**}/\widehat{\E}) (-kD)$. Since $\widehat{\E}$ is torsion-free, the support $V \subset D \subset \widehat{B}$ of $\tau_k$ is of complex codimension at least $2$ in $\widehat B$.  We have the following observation 

\

\textit{
At any point $z\in V$, there exists a meromorphic function over $\widehat{B}$ of the form $$f=\frac{p^*P}{p^*Q}$$ which vanishes at $z$ and $f\cdot (\tau_k)_z=0$,  where $(\tau_k)_z$ denotes the stalk of $\tau_k$ at $z$, and $P, Q$ are homogeneous polynomials on $\C^n$. 
}

\

In particular, we see $$(p^*P\cdot  s)_z\in (\mathcal{I}_D^{deg(P)}\cdot \widehat{\E}(-kD))_z,$$ Since $V$ is compact, we can find finitely many such $p^*P_1, \cdots, p^*P_l$ so that for any $z\in D$, there exists some $P_i$ so that $$(p^*P_i \cdot s)_z \in (\mathcal I_D^{d_i}\cdot\widehat{\E}(-kD))_z.$$ Here $d_i=\deg(P_i)$. If we denote $\widehat{P}=p^*(P_1\cdots P_l)$, it follows that for any $z\in D$, there exists some $i$ such that 
$$
\begin{aligned}
(\widehat{P}\cdot s)_z 
&\in p^*(P_1 \cdots P_{i-1} P_{i+1} \cdots P_{l})_z (p^{*}P_i \cdot \widehat{\E}(-kD))_z \\
&\subset p^*(P_1 \cdots P_{i-1} P_{i+1} \cdots P_{l})_z (\mathcal{I}_D^{d_i}\cdot \widehat{\E}(-kD))_z\\
&\subset (\mathcal{I}_D^{d}\cdot \widehat{\E}(-kD))_z.
\end{aligned}
$$
Here $d=d_1+\cdots d_l.$ This finishes the proof of Claim \ref{clm3.23}.

Now we justify the observation above. By definition,  there exists some $N$ so that  the annihilator ideal sheaf $\text{Ann}(\tau_k)$ satisfies
$$(\mathcal{I}_V)^N \subset \text{Ann}(\tau_k) \subset \mathcal{I}_V,$$
where $\mathcal I_V$ denotes the ideal sheaf of $V$ on $\widehat B$. 
It suffices to show that there exists a meromorphic function over $\widehat{B}$ of the form $f=\frac{p^*P}{p^*Q}$ so that $f$ vanishes along $V$ near $z$. To see this, we let $\mathcal{I}_{V,D}$ denote the ideal sheaf associated to $V$ in $D$. We know $\mathcal{I}_{V,D}(l)$ is globally generated for $l$ large. Fix such an $l$, since $V$ has complex codimension at least $1$ in $D$, there exists some $P'\in H^0(D, \mathcal{I}_{V,D}(l))$ and $Q'\in H^0(D, \mathcal{O}(l))$ so that $P'$ is not identically zero  and $Q'(z) \neq 0$. Then $\phi^*(\frac{P'}{Q'})$ will be what we need. Here $\phi: \widehat{B} \rightarrow D$ denote the restriction of the projection map $\O(-1) \rightarrow D$. Indeed, it is direct to check that $(\phi^*(\frac{P'}{Q'}))=p^*(\frac{P}{Q})$ where $P,Q$ are the homogeneous polynomials on $\C^n$ corresponding to $P'$ and $Q'$ respectively.   
\end{proof}

For each $i=1, \cdots, m$, we define the sheaf on $D$ $$\underline{\mathcal{T}}^{\mu_i}:= \underline\E_\infty^{\mu_i}/\underline{\N}_{\infty}^{\mu_i}$$ and we denote $$\underline{\mathcal{T}}:=\bigoplus_{i=1}^m \underline{\mathcal{T}}^{\mu_i}.$$
\begin{cor}\label{corollary3.24}
$\underline{\mathcal{T}}$ is a torsion sheaf. Moreover, for all $i=1, \cdots, m$, we have $$\mu(\underline{\E}^{\mu_i}) \leq \mu_i,$$ and the equality holds if and only if the support of $\underline{\mathcal{T}}^{\mu_i}$ has complex codimension at least $2$. 
\end{cor}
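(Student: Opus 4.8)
The plan is to extract the statement directly from the definitions and from Corollary \ref{lem2.12}. First, recall that $\underline{\N}_\infty^{\mu_i}$ is the subsheaf of $\underline\E_\infty^{\mu_i}$ generated by $N_\infty^{\mu_i}$, so $\underline{\mathcal T}^{\mu_i} = \underline\E_\infty^{\mu_i}/\underline{\N}_\infty^{\mu_i}$ is a quotient of a torsion-free sheaf by a subsheaf of the \emph{same rank}: indeed by \eqref{e:rankequal} we have $\rank(\underline{\N}_\infty^{\mu_i}) = \rank(\underline\E^{\mu_i})$, and I will want to know this equals $\rank(\underline\E_\infty^{\mu_i})$. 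This last point is exactly where the construction pays off: since the generators of $N_\infty^{\mu_i}$ (namely the limits $\sigma_{k,j}^\infty$) already contribute a full-rank subsheaf — this is the content of the rank comparison, together with the fact that $\underline\E_\infty^{\mu_i}$ is polystable hence has no proper full-rank subsheaf forcing a drop, so the generic rank of $\underline{\N}_\infty^{\mu_i}$ equals that of $\underline\E_\infty^{\mu_i}$. Granting $\rank(\underline{\N}_\infty^{\mu_i}) = \rank(\underline\E_\infty^{\mu_i})$, the quotient $\underline{\mathcal T}^{\mu_i}$ is supported in codimension at least $1$, i.e. it is a torsion sheaf; summing over $i$ gives that $\underline{\mathcal T}$ is torsion.

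Second, for the slope inequality: by \eqref{e:slopequal} we have $\mu(\underline\E^{\mu_i}) = \mu(\underline{\N}_\infty^{\mu_i})$, so it suffices to bound $\mu(\underline{\N}_\infty^{\mu_i})$. Now $\underline{\N}_\infty^{\mu_i}$ is a full-rank subsheaf of the polystable sheaf $\underline\E_\infty^{\mu_i}$, whose slope is $\mu_i$ (Lemma \ref{lem2.6}). A full-rank subsheaf of a torsion-free sheaf has slope $\le$ that of the ambient sheaf, because passing to the saturation only increases the slope and the saturation of a full-rank subsheaf is the whole sheaf — more concretely, $\mu(\underline\E_\infty^{\mu_i}) - \mu(\underline{\N}_\infty^{\mu_i}) = \deg(\underline{\mathcal T}^{\mu_i})/\rank(\underline\E_\infty^{\mu_i})$ and $\deg$ of a torsion sheaf is $\ge 0$, with equality precisely when the support of $\underline{\mathcal T}^{\mu_i}$ has codimension at least $2$. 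This gives both $\mu(\underline\E^{\mu_i}) \le \mu_i$ and the stated equality criterion at once.

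Concretely the steps are: (i) show $\rank(\underline{\N}_\infty^{\mu_i}) = \rank(\underline\E_\infty^{\mu_i})$ using \eqref{e:rankequal} together with the fact that a subsheaf generated by the limiting sections achieves full rank — here I may invoke that $\underline\E_\infty^{\mu_i}$ appears as a genuine direct summand of $\underline\E_\infty$ corresponding to the value $\mu_i \in \mathcal S + \Z$, and the limit sections $\sigma^\infty_{k,j}$ of degree $\mu_k \equiv \mu_i$ land in $\underline\E_\infty^{\mu_i}$ and span its generic fiber after twisting; (ii) conclude $\underline{\mathcal T}^{\mu_i}$ is torsion, hence so is $\underline{\mathcal T}$; (iii) write the degree identity $\deg(\underline{\mathcal T}^{\mu_i}) = \rank(\underline\E_\infty^{\mu_i})(\mu_i - \mu(\underline{\N}_\infty^{\mu_i})) = \rank(\underline\E_\infty^{\mu_i})(\mu_i - \mu(\underline\E^{\mu_i}))$; (iv) observe $\deg$ of a torsion sheaf is nonnegative, being a sum of lengths along codimension-one components times positive intersection numbers, and vanishes exactly when there are no codimension-one components. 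The main obstacle I anticipate is step (i): carefully justifying that $\underline{\N}_\infty^{\mu_i}$ has full rank inside $\underline\E_\infty^{\mu_i}$. If instead one only knows $\rank(\underline{\N}_\infty^{\mu_i}) = \rank(\underline\E^{\mu_i})$ without matching it to $\rank(\underline\E_\infty^{\mu_i})$, the torsion claim could fail; so I would isolate and prove the full-rank statement first, possibly deferring part of it to the later remark (Remark \ref{r:rank equal}) where the ranks are shown to agree globally, or else argue directly that any nonzero homogeneous section of $\underline\E_\infty^{\mu_i}$ of integer-shifted degree $\mu_k$ is, up to the Gram–Schmidt bookkeeping, accounted for among the $\sigma^\infty_{k,j}$, so no rank is lost.
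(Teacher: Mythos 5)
Your steps (ii)--(iv) — deducing the slope inequality and the equality criterion from the torsion property via the degree (equivalently $c_1$) of $\underline{\mathcal T}^{\mu_i}$ — coincide with the paper's argument: the exact sequence $0\to\underline{\N}_\infty^{\mu_i}\to\underline\E_\infty^{\mu_i}\to\underline{\mathcal T}^{\mu_i}\to 0$ gives $c_1(\underline{\mathcal T}^{\mu_i})=c_1(\underline\E_\infty^{\mu_i})-c_1(\underline{\N}_\infty^{\mu_i})$, and $c_1$ of a torsion sheaf is the Poincar\'e dual of the effective divisor given by the codimension-one part of its support, which yields both $\mu(\underline\E^{\mu_i})=\mu(\underline{\N}_\infty^{\mu_i})\le\mu_i$ and the stated equality criterion. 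That part is fine.

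The genuine gap is precisely the step you yourself flag as the main obstacle, $\rank(\underline{\N}_\infty^{\mu_i})=\rank(\underline\E_\infty^{\mu_i})$, and none of the three routes you sketch closes it. Polystability of $\underline\E_\infty^{\mu_i}$ is irrelevant here: a polystable sheaf has many proper subsheaves of strictly smaller rank, so it cannot force the particular subsheaf $\underline{\N}_\infty^{\mu_i}$ to be generically surjective. Deferring to Remark \ref{r:rank equal} is circular, since in the paper that remark is a \emph{consequence} of Corollary \ref{corollary3.24}. And the assertion that every homogeneous section of $\underline\E_\infty^{\mu_i}$ is accounted for among the limits $\sigma^\infty_{k,j}$ is exactly what is unavailable at this stage: the paper explicitly notes, after defining $M_k$, that one does not yet know how to produce local sections of $\E$ from homogeneous sections of $\E_\infty$, so $S_k$ could a priori be a proper subspace of $H^0(\P^{n-1},\underline\E_\infty^{\mu_i}(e))$ and $\underline\E_\infty$ could a priori have additional direct summands. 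The paper closes the gap by a \emph{global} rank count rather than a factor-by-factor one: trivially $\rank(\underline\E^{\mu_i})=\rank(\underline{\N}_\infty^{\mu_i})\le\rank(\underline\E_\infty^{\mu_i})$ for each $i$, while Theorem \ref{thm3.18} together with the filtration \eqref{eqn315} gives
\begin{equation*}
\sum_{i=1}^m\rank(\underline\E^{\mu_i})=\rank(\underline\E^{an})=\rank(\E)=\rank(\E_\infty)\ge\sum_{i=1}^m\rank(\underline\E_\infty^{\mu_i}),
\end{equation*}
and summing the termwise inequalities against this reverse inequality forces equality for every $i$ at once. This pinching argument, which uses the reflexive extension $\widehat\E$ across $D$ constructed in Section \ref{section3.3}, is the missing ingredient; without it the torsion claim, and hence everything downstream, is unproved.
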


\begin{proof}
By Lemma \ref{thm1.1} and Corollary \ref{lem2.12}, we already know that $$\rank(\underline{\E}^{\mu_i})=\rank(\underline{\N}_{\infty}^{\mu_i}) \leq \rank(\underline\E_\infty^{\mu_i}) .$$ On the other hand, by Theorem \ref{thm3.18} and the filtration (\ref{eqn315}), we have 
$$\sum_{i=1}^m \rank(\underline{ \E}^{\mu_i})=\rank(\underline \E^{an}) = \rank(\E)\geq \sum_{i=1}^m \rank(\underline{\E}_\infty^{\mu_i}).$$
This forces the inequality above to be an equality, so $\underline{\mathcal T}$ is a torsion sheaf. By definition, we also have 
$$
0\rightarrow\underline{\mathcal N}_\infty^{\mu_i}\rightarrow \underline\E_\infty^{\mu_i}\rightarrow \underline{\mathcal T}_\infty^{\mu_i}\rightarrow 0,
$$
which implies $$c_1(\underline\E_\infty^{\mu_i})=c_1(\underline{\mathcal N}_\infty^{\mu_i})+c_1(\underline{\mathcal T}_\infty^{\mu_i}).$$ Let $V$ be the  closure of the codimension $1$ part of the support of $\underline{\mathcal T}_\infty^{\mu_i}$. We know $c_1(\underline{\mathcal T}_\infty^{\mu_i})$ is equal to the Poincar\'e dual of $V$ (see Proposition $3.1$ in \cite{SW}). In particular, we have $\mu(\underline{\E}^{\mu_i}) \leq \mu_i$ and the equality holds if and only if  $c_1(\underline{\mathcal T}_\infty^{\mu_i})=0$, i.e., the support of $\underline{\mathcal{T}}^{\mu_i}$ has complex codimension at least $2$. 
\end{proof}
\begin{rmk}\label{r:rank equal}
In particular, we also know that  $$\underline\E_\infty=\bigoplus_{i=1}^m \underline\E_\infty^{\mu_i}. $$
In other words, if we write 
$$\underline {\mathcal  N}_\infty:=\bigoplus_{i=1}^m\underline{\mathcal N}_\infty^{\mu_i}$$  then
$$\underline{\mathcal T}=\underline\E_\infty/\underline{\mathcal N}_\infty.
$$
Furthermore, we know 
\begin{equation}\underline \N_\infty^{**}=\underline{\E}_\infty
\end{equation}
 which is an important fact to be used later.
\end{rmk}
In the following subsection, we will show that the support of $\underline{\mathcal{T}}$ has complex codimension at least $2$, and as a consequence we see that $\widehat \E$ is an optimal extension.

\subsection{Optimality}\label{Optimality}
 For $i=1, \cdots, m$, let $\widehat \E^i$ be the reflexive  sheaf constructed in Theorem \ref{thm3.18}.  The goal of this subsection is to prove 
\begin{thm}\label{prop-optimal}
 $\widehat{\E}^i$ is an optimal extension of $\E$ at $0$. Moreover,  the Harder-Narasimhan filtration of $\underline{\E}^{an, i}$ is
given by \eqref{eqn315}:
\begin{equation}
0=\underline\E_0^i\subset \underline{\E}_1^i \subset 
\cdots \underline{\E}_m^i=\underline{\E}^{an, i},\end{equation} 
and the associated graded sheaf 
$\Gr^{HN}(\underline \E^{an, i})$  is isomorphic to $\bigoplus_{i=1}^m\underline \E^{\mu_i}$.
\end{thm}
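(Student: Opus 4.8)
The plan is to establish Theorem~\ref{prop-optimal} in three stages: first show that $\widehat{\E}^i$ is an extension of $\E$ at $0$ (already done in Theorem~\ref{thm3.18}), then compute slopes to pin down that the filtration \eqref{eqn315} is the Harder-Narasimhan filtration, and finally deduce optimality from the normalization $\mu_i\in[0,1)$. As usual I will only treat $i=1$ and suppress the superscript.

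First I would invoke the results already assembled: by Theorem~\ref{thm3.18}, $\widehat{\E}:=\widehat{\E}^1$ is a reflexive sheaf on $\widehat B$ with $\widehat{\E}|_{\widehat B\setminus D}\cong p^*\E|_{\widehat B\setminus D}$, so $\widehat{\E}\in\mathcal{A}$ and $\underline{\widehat{\E}}=\underline{\E}^{an}$. By \eqref{eqn315} and \eqref{eqn320} we have a filtration $0=\underline{\E}_0\subset\underline{\E}_1\subset\cdots\subset\underline{\E}_m=\underline{\E}^{an}$ with $\underline{\E}_l/\underline{\E}_{l-1}\cong\underline{\E}^{\mu_{[m-l+1]}}$. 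The crucial missing input is the slope of each $\underline{\E}^{\mu_i}$. This is exactly where Corollary~\ref{corollary3.24} together with the forthcoming fact (to be proved in this subsection) that $\mathrm{supp}(\underline{\mathcal T})$ has codimension $\geq 2$ comes in: that gives $\mu(\underline{\E}^{\mu_i})=\mu_i$ for every $i=1,\dots,m$. Granting this, the associated graded of the filtration \eqref{eqn315} has successive slopes $\mu_m>\mu_{m-1}>\cdots>\mu_1$ (strictly decreasing, since the $\mu_i$ are distinct), so the filtration satisfies the slope-decreasing property of a Harder-Narasimhan filtration.

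To conclude that it genuinely is the Harder-Narasimhan filtration I would verify that each graded piece $\underline{\E}^{\mu_i}$ is semistable. Here the argument should be: $\underline{\E}^{\mu_i}$ has the same Hilbert polynomial as $\underline{\mathcal N}_\infty^{\mu_i}$ by \eqref{same Hilbert polynomial}, the same rank and slope by Corollary~\ref{lem2.12}, and $(\underline{\mathcal N}_\infty^{\mu_i})^{**}=\underline{\E}_\infty^{\mu_i}$ by Remark~\ref{r:rank equal}, which is polystable of slope $\mu_i$; since $\underline{\N}_\infty^{\mu_i}$ is a full-rank subsheaf of a polystable (hence semistable) sheaf of the same slope, it is itself semistable, and any subsheaf of $\underline{\E}^{\mu_i}$ of strictly larger slope would, after the identification of large-degree sections, produce a subsheaf of $\underline{\N}_\infty^{\mu_i}$ destabilizing it — a contradiction. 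Thus each graded piece is semistable of slope $\mu_i$, the filtration \eqref{eqn315} is the Harder-Narasimhan filtration by uniqueness, and $\Gr^{HN}(\underline{\E}^{an})\cong\bigoplus_{i=1}^m\underline{\E}^{\mu_i}$. Finally, $\Phi(\widehat{\E})=\mu(\underline{\E}_1)-\mu(\underline{\E}_m/\underline{\E}_{m-1})=\mu_m-\mu_1$, and by the definition of $m$ (the largest integer with $\mu_m-\mu_1<1$) together with $\mu_1\geq 0$ (Theorem~\ref{thm3.1}(2)), this lies in $[0,1)$; hence $\widehat{\E}$ is an optimal extension.

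The main obstacle I anticipate is the suppressed step: proving that $\mathrm{supp}(\underline{\mathcal T})$ has complex codimension $\geq 2$, equivalently that $c_1(\underline{\mathcal T}^{\mu_i})=0$, equivalently $\mu(\underline{\E}^{\mu_i})=\mu_i$ for all $i$. A naive approach only gives $\mu(\underline{\E}^{\mu_i})\leq\mu_i$ from Corollary~\ref{corollary3.24}; one needs a matching lower bound. The natural route is a global Chern class / degree identity: summing $c_1$ over the filtration, $\sum_i c_1(\underline{\E}^{\mu_i})=c_1(\underline{\E}^{an})=c_1(\underline{\widehat{\E}})$, and comparing with $\sum_i c_1(\underline{\E}_\infty^{\mu_i})=c_1(\underline{\E}_\infty)$, where the right-hand sides are related because both $\underline{\widehat{\E}}$ and $\underline{\E}_\infty$ are tangent-cone-type restrictions of extensions of the same $p^*\E$ across $D$ (their first Chern classes differ at most by a multiple of $[\O(1)]$ that is absorbed by the grading normalization). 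Matching the total slopes $\sum_i\mu_i\cdot\mathrm{rk}(\underline{\E}^{\mu_i})$ against the Yang-Mills/Chern-Simons data of the analytic tangent cone then forces equality in each Corollary~\ref{corollary3.24} inequality. Pinning down this degree bookkeeping carefully — making sure the line-bundle twists $\O(k)$ on each factor and the divisor twists $[D]^k$ cancel correctly — is the delicate part; everything else is formal once it is in place.
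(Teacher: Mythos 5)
Your reduction of the theorem to two facts --- (a) $\mu(\underline\E^{\mu_i})=\mu_i$ for all $i$, and (b) each $\underline\E^{\mu_i}$ is semistable --- matches the paper exactly (these are its Propositions \ref{prop325} and \ref{prop326}), and your derivation of optimality from $\mu_m-\mu_1<1$ once (a) and (b) are in place is correct. The problem is that your proposed proof of (a), which you yourself flag as the delicate part, does not go through as described. You suggest comparing $\sum_i c_1(\underline\E^{\mu_i})=c_1(\underline{\widehat\E})$ with $c_1(\underline\E_\infty)$ and then ``matching the total slopes against the Yang--Mills/Chern--Simons data of the analytic tangent cone.'' But there is no a priori identity relating $c_1(\underline{\widehat\E})$ to $c_1(\underline\E_\infty)$: the analytic tangent cone is not known at this stage to be the restriction of any extension across $D$, and different extensions restrict to sheaves whose first Chern classes genuinely differ (Hecke transforms change $c_1$). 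Extracting $c_1(\underline\E_\infty)$ from the Yang--Mills data of the rescaled connections is precisely the hard analytic comparison that the earlier papers attempted via explicit background metrics and that this paper is explicitly designed to avoid. The paper's actual mechanism for the reverse inequality is a duality argument: one runs the entire construction on the dual sheaf $\E^*$ with its induced admissible HYM connection to produce $\widehat\F$, proves via Corollary \ref{corollary3.24} applied to both sides that $\mu(\underline\E^{an})+\mu(\underline\F^{an})\le 0$ with equality iff both torsion sheaves are supported in codimension $\ge 2$ (Lemma \ref{prop3.25}), and then shows the natural pairing induces an isomorphism $\widehat\E^*\simeq\widehat\F$ (Lemmas \ref{lem3.17} and \ref{prop3.28}), forcing $\mu(\underline\E^{an})+\mu(\underline\F^{an})=0$ and hence equality term by term. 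This duality step is the essential idea missing from your proposal.

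Your sketch of (b) is closer to the paper's argument but is also incomplete at the decisive point. A destabilizing subsheaf $\underline\F\subset\underline\E^{\mu_i}$ with $\mu(\underline\F)>\mu_i$ does not automatically ``produce a subsheaf of $\underline\N_\infty^{\mu_i}$ destabilizing it'': the sections of $\underline\F(i_0)$ live in $M_{i_0'}/M_{i_0'+1}$, and when you rescale and pass to the limit the resulting sections could a priori degenerate or generate a sheaf of smaller rank or different slope. The paper handles this by normalizing with $m_j=\max_k\|s_k\|_j$, constructing a sheaf homomorphism $Q$ from $\underline\F(i_0)$ to $\underline\E_\infty^{\mu_i}(i_0)$, and proving it is well defined via Claim \ref{clm3.30}, which shows that a relation $\sum_k a_k[s_k]([z])=0$ in the restricted sheaf forces the corresponding combination of sections, restricted to the line $\C^*\cdot z$, to agree with a section of strictly larger degree; only then does the homogeneity count kill the limit. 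The contradiction is then with the nonexistence of a nonzero map from a stable sheaf of larger slope to a polystable sheaf, not with semistability of $\underline\N_\infty^{\mu_i}$ directly. You would need to supply this descent argument to close the gap.
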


Let $\G^{alg}$ and $\Sigma_b^{alg}$ be defined in Corollary \ref{cor2-18}. Then an immediate consequence is

\begin{cor}\label{cor325}
We have
\begin{equation}
\mathcal G^{alg}=\psi_*\pi^*\bigoplus_{i=1}^m(\Gr^{HNS}(\underline\E^{\mu_i}))^{**}, 
\end{equation}
and 
\begin{equation}
\Sigma_b^{alg}=\sum_{i=1}^m\Sigma_b^{alg}(\underline\E^{\mu_i}). 
\end{equation}
\end{cor}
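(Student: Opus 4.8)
The plan is to reduce the whole theorem to a single claim: for each $\mu\in\{\mu_1,\dots,\mu_m\}$ the torsion-free sheaf $\underline\E^\mu$ is \emph{semistable of slope exactly $\mu$}. Granting this and taking $i=1$ (the other $i$ are identical after a cyclic relabeling of the $\mu$'s), equation \eqref{eqn320} tells us that the successive quotients of \eqref{eqn315} are, in order, $\underline\E^{\mu_m},\underline\E^{\mu_{m-1}},\dots,\underline\E^{\mu_1}$, which are then semistable with \emph{strictly decreasing} slopes $\mu_m>\mu_{m-1}>\dots>\mu_1$. By uniqueness of the Harder--Narasimhan filtration, \eqref{eqn315} is therefore the HN filtration of $\underline\E^{an,1}$, and \eqref{eqn320} identifies $\Gr^{HN}(\underline\E^{an,1})$ with $\bigoplus_{i=1}^m\underline\E^{\mu_i}$. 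Finally $\Phi(\widehat\E^1)=\mu(\underline\E_1^1)-\mu(\underline\E_m^1/\underline\E_{m-1}^1)=\mu(\underline\E^{\mu_m})-\mu(\underline\E^{\mu_1})=\mu_m-\mu_1$, which lies in $[0,1)$ by the very definition of $m$; hence $\widehat\E^1$ is an optimal extension.

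So everything comes down to the displayed claim. By Corollary \ref{lem2.12} and Corollary \ref{corollary3.24} we already know $\mu(\underline\E^\mu)=\mu(\underline\N_\infty^\mu)\le\mu$, with equality precisely when the torsion sheaf $\underline{\mathcal T}^\mu=\underline\E_\infty^\mu/\underline\N_\infty^\mu$ has support of complex codimension $\ge 2$. Observe that once $\mu(\underline\N_\infty^\mu)=\mu$ holds, $\underline\N_\infty^\mu$ is automatically semistable: it is then a subsheaf of the polystable sheaf $\underline\E_\infty^\mu$ of the same slope, so any subsheaf of it is a subsheaf of $\underline\E_\infty^\mu$ and hence has slope $\le\mu$. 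Thus the claim splits into two parts: (a) $\underline\E^\mu\cong\underline\N_\infty^\mu$ (so that semistability transfers from $\underline\N_\infty^\mu$ to $\underline\E^\mu$, and in particular $(\underline\E^\mu)^{**}=\underline\E_\infty^\mu$ is polystable); and (b) $\underline{\mathcal T}^\mu$ has support of complex codimension $\ge 2$.

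For (a) I would show that the rescaling-and-limit procedure by which $\underline\N_\infty^\mu$ was built from the chosen sections $\{s_{k,i}\}$ is, in all sufficiently large degrees, an isomorphism of graded $\C[z_1,\dots,z_n]$-modules onto $N^\mu$. The graded dimensions coincide by \eqref{dimension equal}; the remaining content is that the correspondence of generators intertwines multiplication by the $z_l$'s, and this follows from the strong-convergence bookkeeping of Proposition \ref{lem2.10} and Claim \ref{clm3.13} combined with the degree-semicontinuity of Proposition \ref{prop2.7}. Since two finitely generated graded modules that agree in large degrees define isomorphic coherent sheaves on $\P^{n-1}$, this yields $\underline\E^\mu\cong\underline\N_\infty^\mu$.

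The main obstacle is (b), and I would attack it by a global first Chern class count. Since $\widehat\E^1$ is a reflexive sheaf on the smooth variety $\widehat B$, whose Picard group is $\mathbb Z\cdot[D]$, the degree $\deg_D\underline\E^{an,1}$ is a well-defined integer, and summing the isomorphisms \eqref{eqn320} over the filtration \eqref{eqn315} gives $\deg_D\underline\E^{an,1}=\sum_i\mu(\underline\E^{\mu_i})\,\rank(\underline\E^{\mu_i})$. By Corollary \ref{corollary3.24} and Remark \ref{r:rank equal}, $\rank(\underline\E^{\mu_i})=\rank(\underline\E_\infty^{\mu_i})$, so this sum is $\le\sum_i\mu_i\,\rank(\underline\E_\infty^{\mu_i})=\deg\underline\E_\infty$, with equality iff every $\underline{\mathcal T}^{\mu_i}$ is supported in codimension $\ge 2$. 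Hence (b) is equivalent to the \emph{reverse} inequality $\deg_D\underline\E^{an,1}\ge\deg\underline\E_\infty$, and by the asymptotic Riemann--Roch theorem (Lemma \ref{thm1.1}) this is in turn equivalent to $h^0(\P^{n-1},\underline\E^{\mu_i}(e))\ge h^0(\P^{n-1},\underline\E_\infty^{\mu_i}(e))$ for $e\gg 1$ --- that is, to the assertion that the rescaled limits of degree-$(\mu_i+e)$ local sections of $\E$ fill out \emph{all} of $H^0(\P^{n-1},\underline\E_\infty^{\mu_i}(e))$ for $e$ large. The hard point is precisely this surjectivity: no homogeneous section of the polystable summand $\underline\E_\infty^{\mu_i}$ may be lost in codimension one upon passing to the tangent cone. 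I expect to need reflexivity of $\widehat\E^1$ via the meromorphic-multiplier technique of Claim \ref{clm3.23}, the full strength of Proposition \ref{prop2.7}, the identity $(\underline\N_\infty)^{**}=\underline\E_\infty$ from Remark \ref{r:rank equal}, and a sharper control of $\E$-sections near $\partial B$ than the crude interior bound \eqref{BS interior estimate}.
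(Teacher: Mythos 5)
Your first paragraph is exactly right and matches the paper: the corollary is an immediate consequence of the statement that the filtration \eqref{eqn315} is the Harder--Narasimhan filtration of $\underline\E^{an,i}$ with semistable quotients $\underline\E^{\mu_{[m-l+i]}}$ of slopes exactly $\mu_{[m-l+i]}$, i.e.\ of Theorem \ref{prop-optimal}, which the paper in turn reduces to Proposition \ref{prop325} (slope equality) and Proposition \ref{prop326} (semistability). The problem is that your proposed proofs of these two propositions do not go through.

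The crucial gap is in your step (b). You correctly observe that $\mu(\underline\E^{\mu_i})=\mu_i$ is equivalent to $\underline{\mathcal T}^{\mu_i}$ having support of codimension $\geq 2$, and then you translate this into the surjectivity $S_k=H^0(\P^{n-1},\underline\E_\infty^{\mu_i}(e))$ for $e\gg1$. But by \eqref{same Hilbert polynomial} and Lemma \ref{thm1.1} that surjectivity is \emph{equivalent} to the statement you are trying to prove, so you have only reformulated the problem; the tools you list (Claim \ref{clm3.23}, Proposition \ref{prop2.7}, boundary control) do not obviously assemble into a proof, and you say as much. The idea you are missing is the duality argument of Section \ref{Optimality}: run the entire construction on the dual sheaf $\F=\E^*$ with its dual admissible HYM connection, use the nondegenerate pairing of Lemma \ref{lem3.17} to show $\widehat\F\cong\widehat\E^*$ (Lemma \ref{prop3.28}), hence $\mu(\underline\E^{an})+\mu(\underline\F^{an})=0$; since Lemma \ref{prop3.25} bounds this sum above by $0$ term by term, every inequality is forced to be an equality, which is precisely (b) for both $\E$ and $\E^*$ simultaneously.

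Your step (a) also overreaches. The paper establishes only that $N^\mu$ and $N_\infty^\mu$ have equal graded dimensions in large degrees (equation \eqref{dimension equal}, Corollary \ref{lem2.12}); it does not assert a graded-module isomorphism $N^\mu\cong N_\infty^\mu$, and the assignment $[s]\mapsto s_\infty$ is not a well-defined linear map without further work (limits require passing to subsequences and depend on the Gram--Schmidt normalizations, and $\underline\N_\infty^\mu$ itself depends on the choice of sections $\{s_{k,i}\}$, as the paper points out). Indeed, in Section \ref{uniquenessoflimitingsheaf} the relation between $\underline\E^{\mu}$ and $\underline\N_\infty^{\mu}$ is ultimately realized only as a degeneration inside a Quot scheme (Proposition \ref{prop4.3}), not as an isomorphism, which is why the S-equivalence machinery of Section \ref{section2.3} is needed at all. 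Semistability of $\underline\E^{\mu_i}$ is instead proved directly in Proposition \ref{prop326}: a destabilizing stable subsheaf $\underline\F\subset\underline\E^{\mu_i}$ with $\mu(\underline\F)>\mu_i$ would, by taking rescaled limits of its generating sections and using Claim \ref{clm3.30}, induce a nontrivial map $\underline\F(i_0)\to\underline\E^{\mu_i}_\infty(i_0)$ into a polystable sheaf of strictly smaller slope, a contradiction. You should replace (a) and (b) by these two arguments.
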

To prove Theorem \ref{prop-optimal}, again we only prove the case $i=1$ and the other cases are similar. So we shall omit the superscript $i$ throughout this subsection. By \eqref{eqn320} we know for $l=1, \cdots, m$
\begin{equation}
\underline \E^{l}/\underline\E^{l-1}=\underline\E^{\mu_{m-l+1}}. 
\end{equation}
So Theorem \ref{prop-optimal} follows from the definition and the following two Propositions. 

\begin{prop}\label{prop325}
For $i=1, \cdots, m$, we have  $\mu(\underline\E^{\mu_i})=\mu_i$. In particular, the support of the torsion sheaf $\underline{\mathcal T}$ has codimension at least 2. 
\end{prop}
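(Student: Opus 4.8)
The plan is to reduce the statement to the single fact that, for each $i$, the torsion sheaf $\underline{\mathcal T}^{\mu_i}=\underline\E_\infty^{\mu_i}/\underline{\mathcal N}_\infty^{\mu_i}$ is supported in complex codimension at least $2$, and then to derive this from the material already assembled. Indeed, by Corollary \ref{corollary3.24} we already have $\mu(\underline\E^{\mu_i})\le\mu_i$, with equality if and only if $\operatorname{supp}\underline{\mathcal T}^{\mu_i}$ has codimension $\ge 2$; and once this holds for every $i$, the ``in particular'' clause about $\operatorname{supp}\underline{\mathcal T}=\bigcup_i\operatorname{supp}\underline{\mathcal T}^{\mu_i}$ is immediate since $\underline{\mathcal T}=\bigoplus_i\underline{\mathcal T}^{\mu_i}$. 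So everything comes down to the codimension-two claim.

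First I would invoke Remark \ref{r:rank equal}, which records $\underline{\mathcal N}_\infty^{**}=\underline\E_\infty$; since double dualization commutes with direct sums this gives $(\underline{\mathcal N}_\infty^{\mu_i})^{**}=\underline\E_\infty^{\mu_i}$, so that $\underline{\mathcal T}^{\mu_i}$ is exactly the cokernel of the canonical inclusion $\underline{\mathcal N}_\infty^{\mu_i}\hookrightarrow(\underline{\mathcal N}_\infty^{\mu_i})^{**}$. But $\underline{\mathcal N}_\infty^{\mu_i}$ is torsion-free on the smooth variety $\P^{n-1}$, and at any codimension-one point the stalk is a finitely generated torsion-free module over a discrete valuation ring, hence free, hence reflexive; therefore the natural map $\underline{\mathcal N}_\infty^{\mu_i}\to(\underline{\mathcal N}_\infty^{\mu_i})^{**}$ is an isomorphism away from a set of codimension $\ge 2$, i.e.\ $\operatorname{supp}\underline{\mathcal T}^{\mu_i}$ has codimension $\ge 2$. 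With Corollary \ref{corollary3.24} this yields $\mu(\underline\E^{\mu_i})=\mu_i$, and we are done.

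The real content therefore sits in the identity $\underline{\mathcal N}_\infty^{**}=\underline\E_\infty$, which is equivalent to the codimension-two statement; if one wants to establish it directly rather than quote it, the point is to show that the sections spanning $N_\infty^{\mu_i}$ generate $\underline\E_\infty^{\mu_i}$ not just at the generic point but at every codimension-one point of $\P^{n-1}$. Equivalently, one must show that $\dim_{\mathbb C}S_k$ and $h^0(\P^{n-1},\underline\E_\infty^{\mu_i}(k))$ agree up to an error $o(k^{n-2})$, so that $\underline{\mathcal N}_\infty^{\mu_i}$ and $\underline\E_\infty^{\mu_i}$ have the same Hilbert polynomial up to and including the subleading term, hence the same slope. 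I expect this generation-at-codimension-one step to be the main obstacle: it requires ruling out the possibility that an entire codimension-one locus of the polystable bundle $\underline\E_\infty^{\mu_i}$ fails to be hit by limits of rescaled sections of $\E$, which one would contradict using the strong-convergence bookkeeping (Lemma \ref{facts about strong convergence} and the interior estimate \eqref{BS interior estimate}) together with the degree semicontinuity of Proposition \ref{prop2.7}, i.e.\ the fact that every holomorphic section of $\E$ of finite degree survives the rescaling limit with its degree unchanged. By comparison, the sheaf-theoretic reduction in the first two paragraphs is routine.
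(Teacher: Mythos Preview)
Your first two paragraphs are circular. The identity $\underline{\mathcal N}_\infty^{**}=\underline\E_\infty$ stated in Remark \ref{r:rank equal} is not proved at the point where that remark appears; it is a forward reference. Indeed, for a torsion-free subsheaf $\underline{\mathcal N}\subset\underline\E_\infty$ of full rank with $\underline\E_\infty$ reflexive, the statement $\underline{\mathcal N}^{**}=\underline\E_\infty$ is \emph{equivalent} to the quotient $\underline\E_\infty/\underline{\mathcal N}$ being supported in codimension $\ge 2$: one direction is the DVR argument you give, and the other is the reflexive extension property. (Think of $\O(-1)\subset\O$ on $\P^1$: the quotient is torsion, yet the double duals disagree.) So quoting the remark to deduce the codimension-two statement, and then deducing $\mu(\underline\E^{\mu_i})=\mu_i$ from it via Corollary \ref{corollary3.24}, is using the conclusion to prove itself. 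The sentence immediately following Remark \ref{r:rank equal} in the paper makes this explicit: ``In the following subsection, we will show that the support of $\underline{\mathcal T}$ has complex codimension at least $2$.''

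You recognize this in your third paragraph, but the workaround you sketch there is not a proof. ``Strong-convergence bookkeeping together with degree semicontinuity'' gives you nontrivial limit sections of the right degree, but it does not by itself prevent the image sheaf $\underline{\mathcal N}_\infty^{\mu_i}$ from landing inside $\underline\E_\infty^{\mu_i}$ twisted down along some divisor; nothing in Proposition \ref{prop2.7} or Lemma \ref{facts about strong convergence} controls where the limit sections vanish. The paper's argument is genuinely different: it runs the entire construction of Section \ref{section3.3} a second time for the dual pair $(\E^*,H^*)$, obtains an extension $\widehat\F$ of $\E^*$, and proves (Lemma \ref{prop3.28}) that the natural pairing $\E\otimes\E^*\to\O$ lifts to an isomorphism $\widehat\E^*\simeq\widehat\F$. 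Restricting to $D$ gives $\mu(\underline\E^{an})+\mu(\underline\F^{an})=0$, while Lemma \ref{prop3.25} shows this sum is $\le 0$ with equality forcing all the individual slope inequalities $\mu(\underline\E^{\mu_i})\le\mu_i$ (and their dual counterparts) to be equalities. The duality trick is the missing idea; your proposal does not contain it or a substitute for it.
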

\begin{prop}\label{prop326}
For $i=1, \cdots, m$, $\underline\E^{\mu_i}$ is semi-stable. 
\end{prop}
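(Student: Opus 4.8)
The plan is to prove Proposition~\ref{prop326} by transporting a would‑be destabilizing subsheaf of $\underline{\E}^{\mu_i}$ to the tangent cone, where the polystable sheaf $\underline{\E}_\infty^{\mu_i}$ of slope $\mu_i$ bounds the slopes of all its subsheaves. This forces one first to compute $\mu(\underline{\E}^{\mu_i})$, that is, to prove Proposition~\ref{prop325}, so I would carry out the two statements in that order.

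\emph{Proposition~\ref{prop325}.} By Corollary~\ref{corollary3.24} we know $\mu(\underline{\E}^{\mu_i})\le\mu_i$, with equality if and only if the torsion sheaf $\underline{\mathcal T}^{\mu_i}=\underline{\E}_\infty^{\mu_i}/\underline{\mathcal N}_\infty^{\mu_i}$ has support of codimension at least $2$; and by Corollary~\ref{lem2.12} we have $\mu(\underline{\E}^{\mu_i})=\mu(\underline{\mathcal N}_\infty^{\mu_i})$. Thus it suffices to show that $\underline{\mathcal N}_\infty^{\mu_i}$ is saturated in $\underline{\E}_\infty^{\mu_i}$. Note that $\underline{\E}^{an}$ and $\underline{\mathcal N}_\infty:=\bigoplus_i\underline{\mathcal N}_\infty^{\mu_i}$ already have the same Hilbert polynomial, since $\dim_\C(M_q^{an}/M_{q+1}^{an})$ equals $\dim_\C H^0(\P^{n-1},\underline{\mathcal N}_\infty(q))$ for $q\gg1$ by \eqref{dimension equal}, so the only issue is the codimension‑one defect of $\underline{\mathcal N}_\infty\hookrightarrow\underline{\E}_\infty$. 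I would argue by contradiction: a codimension‑one component of the support of $\underline{\mathcal T}^{\mu_i}$ would be a prime divisor $V\subset\P^{n-1}$ along which $\underline{\mathcal N}_\infty^{\mu_i}$ fails to be a subbundle of $\underline{\E}_\infty^{\mu_i}$; but $\underline{\mathcal N}_\infty^{\mu_i}$ is \emph{generated} by the limits of rescaled holomorphic sections of $\E$ of the relevant degrees, and since $\E$ is globally generated with everywhere‑finite degree function (Theorem~\ref{thm3.1}(1)) one should be able to produce, near a generic point of $V$, a limit homogeneous section of $\underline{\E}_\infty^{\mu_i}$ not lying in $\underline{\mathcal N}_\infty^{\mu_i}$ — a contradiction. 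I expect this codimension‑one saturation statement to be the genuine obstacle in the whole section.

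\emph{Proposition~\ref{prop326}.} Fix $i$ and let $\underline{\mathcal S}\subsetneq\underline{\E}^{\mu_i}$ be a saturated subsheaf; the goal is $\mu(\underline{\mathcal S})\le\mu(\underline{\E}^{\mu_i})$, which by Proposition~\ref{prop325} equals $\mu_i$. For $q\gg1$, $H^0(\P^{n-1},\underline{\mathcal S}(q))$ is a subspace of $H^0(\P^{n-1},\underline{\E}^{\mu_i}(q))$, hence a subspace of $M_k/M_{k+1}$ with $\mu_k=\mu_i+q$, arising from a graded submodule. I would lift a basis of $H^0(\P^{n-1},\underline{\mathcal S}(q))$ to sections of $\E$ of degree exactly $\mu_i+q$ and run the $L^2$ Gram--Schmidt‑and‑rescaling procedure of the proofs of Proposition~\ref{l:orthogonal projections} and Claim~\ref{clm3.13}: after passing to a subsequence the orthonormalized rescalings converge strongly, and by the no‑degree‑jump argument based on Proposition~\ref{prop2.7} the limits are homogeneous of degree precisely $\mu_i+q$ and span a subspace of $H^0(\P^{n-1},\underline{\E}_\infty^{\mu_i}(q))$ of dimension $\dim_\C H^0(\P^{n-1},\underline{\mathcal S}(q))$. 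Letting $q$ vary and using that limits of products equal products of limits (Lemma~\ref{facts about strong convergence}), these spaces form a graded submodule and generate a subsheaf $\underline{\mathcal S}_\infty\subseteq\underline{\E}_\infty^{\mu_i}$; a bookkeeping argument (using that $\underline{\mathcal S}$ is finitely generated) identifies $H^0(\P^{n-1},\underline{\mathcal S}_\infty(q))$ with that span for $q\gg1$, so $\underline{\mathcal S}_\infty$ and $\underline{\mathcal S}$ have the same Hilbert polynomial, hence equal rank and equal slope by Lemma~\ref{thm1.1}. Since $\underline{\E}_\infty^{\mu_i}$ is polystable of slope $\mu_i$, every subsheaf has slope $\le\mu_i$, so $\mu(\underline{\mathcal S})=\mu(\underline{\mathcal S}_\infty)\le\mu_i=\mu(\underline{\E}^{\mu_i})$, which is the asserted semistability.

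The delicate but routine points in the argument for Proposition~\ref{prop326} are: (i) checking that the limits produced by Gram--Schmidt are homogeneous of the exact degree $\mu_i+q$ and are linearly independent — this is the mechanism already used to construct the sheaves $\underline{\E}^{\mu_i}$, now applied to a submodule; and (ii) the bookkeeping identifying $H^0(\P^{n-1},\underline{\mathcal S}_\infty(q))$ with the span of the limits for $q$ large, so that no rank is gained in the limit. The hard step, as indicated, is the saturation statement in Proposition~\ref{prop325}; the rest is a reprise of the machinery of Sections~\ref{Finiteness of degree}--\ref{section3.3}.
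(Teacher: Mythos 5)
Your argument for Proposition~\ref{prop326} itself, granting Proposition~\ref{prop325}, is essentially sound and is a genuine variant of the paper's. Both proofs transport a would-be destabilizing subsheaf to the tangent cone and invoke polystability of $\underline\E_\infty^{\mu_i}$, but the mechanisms differ: the paper takes a \emph{stable} destabilizer $\underline\F$ with $\mu(\underline\F)>\mu_i$, pushes its sections to the limit to build a nontrivial sheaf homomorphism $\underline\F(i_0)\to\underline\E_\infty^{\mu_i}(i_0)$ (the delicate point being Claim~\ref{clm3.30}, which shows that pointwise relations among the sections persist in the limit so that the map descends from $\O^{\oplus N}$ to $\underline\F(i_0)$), and then uses the vanishing of homomorphisms from a stable sheaf to a polystable sheaf of strictly smaller slope. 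You instead build a \emph{subsheaf} $\underline{\mathcal S}_\infty\subset\underline\E_\infty^{\mu_i}$ with the same Hilbert polynomial as $\underline{\mathcal S}$ and quote the slope inequality for subsheaves of a semistable sheaf; this replaces Claim~\ref{clm3.30} by the Hilbert-polynomial bookkeeping that the paper already performs in \eqref{same Hilbert polynomial} and Corollary~\ref{lem2.12}, and is a legitimate trade. Note that both routes really do need $\mu(\underline\E^{\mu_i})=\mu_i$ (in the paper's version this is what lets one assume $\mu(\underline\F)>\mu_i$ rather than merely $\mu(\underline\F)>\mu(\underline\E^{\mu_i})$), so your decision to prove Proposition~\ref{prop325} first matches the paper's logical order.

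The genuine gap is precisely where you suspected it: your treatment of Proposition~\ref{prop325}. The reduction to ``$\operatorname{supp}\underline{\mathcal T}^{\mu_i}$ has codimension $\ge 2$'' via Corollaries~\ref{corollary3.24} and~\ref{lem2.12} is correct, but the proposed contradiction does not work as stated. You cannot hope to ``produce a limit homogeneous section of $\underline\E_\infty^{\mu_i}$ not lying in $\underline{\mathcal N}_\infty^{\mu_i}$'': by construction the spaces $S_k$ exhaust the limits of (normalized) rescaled sections of $\E$ of degree $\mu_k$, and $\underline{\mathcal N}_\infty^{\mu_i}$ is defined to be the subsheaf they generate, so every limit section lies in it. The actual danger is different: all the limit sections could simultaneously vanish along a divisor $V\subset\P^{n-1}$, i.e.\ $\underline{\mathcal N}_\infty^{\mu_i}\subset\underline\E_\infty^{\mu_i}(-V)$, and global generation of $\E$ together with finiteness of $d$ gives no control on this. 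The paper rules it out by a duality argument: it runs the whole construction for the dual connection on $\F=\E^*$, shows via the nondegenerate pairing of Lemmas~\ref{lem3.17} and~\ref{prop3.28} that $\widehat\E^*\simeq\widehat\F$, hence $\mu(\underline\E^{an})+\mu(\underline\F^{an})=0$, and combines this with the one-sided inequality of Lemma~\ref{prop3.25} to force equality term by term. This duality input is absent from your proposal, so Proposition~\ref{prop325}, and with it the conclusion of Proposition~\ref{prop326}, is not yet established.
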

In the following we shall prove these two results. By Corollary \ref{corollary3.24} we already know $\mu(\underline\E^{\mu_i})\leq \mu_i$. To prove Proposition \ref{prop325} it suffices to prove the reversed inequality. The key idea is to make use of the dual sheaf $\E^*$, which is endowed with the induced HYM connection. We can apply the previous construction in this section to obtain a natural algebraic tangent cone of $\E^*$. We will show this algebraic tangent cone is naturally dual to the algebraic tangent cone $\widehat \E$ of $\E$, and applying \ref{corollary3.24} yields the desired reversed inequality. This duality should be viewed as a manifestation of the fact that taking dual is an intrinsic operation, both algebraically and analytically. Below we give the detailed arguments.

By construction we know $\widehat\E$ satisfies the following
\begin{itemize}
\item[(1)] $\widehat{\E}|_{\widehat{B} \setminus D} \simeq (p^*\E)|_{\widehat{B} \setminus D}$, and $H^0(\widehat B, \widehat \E(-kD))$ is naturally identified with $M_k^{an}\subset H^0(B, \widehat \E)$ for  $k$ large;
\item[(2)] there exist finitely many sections $s_i\in M_{k_i}^{an}\setminus M_{k_i+1}^{an}$,  $i=1, \cdots, l$,  which globally generate $\widehat{\E}$ in the following sense
\begin{equation}\label{eqn3.11}
 \bigoplus_{1\leq i \leq l} \mathcal O(k_iD) \xrightarrow{(s_1, \cdots, s_l)} \widehat{\E} \rightarrow 0
\end{equation}
where $s_i$ is naturally viewed as an element in $H^0(\widehat B, \widehat \E(-k_iD))$; 
\item[(3)] there exists a filtration of $\underline\E^{an}:=\widehat{\E}|_D$  
$$
0=\underline{\E}_0\subset \underline{\E}_1 \subset \cdots \underline{\E}_m=\underline\E^{an} 
$$
so that $\mu(\underline{\E}_{i+1}/ \underline{\E}_{i}) \leq \mu_{m-i}$ and $\rank(\underline{\E}_{i+1}/ \underline{\E}_{i})=\rank(\underline{\E}_\infty^{\mu_{m-i}})$;
\item[(4)] the support of the torsion sheaf $\underline{\mathcal{T}}$ has complex codimension at least $2$ if and only if $\mu(\underline{\E}_{i+1}/ \underline{\E}_{i})=\mu_{m-i}$ for any $1\leq i\leq m$.
\end{itemize}
Similar construction applies to the dual admissible HYM connection $A^*$ on $\F:=\E^*$.   Abusing notation, we still denote the degree function associated to $\F$ by $d$. It is clear that given an analytic tangent cone $\E_\infty$ of $A$ at $0$, then $\F_\infty:=\E^*_\infty$ is an analytic tangent cone of $A^*$. It follows that for any nonzero section $s^*\in H^0(B, \F)$, $d(s^*)\equiv -\mu_i (\text{mod} \ \mathbb{Z})$ for some $i\in \{1, \cdots, m\}$. For any $k'\in \mathbb{Z}$, we denote
$$L^{an}_{k'}=\{s^*\in H^0(B,\F)| d(s^*) \geq k'-\mu_1\}.$$
Then $\hat{\F}$ satisfies
\begin{itemize}
\item[$(1')$] $\widehat{\F}|_{\widehat{B} \setminus D} \simeq  (p^*\F)|_{\widehat{B} \setminus D}$, and $H^0(\widehat B, \widehat \F(-k'D))$ is naturally identified with $L_{k'}^{an}\subset H^0(B, \widehat \F)$ for all $k'$ large;
\item[$(2')$]
there exist finitely many sections $s_{i'}^*\in L_{k_{i'}}^{an}\setminus L_{k_{i'}+1}^{an}$,  $i'=1, \cdots, l'$,  which globally generate $\widehat{\F}$ in the following sense
\begin{equation}\label{eqn3.12}
\bigoplus_{1\leq i \leq l'} \mathcal O(k_{i'}D) \xrightarrow{(s_1^*, \cdots, s_{l'}^*)} \widehat{\F} \rightarrow 0
\end{equation}
where $s_{i'}^*$ is naturally viewed as an element in $H^0(\widehat B, \widehat {\F}(-k_{i'}D))$;

\item[$(3')$] There exists a filtration of $\underline\F^{an}:=\widehat{\F}|_D$
$$
0=\underline\F_0\subset \underline{\F}_1 \subset \cdots \underline{\F}_m=\underline\F^{an} 
$$
so that $\mu(\underline{\F}_{i'+1}/ \underline{\F}_{i'}) \leq -\mu_{i'}$ with $\rank(\underline{\F}_{i'+1}/ \underline{\F}_{i'})=\rank(\underline \F_\infty^{-\mu_i'})=\rank(\underline\E^{\mu_{i'}}_\infty)$; 
\item[$(4')$] the support of the torsion sheaf $\underline{\mathcal T'}$ has complex codimension at least $2$ if and only if $\mu(\underline{\F}_{i'+1}/ \underline{\F}_{i'})=-\mu_{i'}$ for any $1\leq i'\leq m$.
\end{itemize}

\begin{lem}\label{prop3.25}
We have
$$\mu(\underline{{\E}}^{an})+\mu (\underline{\F}^{an})\leq 0,$$
 and the equality holds if and only if the support of $\underline{\mathcal T}$ and $\underline{\mathcal T'}$ have complex codimension at least $2$.  
\end{lem}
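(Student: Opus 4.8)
The plan is to exploit the natural pairing between $\E$ and $\F=\E^*$ to relate the two algebraic tangent cones $\widehat\E$ and $\widehat\F$. The crucial observation is that if $s\in H^0(B,\E)$ and $s^*\in H^0(B,\F)$, then $\langle s^*,s\rangle$ is a holomorphic function on $B$, and by Theorem \ref{thm3.1}(4) (applied with the dual admissible HYM metrics, since the natural pairing $\E\otimes\E^*\to\O$ is compatible with the Hermitian metrics up to a bounded factor) we get the subadditivity/multiplicativity estimate
\begin{equation}\label{e:pairing-degree}
\deg\langle s^*,s\rangle \;\geq\; d(s)+d(s^*).
\end{equation}
Actually what I need is the cleaner inequality $d(\langle s^*,s\rangle)\ge d(s)+d(s^*)$, which follows because $\langle s^*,s\rangle$ is the image of $s\otimes s^*$ under the evaluation map, and evaluation does not decrease degree (this is Item (2) and Item (3)/(4) of Theorem \ref{thm3.1} combined with Corollary \ref{cor3-4}). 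First I would make this estimate precise, being careful that the pairing is only a morphism of sheaves, not an isometry, but the degree function only cares about vanishing orders up to bounded multiplicative constants, as in Lemma \ref{lem3-5}.

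Next I would translate \eqref{e:pairing-degree} into a statement about the filtered pieces. If $s\in M_k^{an}\setminus M_{k+1}^{an}$ represents a nonzero class in the $\mu_i$-graded piece (so $d(s)=k+\mu_i$) and $s^*\in L_{k'}^{an}$ with $d(s^*)\ge k'-\mu_1$, then $\langle s^*,s\rangle$ has degree at least $(k+\mu_i)+(k'-\mu_1)$. Choosing the generators $\{s_j\}$ of $\widehat\E$ and $\{s^*_{j'}\}$ of $\widehat\F$ from properties (2) and $(2')$, the pairings $\langle s^*_{j'},s_j\rangle$ assemble, after taking $-kD$ twists and restricting to $D$, into a pairing of the graded sheaves on $D=\P^{n-1}$. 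Concretely, I expect the pairing $\underline\E^{an}\otimes\underline\F^{an}\to\O_D(?)$ (with an appropriate twist dictated by the degrees $k_j,k_{j'}$) to be a perfect pairing away from the codimension $\ge 2$ loci, which would force $\mu(\underline\E^{an})+\mu(\underline\F^{an})\le 0$ with equality exactly when no $c_1$ gets lost, i.e. when the torsion sheaves $\underline{\mathcal T}$, $\underline{\mathcal T}'$ are supported in codimension $\ge 2$. The cleanest way to see the inequality: $\underline\F^{an}$ maps naturally into $(\underline\E^{an})^*$ twisted appropriately (because a section of $\F$ with $d(s^*)\ge k'-\mu_1$ pairs into functions of the right vanishing order against all of $\widehat\E$), hence $\mu(\underline\F^{an})\le \mu((\underline\E^{an})^*)=-\mu(\underline\E^{an})$, and the map is an isomorphism in codimension $\le 1$ precisely in the equality case. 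Here one uses that $\widehat\E$ and $\widehat\F=\widehat{\E^*}$ restrict to genuinely dual bundles on $\widehat B\setminus D$, so the generic fibers of the two graded sheaves are honestly dual, pinning down the rank and forcing the map to be generically an isomorphism.

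The main obstacle I anticipate is making the pairing of \emph{graded modules} precise and checking that it descends to the correct twisted pairing of sheaves on $D$, rather than just on $\widehat B\setminus D$ — i.e. controlling exactly how the vanishing orders along $D$ interact with the pairing. One has to track that $\langle s^*_{j'},s_j\rangle\in\m^{k_j+k_{j'}}$ (so that it defines a section of $\O((k_j+k_{j'})D)$ pulled back, giving the right twist) and that the induced map on $D$-restrictions is nonzero on each pairing of HN-graded pieces $\underline\E^{\mu_i}\otimes\underline\F^{-\mu_i}$ — this last nonvanishing should come from the fact that $\underline{\mathcal N}_\infty^{**}=\underline\E_\infty$ (Remark \ref{r:rank equal}) together with the corresponding statement for $\F_\infty=\E^*_\infty$, so that the induced pairing on the tangent-cone side is the genuine perfect pairing $\underline\E_\infty\otimes\underline\E_\infty^*\to\O$. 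Once the generic perfectness is established, the slope inequality and the equality characterization are formal, using $c_1(\underline{\mathcal T})=\mathrm{PD}[V]$ as in Corollary \ref{corollary3.24}. A secondary technical point is that $R=\O(B)$ is non-Noetherian, but since all the relevant modules $M_k^{an}$, $L_{k'}^{an}$ are finitely generated (already shown) and the blowup rings are finitely generated over $\widehat R$, this causes no real trouble.
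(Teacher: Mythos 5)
Your route to the inequality is genuinely different from the paper's. The paper proves this lemma by pure bookkeeping: it writes $\mu(\underline{\E}^{an})$ and $\mu(\underline{\F}^{an})$ as rank-weighted sums of the slopes of the graded pieces of the filtrations in (3) and $(3')$, invokes the termwise bounds $\mu(\underline{\E}_{i+1}/\underline{\E}_i)\le\mu_{m-i}$ and $\mu(\underline{\F}_{i'+1}/\underline{\F}_{i'})\le-\mu_{i'}$ coming from Corollary \ref{corollary3.24}, and uses the rank matching $\rank(\underline{\E}_{i+1}/\underline{\E}_i)=\rank(\underline{\E}_\infty^{\mu_{m-i}})=\rank(\underline{\F}_{i'+1}/\underline{\F}_{i'})$ (via $\F_\infty=\E_\infty^*$) so that the upper bounds telescope exactly to $0$. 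No pairing is needed. What you propose --- a generically perfect pairing $\underline{\E}^{an}\otimes\underline{\F}^{an}\to\O(\cdot)$, hence a generically isomorphic map $\underline{\F}^{an}\to(\underline{\E}^{an})^*$ up to twist --- is essentially the content of the paper's \emph{later} Lemma \ref{lem3.17}, Claim \ref{clm3.27} and Lemma \ref{prop3.28}, which are deployed only afterwards to show that equality actually holds. Your inequality argument is not circular (those duality lemmas rest on Corollary \ref{corollary3.24}, not on the present lemma), but you are front-loading the hardest part of the section into a statement the paper gets for free.

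The genuine gap is in the equality case, which is the part of the lemma that matters downstream. The statement characterizes equality by the condition that $\underline{\mathcal{T}}=\bigoplus_i\underline{\E}_\infty^{\mu_i}/\underline{\N}_\infty^{\mu_i}$ and $\underline{\mathcal{T}}'$ are supported in codimension at least $2$; these are torsion sheaves on the \emph{analytic tangent cone}, measuring the failure of the section-generated subsheaves $\underline{\N}_\infty^{\mu_i}$ to be saturated in codimension one. Your argument characterizes equality by a different condition: that the pairing map $\underline{\F}^{an}\to(\underline{\E}^{an})^*$ is an isomorphism in codimension one. You never connect the two, and the connection is not formal. One needs to decompose $\mu(\underline{\E}^{an})$ and $\mu(\underline{\F}^{an})$ along the filtrations, apply Corollary \ref{corollary3.24} to each graded piece separately (equality in $\mu(\underline{\E}^{\mu_i})\le\mu_i$ holds iff $\underline{\mathcal{T}}^{\mu_i}$ has codimension $\ge2$ support), and use the rank identities so that the sum of the individual upper bounds is exactly $0$; only then does global equality force termwise equality and hence the codimension condition. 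That is precisely the paper's proof, which your proposal bypasses; the passing reference to $c_1(\underline{\mathcal{T}})$ being the Poincar\'e dual of the codimension-one part of its support does not by itself supply this step.
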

\begin{proof}
By $(3)$ and $(3')$ above, we have 
$$
\begin{aligned}
&\mu(\underline{{\E}}^{an})+\mu (\underline{{\F}}^{an})\\
=&\sum_{i=1}^m \frac{ \rank(\underline{\E}_{i+1}/ \underline{\E}_{i}) \mu(\underline{\E}_{i+1}/ \underline{\E}_{i})}{\rank(\E)}+\sum_{i'=1}^{m} \frac{\rank(\underline \F_{i'+1}/ \underline\F_{i'}) \mu(\underline\F_{i'+1} / \underline\F_{i'})}{\rank(\E)}\\
=& \sum_{i=1}^m \frac{\rank(\underline\E^{\mu_{m+1-i}}_\infty )  \mu(\underline{\E}_{i+1}/ \underline{\E}_{i})}{\rank(\E)}+\sum_{i'=1}^{m} \frac{\rank(\underline\E^{\mu_{i'}}_\infty )  \mu(\underline\F_{i'+1} / \underline\F_{i'})}{\rank(\E)}\\
\leq & \sum_{i=1}^m \frac{ \rank(\underline\E^{\mu_{m+1-i}}_\infty ) \mu_{m+1-i} }{\rank(\E)}-\sum_{i'=1}^{m} \frac{\rank(\underline\E^{\mu_{i'}}_\infty )  \mu_{i'}}{\rank(\E)}\\
=& 0. 
\end{aligned}
$$
The equality holds if and only if $\mu(\underline{\E}_{i+1}/ \underline{\E}_{i}) =\mu_{m+1-i}$ and $\mu(\underline{\F}_{i'+1}/ \underline{\F}_{i'}) = -\mu_{i'}$ for all $1\leq i,i'\leq m$. By Corollary \ref{corollary3.24}, this holds if and only if the support of $\underline{\mathcal T}$ and $\underline{\mathcal T}'$  has complex codimension at least $2$.
\end{proof}

\begin{lem}\label{lem3.17}
There exists a natural paring  
$$ L^{an}_{k'} / L^{an}_{k'+1} \times M^{an}_k/M^{an}_{k+1}  \rightarrow \mathfrak{m}^{k+k'}/\mathfrak{m}^{k+k'+1}$$
which sends $([s^*_{k'}], [s_{k}])$ to $[s^*_{k'}(s_{k})]$. Furthermore, this paring is non-degenerate for $k$ and $k'$ both large.
\end{lem}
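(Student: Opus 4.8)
The plan is to define the pairing at the level of the blow-up and then transfer it to the graded pieces. First I would note that a section $s_k \in M^{an}_k$ is, via the identification in Theorem~\ref{thm3.18} (and its analogue $(1')$ for $\widehat\F$), the same thing as a section of $\widehat\E(-kD)$, while $s^*_{k'} \in L^{an}_{k'}$ is a section of $\widehat\F(-k'D) = \widehat{\E^*}(-k'D)$. Since $\widehat\E$ and $\widehat\F$ agree with $p^*\E$ and $p^*\E^*$ away from $D$, the natural evaluation pairing $\E^* \otimes \E \to \O_B$ pulls back to a pairing $\widehat\F \otimes \widehat\E \to \O_{\widehat B}$ defined on $\widehat B \setminus D$; because both sheaves are reflexive and $D$ has codimension one, this extends (using Hartogs for reflexive sheaves, exactly as in Section~\ref{section2.2}) to a sheaf map $\widehat\F \otimes \widehat\E \to \O_{\widehat B}$. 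Tensoring with $\O(-(k+k')D)$ and taking global sections, $s^*_{k'}(s_k)$ lands in $H^0(\widehat B, \O(-(k+k')D)) = \m^{k+k'}$, i.e.\ it is a holomorphic function on $B$ vanishing to order $\geq k+k'$ at $0$; equivalently, by Corollary~\ref{cor3-4}, $\deg(s^*_{k'}(s_k)) \geq (k-\mu_1) + (k'-\mu_1) \geq$ the right order once one keeps careful track of the shift by $\mu_1$. Passing to the quotients $M^{an}_k/M^{an}_{k+1}$, $L^{an}_{k'}/L^{an}_{k'+1}$ and $\m^{k+k'}/\m^{k+k'+1}$ then gives a well-defined bilinear pairing: if $s_k \in M^{an}_{k+1}$ then $d(s^*_{k'}(s_k)) = d(s^*_{k'}) + d(s_k) > (k+k') - 2\mu_1 + \cdots$, so its class vanishes in $\m^{k+k'}/\m^{k+k'+1}$, and symmetrically in the other variable. (Here one should double-check the exact indexing conventions for $M^{an}_k$ versus $L^{an}_{k'}$ so that the target is genuinely $\m^{k+k'}/\m^{k+k'+1}$ rather than a neighboring graded piece; this is bookkeeping, not substance.)

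For non-degeneracy, the idea is to reduce to non-degeneracy of the corresponding pairing on the analytic tangent cone, which is essentially formal there because $\underline\E_\infty$ is polystable and $\underline\F_\infty = \underline\E_\infty^*$. Concretely, I would use the sheaves $\underline\N_\infty^\mu \subset \underline\E_\infty^\mu$ and the analogous $\underline\N_\infty^{-\mu}\subset \underline\F_\infty^{-\mu}$ constructed in Section~\ref{A torsion-free sheaf on $D$}: for $k$ large, $M^{an}_k/M^{an}_{k+1} \cong \bigoplus_\mu S_k^{\mu}$ sits inside $\bigoplus_\mu H^0(\P^{n-1}, \underline\E_\infty^\mu(e))$ and likewise for the dual side. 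The perfect pairing $\underline\E_\infty^\mu \otimes \underline\E_\infty^{-\mu} \to \O$ (which restricts from the cone $\C^n$) induces, after twisting and taking sections, a pairing on these spaces of sections; the key point is that the pairing constructed above on $M^{an}_k/M^{an}_{k+1}\times L^{an}_{k'}/L^{an}_{k'+1}$ is compatible with it under strong convergence of sections — this is exactly the content of Lemma~\ref{facts about strong convergence} (the product $s^*_{k',j}(s_{k,j})$ converges strongly to $s^{*,\infty}_{k'}(s^\infty_k)$, and a holomorphic function vanishing to order exactly $k+k'$ on $B$ produces a nonzero degree-$(k+k')$ homogeneous limit). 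So if the pairing on the quotients were degenerate, one could produce a nonzero class $[s_k]$ killed by all $[s^*_{k'}]$, whose strong limit $s^\infty_k$ would be a nonzero homogeneous section of $\underline\E_\infty^\mu(e)$ annihilated by all of $\underline\N_\infty^{-\mu}$ at the level of the cone; but since $(\underline\N_\infty^{-\mu})^{**} = \underline\F_\infty^{-\mu} = (\underline\E_\infty^\mu)^*$ by Remark~\ref{r:rank equal}, and the pairing $\underline\E_\infty^\mu \times (\underline\E_\infty^\mu)^* \to \O$ is generically perfect, this forces $s^\infty_k = 0$, a contradiction.

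The main obstacle I expect is the non-degeneracy direction, specifically the reduction to the cone. There are two subtleties to handle carefully: first, one only knows the $\underline\N_\infty^\mu$ generate a full-rank subsheaf of $\underline\E_\infty^\mu$ (Corollary~\ref{lem2.12}, Remark~\ref{r:rank equal}), not the whole sheaf, so the argument must use that the pairing $\underline\N_\infty^{-\mu} \times \underline\E_\infty^\mu \to \O$ is still generically non-degenerate — this is fine because $\underline\N_\infty^{-\mu}$ has full rank, so it agrees with $\underline\F_\infty^{-\mu}$ away from a codimension-$\geq 1$ set, and a section of the polystable sheaf $\underline\E_\infty^\mu(e)$ vanishing on a dense open set vanishes identically. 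Second, one must be sure that "non-degenerate for $k$ and $k'$ both large" is the right statement and not perfectness — degeneracy would be witnessed by a nonzero class, hence (after rescaling and passing to a subsequence) a nonzero strong limit, and the finiteness in Lemma~\ref{lem-finite cone} together with Proposition~\ref{prop2.7} guarantees the limit is genuinely homogeneous of the expected degree rather than something of higher degree. Modulo these points, the argument is a fairly direct transplant of the strong-convergence technique already used repeatedly in Section~\ref{A torsion-free sheaf on $D$}, combined with the elementary linear algebra of the perfect pairing on a polystable sheaf and its dual.
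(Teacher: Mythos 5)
Your proposal is correct and follows essentially the same route as the paper: existence of the pairing comes down to the degree additivity $d(s^*(s))=d(s^*)+d(s)$ of Theorem \ref{thm3.1}(4) (your sign bookkeeping should read $(k'-\mu_1)+(k+\mu_1)=k+k'$, matching the chosen normalizations), and non-degeneracy is proved by contradiction via strong limits of rescaled sections on the tangent cone, using that these limits generate the fiber of $\E_\infty$ (equivalently, that $\underline\N_\infty$ has full rank) at a generic point, which is exactly the paper's appeal to Corollary \ref{corollary3.24}. The only cosmetic difference is that you phrase the generic generation in terms of the generically perfect pairing between $\underline\E_\infty^{\mu}$ and $(\underline\N_\infty^{-\mu})^{**}=(\underline\E_\infty^{\mu})^*$, while the paper argues directly that a covector vanishing on a generating set at a generic point must vanish.
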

\begin{proof}
The existence of the paring follows from Theorem \ref{thm3.1}, Item (4). It remains to prove this paring is non-degenerate. Suppose there exists some nonzero $[s^*]\in L^{an}_{k'} / L^{an}_{k'+1}$ so that $s^*(t)\in \mathfrak{m}^{k'+k+1}$ for all $t\in M^{an}_k$ for $k$ large. Let $s^*_\infty$ be the nonzero rescaled limit section of $\F_\infty$ given by $s^*$ and $t_\infty$ be the nonzero rescaled limit section of $\E_\infty$ given by $t$. Taking the limit of the fact $s^*(t)\in \mathfrak{m}^{k'+k+1}$, we know $s^*_\infty(t_\infty)=0$. By Corollary \ref{corollary3.24}, for $k$ large, we know that such limiting homogeneous sections $t_\infty$ generate the fiber of $\E_\infty$ at a generic point of $\C^n$. In particular, we know $s^*_\infty$ has to vanish at a generic point hence $s_\infty^*=0$. Contradiction. 
\end{proof}

\begin{lem}\label{prop3.28}
There exists a natural paring  $\widehat{\E}\otimes \widehat{\F}\rightarrow \O_{\widehat B}$ induced by the paring $\E\otimes\F\rightarrow \O_B$.  Furthermore, this paring induces an isomorphism between $\widehat{\E}^*$ and $\widehat{\F}.$
\end{lem}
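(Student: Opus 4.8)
The plan is to first construct the pairing away from $D$, and then extend it across $D$ using reflexivity. Since $\widehat\E|_{\widehat B\setminus D}\simeq (p^*\E)|_{\widehat B\setminus D}$ and $\widehat\F|_{\widehat B\setminus D}\simeq (p^*\F)|_{\widehat B\setminus D}$, the original pairing $\E\otimes\F\to\O_B$ pulls back to a pairing $\widehat\E\otimes\widehat\F\to\O_{\widehat B}$ over $\widehat B\setminus D$. To extend this across $D$, observe that a homomorphism $\widehat\E\otimes\widehat\F\to\O_{\widehat B}$ is a section of $(\widehat\E\otimes\widehat\F)^*$, and since $\widehat\E$ and $\widehat\F$ are reflexive (hence $\widehat\E\otimes\widehat\F$ is torsion-free, and its dual is reflexive), sections extend over the codimension $\geq 2$ locus $\{0\}\subset\widehat B$ — but here we are extending across $D$ which has codimension $1$. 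So instead I would argue as follows: the section of $\mathcal Hom(\widehat\E\otimes\widehat\F,\O_{\widehat B})$ defined on $\widehat B\setminus D$ a priori extends to $\widehat B$ as a section of $\mathcal Hom(\widehat\E\otimes\widehat\F,\O_{\widehat B}(ND))$ for some $N\geq 0$; I then need to show $N=0$ can be taken, i.e., that the pairing has no poles along $D$.

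To rule out poles along $D$, I would use the global section descriptions from items (1), (2), (1$'$), (2$'$). The key point is that for $s_i\in M^{an}_{k_i}$ and $s^*_{i'}\in L^{an}_{k_{i'}}$, the degree additivity from Theorem \ref{thm3.1}(4) gives $d(s^*_{i'}(s_i))\geq (k_{i'}-\mu_1)+(\mu_1+k_i)=k_i+k_{i'}$, so $s^*_{i'}(s_i)\in\m^{k_i+k_{i'}}$, i.e., $s^*_{i'}(s_i)\in H^0(\widehat B,\O_{\widehat B}(-(k_i+k_{i'})D))$. Via the surjections \eqref{eqn3.11} and \eqref{eqn3.12}, the pairing $\widehat\E\otimes\widehat\F\to\O_{\widehat B}$ is determined by its values $s^*_{i'}(s_i)$ on generators, and these values lie in $\O_{\widehat B}$ with the correct vanishing along $D$; hence the pairing is a genuine morphism $\widehat\E\otimes\widehat\F\to\O_{\widehat B}$ with no pole, i.e., $N=0$. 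Equivalently, the induced map $\widehat\F\to\widehat\E^*$ is a morphism of sheaves on all of $\widehat B$.

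Finally, to show this map $\widehat\F\to\widehat\E^*$ is an isomorphism: both sheaves are reflexive, and the map is an isomorphism over $\widehat B\setminus D$ (since it restricts to the pullback of the isomorphism $\F\xrightarrow{\sim}\E^*$ on $B\setminus\{0\}$). A morphism between reflexive sheaves on $\widehat B$ that is an isomorphism in codimension $1$ (outside $D$) need not be an isomorphism in general, so I would instead use Lemma \ref{lem3.17}: the non-degeneracy of the pairing $L^{an}_{k'}/L^{an}_{k'+1}\times M^{an}_k/M^{an}_{k+1}\to\m^{k+k'}/\m^{k+k'+1}$ for $k,k'$ large translates, via Lemma \ref{lem2.14} and the identifications of global sections, into the statement that the induced map on the restricted sheaves $\underline\F^{an}\to (\underline\E^{an})^*$ is an isomorphism (at least generically on $D$, and the non-degeneracy at generic points of $D$ together with both sides being torsion-free of the same rank gives injectivity, with the symmetric statement for the reverse pairing giving surjectivity in codimension $1$ on $D$). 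Combined with the isomorphism off $D$, and reflexivity of $\widehat\E^*$ and $\widehat\F$, this forces $\widehat\F\cong\widehat\E^*$. The main obstacle I anticipate is the last step: controlling the behavior of the pairing precisely along $D$ (not just generically) to upgrade "isomorphism off $D$ plus generic iso on $D$" to a genuine isomorphism of reflexive sheaves on $\widehat B$ — this is exactly where the non-degeneracy statement of Lemma \ref{lem3.17} in \emph{all} large degrees $k,k'$, rather than just one degree, does the work, since it pins down the full graded module structure and hence (via the Fujiki vanishing of Lemma \ref{lem2.13} and Lemma \ref{lem2.14}) the sheaves themselves.
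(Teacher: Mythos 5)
Your proposal is correct and follows essentially the same route as the paper: define the pairing on generators via \eqref{eqn3.11}, \eqref{eqn3.12} and the degree additivity of Theorem \ref{thm3.1}(4) to get the vanishing $s^*_{i'}(s_i)\in\m^{k_i+k_{i'}}$ along $D$, then combine the non-degeneracy of Lemma \ref{lem3.17} with the isomorphism off $D$ and reflexivity to conclude. The one step you leave informal --- passing from non-degeneracy of the pairing on global sections to generic pointwise non-degeneracy on $D$ --- is carried out in the paper by noting that $\underline P^*$ is injective on $H^0(D,\underline\E^{an}(k))$ for all large $k$, so the image sheaf has full rank by the asymptotic Riemann--Roch formula (Lemma \ref{thm1.1}), which is exactly the "all large degrees" mechanism you anticipate.
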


\begin{proof}
We first define  the natural paring $\widehat{\E}\otimes \widehat{\F}\rightarrow \O_{\widehat B}$. This follows from \eqref{eqn3.11} and \eqref{eqn3.12}. More precisely, we first define a paring $$
P: \bigoplus_i \O(k_i) \times \bigoplus_{i'} \O(k_{i'}) \rightarrow \O_{\widehat{B}}
$$
which sends $((f_1\cdots, f_l), (g_1\cdots g_{l'}))$ to $\sum_{i,i'} f_i g_{i'} s^*_{i}(s_{i'})$. Here $s^*_{i}(s_{i'})\in \mathfrak{m}^{k_i+k_{i'}}$ by Theorem \ref{thm3.1}, Item (4), and we naturally view it as a holomorphic function defined over $\widehat{B}$. In particular, $s^*_{i}(s_{i'})$ has a vanishing order at least $k_i+k_{i'}$ along $D$ and thus $f_i g_{i'} s^*_{i}(s_{i'})$ is a well-defined local homorphic function. It is now easy to see that this paring descends to be a paring between $\widehat{\E}$ and $\widehat{\F}$, which we also denote  as $P$. Furthermore, by definition, away from $D$, this paring is naturally isomorphic to the paring between $\E$ and $\F$ over $B^*$. 
\begin{clm}\label{clm3.27}
 $P$ induces a vector bundle isomorphism at a generic point when restricting to the exceptional divisor $D$.
\end{clm}
Given this claim, we know $P$ induces an isomorphism between $\widehat \E$ and $\widehat{\F}$ away from a codimension $2$ analytic subvariety of $\widehat{B}$. Since $\widehat{\E}$ and $\widehat{\F}$ are reflexive, $P$ actually induces a global isomorphism. 
\end{proof}
\begin{proof}[Proof of Claim \ref{clm3.27}]
For any $k$ and $k'$, $P$ induces a paring
$$
\widehat{\E}(-kD) \times \widehat{\F}(-k'D) \rightarrow \O(-(k+k')D).
$$
 Let $\underline P$ denote the induced paring on $D$
  $$\underline{{\E}}^{an}(k) \times \underline{{\F}}^{an}(k') \rightarrow \O(k+k').$$ By definition, $\underline P$ induces the paring of global sections
 $$ L^{an}_{k'} / L^{an}_{k'+1} \times M^{an}_k/M^{an}_{k+1}  \rightarrow \mathfrak{m}^{k+k'}/\mathfrak{m}^{k+k'+1}$$
which sends $([s^*_{k'}], [s_{k}])$ to $[s^*_{k'}(s_{k})].$ Indeed, this follows from the construction that $$H^0(D, \underline{{\E}}^{an}(k))=M^{an}_k/M^{an}_{k+1}$$ and $$H^0(D, \underline{{\F}}^{an}(k'))=L^{an}_{k'}/L^{an}_{k'+1}$$ for $k$ and $k'$ large. Furthermore, this paring  is non-degenerate for $k$ and $k'$ large. Let 
$$\underline P^*: \underline{{\E}}^{an}(k) \rightarrow (\underline{{\F}}^{an}(k'))^* \otimes \O((k'+k)D)=(\underline{{\F}}^{an})^*(k)$$ 
be the map induced by $\underline P$. We know that $\underline P^*$ induces an injective map of global sections 
$$
\underline P^*: H^0(D, \underline{{\E}}^{an}(k)) \rightarrow H^0(D, (\underline{{\F}}^{an})^*(k))
$$
for any $k$ large. In particular, by Lemma \ref{thm1.1}, the rank of the image sheaf $\text{Im}(\underline P^*)$ has rank equal to $\rank(\underline{{\E}}^{an})$ which implies $\underline P$ induces an isomorphism at a generic point.  
\end{proof}

\begin{proof}[Proof of Proposition \ref{prop325}]
By Lemma \ref{prop3.28} we have $(\widehat \E)^*\simeq \widehat \F$, so away from a codimension 2 subvariety in $D$, we have $(\underline \E^{an})^*\simeq\underline \F^{an}$. It follows that $\mu(\underline\E^{an})+\mu(\underline\F^{an})=0$.   Then the conclusion follows from Lemma \ref{prop3.25} and Corollary \ref{corollary3.24}.
\end{proof}

\begin{proof}[Proof of Proposition \ref{prop326}]
We argue by contradiction. Suppose $\underline\E^{\mu_i}$ is not semistable, then there exists a subsheaf $\underline{\F}$ of $\underline{\E}^{\mu_i}$ which is stable and satisfies $\mu(\underline{\F})>\mu_i$. Take a basis $\{[s_k]\}^N_{k=1}$ of $H^0(\P^{n-1}, \underline \F(i_0)) \subset M_{i'_0}/M_{i'_0+1}$ where $s_{k}\in M_{i_0'} \setminus M_{i_0'+1}$ for $k=1,\cdots N$. Here $i_0$ is chosen large enough so that $\underline \F(i_0)$ is globally generated and $\mu_{i_0'}=i_0+\mu_i$. In particular, we have an exact sequence
$$
\O^{\oplus N} \xrightarrow{([s_1], \cdots, [s_N])} \underline \F(i_0) \rightarrow 0.
$$
Let $$m_j=\max_{k} \|s_k\|_j.$$ Fix an analytic tangent cone $\E_\infty$, and passing to a further subsequence of $\{j\}$ if necessary we may assume $m_j^{-1}\cdot \lambda_j^*s_k$ converges strongly to a homogeneous section $s_k^\infty$ of $\E_\infty$ for all $k$, and at least one of the $s_k^\infty$ is non-zero.  We define a nontrivial sheaf homomorphism  
$$Q:\O^{\oplus N} \rightarrow \E_{\infty}$$ 
by sending $(a_1,\cdots, a_N)$ to $\sum a_k s_k^{\infty}$.  Also we know $d(s_{k}^\infty)=\mu_i+i_0$ for any $k$ with $s_k^\infty\neq0$. We want to show $Q$ descends to be a nontrivial sheaf homomorphism from $\underline \F(i_0)$ to $\underline \E^{\mu_i}_{\infty}(i_0)$. 
It suffices to show that if $$\sum_k a_k[s_k]([z])=0$$ then $$\sum_k a_ks_k^{\infty}|_{\C^* . z}=0.$$ 
\begin{clm}\label{clm3.30}
Given $[z]\notin \Sing(\underline{{\E}}^{an}) \cup \Sigma\cup Z(\E)$, where $\Sigma$ is the bubbling set of the convergence to the analytic tangent cone $\E_\infty$, if $$\sum_k a_k[s_k]([z])=0,$$ then $$\sum_k a_k s_k|_{\C^*\cdot z \cap B} = s|_{\C^* \cdot z \cap B}$$ for some $s\in H^0(B, \E)$ with $d(s)>i_0+\mu_i$.
\end{clm}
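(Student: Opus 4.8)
The plan is to reduce the hypothesis ``$\sum_k a_k[s_k]$ vanishes at $[z]$'' to a purely algebraic statement in the graded module $N^{\mu_i}$, exploiting two facts: the ideal sheaf $\mathcal I_{[z]}$ of a point of $\P^{n-1}$ becomes globally generated by linear forms after twisting by $\O(1)$, and a linear form vanishing at $z$ vanishes identically on the whole line $\C\cdot z$. Write $\sigma:=\sum_k a_k s_k\in M_{i_0'}\subset H^0(B,\E)$, so that its class $[\sigma]=\sum_k a_k[s_k]$ lies in $H^0(\P^{n-1},\underline\F(i_0))\subset H^0(\P^{n-1},\underline\E^{\mu_i}(i_0))$, and for $i_0$ large this last group is identified with $M_{i_0'}/M_{i_0'+1}$ by the definition of $\underline\E^{\mu_i}$ as the sheaf associated to $N^{\mu_i}$ (recall $\mu_{i_0'}=\mu_i+i_0$). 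Since $[z]$ avoids $\Sing(\underline\E^{an})$ — and, after enlarging this analytic set by a further codimension $\geq1$ piece if needed, also $\Sing(\underline\F)$, which in any case only matters up to genericity of $[z]$ for the later application — $\underline\F$ is locally free near $[z]$, and the hypothesis says exactly that $[\sigma]\in H^0(\P^{n-1},\underline\F(i_0)\otimes\mathcal I_{[z]})$.

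Next I would fix a basis $\ell_1,\dots,\ell_{n-1}$ of the space of linear forms on $\C^n$ vanishing at $z$, so that $[z]=\{\ell_1=\cdots=\ell_{n-1}=0\}$ in $\P^{n-1}$. Tensoring the Koszul resolution of $\mathcal I_{[z]}$ by $\underline\F(i_0)$ remains exact since $\underline\F$ is locally free at $[z]$, and for $i_0$ large Serre vanishing applied to the fixed sheaf $\underline\F$ along that resolution forces
\[
H^0(\P^{n-1},\underline\F(i_0-1))^{\oplus(n-1)}\xrightarrow{(\ell_1,\dots,\ell_{n-1})}H^0(\P^{n-1},\underline\F(i_0)\otimes\mathcal I_{[z]})\longrightarrow 0
\]
to be surjective. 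Hence $[\sigma]=\sum_{\nu=1}^{n-1}\ell_\nu[\tau_\nu]$ with $[\tau_\nu]\in H^0(\P^{n-1},\underline\F(i_0-1))\subset H^0(\P^{n-1},\underline\E^{\mu_i}(i_0-1))=M_{i_0'-m}/M_{i_0'-m+1}$, using $\mu_{i_0'-m}=\mu_{i_0'}-1$. Picking representatives $\tau_\nu\in M_{i_0'-m}\subset H^0(B,\E)$, I then set $s:=\sum_k a_k s_k-\sum_{\nu=1}^{n-1}\ell_\nu\tau_\nu\in H^0(B,\E)$.

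It remains to verify that $s$ has the two required properties. By Corollary \ref{cor3-4}, $d(\ell_\nu\tau_\nu)=1+d(\tau_\nu)$ and the class of $\ell_\nu\tau_\nu$ in $M_{i_0'}/M_{i_0'+1}$ is $\ell_\nu\cdot[\tau_\nu]$; hence the class of $\sum_\nu\ell_\nu\tau_\nu$ equals $\sum_\nu\ell_\nu[\tau_\nu]=[\sigma]$, which is the class of $\sum_k a_k s_k$, so $s\in M_{i_0'+1}$, i.e.\ $d(s)\geq\mu_{i_0'+1}>\mu_{i_0'}=i_0+\mu_i$. On the other hand $\ell_\nu(z)=0$ implies $\ell_\nu|_{\C\cdot z}\equiv0$, so $\bigl(\sum_\nu\ell_\nu\tau_\nu\bigr)\big|_{\C^*\cdot z\cap B}=0$, and therefore $\sum_k a_k s_k|_{\C^*\cdot z\cap B}=s|_{\C^*\cdot z\cap B}$, which is the assertion of the Claim.

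The step I expect to require the most care is the bookkeeping of gradings and the compatibility of module structures: the identity ``multiply then lift $=$ lift then multiply, modulo $M_{i_0'+1}$'' is exactly what Corollary \ref{cor3-4} (additivity of $d$ under multiplication by holomorphic functions) together with the relation $\mu_{k+m}=\mu_k+1$ provide, and one must check that $i_0$ is large enough to simultaneously lie in the stable range where $M_k/M_{k+1}\cong H^0(\P^{n-1},\underline\E^{\mu_i}(\cdot))$, to have $\underline\F(i_0-1)$ globally generated, and to have the higher-cohomology vanishing needed in the Koszul argument — all of which hold for $i_0\gg1$, as already arranged in the setup of Proposition \ref{prop326}. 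Note that the hypotheses $[z]\notin\Sigma\cup Z(\E)$ play no role in this proof of the Claim itself; they are needed only afterward, when one passes to the rescaled limit of the identity along the dilation-invariant line $\C^*\cdot z$.
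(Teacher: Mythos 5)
Your proof is correct, but it takes a genuinely different route from the paper's. The paper argues locally on the blowup: after normalizing $[z]=[1:0:\cdots:0]$ and trivializing $\widehat\E(-i_0D)$ near $[z]$ by global sections, the vanishing of $\sum_k a_ks_k$ at $[z]$ gives a Taylor expansion $z_1f_1+\omega_2f_2+\cdots+\omega_nf_n$ in the blown-up coordinates $(z_1,\omega_2,\dots,\omega_n)$; restricting to $\C_{[z]}=\{\omega_2=\cdots=\omega_n=0\}$ kills all but the first term, and $s:=z_1s'$ (with $s'$ the global section of $\widehat\E(-i_0D)$ built from the pullback of $f_1|_{\C_{[z]}}$) gives the restriction identity, the degree bound coming from $d(s)=1+d(s')\geq 1+i_0+\mu_1>i_0+\mu_i$, which uses $\mu_i-\mu_1<1$ in an essential way. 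You instead argue globally on $D$ via the Koszul resolution of the point and Serre vanishing, writing the vanishing class as $\sum_\nu\ell_\nu[\tau_\nu]$ with the $\ell_\nu$ linear forms that vanish on all of $\C\cdot z$, so that subtracting a lift lands directly in $M_{i_0'+1}$. Your route stays entirely within the graded-module formalism of Section \ref{A torsion-free sheaf on $D$}, yields the sharper bound $d(s)\geq\mu_{i_0'+1}$, and does not invoke $\mu_i-\mu_1<1$; the price is the extra genericity requirement $[z]\notin\Sing(\underline\F)$ (needed both to read the hypothesis as $[\sigma]\in H^0(\underline\F(i_0)\otimes\mathcal I_{[z]})$ and for exactness of the tensored Koszul complex), which is a codimension-$\geq 2$ condition and therefore harmless for the application in Proposition \ref{prop326}, plus the bookkeeping that $i_0$ lies simultaneously in the stable range for the module--sheaf correspondence and in the Serre vanishing range --- all of which you correctly flag, and none of which depends on $[z]$. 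Your closing observation that $[z]\notin\Sigma\cup Z(\E)$ is not used in the claim itself is also true of the paper's proof; those hypotheses enter only in the subsequent limit argument.
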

Given this Claim, we have $$ \frac{\sum_k a_k (\lambda_j^*s_k)(z)}{m_j}= \frac{(\lambda_j)^*s (z)}{m_{j}}=\frac{\|s\|_j}{m_j} \cdot \frac{(\lambda_j)^*s (z)}{\|s\|_j}$$
By definition 
$$\lim_{j\rightarrow\infty} \frac{\log m_j}{-j\log 2}=2(i_0+\mu_i+n)$$
whereas 
$$\lim_{j\rightarrow\infty} \frac{\log \|s\|_j}{-j\log 2}=2(d(s)+n)$$

In particular, $$\lim_{j\rightarrow\infty} \frac{\|s\|_j}{m_j}=0$$  
It follows that $\sum_k a_ks_k^{\infty}|_{\C^* . z}=0$.
So $Q$ descends to be a nontrivial sheaf homomorphism from $\underline \F(i_0)$ to $\underline \E^{\mu_i}_{\infty}(i_0)$ away from $ \pi(\Sigma\cup Z(\E)) \cup \Sing(\underline{{\E}}^{an})$ which is of complex codimension at least 2. It then extends to a nontrivial map from $\underline \F(i_0)$ to  $\underline \E^{\mu_i}_{\infty}(i_0)$ over the entire $D$.  However, since  $\underline \F(i_0)$ is stable and  $\underline \E^{\mu_i}_{\infty}(i_0)$ is polystable with $\mu(\underline \F(i_0))>\mu_i+i_0=\mu(\underline \E^{\mu_i}_{\infty}(i_0))$, such a map can not be non-trivial. This is a contradiction. 
\end{proof}

\begin{proof}[Proof of Claim \ref{clm3.30}]
Without loss of generality we may assume $$[z]=((0,0,\cdots, 0), [1,0,\cdots 0])\in \widehat B\subset B\times \P^{n-1}.$$ Then the map $p$ is locally given by $$p(z_1, \omega_2, \cdots \omega_n)=(z_1, z_1\omega_2, \cdots, z_1 \omega_n).$$ By choosing a local trivialization of $\widehat{\E}(-i_0D)$ near $[z]$ given by \emph{global} sections, we can view the sections locally as vector valued holomorphic functions. Near $[z]$, by  Taylor expansion, we have $$ \sum_k a_ks_k(z)=z_1 f_1 + \omega_2 f_2 + \cdots \omega_n f_n.$$
 Let $\C_{[z]}$ be the line given by $\omega_2=\cdots \omega_n=0$, which can be identified with the line $z_2=\cdots=z_n=0$ in $\C^n$ through the projection map $p$. Then $$\sum_k a_ks_k(z)|_{\C_{[z]}}= z_1 f_1|_{\C_{[z]}}.$$
Let $f$ be the pull-back of $f_1|_{\C_{[z]}}$ under the composition map $\widehat{B} \rightarrow B \xrightarrow{\rho} \C_{[z]}$, where $\rho$ is the natural orthogonal projection with respect to the flat metric.  By our choice of local trivialization we may view $f$ as a section $s'$ of $\widehat \E(-i_0D)$. Let $s=z_1s'$, then we have $$\sum_k a_k s_k|_{\C^* . z \cap B} = s |_{\C^* . z \cap B}.$$ 
  Also $$d(s)=d(s')+1\geq i_0+\mu_1+1>i_0+\mu_i$$ because $\mu_i-\mu_1<1$.  
\end{proof}

\begin{rmk}\label{freechoice2}
From Remark \ref{rmk316} a priori the definition of $\widehat\E^i$ is not canonical, but since we know it is optimal and the restriction $\widehat{\E}^i|_D$ is isomorphic to $\underline \E^{an, i}$ which is intrinsically defined, by Theorem \ref{optimalalgebraictangentcone} we know that the isomorphism class of $\widehat\E^i$ is uniquely determined. Moreover, 
we have obtained $m$ optimal extensions $\widehat\E^{i}(i=1, \cdots, m)$.   By Theorem \ref{optimalalgebraictangentcone},  we know they are related by Hecke transforms of special type, and    our  constructions can recover all the different optimal algebraic tangent cones up to equivalence.
\end{rmk}

\section{Proof of the main theorem}\label{Proof of the main theorem}
In this Section we prove Theorem \ref{main}. In Section \ref{uniquenessoflimitingsheaf} we prove part (I), and in Section \ref{the bubbling set} we prove Part (II). 
Throughout this section we fix an analytic tangent cone $(A_\infty, \Sigma_b^{an})$ with underlying reflexive sheaf $\E_\infty$, which arises as the rescaled limit corresponding to a subsequence of the fixed sequence $\{\lambda_j=2^{-j}\}$. We will use the algebro-geometric results in \cite{GT} and \cite{GSRW}, which is very different from the pointwise orthogonal projection technique developed in \cite{CS2}. The overall discussion is very similar to the arguments in \cite{GSRW}. 

\label{section4}
\subsection{The limiting sheaf and connection}\label{uniquenessoflimitingsheaf}
By Lemma \ref{lem2.6} and Corollary \ref{cor325}, Part (I) of Theorem \ref{main} is  a consequence of the following 

\begin{prop}\label{prop4.1}
For $l=1, \cdots, m$,  we have 
$$\underline\E_\infty^{\mu_l}\simeq (\Gr^{HNS}(\underline \E^{\mu_{l}}))^{**}, $$
and 
 $$\Sigma_b^{alg}(\underline \E^{\mu_l})=\mathcal C(\underline{\E}_\infty^{\mu_l}/\underline{\N}_\infty^{\mu_l}).$$
\end{prop}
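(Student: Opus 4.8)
The plan is to deduce Proposition~\ref{prop4.1} from the $S$-equivalence statement of Corollary~\ref{S-equivalence} (equivalently Proposition~\ref{prop2.4}), applied to the pair $(\underline\E^{\mu_l},\underline{\N}_\infty^{\mu_l})$. First I would record the structural facts. By Propositions~\ref{prop325}--\ref{prop326}, $\underline\E^{\mu_l}$ is a semistable torsion-free sheaf of slope $\mu_l$; by \eqref{same Hilbert polynomial} and Corollary~\ref{lem2.12}, $\underline{\N}_\infty^{\mu_l}$ has the same rank, slope and Hilbert polynomial as $\underline\E^{\mu_l}$; being a subsheaf of the polystable sheaf $\underline\E_\infty^{\mu_l}$ of the same slope $\mu_l$, $\underline{\N}_\infty^{\mu_l}$ is itself semistable; and $(\underline{\N}_\infty^{\mu_l})^{**}=\underline\E_\infty^{\mu_l}$ by Remark~\ref{r:rank equal}. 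Since this reflexive hull is polystable, the filtration argument in the proof of Corollary~\ref{S-equivalence} (take a Seshadri filtration of $\underline{\N}_\infty^{\mu_l}$ by saturated subsheaves; each graded piece has a stable, slope-$\mu_l$ reflexive hull, which splits off $\underline\E_\infty^{\mu_l}$) gives $(\Gr^{HNS}(\underline{\N}_\infty^{\mu_l}))^{**}=\underline\E_\infty^{\mu_l}$. Granting the main step below, Corollary~\ref{S-equivalence} then yields $(\Gr^{HNS}(\underline\E^{\mu_l}))^{**}=(\Gr^{HNS}(\underline{\N}_\infty^{\mu_l}))^{**}=\underline\E_\infty^{\mu_l}$ and, since $(\underline{\N}_\infty^{\mu_l})^{**}$ is polystable, $\Sigma_b^{alg}(\underline\E^{\mu_l})=\mathcal C(\underline\E_\infty^{\mu_l}/\underline{\N}_\infty^{\mu_l})$, which are exactly the two assertions of the Proposition.

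The main step, and the hard part, is to produce a point $q\in R^{\mu ss}$ presenting $\underline\E^{\mu_l}$ and matrices $M_j\in GL(N,\C)$ with $q\circ M_j\to q_\infty$ in the analytic topology of $\Quot$, where $q_\infty$ presents $\underline{\N}_\infty^{\mu_l}$. To set this up I would fix a large $k$ with $\mu_k\equiv\mu_l\ (\mathrm{mod}\ \Z)$, put $e=\mu_k-\mu_l$ (taken large enough to twist into the bounded family of Section~\ref{section2.3}), and take a basis $\{s_{k,i}\}_{i=1}^N$ of $M_k/M_{k+1}=H^0(\P^{n-1},\underline\E^{\mu_l}(e))$ as in Section~\ref{section3}, which determines $q$; the strong limits $\{\sigma_{k,i}^\infty\}_{i=1}^N$ of the $L^2(\lambda_j^*H)$-orthonormalized rescalings $\{\sigma_{k,i}^j\}$ of these sections (after passing to a subsequence as in Section~\ref{section3}) form a basis of $S_k=H^0(\P^{n-1},\underline{\N}_\infty^{\mu_l}(e))$ and determine $q_\infty$. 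The matrices $M_j$ come from expressing, in terms of $q$, the presentation of $\underline\E^{\mu_l}$ given by the classes $[\sigma_{k,i}^j]\in M_k/M_{k+1}$; they must escape to infinity in $GL(N,\C)$ because the rescaled HYM data $A_j=\lambda_j^*A$ degenerate at the origin — each $\lambda_j^*\E$ carries the same algebraic tangent cone data $\underline\E^{an}$ near $0$, yet the analytic limit $A_\infty$ is the HYM cone on $\bigoplus_i\underline\E_\infty^{\mu_i}$, with $\underline\E_\infty^{\mu_l}$ generically different from $\underline\E^{\mu_l}$. By compactness of $\Quot$, $q\circ M_j$ subconverges, staying in $R^{\mu ss}$ since $\underline\E^{\mu_l}$ is fixed, and the remaining content is to identify the limit with $q_\infty$.

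That identification is the delicate point: it is where the analytic input of Section~\ref{section3} must be converted into algebraic convergence on $\P^{n-1}$. I would argue it by combining the smooth convergence $\sigma_{k,i}^j\to\sigma_{k,i}^\infty$ away from $\Sigma\cup Z(\E)$ with the identification, furnished by the HYM cone structure (Section~\ref{section2.1}), of degree-$\mu_k$ homogeneous sections of $\E_\infty$ with sections of $\underline\E_\infty^{\mu_l}(e)$ over $\P^{n-1}$: matching values at a generic point of $\P^{n-1}$ forces the subspaces $H^0(\ker(q\circ M_j)(m))\subset H^0(\mathcal H(m))$ — under the Grassmannian embedding of $\Quot$ used in the proof of Lemma~\ref{quoteschemeanalyticconvergence} — to converge to those cut out by $q_\infty$, so the limit is indeed $q_\infty$. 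The overall structure here parallels the arguments in \cite{GSRW}. With the main step established, the first paragraph finishes the proof; and since $\underline\E^{\mu_l}$ and $\Sigma_b^{alg}(\underline\E^{\mu_l})$ are intrinsic to the stalk of $\E$ at $0$, independent of the chosen analytic tangent cone and of subsequences, so are $\underline\E_\infty^{\mu_l}$ and $\mathcal C(\underline\E_\infty^{\mu_l}/\underline{\N}_\infty^{\mu_l})$ — which is part of the uniqueness assertion of Theorem~\ref{main}.
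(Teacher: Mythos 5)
Your overall strategy coincides with the paper's: present $\underline\E^{\mu_l}(e)$ by the restrictions to $D$ of the orthonormalized sections $\sigma^j_{k,i}$, so that the resulting quotients all lie in one $GL(N,\C)$-orbit in $\Quot(\mathcal H,\tau)$; take the $\Quot$-scheme limit; identify it with the quotient $\underline q_\infty^{an}\colon \O_D^{\oplus N}\to \underline\N_\infty^{\mu_l}(e)$ defined by the analytic limits $\sigma^\infty_{k,i}$; and conclude via Corollary \ref{S-equivalence} together with $(\underline\N_\infty^{\mu_l})^{**}=\underline\E_\infty^{\mu_l}$ polystable. Your first paragraph (the endgame) is correct and matches the paper's. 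The problem is the step you yourself single out as delicate: the identification of the algebraic limit with $\underline q_\infty^{an}$. Your justification --- that ``matching values at a generic point of $\P^{n-1}$ forces the subspaces $H^0(\ker(q\circ M_j)(m))$ to converge to those cut out by $q_\infty$'' --- restates the claim rather than proving it. The two kinds of ``values'' live on different spaces: $\underline\sigma^j_{k,i}([z])$ is the value at $[z]\in D$ of the restriction of $\sigma^j_{k,i}$ (as a section of $\lambda_j^*\widehat\E(-kD)$) to the exceptional divisor, i.e.\ the leading coefficient of its expansion transverse to $D$, while $\sigma^\infty_{k,i}|_{\C^*\cdot z}$ is an analytic limit taken over the punctured cone. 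Convergence in the Grassmannian hands you, at a generic $[z]$, a vector $a\in\C^N$ with $\underline\pi_j(a)|_{[z]}\to 0$, i.e.\ a combination of the $\sigma^j_{k,i}$ that vanishes to one higher order at $[z]$ \emph{along $D$}; you must then show that the corresponding combination of the analytic limits vanishes on the entire line $\C^*\cdot z$. Nothing in smooth convergence away from $\Sigma\cup Z(\E)$ gives this for free, and this is exactly the content of Proposition \ref{prop4.3} in the paper.

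The paper's mechanism for closing this gap is absent from your proposal: it uses the dual optimal extension $\widehat{\E^*}$ and its generating sections $\overline\sigma^j_i$ (whose existence rests on the duality and optimality results of Section \ref{Optimality}). Setting $\sigma^j=q_j(a-\underline\pi_j(a)|_{[z]})$, the pairings $f^j_i=\overline\sigma^j_i(\sigma^j)$ are holomorphic functions in $H^0(\widehat B,\mathcal I_{[z]}(-(k+k')D))$, hence vanish to order at least $k+k'+1$ at $0$ along $\C_{[z]}$; their limits $f^\infty_i=\overline\sigma^\infty_i(\sigma^\infty)$ are homogeneous of degree exactly $k+k'$ unless identically zero, so convergence of vanishing orders forces $f^\infty_i|_{\C_{[z]}}\equiv 0$, and since the $\overline\sigma^\infty_i$ generate $\E_\infty^*$ along $\C_{[z]}$ one gets $\sigma^\infty|_{\C_{[z]}}=0$. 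This order-of-vanishing-versus-homogeneity argument is the heart of the proof and needs to be supplied. Two smaller points: the limit of $q\circ M_j$ does \emph{not} automatically stay in $R^{\mu ss}$ --- a priori it may acquire torsion, and torsion-freeness is a \emph{consequence} of the identification with $\underline\N_\infty^{\mu_l}$, not an input; and the remark that the $M_j$ ``must escape to infinity'' plays no role in the argument.
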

 Notice by \eqref{eqn320} in Section \ref{A torsion-free sheaf on $D$} we know for $i=1, \cdots, m$, $\underline\E^{\mu_{m-i+1}}$ is identified with $\underline \E_1^{i}$, which is the first term of in the Harder-Narasimhan filtration of $\underline \E^{an, i}$. We shall only prove the case $i=1$, which corresponds to the case $l=m$ in the above Proposition. For general $l$ one can get the conclusion  by repeating the argument with $\underline \E^{an, i}$ replaced by $\underline \E^{an, m-l+1}$. As before we shall omit  the superscript $i$ and only consider $\underline\E^{an}=\underline\E^{an, 1}$.

 Denote by $\widehat{\E}$  the corresponding optimal extension constructed in Section \ref{section3.3}. Then we know  for $k\gg1$ there is a natural identification
$$H^0(\widehat{B}, \widehat{\E}(-kD))\simeq M_k^{an}=\{s\in H^0(B, \E): d(s) \geq \mu_1+k\}.$$ Suppose the Harder-Narasihman filtration of $\underline \E^{an} = \widehat{\E}|_D$ is given by 
$$0=\underline\E_0\subset \underline \E_1\subset \cdots \underline \E_m=\underline \E^{an}, $$
where $\mu(\underline \E_{l}/\underline \E_{l-1})=\mu_{m+1-l}$. Then $$\underline \E_{l}/\underline \E_{l-1}=\underline \E^{\mu_{m+1-l}}$$ is the sheaf associated to the graded module $N^{\mu_{m+1-l}}$ (see \eqref{e:definition of N}).

Now for $k\gg1$ we know
 $\widehat{\E}(-kD)$ is globally generated and we have the following exact sequence 
$$H^0(\widehat{B}, \widehat{\E}(-kD))\rightarrow H^0(D, \underline{\E}^{an}\otimes \O_D(k))\rightarrow0$$
given by restriction to $D$.
We may also assume the sheaf $\underline\E_1(k)$ is globally generated. 
Choose sections  $s_{i} \in H^0(\widehat{B}, \widehat{\E}(-kD))=M_k^{an}$, $i=1, \cdots, N$, so that when restricting to $D$ they form a basis $\{\underline s_i\}$ of the vector space  $H^0(D, \underline\E_1(k))$, and they globally generate the sheaf $\underline \E_1(k)$. 

As in section \ref{A torsion-free sheaf on $D$}, for each $j$, we can perform the Gram-Schmidt process to the sections $\lambda_j^*s_i$ over $B$, and obtain sections $\sigma_{i}^j$, $i=1, \cdots, N$, which are $L^2$-orthonormal over $B$.
 Passing to a further subsequence if necessary we may assume the sections $\sigma_{i}^j$ converge strongly to holomorphic sections $\sigma_i^\infty$ of $\E_\infty$. The latter are all homogeneous of degree $k+\mu_m$ and they induce sections $\{\underline\sigma_i^\infty\}_{i=1}^N$ of the sheaf $\underline{\N}_{\infty}^{\mu_m}(k)$ on $D$.  By definition, for $k\gg1$ we know $\underline{\N}_{\infty}^{\mu_m}(k)$ is globally generated by these sections.
 
 Now we fix  a $k\gg1$ so that all the above named properties are satisfied. Notice $\{\sigma_{i}^j\}_{i=1}^N$ can also be viewed as sections of $\lambda_j^*\widehat{\E}(-kD)$ over $\widehat B$, and they differ from $\{\lambda_j^*s_{i}\}_{i=1}^N$ be an element in $GL(N; \C)$. In particular we have a sequence of sheaf homomorphisms over $\widehat B$ given by

\begin{equation}\label{exact sequence 0}
\O_{\widehat B}^{\oplus N} \xrightarrow{q_j=(\sigma_{1}^j, \cdots, \sigma_{N}^{j})} \lambda_j^*\widehat{\E}(-kD). 
\end{equation}
Now we want to take limits of these as $j$ tends to infinity. Over $\widehat B$ there are no natural limits, but we can restrict to either $D$ or $\widehat B\setminus D\simeq B\setminus\{0\}$ to take limits. The first limit is \emph{algebraic} and the second limit is \emph{analytic}.  It turns out these two limits can be related,  and we will use the algebraic results quoted in Section \ref{section2.3} to get the conclusion about the analytic limit.

On the one hand, by the choice of $k$, we know that when restricting to $D$, \eqref{exact sequence 0} yields an exact sequence
$$
\O_D^{\oplus N} \xrightarrow{\underline q_j:=(\underline{\sigma}_{1}^j, \cdots, \underline \sigma_{N}^{j})} \underline{\E}_1(k) \rightarrow 0.
$$
In particular we obtain a sequence of points in $\Quot(\mathcal H, \tau)$, where $\tau$ denotes the Hilbert polynomial of $\underline \E_1$. By passing to a subsequence, we can take an algebraic limit 
\begin{equation}\label{limit1}\O_D^{\oplus N} \xrightarrow{\underline q_\infty^{alg}=(\underline{\sigma}_{1}^{alg,\infty}, \cdots, \underline \sigma_{N}^{alg,\infty})} \underline{\E}^{alg}_\infty(k) \rightarrow 0
\end{equation}
  in $\Quot(\mathcal{H}, \tau)$. Notice the notation here is different from Section \ref{section2.3} in that we have tensored everything by $\O(k)$. Also notice a priori $\underline\E_\infty^{alg}$ may not be torsion-free, and may depend on the choice of subsequences. 

Fixing a smooth Hermitian metric on $\O_D^{\oplus N} $, we can identity the map $\underline q_j$ with the projection map $\underline \pi_j$ to the orthogonal complement of $\Ker(\underline q_j)$ away from $\Sing(\underline\E_1)$. We may similarly identify $\underline q_\infty$ with $\underline \pi_\infty$ outside $\Sing(\underline\E_\infty^{alg})$ . We also fix a smooth Hermitian metric on the locally free part of $\underline{\E}_1$, so that the adjoint $\underline q_j^*$ is well-defined. 
  From Lemma \ref{quoteschemeanalyticconvergence}, we have the following 

\begin{lem}\label{lem4.1}
 $\underline{\pi}_j$ converge to $\underline \pi_\infty$ smoothly away from $\Sing(\underline \E^{alg}_\infty)\cup \Sing(\underline \E_1)$. 
\end{lem}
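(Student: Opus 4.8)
The plan is to deduce this directly from Lemma~\ref{quoteschemeanalyticconvergence}: once one knows that $\{\underline q_j\}$ is a sequence lying in a single $GL(N;\C)$-orbit inside a fixed Quot scheme and converging in the analytic topology, that lemma produces precisely the smooth convergence of the associated orthogonal projections off the two singular loci. So the entire task is to check that the hypotheses of Lemma~\ref{quoteschemeanalyticconvergence} are met by the sequence $\underline q_j=(\underline\sigma_1^j,\dots,\underline\sigma_N^j)$ obtained by restricting \eqref{exact sequence 0} to $D$.

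First I would record that all the $\underline q_j$ lie in one $GL(N;\C)$-orbit with a constant target. By construction the $\sigma_i^j$ differ from $\lambda_j^*s_i$ by an element of $GL(N;\C)$ (the Gram--Schmidt process over $B$ uses only scalar coefficients), and under the natural identification $\lambda_j^*\widehat\E(-kD)|_D\cong\widehat\E(-kD)|_D=\underline\E^{an}(k)$ — which sends $\lambda_j^*s_i|_D$ to $\underline s_i$ up to an invertible, $i$-independent scalar coming from the $\O(-kD)$-twist — this gives $\underline\sigma_i^j=\sum_k(\widetilde M_j)_{ik}\,\underline s_k$ for some $\widetilde M_j\in GL(N;\C)$. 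Since $k$ was fixed large enough that $\{\underline s_i\}$ is a basis of $H^0(D,\underline\E_1(k))$ (so $N=\tau(k)$, with $\tau$ the Hilbert polynomial of $\underline\E_1$) and globally generates $\underline\E_1(k)$, the fixed quotient $\underline q=(\underline s_1,\dots,\underline s_N)\colon\O_D^{\oplus N}\to\underline\E_1(k)\to0$ is a point of $\Quot(\mathcal H(k),\tau)$, each $\underline q_j$ equals $\underline q\circ\widetilde M_j$, and in particular every $\underline q_j$ has target exactly $\underline\E_1(k)$ and lies in the $GL(N;\C)$-orbit of $\underline q$. Because $\Quot(\mathcal H(k),\tau)$ is a projective scheme, after passing to a subsequence $\underline q_j$ converges in the analytic topology to $\underline q_\infty^{alg}\colon\O_D^{\oplus N}\to\underline\E_\infty^{alg}(k)\to0$, which is exactly \eqref{limit1}.

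With this in place, Lemma~\ref{quoteschemeanalyticconvergence} applies verbatim, taking $\mathcal H(k)=\O_D^{\oplus N}$, the sheaf $\underline\E$ there to be $\underline\E_1(k)$, the $GL(\tau(k),\C)$-sequence to be $\{\widetilde M_j\}$, and the limit sheaf to be $\underline\E_\infty^{alg}(k)$: the orthogonal projections $\underline\pi_j$ onto $(\Ker\underline q_j)^{\perp}\subset\O_D^{\oplus N}$ converge smoothly to $\underline\pi_\infty$ away from $\Sing(\underline\E_1(k))\cup\Sing(\underline\E_\infty^{alg}(k))=\Sing(\underline\E_1)\cup\Sing(\underline\E_\infty^{alg})$, which is the assertion. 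The only step that genuinely requires care is the identification in the second paragraph — verifying that restricting the analytic sections $\sigma_i^j$ from $\widehat B$ to $D$ yields a sequence in one fixed $GL(N;\C)$-orbit with constant target $\underline\E_1(k)$, rather than merely agreeing at a generic point — and this is precisely what the specific choices of $k$ and of the $s_i$ were arranged to guarantee; everything after that is a formal consequence of Lemma~\ref{quoteschemeanalyticconvergence} and projectivity of the Quot scheme.
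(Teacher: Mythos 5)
Your proposal is correct and is exactly the paper's argument: the paper proves this lemma simply by citing Lemma \ref{quoteschemeanalyticconvergence}, having already set up the sequence $\underline q_j$ as points of $\Quot(\mathcal H,\tau)$ converging (after a subsequence) to $\underline q_\infty^{alg}$ in \eqref{limit1}. The only content beyond that citation is the verification that the $\underline q_j$ lie in a single $GL(N;\C)$-orbit with fixed target $\underline\E_1(k)$, which the paper leaves implicit (it only remarks that the $\sigma_i^j$ differ from the $\lambda_j^*s_i$ by an element of $GL(N;\C)$) and which you carry out correctly.
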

For our purpose, we also note the following elementary formula
 $$\underline{\pi}_j=\underline q^*_j (\underline q_j \underline q^*_j)^{-1}\underline q_j$$ 
 away from $\text{Sing}(\underline \E_1)$.

On the other hand, we can restrict the exact sequence \eqref{exact sequence 0} to $B\setminus \{0\}$, 
\begin{equation}
\O_{B\setminus\{0\}}^{\oplus N} \xrightarrow{q_j=(\sigma_{1}^j, \cdots, \sigma_{N}^{j})} \lambda_j^*\E. 
\end{equation}
Now by our discussion above each $\sigma_{i}^j$ converges to $\sigma_{i}^\infty$, which is a degree $\mu_m+k$ homogeneous section of $\E_\infty$. So we obtain the exact sequence
\begin{equation}\label{limit2}\O_D^{\oplus N} \xrightarrow{\underline q^{an}_\infty:=(\underline \sigma_1^\infty, \cdots,  \underline\sigma_N^\infty)} \underline \N_\infty^{\mu_m}(k) \rightarrow 0.
\end{equation}
Our goal is to compare \eqref{limit1} with \eqref{limit2}. In the arguments below we need to consider the dual sheaf $\E^*$, endowed with the dual HYM metric $H^*$. 
Let $\widehat{\E^*}$ be the optimal extension of $\E^*$ at $0$  constructed in  Section \ref{Optimality}. Then there exists a non-degenerate paring between $\widehat{\E}$ and $\widehat{\E^*}$ induced by the paring between $\E$ and $\E^*$ over $B$. For $k'\gg1$, we may find a set of sections $\{\overline{\sigma}_{i}^{j}\}_{i=1}^{N'}$ in $H^0(\widehat{B}_j, (\widehat{\E^*})_j(-k'D))$ (where $(\widehat{\E^*})_j:=\lambda_j^*(\widehat{\E^*})$), which are $L^2$ orthonormal over $B$ with respect to the rescaled metric $\lambda_j^* H^*$. Moreover, they converge strongly to homogeneous limits $\{\overline{\sigma}_i^\infty\}_{i=1}^{N'}$, which globally generate $\F_\infty=\E_\infty^*$ away from a complex codimension 2 subvariety $\mathcal S$ (by Proposition \ref{prop325}).   

\begin{prop}\label{prop4.3}
There exists an isomorphism $$\underline{\rho}_\infty: \underline{\E}^{alg}_\infty(k) \rightarrow \underline{\N}^{\mu_m}_\infty(k)$$ induced by $\underline q_\infty^{alg}$ and $\underline q_\infty^{an}$,  i.e., $\underline\rho_\infty$ sends $\sum _j a_j \underline\sigma_j^{ alg, \infty}$ to $\sum_j a_j \underline \sigma_j^{\infty}$. In particular, $\underline \E^{alg}_\infty$ is torsion-free.
\end{prop}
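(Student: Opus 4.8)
The plan is to build the isomorphism $\underline\rho_\infty$ by exploiting the non-degenerate pairing between the two optimal extensions $\widehat\E$ and $\widehat{\E^*}$, which induces on $D$ a non-degenerate pairing between the appropriate graded pieces. Concretely, the sections $\sigma_i^j$ and $\overline\sigma_i^j$ are $L^2$-orthonormal over $B$ for the rescaled metrics, and they can simultaneously be regarded as elements of the (fixed, after rescaling) Quot schemes parametrizing quotients of $\O_D^{\oplus N}$ and $\O_D^{\oplus N'}$ respectively. So there are two kinds of limits of the quotient map $q_j$: the algebraic one $\underline q_\infty^{alg}$ landing in $\underline\E^{alg}_\infty(k)$, obtained purely inside the Quot scheme, and the analytic one $\underline q_\infty^{an}$ landing in $\underline\N^{\mu_m}_\infty(k)$, obtained from the strong convergence $\sigma_i^j\to\sigma_i^\infty$. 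The key point is that both kinds of limits are governed by the same linear-algebra data, namely the limiting behavior of the Gram matrices $(\langle \sigma_i^j,\sigma_{i'}^j\rangle)$ and the limiting kernels $\Ker(\underline q_j)$.

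First I would set up the comparison at the level of kernels. By Lemma \ref{lem4.1} (i.e. Lemma \ref{quoteschemeanalyticconvergence} applied to the sequence $\underline q_j$), the projections $\underline\pi_j$ onto the orthogonal complement of $\Ker(\underline q_j)$ converge smoothly to $\underline\pi_\infty$, the projection onto the complement of $\Ker(\underline q_\infty^{alg})$, away from $\Sing(\underline\E_1)\cup\Sing(\underline\E^{alg}_\infty)$. On the other hand, the strong smooth convergence $\sigma_i^j\to\sigma_i^\infty$ on $\C^n\setminus(\Sigma\cup Z(\E))$ shows that the subspace $\Ker(\underline q_j)\subset \O_D^{\oplus N}$, viewed fiberwise over generic points, also converges to $\Ker(\underline q_\infty^{an})$. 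Matching these two identifications of the limiting kernel sub-bundle, one gets $\Ker(\underline q_\infty^{alg})=\Ker(\underline q_\infty^{an})$ as coherent subsheaves of $\O_D^{\oplus N}$ — at least generically, hence everywhere after saturating — and therefore a canonical isomorphism of the quotients, sending $\underline\sigma_i^{alg,\infty}$ to $\underline\sigma_i^\infty$. This is exactly the claimed $\underline\rho_\infty$. Since $\underline\N^{\mu_m}_\infty(k)$ is torsion-free (it is a subsheaf of $\underline\E_\infty^{\mu_m}(k)$), so is $\underline\E^{alg}_\infty(k)$, giving the last sentence.

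The step I expect to be the main obstacle is justifying that the \emph{analytic} limit of $\Ker(\underline q_j)$ genuinely matches the \emph{algebraic} limit taken inside the Quot scheme — a priori these are limits in different topologies and one must show they agree. The clean way to do this is to argue that the Gram matrix $G_j=(\langle\sigma_i^j,\sigma_{i'}^j\rangle_{L^2(B)})=\Id$ by construction, and that the maps $q_j$, $\underline q_j$, and their pairings with $\overline\sigma^j_\cdot$ are all controlled by the inner products $s^*_{i'}(s_i)\in\m^{k+k'}$ of Lemma \ref{lem3.17}; taking limits of these inner products (using strong convergence and Lemma \ref{facts about strong convergence}) pins down both limits in terms of the same limiting data on $D$. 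One then invokes the non-degeneracy of the limiting pairing (which follows from Proposition \ref{prop325}, guaranteeing $\overline\sigma_i^\infty$ generate $\F_\infty$ off codimension $2$) to conclude that the induced map $\underline\E^{alg}_\infty(k)\to\underline\N^{\mu_m}_\infty(k)$ is an isomorphism generically, and then reflexivity/torsion-freeness upgrades this to a global isomorphism. The bookkeeping to make the ``same limiting data'' claim precise — keeping track of which $GL(N;\C)$ orbit everything sits in and that the rescaling factors $\|s\|_j$ behave consistently — is where the real work lies.
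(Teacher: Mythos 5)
Your overall architecture is the same as the paper's: reduce the proposition to a statement about kernels at a generic point of $D$, use the smooth convergence $\underline\pi_j\rightarrow\underline\pi_\infty$ from Lemma \ref{lem4.1}, and bring in the dual sections $\overline\sigma_i^j$ together with the fact (from Proposition \ref{prop325}) that their limits generate $\F_\infty$ off a codimension $2$ set. You have also correctly located where the difficulty sits. But the step you wave at — ``the strong smooth convergence $\sigma_i^j\to\sigma_i^\infty$ shows that $\Ker(\underline q_j)$, viewed fiberwise, converges to $\Ker(\underline q_\infty^{an})$'' — is precisely the assertion that needs proof, and it does not follow from the smooth convergence alone, nor from bookkeeping of Gram matrices. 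The point is that $a\in\Ker(\underline q_j)|_{[z]}$ is a condition on the \emph{restriction to the exceptional divisor}: it says that the section $q_j(a)$ of $\lambda_j^*\widehat\E(-kD)$ vanishes at the single point $[z]\in D$, i.e.\ that $q_j(a)|_{\C_{[z]}}$ vanishes to order at least $k+1$ at the origin. Membership in $\Ker(\underline q_\infty^{an})|_{[z]}$, by contrast, means the homogeneous limit vanishes on the entire punctured line $\C_{[z]}\setminus\{0\}$. An infinitesimal condition at $0$ for each $j$ does not a priori pass to a global vanishing statement on the line in the limit; closing this gap is the actual content of the proof.

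The paper bridges it as follows. Given $a$ with $\underline q_\infty^{alg}(a)=0$, set $\sigma^j=q_j(a-\underline\pi_j(a)|_{[z]})$, so that $\sigma^j$ restricted to $D$ vanishes at $[z]$ and (since $\underline\pi_j(a)|_{[z]}\to 0$) the analytic limit is $\sigma^\infty=\lim_j q_j(a)$. Pairing with the dual sections gives scalar holomorphic functions $f_i^j=\overline\sigma_i^j(\sigma^j)$ whose restrictions to $\C_{[z]}$ vanish to order at least $k+k'+1$ at $0$. The limit $f_i^\infty=\overline\sigma_i^\infty(\sigma^\infty)$ is, by homogeneity of the limiting sections, either identically zero or a homogeneous polynomial of degree \emph{exactly} $k+k'$; since the vanishing order at $0$ persists in the limit, $f_i^\infty\equiv 0$ on $\C_{[z]}$, and generation of $\F_\infty$ along the line then forces $\sigma^\infty|_{\C_{[z]}}=0$. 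This homogeneity-versus-vanishing-order argument is the missing engine in your sketch; ``taking limits of the inner products $s^*_{i'}(s_i)\in\m^{k+k'}$'' does not substitute for it. Two smaller remarks: only the inclusion $\Ker(\underline q_\infty^{alg})\subset\Ker(\underline q_\infty^{an})$ at generic points is needed (well-definedness then follows because the image of $\Ker(\underline q_\infty^{alg})$ under $\underline q_\infty^{an}$ would be a torsion subsheaf of the torsion-free $\underline\N_\infty^{\mu_m}$); and to upgrade the resulting surjection to an isomorphism you must invoke the equality of Hilbert polynomials \eqref{same Hilbert polynomial}, which your write-up omits.
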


\begin{proof}
It suffices to show that there exists a sheaf inclusion $$\Ker(\underline q^{alg}_\infty)\subset \Ker(\underline q^{an}_\infty)$$ away from a proper subvariety of $D$. Indeed, given this, $\underline{q}_\infty^{an}(\Ker(\underline q^{alg}_\infty))$ will be a torsion subsheaf of $\underline{\N}_\infty^{\mu_m}$ and has to be zero since $\underline \N_\infty^{\mu_m}$ is torsion-free. This shows that $\underline\rho_\infty$ is well-defined. By definition $\underline\rho_\infty$ is surjective, so must be an isomorphism. This is because the two sheaves $\underline\E_\infty^{alg}$ and $\underline{\mathcal N}_\infty^{\mu_m}$ have the same Hilbert polynomial by \eqref{same Hilbert polynomial} and the definition of the Quot scheme. Take a generic point $[z] \in D$ such that the following hold
\begin{itemize}
\item $\underline\E_1$ and $\underline \E_{\infty}^{alg}$ are locally free near $[z]$ and the convergence $\underline\pi_j\rightarrow\underline\pi_\infty$ is smooth. 
\item there exists a neighborhood $U$ of the line $\C_{[z]}\subset \C^n\cap B$, such that for all $\delta>0$, $\E$ and $\E_\infty$ are locally free on $U\setminus B_\delta$ and
$U\cap (\Sigma\cup Z(\E))=\{0\}$ where $\Sigma$ is the bubbling set (see \eqref{bs}) and $Z(\E)$ is the $\C^*$ invariant reduced subvariety in $\C^n$ underlying the Zariski tangent cone of $\text{Sing}(\E)$ at $0$.
\item $\widehat\E$ is locally free in a neighborhood of $[z]$. 
\item $\underline \N_\infty^{\mu_m}$ is locally free in a neighborhood of $[z]$, and its fiber over $[z]$ is generated by the limit sections $\underline\sigma_{1}^\infty, \cdots, \underline\sigma_{N}^\infty$.
\item $\C_{[z]}\cap \mathcal S=\{0\}$.
\end{itemize}
Over the point $[z]$, it suffices to show the inclusion   $\Ker(\underline q^{alg}_\infty)|_{[z]}\subset \Ker(\underline q^{an}_\infty)|_{[z]}$ as vector subspaces of $\C^N$ (the fiber of $\O_D^{\oplus N}$ at $[z]$). 
Suppose $a\in \C^{\oplus N}$ is given  so that $\underline q_\infty^{alg}(a)=0$. Then by definition we have $\underline \pi_\infty(a)|_{[z]}=0$. Hence by Lemma \ref{lem4.1} $$\lim_{j\rightarrow\infty} \underline\pi_j(a)|_{[z]}=0.$$ 
Fix a nonzero $z\in \C^n$ on the complex line $\C_{[z]}$.  We want to show 
$$\lim_{j\rightarrow\infty} q_j(a)(z)=0, $$
where this limit is taken in the analytic sense.

Now we let
$$\sigma^j= q_j(a-\underline\pi_j(a)|_{[z]}). $$  It follows that $\sigma^j([z])=0$. Moreover, the analytic limit on the analytic tangent cone $\E_\infty$ is given by  $$\sigma^\infty:=\lim_{j\rightarrow\infty} \sigma^j=\lim_{j\rightarrow\infty} q_j(a).$$  It suffices to show that $\sigma^\infty$ vanishes at any $z\neq 0$ on the line $\C_{[z]}$. 

To show this we make use of the dual sheaf $\E^*$. Notice by our choice of $[z]$, $\underline\F_\infty$ is locally free in a neighborhood of $[z]$, so we may choose $r=\rank(\E)$ sections, say $\overline\sigma_1^\infty, \cdots,\overline\sigma_r^\infty$, which generate the fibers of $\F_\infty$ over $\C_{[z]}\setminus\{0\}$. 

For each $i$, we have 
\begin{equation}\label{Eqn4.1}
\overline{\sigma}_i^j(\sigma^j)=f_{i}^j \in H^0(\widehat{B}, \mathcal I_{[z]}(-(k+k')D)).
\end{equation}
In particular, when viewed as a holomorphic function over $B$, $f_i^j|_{\C_{[z]}}$ has a vanishing order at least $k+k'+1$. If we take limits to the analytic tangent cones, we know that $\overline\sigma_i^\infty$ has degree $k'$ and $\sigma^\infty$ has degree $k$, so the function
 $$f_i^{\infty}=\overline{\sigma}_i^\infty(\sigma^\infty)$$ is a homogeneous polynomial which  either is identically zero, or has degree exactly equal to $k+k'$. In particular, restricting to the line $\C_{[z]}$, $f_i^{\infty}|_{\C_{[z]}}$ either vanishes identically or has a vanishing order exactly equal to $k+k'$ at $0$. However, since for each $j$, $f_i^j|_{\C_{[z]}}$ vanishes at $0$ up to order at least ${k'+k+1}$, and they converge uniformly to $f_i^\infty$ outside any small neighborhood of $0$, by convergence of residues, we know $f_i^{\infty}|_{\C_{[z]}}$ vanishes at $0$ up to order at least $k'+k+1$ . In particular, for any $i$, $f_i^\infty$ is identically zero on the line ${\C_{[z]}}$. So $$\overline{\sigma}^\infty_i(\sigma^\infty)|_{\C_{[z]}}=0.$$ By assumption, $\{\overline{\sigma}_i^\infty\}_{i=1}^{r}$ generate the fibers of $\E_\infty^*|_{\C_{[z]}}$, so we conclude that $\sigma^\infty|_{\C_{[z]}}=0$.  

\end{proof}

Now we need to use the results from Section \ref{section2.3} and finish the proof. Indeed, since $\underline{\E}^{\mu_m}_\infty=(\underline{\mathcal{N}}_{\infty}^{\mu_m})^{**}$ admits an admissible HYM connection, it is polystable, Proposition \ref{prop4.1} follows directly from Corollary \ref{S-equivalence}.   

\begin{rmk}\label{rmk4.7}
In the proof above, to find each factor $\underline{\E}_\infty^{\mu_i}$, we can choose a set of global sections $\{s_{i,l} \in H^0(B, \E)\}_{l=1}^{N_i}$ with  $d(s_{i,l})=k+\mu_i$ (here we can assume that $k\gg1$ is uniform for all $i$), such that  the homogeneous limits given by those sections generate $\underline{\N}_\infty^{\mu_i}(k)$. Furthermore, by assumption, the union of all $s_{i,l} (i=1, \cdots, m, l=1, \cdots, N_i)$  spans $M^{an}_k / M^{an}_{k+1}$ over $\C$, and they globally generate $\widehat{\E}(-kD)$ over $\widehat{B}$.  For simplicity we shall denote these sections by $\sigma_1, \cdots, \sigma_N$, and the induced sections of $\lambda_j^*\E$ by $\sigma_1^j, \cdots, \sigma_N^j$. 
\end{rmk}

\subsection{The analytic blow-up cycle}\label{the bubbling set}

By Corollary \ref{cor325} and Proposition \ref{prop4.1}, to prove Part (II) of Theorem \ref{main}, it suffices to prove 

\begin{equation} \label{e:cycle equal}
\sum_{l=1}^m\mathcal C(\underline{\E}_\infty^{\mu_l}/\underline{\N}_\infty^{\mu_l})=\Sigma_b^{an}.
\end{equation}
By definition we have
$$\underline\E_\infty=\bigoplus_{l=1}^m \underline\E_\infty^{\mu_l}.$$
As in Remark \ref{r:rank equal}, we denote 
$$\underline {\mathcal N}_\infty:=\bigoplus_{l=1}^m \underline\N_\infty^{\mu_l}, $$
and
$$\mathcal N_\infty:=\psi_*\pi^*\underline{\mathcal N}_\infty.$$
By Remark \ref{rmk4.7}, one can find a sequence of global resolutions of $\E_j=\lambda_j^*\E$ over $B\setminus\{0\}$, which are of the form 
$$0\rightarrow \mathcal{K}_j  \rightarrow  \O_{B\setminus\{0\}}^{\oplus N} \xrightarrow{p_j=(\sigma^{j}_1,\cdots, \sigma^j_N)} \E_j \rightarrow 0. $$
 By passing to a subsequence we may assume the above converges to a global resolution of $\N_\infty\subset \E_\infty$: \begin{equation}\label{equation5.1}
0\rightarrow \mathcal{K}_\infty \rightarrow \O_{B\setminus\{0\}}^{\oplus N} \xrightarrow{p_\infty=(\sigma^{\infty}_1,\cdots, \sigma^\infty_N)} \N_\infty \rightarrow 0.
\end{equation}
 Here the crucial fact for us is that by homogeneity Equation (\ref{equation5.1}) naturally descends to the projective space $\P^{n-1}$ as  a short exact sequence
\begin{equation}\label{equation5.2}
0\rightarrow \underline {\mathcal{K}}_\infty \rightarrow \O_{\P^{n-1}}^{\oplus N} \xrightarrow{\underline p_\infty=(\underline\sigma^{\infty}_1,\cdots \underline\sigma^\infty_N)} \underline \N_\infty(k) \rightarrow 0.
\end{equation}
This  enables us to use the global Bott-Chern formula in \cite{SW, GSRW} to calculate the algebraic multiplicity of the torsion sheaf $\underline{\mathcal T}=\underline\E_\infty/\underline\N_\infty$ along a codimension two irreducible subvariety in $\P^{n-1}$.

 In the following we shall fix a flat Hermitian metric on $\O_D^{\oplus N}$ which induces a flat metric on $\O_{{B\setminus\{0\}}}^{\oplus N}$.  Let ${\mathcal{K}}_j^{\perp}$ denote the orthogonal complement ${\mathcal{K}}_j^{\perp}$ of ${\mathcal{K}}_j$ in $\O^{\oplus N}$ with respect to the flat metric. It is a smooth sub-bundle of $\mathcal O^{\oplus N}$ away from the $\Sing(\E_j)$. We denote by $p_j^*A_j$ the induced connection on ${\mathcal{K}}_j^{\perp}$ through the smooth isomorphism ${\mathcal{K}}_j^{\perp}\simeq \E_j$.  Similarly we define $p_\infty^*A_\infty, \underline p_\infty^*\underline A_\infty$ etc. 

Given \emph{any} codimension two irreducible subvariety $\underline \Gamma\subset \P^{n-1}$, the algebraic multiplicity $m^{alg}(\underline\Gamma)$ of $\underline\E_\infty/\underline \N_\infty$ along $\underline \Gamma$ is defined to be $$m^{alg}(\underline \Gamma):=\dim H^0(\underline \Delta, \mathcal T|_{\underline \Delta})$$ where $\underline \Delta$ is a  holomorphic transverse slice of $\underline \Gamma$ at a generic point.  Then by definition we have 
$$\mathcal C(\underline{\mathcal T})=\sum_{\underline\Gamma} m^{alg}(\underline\Gamma)\cdot[\underline\Gamma].$$

Take any such $\underline \Gamma$. 
We can compute $m^{alg}(\underline\Gamma)$ using the exact sequence (\ref{equation5.2})  (see \cite{SW, GSRW}). To explain this, we fix a flat unitary connection $\underline {A_f}$ on $\O_D^{\oplus N}$ hence a flat unitary connection $A_f$ on $\O_{B\setminus\{0\}}^{\oplus N}$. Let $G_j$ be the induced connection on ${\mathcal{K}}_j$ and $G_\infty$ be the pull-back of a fixed admissible connection $\underline G_\infty$ on $\underline {\mathcal{K}}_\infty$. Here being admissible means that $\underline G_\infty$ is a smooth connection on the locally free locus of $\underline{{\mathcal{K}}}_\infty$ which has finite Yang-Mills energy  and bounded $\Lambda_{\omega_{ FS}} F_{\underline G_\infty}$. Notice since ${\mathcal N}_\infty$ is torsion-free, we know ${\mathcal{K}}_\infty$ hence $\underline {{\mathcal{K}}}_\infty$ is reflexive, which implies its singular set has complex codimension at least 3. So we can make the connection $\underline G_\infty$  smooth at a generic point of $\underline\Gamma$.

We know that $A_\infty$ descends to a HYM connection on $\underline \E_\infty(k)$, which is the direct sum of admissible HYM connections. We denote it by $\underline A_\infty$. 
Choose a two dimensional smooth  subvariety $\underline \Delta$ of $\P^{n-1}$ so that $\underline \Delta$ intersects $\underline\Gamma$ transversely and positively at $\underline z_1,\cdots, \underline z_l$.  Then we have (see Page $59$ in \cite{GSRW})
\begin{equation}\label{equation5.3}
\begin{aligned}
&l\cdot  m^{alg}(\underline\Gamma)\\=&\frac{1}{8\pi^2} \sum_i\{\int_{ \underline \Delta \cap  B_\epsilon(\underline z_i)}Tr(F_{\underline {A_f}} \wedge F_{\underline {A_f}})-Tr(F_{\underline G_\infty\oplus (\underline p_\infty)^* \underline A_\infty} \wedge F_{\underline G_\infty\oplus (\underline p_\infty)^* \underline A_\infty})\\
&-\int_{\partial( \underline \Delta \cap B_\epsilon(\underline z_i))}CS(\underline {A_f}, \underline G_\infty\oplus (\underline p_\infty)^* \underline A_\infty)\}
\end{aligned}
\end{equation}
where $B_{\epsilon}(\underline z_i)$ denotes a small ball of radius $\epsilon$ in $\P^{n-1}$, and $CS(\cdot, \cdot)$ is the Chern-Simons functional defined using the obvious trivialization of $\O_{\P^{n-1}}^{\oplus N}$ over the boundary of each $\underline\Delta\cap B_{\epsilon}(z_i)$ (see \cite{CS2}, Section $4.2$).  Here we may assume each $\underline\Delta\cap B_\epsilon(\underline z_i)$ only intersects $\underline\Gamma$ at one point and also $$(B_\epsilon(\underline z_i)\setminus \underline\Gamma)\cap (\text{supp}(\Sigma^{an}_b)\cup \Sing(\underline A_\infty))=\emptyset$$
Notice \eqref{equation5.3} was stated in \cite{GSRW} when $\underline\Gamma$ is contained in the support of $\underline{\mathcal T}$. But if $\underline\Gamma$ is not contained in the support of $\underline{\mathcal T}$, then $m^{alg}(\underline\Gamma)=0$ and \eqref{equation5.3} obviously holds.

For each $i$, we choose a holomorphic lift $\Delta_i\subset B$ of $\underline \Delta \cap B_\epsilon(\underline z_i)$. Then as a direct corollary of Equation (\ref{equation5.3}), we obtain  (see similar calculation in Section $4.2$ in \cite{CS2}) 
\begin{equation}
\label{equation5.4}
\begin{aligned}
l\cdot  m^{alg}(\underline\Gamma)=&\frac{1}{8\pi^2}\sum_i \{ \int_{ \Delta_i }Tr(F_{{A_f}} \wedge F_{{A_f}})-Tr(F_{ G_\infty\oplus  p_\infty^* A_\infty} \wedge F_{ G_\infty\oplus p_\infty^* A_\infty})\\
&-\int_{  \partial \Delta_i}CS({A_f},  G_\infty\oplus  p_\infty ^*A_\infty)\}
\end{aligned}
\end{equation}

On the other hand, let $\Gamma\subset\C^n$ be the affine cone over $\underline\Gamma$, then the analytic multiplicity along $\Gamma$ is given by 
$$l\cdot m^{an}(\Gamma):=\lim_{j\rightarrow\infty} \frac{1}{8\pi^2}\sum_i\{\int_{\Delta_i}Tr(F_{A_{j}} \wedge F_{A_{j}})-Tr(F_{A_\infty}\wedge F_{A_\infty})\}.$$
So we have 
$$\Sigma_{b}^{an}=\sum_{\underline\Gamma} m^{an}(\Gamma)\cdot [\underline\Gamma].$$
Now the equality \eqref{e:cycle equal} follows from 
\begin{prop} For any codimension two irreducible subvariety $\underline\Gamma\subset \P^{n-1}$, we have   
$m^{alg}(\underline\Gamma)=m^{an}(\Gamma).$
\end{prop}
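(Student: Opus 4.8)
The plan is to compare the two formulas \eqref{equation5.4} and its analytic counterpart term by term, and show that the boundary Chern--Simons contributions are negligible and the curvature integrals match in the limit. First, I would fix a generic transverse slice $\underline\Delta$ meeting $\underline\Gamma$ at $\underline z_1,\dots,\underline z_l$, lift each $\underline\Delta\cap B_\epsilon(\underline z_i)$ holomorphically to $\Delta_i\subset B$, and arrange (using that $\Sigma\cup Z(\E)$, $\mathcal S$, $\Sing(\E_\infty)$ and the various singular loci are all of high codimension) that on $\Delta_i$ the convergences $\sigma^j_l\to\sigma^\infty_l$, and hence $p_j^*A_j\to p_\infty^*A_\infty$ on $\mathcal K_j^\perp$, and $G_j\to G_\infty$ on $\mathcal K_j$, all hold smoothly away from the single point where $\Delta_i$ meets $\Gamma$ (after passing to the subsequence already fixed). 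The key structural input is that, because $\sigma^\infty_l$ are homogeneous of the same degree $k+\mu_{i}$ on each factor $\underline\E_\infty^{\mu_i}$ and the resolution \eqref{equation5.1} descends to the projective resolution \eqref{equation5.2}, the ``algebraic'' local model on a small ball around $\underline z_i$ in $\underline\Delta$ is exactly the restriction of the ``analytic'' one to the affine slice $\Delta_i$; this is the content of the parallel derivation of \eqref{equation5.4} from \eqref{equation5.3}.

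The main steps, in order, are: (1) Using the energy identity \eqref{energy identity} and Lemma~\ref{analyticmultiplictyformula}, express $l\cdot m^{an}(\Gamma)$ as the limit of $\frac{1}{8\pi^2}\sum_i\int_{\Delta_i}\Tr(F_{A_j}\wedge F_{A_j})-\Tr(F_{A_\infty}\wedge F_{A_\infty})$, where $\Delta_i$ is chosen so it meets $\Sigma_b$ only along $\Gamma$ and misses $\Sing(A_\infty)$. (2) Via the resolutions $p_j$ and the flat connection $A_f$ on $\mathcal O^{\oplus N}$, rewrite $\Tr(F_{A_j}\wedge F_{A_j})$ on $\Delta_i\setminus\{pt\}$ as $\Tr(F_{A_f}\wedge F_{A_f})-\Tr(F_{G_j\oplus p_j^*A_j}\wedge F_{G_j\oplus p_j^*A_j})$ up to an exact Chern--Simons term, i.e. apply the Bott--Chern transgression formula fiberwise to the smooth splitting $\mathcal O^{\oplus N}=\mathcal K_j\oplus\mathcal K_j^\perp$; integrate over $\Delta_i$ and move the exact term to $\partial\Delta_i$. (3) Let $j\to\infty$: the bulk curvature terms converge by the smooth convergence on $\Delta_i$ away from the puncture together with a uniform $L^1$ bound on $|F_{A_j}|^2$ (from Price monotonicity \eqref{eqn2.0}) to localize any lost mass at the puncture and along $\Gamma$ — but since $\Gamma\cap\Delta_i$ is a single point and $A_\infty$ contributes no extra mass transverse to $\Gamma$ there, the difference converges to the analogous expression with $(G_\infty, p_\infty^*A_\infty)$. (4) The boundary term $\int_{\partial\Delta_i}CS(A_f, G_j\oplus p_j^*A_j)$ converges to $\int_{\partial\Delta_i}CS(A_f, G_\infty\oplus p_\infty^*A_\infty)$ since $\partial\Delta_i$ stays away from all singularities and the convergence there is smooth. (5) Comparing with \eqref{equation5.4} then yields $m^{an}(\Gamma)=m^{alg}(\underline\Gamma)$; summing over all $\underline\Gamma$ gives \eqref{e:cycle equal}.

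The delicate point — and the main obstacle — is step (3): controlling the curvature integrals near the puncture $p=\Delta_i\cap\Gamma$. A priori, as $j\to\infty$, Yang--Mills energy of $A_j$ could concentrate near $p$ in a way not fully accounted for by $\Tr(F_{A_j}\wedge F_{A_j})$ on $\Delta_i$, since $\Tr(F\wedge F)$ is only the signed (Chern--Weil) density, not $|F|^2$. The resolution is exactly the role played by the $\partial_r$-invariance (property (2) of $A_\infty$) and the homogeneity of the $\sigma^\infty_l$: the quantity $\int_{\Delta_i}\Tr(F\wedge F)$ minus its boundary Chern--Simons correction is, by Stokes, a purely boundary quantity plus the local $c_2$-type contribution at $p$, and the latter is precisely $8\pi^2 m^{alg}(\underline\Gamma)/l$ by the algebraic computation \eqref{equation5.3}. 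So rather than estimating energy concentration directly, one identifies both sides with the same topological/residue quantity at $p$. I would make this rigorous by excising a small ball $B_\delta(p)$, applying Stokes on $\Delta_i\setminus B_\delta(p)$ to both the $A_j$ and $A_\infty$ integrands, taking $j\to\infty$ first (smooth convergence on the compact region $\Delta_i\setminus B_\delta(p)$) and then $\delta\to 0$, and matching the $\delta\to 0$ limit of the boundary integral over $\partial B_\delta(p)$ with the index-theoretic local contribution — which, for a homogeneous cone connection with resolution \eqref{equation5.2}, is exactly computed by \eqref{equation5.3}. This is the same bookkeeping as in \cite{GSRW} and \cite{CS2}, Section~4.2, and I would invoke those transgression arguments directly rather than redo them.
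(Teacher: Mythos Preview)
Your overall strategy---use the resolutions $p_j$, the Bott--Chern transgression, and compare with \eqref{equation5.4}---is the same as the paper's. But there is a real gap in steps (3)--(4), and it is exactly the ``delicate point'' you flag without resolving.

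You assert that $G_j\to G_\infty$ smoothly on $\Delta_i$ away from the puncture, and then that the bulk integrals $\int_{\Delta_i}\Tr(F_{G_j}\wedge F_{G_j})$ converge to $\int_{\Delta_i}\Tr(F_{G_\infty}\wedge F_{G_\infty})$. Neither holds. First, in the paper's setup $G_\infty$ is an \emph{arbitrary} admissible connection on $\underline{\mathcal K}_\infty$ (pulled back), not the limit of the $G_j$; there is no reason the induced Chern connections on $\mathcal K_j$ should converge to it. Second, even if you replace $G_\infty$ by the natural limit connection, the projections $\pi_j=p_j^*(p_jp_j^*)^{-1}p_j$ do \emph{not} converge smoothly at the puncture: there $p_\infty p_\infty^*$ fails to be invertible on $\E_\infty$ precisely because $\mathcal N_\infty\subsetneq\E_\infty$ fiberwise (this is where $\underline{\mathcal T}$ is supported). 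So the second fundamental forms of $\mathcal K_j$ blow up there, and $\Tr(F_{G_j}\wedge F_{G_j})$ can concentrate mass at the puncture. Your excision/Stokes/``match with \eqref{equation5.3}'' fix is circular: you would be using the algebraic formula to identify the residue at $p$, which is what you are trying to prove equals the analytic one. The bulk and boundary pieces simply do not converge separately.

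The paper's argument sidesteps this entirely. After writing the difference $l\cdot m^{alg}-l\cdot m^{an}$ as a sum of $\int_{\partial\Delta_i}CS(G_\infty\oplus p_\infty^*A_\infty,\,G_j\oplus p_j^*A_j)$ and $\frac{1}{8\pi^2}\int_{\Delta_i}\Tr(F_{G_j}\wedge F_{G_j})-\Tr(F_{G_\infty}\wedge F_{G_\infty})$, it uses two facts you did not isolate. (i) Since $\underline{\mathcal K}_\infty$ is reflexive, $\mathcal K_\infty$ is a genuine vector bundle over the generic slice $\Delta_i$, so one can choose compatible trivializations of $\mathcal K_j$ and $\mathcal K_\infty$ that \emph{extend across the puncture}; then the Chern--Simons/Stokes identity gives $\int_{\partial\Delta_i}CS(G_\infty,G_j)=\int_{\Delta_i}\Tr(F_{G_j}\wedge F_{G_j})-\Tr(F_{G_\infty}\wedge F_{G_\infty})$, and these two terms cancel exactly---no limit is needed here. (ii) What remains is $\int_{\partial\Delta_i}CS(p_\infty^*A_\infty,\,p_j^*A_j)$. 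Here the crucial point is that $p_j$ (unlike $p_\infty$) is a global resolution of $\E_j$ over all of $\Delta_i$; rewriting this boundary term via a second application of Stokes (with $p_{j'}$ in place of $p_\infty$, then letting $j'\to\infty$) converts it back to $\int_{\Delta_i}\Tr(F_{A_{j'}}\wedge F_{A_{j'}})-\Tr(F_{A_j}\wedge F_{A_j})$, which tends to zero as $j\to\infty$ by Lemma~\ref{analyticmultiplictyformula}. That last trick---using the global definedness of $p_j$ to turn the residual boundary term into a difference of Chern integrals you already control---is the missing idea in your outline.
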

\begin{proof}
By Equation (\ref{equation5.4}) and Lemma \ref{analyticmultiplictyformula}, a direct computation yields
\begin{eqnarray*}
&&l\cdot m^{alg}(\underline\Gamma)-l\cdot m^{an}(\Gamma)\\ &=& \sum_i \{\int_{\partial \Delta_i} CS(G_\infty \oplus p_\infty^*A_\infty, G_{j}\oplus p_j^* A_{j})\\ && +\frac{1}{8\pi^2}\int_{\Delta_i} Tr(F_{G_j} \wedge F_{G_j})-Tr(F_{G_\infty} \wedge F_{G_\infty})\}.
\end{eqnarray*}
It suffices to show that for each $i$
\begin{equation}\label{equation5.5}
\lim_{j\rightarrow\infty} \{\int_{\partial \Delta_i} CS(G_\infty \oplus p_\infty^* A_\infty,G_{j}\oplus p_j^* A_{j})+\int_{\Delta_i} Tr(F_{G_j} \wedge F_{G_j})-Tr(F_{G_\infty} \wedge F_{G_\infty})\}=0.
\end{equation}
By assumption, using the given flat metric, we obtain an orthogonal splitting 
$$\O_{B\setminus\{0\}}^{\oplus N}={\mathcal{K}}_j\oplus {\mathcal{K}}_j^{\perp}.$$
 Then the orthogonal projection $\pi_j$ from $\O^{\oplus N}_{B\setminus\{0\}}$ to the orthogonal complement of ${\mathcal{K}}_j$ converges to $\pi_\infty$ away from the center of $\Delta_i$. Now by assumption ${\mathcal{K}}_\infty$ is locally free over $\Delta_i$ so we may fix a smooth trivialization of ${\mathcal{K}}_\infty$ over $\Delta_i$. In particular this yields another trivialization of ${\mathcal{K}}_\infty$ over $\partial \Delta_i$. Fix an arbitrary smooth trivialization of ${\mathcal{K}}_\infty^{\perp}$ over $\partial \Delta_i$, so together we get a new trivialization of 
 $$\O_{B\setminus\{0\}}^{\oplus N}={\mathcal{K}}_\infty\oplus {\mathcal{K}}_\infty^{\perp}$$ over $\partial \Delta_i$. 
Using the smooth convergence above we may for $j\gg1$ find  smooth trivializations of ${\mathcal{K}}_j$ and ${\mathcal{K}}_j^{\perp}$ so that they are identified with ${\mathcal{K}}_\infty$ and ${\mathcal{K}}_\infty^{\perp}$ (as complex vector bundles). Notice by \cite{CS2}, Lemma 4.1, the relative Chern-Simons integral does not depend on the choice of the trivialization of the bundle, so using the new trivialization we may write
$$CS(G_\infty\oplus p_\infty^*A_\infty, G_j\oplus p_j^*A_j)=CS(G_\infty, G_j)+CS(p_\infty^*A_\infty, p_j^*A_j). $$
Now  since the above trivializations of ${\mathcal{K}}_\infty$ and ${\mathcal{K}}_j$ on $\partial \Delta_i$ extend over $\Delta_i$, it follows that  (see Lemma 4.1 in \cite{CS2})
$$CS(G_j, G_\infty)=\int_{\Delta_i} Tr(F_{G_j} \wedge F_{G_j})-Tr(F_{G_\infty} \wedge F_{G_\infty}).$$
So it suffices to prove 
$$\lim_{j\rightarrow\infty}\int_{\partial \Delta_i} CS(p_\infty^* A_\infty, p_j^* A_{j})=0.$$
Essentially, this follows from the fact that $p_\infty$ comes from the limit of maps which are initially globally defined over $\Delta_i$. By assumption, there exists a sequence of gauge transforms $g_j$  and Hermitian isomorphisms $P_j: \E_j \rightarrow \E_\infty$, both defined away from $\Gamma$, so that $(P_j^{-1})^*(g_j.A_{j})$ converges smoothly to $A_\infty $ away from $\Gamma$. Furthermore, $P_j g_j p_j$ converge to $p_\infty$ smoothly away from $\Gamma$. Given this, we have 
$$
\begin{aligned}
& \int_{\partial \Delta_i} CS(p_\infty^* A_\infty,p_j^*A_{j})\\
=&\lim_{j'\rightarrow\infty} \int_{\partial \Delta_i} CS((P_{j'}g_{j'} p_{j'})^* (P_{j'}^{-1})^*g_{j'} . A_{j'}, p_j^* A_j)\\
=&\lim_{j'\rightarrow\infty} \int_{\partial \Delta_i} CS((p_{j'})^*A_{j'}, p_j^* A_j)\\
=&\lim_{ j'\rightarrow\infty} \int_{\Delta_i} \Tr(F_{A_{j'}} \wedge F_{A_{j'}}) - \Tr(F_{A_j} \wedge F_{A_j}),
\end{aligned}
$$
where the last equality follows from the fact that $p_j$ and $p_j'$ are globally defined over $\Delta_i$ and Lemma 4.1 in \cite{CS2}. Now let $j$ tend to infinity,  the last term converges to zero by Lemma \ref{analyticmultiplictyformula}. 

 \end{proof}


\begin{thebibliography}{1}
\bibitem{BS} S. Bando, Y-T. Siu. \emph{Stable sheaves and Einstein-Hermitian metrics},  Geometry and Analysis on Complex Manifolds, T. Mabuchi, J. Noguchi, and T. Ochiai, eds., World Scientific, River Edge, NJ, 1994, 39--50.
\bibitem{BTT} N. Buchdahl,  A. Teleman, M. Toma. \emph{A continuity theorem for families of sheaves on complex surfaces.} Journal of Topology 10.4 (2017): 995-1028.
\bibitem{CS1} X.M. Chen, and S. Sun. \emph{Singularities of Hermitian-Yang-Mills connections and Harder-Narasimhan-Seshadri filtrations.} Duke Mathematical Journal 169, no. 14 (2020): 2629-2695.

\bibitem{CS2} X.M. Chen, S. Sun. \emph{Analytic tangent cones of admissible Hermitian-Yang-Mills connections}. To appear in Geometry and Topology arXiv:1806.11247 (2018).
\bibitem{CS3}X.M. Chen, S. Sun. \emph{Algebraic Tangent Cones of Reflexive Sheave.} International Mathematics Research Notices, rny276, https://doi.org/10.1093/imrn/rny276

\bibitem{DS} S. Donaldson, S. Sun. \emph{Gromov--Hausdorff limits of K\"ahler manifolds and algebraic geometry, II.} Journal of Differential Geometry 107, no. 2 (2017): 327-371.
\bibitem{Eisenbud} D. Eisenbud. \emph{Commutative Algebra: with a view toward algebraic geometry.} Vol. 150. Springer Science Business Media, 2013.
\bibitem{Fujiki} A. Fujiki. \emph{On the blowing down of analytic spaces.} 
Publ. Res. Inst. Math. Sci. 10 (1974/75), 473--507. 

\bibitem{GSRW}D. Greb,  B. Sibley, M. Toma,  R. Wentworth. \emph{Complex algebraic compactifications of the moduli space of Hermitian-Yang-Mills connections on a projective manifold.}  arXiv:1810.00025.(2018).
\bibitem{GT} D. Greb and M. Toma, \emph{Compact moduli spaces for slope-semistable sheaves.} Algebr. Geom. 4 (2017), no. 1, 40–78. MR 3592465
\bibitem{Grothendieck} A. Grothendieck. \emph{Techniques de construction et théoremes d'existence en g\'eom\'etrie alg\'ebrique. IV. Les sch\'emas de Hilbert.} S\'eminaire Bourbaki 6.221 (1960): 249-276.

\bibitem{Hartshorne} R. Hartshorne, \emph{ Algebraic geometry.} Vol. 52. Springer Science $\&$  Business Media, 2013.
\bibitem{JWE} A. Jacob, H. S\'a Earp, and T. Walpuski. \emph{Tangent cones of Hermitian Yang-Mills connections with isolated singularities.} Mathematical Research Letter 25.5 (2018): 1429.
\bibitem{Kobayashi} S. Kobayashi. \emph{Differential geometry of complex vector bundles.} Publications of the Mathematical Society of Japan, 15. Kano Memorial Lectures, 5. Princeton University Press, Princeton, NJ (1987).
\bibitem{Maruyama}  M. Maruyama. \emph{On boundedness of families of torsion free sheaves.} J. Math. Kyoto Univ. 21
(1981), no. 4, 673-701.
\bibitem{Shiffman} B. Shiffman. \emph{On the removal of singularities of analytic sets.} Michigan Math. J 15.1 (1968): 1.
\bibitem{SW} B. Sibley, R. Wentworth. \emph{Analytic cycles, Bott-Chern forms, and singular sets for the Yang-Mills flow on K\"ahler manifolds.} Advances in mathematics 279 (2015), 501-531. 

\bibitem{Tian}G. Tian. \emph{Gauge theory and calibrated geometry.} I. Ann. of Math. (2) 151.1 (2000), 193–268
\bibitem{HW} H. Whitney. \emph{Tangents to an analytic variety.} Annals of mathematics (1965): 496-549.
\end{thebibliography}
\end{document}